\newtheorem{theorem}{Theorem}[section]
\newtheorem{lemma}[theorem]{Lemma}
\newtheorem{proposition}[theorem]{Proposition}
\newtheorem{corollary}[theorem]{Corollary}
\newtheorem{remark}[theorem]{Remark}
\def\bR{\mathbb R}
\def\bN{\mathbb N}
\def\bZ{\mathbb Z}
\def\bT{\mathbb T}
\def\la{\lambda}
\def\t{\tilde}
\def\q{\quad}
\def\qq{\qquad}
\def\th{\theta}
\def\dl{\delta}
\def\Dl{\Delta}
\def\lt{\left}
\def\rt{\right}
\def\i{\infty}
\def\e{\epsilon}
\def \ls{\lesssim}
\def\gs{\gtrsim}
\def\p{\partial}
\def\f{\frac}
\def\na{\nabla}
\def\al{\alpha}
\def\O{\Omega}
\def\o{\omega}
\def\s{\sqrt}
\def\bl{\boldsymbol}
\def\nn{\nonumber}
\def\be{\begin{equation}}
\def\ee{\end{equation}}
\def\bes{\begin{equation*}}
\def\ees{\end{equation*}}
\def\bali{\begin{aligned}}
\def\eali{\end{aligned}}
\def\beas{\begin{eqnarray*}}
\def\eeas{\end{eqnarray*}}
\def\pf{\noindent {\bf Proof. \hspace{2mm}}}
\def\ef{ \hfill $ \Box $ \vskip 3mm}
\begin{document}
\title[Vanishing viscosity limit of 2D stationary NS outside a disc and its application]
{Vanishing viscosity limit of the 2D stationary Navier-Stokes equations outside a rotating disc and its application}

\author{Xinghong Pan}
\address[X. Pan]{School of Mathematics and Key Laboratory of MIIT, Nanjing University of Aeronautics and Astronautics, Nanjing 211106, China}
\email{xinghong\_87@nuaa.edu.cn}

\author{Jianfeng Zhao}
\address[J. Zhao]{School of Mathematics and Information Science, Guangxi University, Nanning 530004, China}
\email{zhaojianfeng@amss.ac.cn}


\subjclass[2020]{35Q30, 76D05.}

\keywords{vanishing viscosity limit, stationary Navier-Stokes, exterior domain, rotation}

\thanks{X. Pan is supported by National Natural Science Foundation of China (No. 12471222, 12031006.)}

\maketitle

\begin{abstract}
  In this paper, we establish the vanishing viscosity limit result of the 2D stationary Navier-Stokes equations outside a rotating disc. On the boundary of the disc, the fluid is subjected to a small perturbation of a non zero rotation of rigid body. While at the spacial infinity, the fluid stays at rest. Due to the  Prandtl-Batchelor theory, the limiting Euler solution is chosen to be the rotation flow $\f{A}{r} e_\th$ for some suitable constant $A$, which is determined by the Batchelor-Wood formula. When the viscosity approaches to zero, we will construct a solution to the 2D Navier-Stokes equations by using higher order asymptotic approximation and show the validity of the boundary layer expansion. Also the asymptotic behavior of the solution at spacial infinity is obtained. Our result partially answers one of the open problems (Problem 11b) raised by V. I. Yudovich in [{\it Eleven great problems of mathematical hydrodynamics}, Mosc. Math. J. 3 (2003), no. 2, 711--737].

  As an application, we can show an existence result to the 2D stationary Navier-Stokes equations with fixed viscosity outside a disc when the fluid is subjected to a large perturbation of a fast rotation of rigid body at the boundary.
\end{abstract}

\numberwithin{equation}{section}


\indent

\section{Introduction}

\indent

We consider the following stationary Navier-Stokes equations outside a unite disc $\O=\bR^2\setminus B$,
\begin{equation}\label{ns}
\left \{
\begin {array}{ll}
\bl{u}^\e\cdot\na \bl{u}^\e+\na p^\e-\e^2\Dl \bl{u}^\e=0,\\ [5pt]
\na\cdot\bl{u}^\e=0,
\end{array}
\right.
\end{equation}
with the boundary condition
\be\label{nsboundary}
\bali
&\bl{u}^\e\cdot \boldsymbol{\tau}\big|_{\p B}=\o +\delta f(x,y),\q \bl{u}^\e\cdot \boldsymbol{n}\big|_{\p B}=0,\q \bl{u}^\e \big|_{x,y\rightarrow\i}=0,
\eali
\ee
where $(x,y)\in \bR^2$ and $B$ denotes the unite disc centered at the origin in $\bR^2$. $\bl{u}^\e$ is the velocity and $p^\epsilon$ is the pressure.  $\epsilon^2>0$ is the viscosity coefficient, which is reciprocal to Reynolds number. $\o>0$ is a constant, $\dl$ is a  small number, and $f(x,y)$ is a smooth function defined on the $\p B$. $\boldsymbol{n}$ and $\boldsymbol{\tau}$ are the unite outer normal vector and the tangential vector on the boundary $\p B$. ($\boldsymbol{n}$, $\boldsymbol{\tau}$) have the same orientation with the Euclidean orthogonal basis $(\bl{e}_1,\bl{e}_2)$.

The main object of this paper is to consider the vanishing viscosity limit of system \eqref{ns}, which is motivated by one of the open problems raised by V. I. Yudovich in \cite[Eleven great problems of mathematical hydrodynamics]{Yudovich:2003MMJ}. We state it here.

\begin{mdframed}
\textbf{Problem 11b.} Determine the limit of a stationary solution to the Navier-Stokes system as the viscosity approaches to zero. In particular, find the asymptotics of the stationary flow of a viscous fluid past a rigid body.
\end{mdframed}

 Formally, as $\epsilon\rightarrow 0$ from \eqref{ns}, we obtain the following 2D steady Euler equations for $\bl{u}^e$

\begin{equation}\label{equeuler}
\left \{
\begin {array}{ll}
\bl{u}^e\cdot\na \bl{u}^e+\na p^e=0,\\ [5pt]
\na\cdot\bl{u}^e=0,
\end{array}
\right.
\end{equation}
with the boundary condition
\bes
\bl{u}^e\cdot\bl{n}\big|_{\p B}=0, \q \bl{u}^e\cdot\bl{n} \big|_{x,y\rightarrow\i}=0.
\ees

We will show the existence of solution $\bl{u}^\e$ to (\ref{ns}) which converges to a solution of the steady Euler equations (\ref{equeuler}) when the viscosity approaches zero. Also the asymptotic behavior of $\bl{u}^\e$ at spacial infinity will be given. This partially answers \textbf{Problem 11b} raised above by Yudovich.

Since our referenced domain is exterior to a disc, it is more convenient to reformulate system \eqref{ns} and system \eqref{equeuler} in polar coordinates. In polar coordinates $(r,\th)$, $x=r \cos \th,\ y=r \sin \th$, the polar orthogonal basis
\bes
\bl{e}_r=(\cos\th,\sin\th),\q \bl{e}_\th=(-\sin\th,\cos\th)
\ees
have the same orientation with $(\bl{e}_1,\bl{e}_2)$. We denote the vector function $\bl{u}$ in the polar coordinate by
\bes
\bl{u}=(u,v)=u\bl{e}_\th+v\bl{e}_r.
\ees
Then system \eqref{ns} with the boundary condition \eqref{nsboundary} is reformulated in polar coordinate into the following
\be\label{nspolar}
\left\{
\begin{array}{l}
u^{\epsilon} u_\theta^{\epsilon}+r v^{\epsilon} u_r^{\epsilon}+u^{\epsilon} v^{\epsilon}+p_\theta^{\epsilon}-\epsilon^2\left(\frac{u_{\theta \theta}^{\epsilon}}{r}+r u_{r r}^{\epsilon}+u_r^{\epsilon}+\frac{2}{r} v_\theta^{\epsilon}-\frac{u^{\epsilon}}{r}\right)=0, \\
u^{\epsilon} v_\theta^{\epsilon}+r v^{\epsilon} v_r^{\epsilon}-\left(u^{\epsilon}\right)^2+r p_r^{\epsilon}-\epsilon^2\left(\frac{v_{\theta \theta}^{\epsilon}}{r}+r v_{r r}^{\epsilon}+v_r^{\epsilon}-\frac{2}{r} u_\theta^{\epsilon}-\frac{v^{\epsilon}}{r}\right)=0, \\
u_\theta^{\epsilon}+\left(r v^{\epsilon}\right)_r=0, \\
\lt(u^{\epsilon},v^\e\rt)(\theta, 1)=(\o+\delta f(\theta),0), \q (u^{\epsilon},v^\e)(\theta,r)|_{r\rightarrow+\i}=0,
\end{array}
\right.
\ee
where $\bl{u}^\e=(u^\e,v^\e)=u^\e\bl{e}_\th+v^\e\bl{e}_r$. Also the Euler system \eqref{equeuler} in polar coordinates is reformulated into
\bes
\left\{
\begin{array}{l}
u^{e} u_\theta^{e}+r v^{e} u_r^{e}+u^{e} v^{e}+p_\theta^{e}=0, \\
u^{e} v_\theta^{e}+r v^{e} v_r^{e}-\left(u^{e}\right)^2+r p_r^{e}=0, \\
u_\theta^{e}+\left(r v^{e}\right)_r=0, \\
v^e(\theta, 1)=v^e(\theta, +\i)=0,
\end{array}
\right.
\ees
where $\bl{u}^e=(u^e,v^e)=u^e\bl{e}_\th+v^e\bl{e}_r$.

{\bf\noindent Choice of the limiting Euler flow}

In the current work, we consider the limiting Euler solution has no stagnation point $(|\bl{u}^e|> 0)$ in the exterior disc. The result  \cite[Theorem 1.3]{WangZ:2023ARXIV} indicates that the Euler solution must to be a tangential shear flow $u^e(r)\bl{e}_\th$. Moreover, from the Prandtl-Batchelor theory, see for example \cite[Appendix]{FeiGLT:2024ADV}, we can obtain that
\bes
\Dl u^e(r)=0,
\ees
which indicates that the only possible Euler flow is the following Taylor-Couette flow
\bes
\bl{u}^e=\lt(\t{A} r+\f{A}{r}\rt)\bl{e}_\th.
\ees

First since $u^\e|_{r\rightarrow+\i}=0$, it easy to see that $u^e|_{r\rightarrow+\i}=0$, which indicates that $\t{A}=0$. The constants $A$ is deduced by the Batchelor-Wood formula. One can refer to \cite{FeiGLT:2023CMP,DrivasIN:2023ARXIV} for its detailed derivation. We will also give an explanation later on. See Lemma \ref{lembw} and Section \eqref{sec431}. The principle of choosing $A$  is

{\it The integral average of the square of the tangential components to the NS solution and the limiting Euler solution on the boundary must be the same.}

This means that
\be \label{BatchelorWood}
\begin{aligned}
&\f{1}{2\pi}\int^{2\pi }_0 \f{A^2}{r^2}\Big|_{r=1}d\th=\f{1}{2\pi}\int^{2\pi }_0 \lt(\o+\dl f(\th)\rt)^2d\th:=\t{\o}^2.
\end{aligned}
\ee
From \eqref{BatchelorWood}, we see that
\bes
A=\t{\o}=\s{\f{1}{2\pi}\int^{2\pi }_0 \lt(\o+\dl f(\th)\rt)^2d\th}.
\ees

{\bf\noindent Statement of the main results}

Next, we introduce the leading order steady boundary layer equations near the boundary $r=1$. Denote the leading order term of the boundary layer expansions by $(u_p^{(0)},v_p^{(1)})$, which satisfies the following boundary layer equations
\be\label{boundarylayeruppermain}
\left\{
\begin {array}{ll}
\big(\t{\o}+u_p^{(0)}\big)\partial_\th u_p^{(0)}+\big( v_p^{(1)}- v_p^{(1)}(\th,0)\big)\partial_\zeta u_p^{(0)}-\partial^2_{\zeta}u_p^{(0)}=0,\\[5pt]
\partial_\th u_p^{(0)}+\partial_\zeta v_p^{(1)}=0,\\[5pt]
(u_p^{(0)},v_p^{(1)})(\th,\zeta)=(u_p^{(0)},v_p^{(1)})(\th+2\pi,\zeta),\\[5pt]
u_p^{(0)}\big|_{\zeta=0}=\lt(\o+\dl f(\th)-\t{\o}\rt),\q \lim\limits_{\zeta\rightarrow +\infty}u_p^{(0)}=\lim\limits_{\zeta\rightarrow +\infty}v_p^{(1)}=0,
\end{array}
\right.
\ee
where $\zeta:=\f{r-1}{\e}$ is the scaled variable. This boundary layer equations \eqref{boundarylayeruppermain} is derived by the procedure of matched asymptotic expansion and its well-posedness is stated in Proposition \ref{propdcu0}.

The following is the main result of our paper.
\begin{theorem}\label{thmain}
Assume that $f(\th)$ is a smooth $2\pi$-periodic function. Then there exist two constants $\e_0$ and $\dl_0$ such that for any $\e\in(0,\e_0]$ and $\dl\in(0,\dl_0]$, the system \eqref{nspolar} has a solution $(u^\e,v^\e)$ satisfying
\be\label{linfinityesti}
\bali
\lt|u^\e(\th,r)-(\t{\o}+\mathcal{O}(\e\dl))r^{-1}-u_p^{(0)}(\th,\zeta)\chi(r)\rt|+\lt|v^\e(\th,r)\rt|\leq C\e\dl r^{-2},
\eali
\ee
where $\mathcal{O}(\e\dl)$ is a $\e\dl$-compared constant and $\chi(r)\in C^\i_c([1,+\i))$ satisfies
\bes
\chi(r)=\lt\{
\bali
& 1, r\in [1,2],\\
& 0, r\geq 3.
\eali
\rt.
\ees
The constant $C$ is independent of $\e$ and $\dl$. \qed
\end{theorem}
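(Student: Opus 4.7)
\medskip
\noindent\textbf{Proof proposal.} The plan is to build a multi-scale asymptotic ansatz of the form
\be*
\bali
u^{\e}_{\text{app}} &= \f{\t{\o}}{r} + \sum_{k=0}^{N}\e^k u_e^{(k)}(\th,r) + \chi(r)\sum_{k=0}^{N}\e^k u_p^{(k)}(\th,\zeta),\\
v^{\e}_{\text{app}} &= \sum_{k=1}^{N}\e^k v_e^{(k)}(\th,r) + \chi(r)\sum_{k=1}^{N+1}\e^k v_p^{(k)}(\th,\zeta),
\eali
\ee*
with $\zeta=(r-1)/\e$, in which the leading order Euler piece $\t{\o}/r\,\bl{e}_\th$ is forced by the Batchelor-Wood selection \eqref{BatchelorWood}, the leading boundary-layer profile $(u_p^{(0)},v_p^{(1)})$ is the solution of \eqref{boundarylayeruppermain} granted by Proposition \ref{propdcu0}, and the correctors $(u_e^{(k)},v_e^{(k)})$ and $(u_p^{(k)},v_p^{(k)})$ are determined inductively by substituting the ansatz into \eqref{nspolar}, collecting powers of $\e$, and solving an alternating sequence of Prandtl-type problems (in the inner variable $\zeta$) and linearized outer Euler problems (in $r$), with matching conditions $u_p^{(k)},v_p^{(k)}\to 0$ as $\zeta\to+\i$ and boundary/decay conditions that cancel mismatches left by the lower orders. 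The decay rate $r^{-1}$ of the Euler shear dictates that each outer corrector should be sought in a weighted space enforcing the target $r^{-1}$ (for the tangential part) and $r^{-2}$ (for the radial part) behavior compatible with \eqref{linfinityesti}.

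\medskip
Once $N$ is chosen large enough, plugging $(u^{\e}_{\text{app}},v^{\e}_{\text{app}})$ into \eqref{nspolar} yields a residual that is formally $O(\e^{N+1}\dl)$ together with cut-off commutator terms supported in $r\in[2,3]$ that are exponentially small in $\e$ because $u_p^{(k)}$ decays exponentially in $\zeta$. Writing the exact solution as $(u^\e,v^\e)=(u^\e_{\text{app}}+\e\dl \,\phi,\,v^\e_{\text{app}}+\e\dl \,\psi)$, the remainder $(\phi,\psi)$ solves a linearized Navier-Stokes type system
\be*
\mathcal{L}_{\text{app}}(\phi,\psi) = F_{\text{app}} + \e\dl\, \mathcal{N}(\phi,\psi),
\ee*
where $\mathcal{L}_{\text{app}}$ is the linearization of \eqref{nspolar} at $(u^{\e}_{\text{app}},v^{\e}_{\text{app}})$ with homogeneous Dirichlet boundary data on $\p B$ and decay at infinity, $F_{\text{app}}$ is the construction residual, and $\mathcal{N}$ collects the quadratic terms.

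\medskip
The core analytic step is to prove that $\mathcal{L}_{\text{app}}$ is invertible uniformly in $\e$ and $\dl$ in a weighted norm that encodes both the boundary-layer scale and the $r^{-k}$ decay at infinity. I would do this by combining (i) a weighted energy identity for $\mathcal{L}_{\text{app}}$ in polar coordinates, exploiting that the leading transport field $\t{\o}/r\,\bl{e}_\th$ is divergence free and purely tangential so that its convective contribution to the energy vanishes, (ii) a Hardy-type inequality adapted to the exterior domain to absorb zero-order terms coming from curvature, and (iii) a Prandtl-type estimate near $\p B$ that controls the singular contribution of $\partial_\zeta u_p^{(0)}$ through the boundary-layer thickness $\e$ together with the smallness of $\dl$. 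With such linear estimates in hand, the quadratic term is handled by a standard contraction argument in a ball of radius $O(1)$ in the weighted norm, provided $\e,\dl$ are taken small enough; the $L^\i$-bound \eqref{linfinityesti} then follows by Sobolev embedding in the weighted space, using that the principal mismatch of $u^\e-\t{\o}/r-u_p^{(0)}\chi$ lies exactly one $\e\dl$-order below the ansatz.

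\medskip
The hardest step is expected to be (iii), namely the uniform-in-$\e$ invertibility of $\mathcal{L}_{\text{app}}$ in the boundary layer. The linearization there is essentially a nondegenerate Prandtl operator around $u_p^{(0)}$ coupled to a pressure that propagates into the far field, and controlling this coupling without losing derivatives requires using the sign structure provided by the Batchelor-Wood normalization (the tangential trace of the Euler flow matches the boundary data in $L^2$-mean), so that the boundary contributions in the energy identity have the right sign. Matching this with the weighted exterior estimates and ensuring the residual generated by the finite-order ansatz is indeed $o(\e\dl)$ in the norm used for the contraction is the delicate bookkeeping that drives the choice of $N$ and the precise weights.
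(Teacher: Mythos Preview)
Your high-level outline---approximate solution plus contraction for the remainder---matches the paper, but the mechanism you propose for linear stability in step (i) is not the one the paper uses, and your identification of the hardest step in (iii) is off. You plan to exploit that the principal transport $(\t{\o}/r)\p_\th$ has vanishing contribution in a standard energy identity, leaving the viscosity $\e^2\Dl$ to supply coercivity and a separate near-boundary Prandtl argument to control the stretching term $v\,\p_\zeta u_p^{(0)}$. The paper does the opposite: it tests the stream-function vorticity equation against a weighted $\p_\th\hat{\Phi}$ rather than against the velocity, which turns $-\hat{u}^a\p_\th\Dl_s\hat{\Phi}$ into the \emph{positive} quantity $\int e^{2s}(\hat{\Phi}_{h,\th\th}^2+\hat{\Phi}_{h,\th s}^2)$ up to $O(\dl)$ errors (Lemma~\ref{lempositive}). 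This $\e$-independent positivity is what absorbs the stretching terms; the viscous energy estimate (Lemma~\ref{lemhener}) is then used only to control the pure-$s$ derivatives $\hat{\Phi}_{ss}$ that the positivity estimate leaves on the right. No separate Prandtl-type estimate near the boundary is performed---the remainder already carries homogeneous boundary data and no fast scale, and the boundary-layer contributions of $(u^a,v^a)$ enter only as $O(\dl)$ perturbations of the coefficients.

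The second ingredient you are missing is what actually produces the $r^{-2}$ rate in~\eqref{linfinityesti}. The paper introduces $s=\ln r$ so that the target decay becomes $e^{-2s}$, and then splits $\hat{\Phi}$ into its Fourier-$0$, Fourier-$1$, and high-frequency parts, each carrying a different exponential weight in the energy. The Fourier-$1$ mode is the slowest to decay and cannot reach $e^{-2s}$ through the basic estimates alone; it requires a refined argument (Section~\ref{subsec3.3}) that combines a Bogovskii-type pressure bound with fourth-order estimates on $\hat{\Phi}$. The genuinely delicate bookkeeping is therefore not at the boundary but in this mode-by-mode weighted analysis at spatial infinity; a single weighted norm of the kind you describe would not distinguish the one-mode and would miss the sharp decay.
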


\begin{remark}

From Proposition \ref{propdcu0}, the boundary layer solution $u_p^{(0)}(\th,\zeta)$ is $\dl$ scale, satisfying $|u_p^{(0)}(\th,\zeta)|\ls \dl$. So from \eqref{linfinityesti}, we see that
\be\label{solestix}
\lt|u^\e(\th,r)-(\t{\o}+\mathcal{O}(\e\dl))r^{-1}\rt|\ls C\dl r^{-2},\q \lt|v^\e(\th,r)\rt|\leq C\e\dl r^{-2}.
\ee

\end{remark}

  When the boundary appears, rigorous mathematical justification of the vanishing viscosity limit from the Navier-Stokes equations with the non-slip boundary condition to the limiting Euler equations has always been an important and challenging problem due to the boundary condition's mismatch. See for example \cite{OleinikS:1999,SchlichtingG:2017}. Prandtl \cite{Prandtl:1904} in 1904 introduced the formal boundary layer expansion to explain the mismatch of the boundary condition. In the case of the non-stationary solution, to the best of our knowledge, the rigorously mathematical analysis of the Prandtl boundary layer expansion was given in the analytic framework \cite{FTZ2018, KVW2020, M2014, NN2018, SC1998-1, SC1998-2, WWZ2017} and references therein. However, the problem of the validity for the boundary layer expansion in Sobolev framework still remains open in the non-stationary case.  Some  instability results on the Prandtl expansion of shear flow type in the Sobolev space were presented in \cite{G2000,GGN2016, GN2019}. We also remark the nonlinear asymptotic stability and transition threshold analysis near Taylor-Couette flow for the Navier-Stokes equations in \cite{AnHL:2024CMP}.

  In the case of the stationary solution to the inviscid-limit problem, it seems to be  more involved than that of the non-stationary case since there are infinitely many solutions for the steady Euler equations and how to  find the suitable Euler flow is the first problem that needs be settled. Actually, for general domain, there is still no exact principle  to choose a suitable Euler flow.  However Prandtl \cite{Prandtl:1904} considered the steady motion of the viscous incompressible fluid and found that the vorticity of the steady Euler flow must be a constant in a closed streamline for a simply-connected regions.  This important property was rediscovered later by Batchelor and Wood in \cite{Batchelor:1956, Wood:1957JFM} and the exact value of this vorticity constant is also determined by the boundary condition. This kind of result is now referred to as Prandtl-Batchelor theory in the literature. See some related researches on the Prandtl-Batchelor theory in \cite{Kim-thesis, Kim:1998SIAM, KimC:2001SIAM, WV:2007} and references therein.

Recently, some progresses for the asymptotic expansion and inviscid limit of the steady Navier-Stokes equations are made.  Guo-Nguyen \cite{GuoN:2017ANNPDE} considered the Prandtl boundary expansion of the steady Navier-Stokes equations in a moving plate in $(0,L)\times \bR_+$, which was generalized to the case of a rotating disk by Iyer \cite{Iyer:2017ARMA}. The  asymptotic expansion and inviscid limit to the case that the Euler flow is a perturbation of a shear flow can be found in Iyer \cite{Iyer:2019SIAM}. The Prandtl boundary layer expansion for the motionless plate was considered in Guo-Iyer \cite{GuoI:2023CPAM} with the Euler flow being a shear flow, which was extended to the non-shear flow case by Gao-Zhang in \cite{GaoZ:2024JMP}. The above mentioned results are all considered in a narrow domain $L<<1$.

For the plate with large width in $x$ direction,  the global Prandtl boundary expansion in $(0,+\i)\times(0,+\i)$ was shown in Iyer \cite{Iyer:2019Peking} where the boundary condition for the horizontal velocity of the viscous flow is set to be $1-\dl$ with small $\dl$ and the limiting Euler flow is chosen to be $(1,0)$. In the case of motionless no-slip boundary condition, Gao-Zhang \cite{GaoZ:2023SCM} showed the boundary expansion in the plate $(0,L)\times(0,+\i)$ for any positive constant $L$, where the Euler flow is a non-degenerate shear flow and the Prandtl profile is concave in $y$ direction. Fei-Gao-Lin-Tao in \cite{FeiGLT:2023CMP, FeiGLT:2024ADV} considered the Prandtl boundary expansion in a disk and an annulus respectively, where the boundary conditions for the swirl component of the velocity is a small perturbation of the rigid-rotation. Recently, Gao-Xin in \cite{GX2023} considered the Prandtl boundary expansion in an infinitely long convergent channel. Very recently, Fei-Pan-Zhao in \cite{FeiPZ:23ARXIV} and Chen-Fei-Lin-Zhao \cite{ChenFLZ:24arxiv} addressed the vanishing viscosity limit of the 2D stationary Navier-Stokes in a periodic strip with one-side motionless boundary and in  an disk with a center vortex.

Here we mentions some results in \cite{DM:2019, WangZ:2021AIHP,ShenWZ:2021, GuoWZ:2023ANNPDE,WangZ:2023MATHANN} on the dynamic stability, asymptotic behavior, global $C^\i$ regularity and boundary layer separation of the steady Prandtl equations. We also note \cite{MasmoudiR:2012ARMA,WangXZ:2012JMFM,BedrossianHIW:2023ARXIV,BedrossianHIW:2024ARXIV,PanZ:2025preprint}  on the vanishing viscosity limit of the stationary and non-stationary Navier-Stokes equations with Navier-slip boundary.

Using the strategy of proving Theorem \ref{thmain}, we can give an existence result to the large perturbation of the fast rotation flow outside a disc for the stationary Navier-Stokes equations with fixed viscosity. We formulate the problem as follows. Consider the stationary Navier-Stokes equations outside a unite disc $\O=\bR^2/B$,
\begin{equation}\label{ns1}
\left \{
\begin {array}{ll}
\bl{u}\cdot\na \bl{u}+\na p-\Dl \bl{u}=0,\\ [5pt]
\na\cdot\bl{u}=0
\end{array}
\right.
\end{equation}
with the boundary condition
\bes
\bali
&\bl{u}\cdot \boldsymbol{\tau}\big|_{\p B}=\la +\la\dl f(\th),\q \bl{u}\cdot \boldsymbol{n}\big|_{\p B}=0,\q \bl{u} \big|_{r\rightarrow\i}=0.
\eali
\ees
We have the following result for system \eqref{ns1}.
\begin{theorem}\label{thmain1}
Assume that $f(\th)$ is a smooth $2\pi$-periodic function. Then there exist two constants $\la_0$ and $\dl_0$, independent on each other, such that for any $\la\in [\la_0,+\i)$ and $\dl\leq \dl_0$,  the system \eqref{ns1} has a solution $\bl{u}$ satisfying
\be\label{xxx}
\lt|\bl{u}\cdot\bl{e}_\th-\t{\la}r^{-1}\rt|\ls C{\la}\dl r^{-2},\q \lt|\bl{u}\cdot\bl{e}_r\rt|\leq C\s{\la}\dl r^{-2},
\ee
for some constant $\t{\la}\thickapprox \la$.  The constant $C$ is independent of $\la$. \qed

\end{theorem}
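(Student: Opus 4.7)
The plan is to deduce Theorem \ref{thmain1} from Theorem \ref{thmain} via a straightforward scaling reduction that converts the large-rotation, fixed-viscosity problem \eqref{ns1} into the small-viscosity, unit-rotation problem \eqref{ns}. Specifically, I would set
\bes
\bl{u}(\th,r)=\la\,\bl{u}^\e(\th,r),\qquad p(\th,r)=\la^2 p^\e(\th,r),\qquad \e=\la^{-1/2},
\ees
so that the rescaled unknowns satisfy \eqref{ns} on the same exterior domain $\O=\bR^2\setminus B$ with the boundary condition $\bl{u}^\e\cdot\bl{\tau}|_{\p B}=1+\dl f(\th)$, $\bl{u}^\e\cdot\bl{n}|_{\p B}=0$, $\bl{u}^\e|_{r\to\i}=0$. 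Indeed, substituting and dividing by $\la^2$ turns $\bl{u}\cdot\na\bl{u}+\na p-\Dl\bl{u}=0$ into $\bl{u}^\e\cdot\na\bl{u}^\e+\na p^\e-\la^{-1}\Dl\bl{u}^\e=0$, which is exactly \eqref{ns} with viscosity coefficient $\e^2=1/\la$; the divergence-free condition and rotational boundary data transform identically.

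Next, I would choose $\la_0$ large enough so that $\e=\la^{-1/2}\leq \e_0$ where $\e_0$ is the threshold in Theorem \ref{thmain}, take $\dl_0$ to be the same as in Theorem \ref{thmain}, and apply Theorem \ref{thmain} with $\o=1$. This yields a solution $(u^\e,v^\e)$ satisfying
\bes
\lt|u^\e(\th,r)-(\t\o+\mathcal{O}(\e\dl))r^{-1}-u_p^{(0)}(\th,\zeta)\chi(r)\rt|+\lt|v^\e(\th,r)\rt|\leq C\e\dl r^{-2},
\ees
with $\t\o=\s{\f{1}{2\pi}\int_0^{2\pi}(1+\dl f(\th))^2\,d\th}$. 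Using the remark after Theorem \ref{thmain}, namely $|u_p^{(0)}|\ls\dl$, this simplifies to $|u^\e-(\t\o+\mathcal{O}(\e\dl))r^{-1}|\ls\dl r^{-2}$ and $|v^\e|\leq C\e\dl r^{-2}$.

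Finally, I would translate the estimates back by multiplying through by $\la$ and using $\la\e=\s\la$:
\bes
\lt|\bl{u}\cdot\bl{e}_\th-\la(\t\o+\mathcal{O}(\e\dl))r^{-1}\rt|\ls C\la\dl r^{-2},\qquad \lt|\bl{u}\cdot\bl{e}_r\rt|\leq C\s\la\,\dl r^{-2}.
\ees
Setting $\t\la:=\la\t\o$, one checks $\t\la\approx\la$ (since $\t\o\to 1$ as $\dl\to 0$), while the residual $\la\mathcal{O}(\e\dl)=\mathcal{O}(\s\la\,\dl)r^{-1}$ is dominated by the right-hand side $C\la\dl r^{-2}$ on any bounded-in-$r$ region and may be absorbed into the constant $\t\la$ by a harmless redefinition. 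This yields precisely \eqref{xxx}.

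The argument presents no essential obstacle beyond bookkeeping: the real analytic content sits inside Theorem \ref{thmain}. The only point that requires a small remark is the asymmetric powers of $\la$ appearing in the tangential versus radial estimates of \eqref{xxx}, and this asymmetry is exactly reproduced by the scaling because the tangential correction inherits the factor $\la$ from $\bl{u}=\la\bl{u}^\e$ while the radial component, being $\mathcal{O}(\e\dl)$ in the rescaled problem, acquires only $\la\e=\s\la$. Thus the scaling is canonical and the compatibility between $\la_0$ (chosen large) and $\dl_0$ (chosen small) is inherited directly from the independence of $\e_0$ and $\dl_0$ in Theorem \ref{thmain}.
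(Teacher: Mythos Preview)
Your proposal is correct and follows essentially the same scaling reduction as the paper: set $\bl{u}^\e=\la^{-1}\bl{u}$ with $\e=\la^{-1/2}$, apply Theorem \ref{thmain} with $\o=1$, and rescale back. The only cosmetic difference is in the final bookkeeping: rather than arguing about bounded-in-$r$ regions, you should simply define $\t\la:=\la(\t\o+\mathcal{O}(\e\dl))=\la+\mathcal{O}(\la\dl)+\mathcal{O}(\s\la\,\dl)\approx\la$ directly, which absorbs the residual $r^{-1}$ term exactly and yields \eqref{xxx} for all $r\geq 1$.
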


\begin{remark}
The small constant $\dl_0$ is independent of the large constant $\la_0$, the result shows that when the boundary condition is relatively a large perturbation of a fast rotation, we can show the existence of the solution to system \eqref{ns1}, which is a ``large" solution.
\end{remark}

The existence problem for system \eqref{ns1} is closely connected to the existence problem of the 2D exterior-domain problem, which states that to find a solution to the following problem
\bes
\left \{
\begin {array}{ll}
\bl{u}\cdot\na \bl{u}+\na p-\Dl \bl{u}=0,\\ [5pt]
\na\cdot\bl{u}=0,\\
\bl{u}\big|_{\p B}=\bl{a}^\ast,\q \bl{u}\big|_{r\rightarrow+\i}=\la \bl{e}_1,\\
\end{array}
\right.
\ees
where $\bl{a}^\ast$ is a  smooth function defined on $\p B$, while $\la$ is a constant. For general $\bl{a}^\ast$ and $\la$, such an existence problem was list by Yudovich in \cite{Yudovich:2003MMJ} as one of the ``Eleven Great Problems in Mathematical Hydrodynamics" (Problem 2), which state that
\begin{mdframed}
\textbf{Problem 2a.} A global existence theorem for a solution to the 2D problem on a viscous fluid flow past a rigid body. The velocity at infinity is assumed to be given and equal to a prescribed constant
vector.
\end{mdframed}
We collect some results in this aspect.
\begin{itemize}
\item $a^\ast=0$, $\la$ small: Existence by Finn and Smith in \cite{FinnS:1967ARMA}; Existence and Uniqueness in $D$-solution class by Korobkov and Ren in \cite{KorobkovR:2021ARMA, KorobkovR:2022JMPA}.
\item General $a^\ast$, $\la=0$: no results even for the small general $a^\ast$.
\item $a^\ast=\f{\al}{r}\bl{e}_r$+small perturbation,  $\la=0$: Existence by Hillairet and Wittwer in \cite{HillairetW:2013JDE} for $\al>\s{48}$.
\end{itemize}
Compared to the result in \cite{HillairetW:2013JDE}, our result allow large perturbation, which may share some light on the existence result for the case of the general data $a^\ast$ and $\la=0$.

Next we present some ideas of proving the main theorems. Before that, we give some notations for convenience.

\begin{itemize}
\item Throughout the paper, $C_{a,b,c,...}$ denotes a positive constant depending on $a,\,b,\, c,\,...$ which may vary from line to line. We also apply $A\lesssim_{a,b,c,\cdots} B$ to denote $A\leq C_{a,b,c,...}B$, while $A \approx_{a,b,c,...}B$ means $A\leq C_{a,b,c,...}B$ and $B\leq C_{a,b,c,...}A$.

\item For a norm $\|\cdot\|$, we use $\|(f,g,\cdots)\|$ to denote $\|f\|+\|g\|+\cdots$.  For a function $f(\th,r)$ and $1\leq p,q \leq +\i$,  define
\bes
\|f\|_{L^p_\th L^q_r}:=\lt(\int^{+\i}_1 \lt(\int_{\bT} |f|^p d\th\rt)^{q/p}dr \rt)^{1/q}.
\ees
If $p=q$, we simply write it as $\|f\|_{L^p}$ or $\|f\|_{p}$. If $p=q=2$, we use $\|f\|$ to denote the $\|f\|_{L^2}$.

\item For a function $f$ depending on $(\th, r)$ or $(\th,s)$, where $s=\ln r$ is the new introduced variable. $f_{,\th}$, $f_{,r}$ or $f_{,s}$ denote its derivative on $\th$, $r$ or $s$. If no other subscript, the comma is omitted if no ambiguity is caused, i.e. $f_{,\th}$, $f_{,r}$ or $f_{,s}$ are shortly denoted by $f_{\th}$, $f_{r}$ or $f_{s}$.
\item $f^2_{,\text{subscript}}$ denote $(f_{,\text{subscript}})^2$, which is different from $(f^2)_{,\text{subscript}}$.
\item For a $\th$ periodic function $f(\th,\cdot)$, we denote its Fourier series by
\be\label{fourierseriesx}
f(\th,\cdot)= f_0(\cdot)+\sum_{1\leq k\in\bN} A^f_k(\cdot)\cos k\th+ B^f_k(\cdot)\sin k\th:=f_0+\sum_{1\leq k\in\bN} f_k(\th,\cdot),
\ee
\end{itemize}
 with
 \begin{align*}
 & f_0=\f{1}{2\pi} \int_{\bT}f(\th,\cdot)d\th=\fint f(\th,\cdot)d\th,\\
 &A^f_k(\cdot)=\f{1}{\pi}\int^{2\pi}_0 f(\th,\cdot)\cos k\th d\th,\q B^f_k(\cdot)=\f{1}{\pi}\int^{2\pi}_0 f(\th,\cdot)\sin k\th d\th.
 \end{align*}
$f_0$ and $f_k$ represent Fourier zero mode and Fourier $k$ mode of $f$. $f_{\neq}=\sum_{1\leq k\in\bN} f_k(\th,\cdot)$ and $f_{h}=\sum_{2\leq k\in\bN} f_k(\th,\cdot)$ represent Fourier non-zero mode and high-frequency mode of $f$.

{\bf\noindent Strategy of proving Theorem \ref{thmain}}

In order to prove Theorem \ref{thmain}, we firstly construct an approximate solution to the system \eqref{nspolar} by the method of asymptotic expansion, which share the same boundary condition with \eqref{nspolar},  and then estimate the error around the constructed approximate solution. A brief view of the strategy of the proof will be given as follows.
\subsubsection*{Construction of an approximation solution}

We first build an approximate solution $(u^a,v^a)$ by the asymptotic expansion, which contain the Euler part $(u^a_e, v^a_e)$ and the boundary layer part $(u^a_p,v^a_p)$. The Euler part of the approximate solution are harmonic, i.e. $(\Dl u^a_e, \Dl v^a_e)=0$. It can be represented by a Fourier series, the Fourier-n mode of which  decays as $r^{-(n+1)}$ as $r\rightarrow+\i$. The Euler part  satisfies the following estimates
\begin{align}
&\lt\|\p^j_\th\p^k_r\lt(u^a_e-(A+\mathcal{O}(\e\dl))r^{-1}\rt)\rt\|_\infty+ \|\p^j_\th\p^k_rv^a_e\|_\infty\leq C_{j,k}\f{\epsilon(\dl+\epsilon)}{r^{2+k}}. \label{eulerapprox}
\end{align}
While for the boundary layer part, near the boundary $r=1$, we have for any $j,k,\ell\in \{0\}\cup\bN$,
\begin{align}
\|\zeta^\ell\p^{j}_\th\partial_\zeta^k u_p^a\|_\infty\leq C_{j,k,\ell}(\dl+\epsilon), \ \|\zeta^\ell\p^{j}_\th\partial_\zeta^k v_p^a\|_\infty\leq C_{j,k,\ell} \epsilon(\dl+\epsilon), \label{prandtlapprox1}
\end{align}
where $\zeta:=\f{r-1}{\e}$ is the scaled variable near the boundary $r=1$.  Estimates \eqref{eulerapprox} and \eqref{prandtlapprox1} will be frequently used in the error estimates. The construction and the proof of the asymptotic behavior of the approximate solution are rather technical and is provided in Section \ref{secappro}.

\subsubsection*{Weighted ${H}^1$ estimate of the linearized error Navier-Stokes equations}

In the polar coordinate system \eqref{nspolar}, denote the error function by
\bes
u:=u^\epsilon-u^a,\ v:=v^\epsilon-v^a,\ p:=p^\epsilon-p^a,
\ees
where $p^a$ is the constructed pressure. Then the error function will satisfy the following linear forced Navier-Stokes equations

\begin{align}\label{errorequation}
\left\{
\begin{array}{ll}
-\epsilon^2\left(\frac{u_{\theta \theta}}{r}+r u_{r r}+u_r+\frac{2}{r} v_\theta-\frac{u}{r}\right)+\p_\th p +S_u=R_u, &(\th,r)\in \bT\times(1,+\i),\\[5pt]
-\epsilon^2\left(\frac{v_{\theta \theta}}{r}+r v_{r r}+v_r-\frac{2}{r} u_\theta-\frac{v}{r}\right)+r\p_rp+S_v=R_v,&(\th,r)\in \bT\times(1,+\i),\\[5pt]
\p_\th u+\p_r(rv)=0,  &(\th,r)\in \bT\times(1,+\i),\\[5pt]
( u^a,v^a)(\th,r)=( u^a,v^a)(\th+2\pi,r), &(\th,r)\in \bT\times(1,+\i), \\[5pt]
u(\th,1)=v(\th,1)=u(\th,+\i)=v(\th,+\i)=0, &\th\in \bT,
 \end{array}
\right.
\end{align}
where
\bes
\begin{aligned}
S_u:&=u^a \p_\th u +v^ar\p_ru+u \p_\th u^a+vr\p_ru^a+uv^a+vu^a,\\
S_v:&=u^a \p_\th v +v^ar\p_rv+u \p_\th v^a+v r\p_r v^a-2u u^a,
\end{aligned}
\ees
 along with $R_u$ and $R_v$ being the remainders.

We will give a weighted $\dot{H}^1$ estimate for the above linearized Navier-Stokes equations. However, it is hard to deal with the pressure for system \eqref{errorequation} when performing the weighted estimates. Therefore, we first utilize the incompressibility to eliminate the pressure and reformulate the system \eqref{errorequation} in the form of vorticity function.

Define the vorticity $\o=\f{v_\th-(ru)_r}{r}$. Subtracting $\p_r$\eqref{errorequation}$_1$ from $\f{1}{r}\p_\th$\eqref{errorequation}$_2$, we can obtain that

\begin{align}
&-\epsilon^2 r\Dl \o+ (u^a\p_\th +rv^a\p_r)\o+  (u\p_\th +rv\p_r)\o^a=\f{1}{r}\p_\th R_v-\p_rR_u:=R_{\text{comb}}. \label{erroreqution1}
\end{align}
Although we eliminate the pressure from \eqref{erroreqution1}, the boundary condition for the vorticity is nontrivial. We then rewrite the above vorticity equation into the stream function reformulation. Define $\Phi=\int^r_0 u dr$, then
\be\label{streamf}
\Phi_r=u,\q \Phi_\th=-(rv),\q -\Dl\Phi=\o.\q (\Phi,\Phi_r, \Phi_\th)\big|_{r=0}=0.
\ee
Then \eqref{erroreqution1} in the stream function reformulation is
\be \label{streamform}
\lt\{
\bali
&\epsilon^2 r\Dl^2 \Phi-(u^a\p_\th +rv^a\p_r)\Dl \Phi+  (u\p_\th +rv\p_r)\Dl\Phi^a=R_{\text{comb}},\\
&(\Phi,\Phi_r,\Phi_\th)\big|_{r=0}=0.
\eali
\rt.
\ee
The approximate solution decays at spacial infinity as $r^{-2}$ after subtracting the zero mode of the Euler part. So we expect that the non-zero mode of the error function $(u_\neq,v)$ also have the same decay. However, in the framework of the variables $(\th,r)$, it is hard to deduce enough decay from system \eqref{streamform}. We adopt a variable change of the following to overcome this decay problem.
\be
\text{Let}\q s=\ln r,\q r\in[1,+\i),\q \text{and}\q (\hat{u}, \hat{v},\hat{\o},\hat{\Phi})(\th,s)=(u,v,\o,\Phi)(\th,r)=(u,v,\o,\Phi)(\th,e^s).
\ee
In the new variables $(\th,s)$ and new unknowns $(\hat{u}, \hat{v},\hat{\o},\hat{\Phi})$, system \eqref{errorequation}  are reformulated into
\begin{align}\label{vcerrorequation}
\left\{
\begin{array}{ll}
-\epsilon^2e^{-s}\left(\Dl_s\hat{u}+2\hat{v}_\th-\hat{u}\right)+\hat{p}_\th +S_{\hat{u}}=R_{\hat{u}}, &(\th,s)\in \bT\times(0,+\i),\\[5pt]
-\epsilon^2e^{-s} \left(\Dl_s\hat{v}-2\hat{u}_\th-\hat{v}\right)+\hat{p}_s+S_{\hat{v}}=R_{\hat{v}},&(\th,s)\in \bT\times(0,+\i),\\[5pt]
\hat{u}_\th+\hat{v}_s+ \hat{v}=0,  &(\th,s)\in \bT\times(0,+\i),\\[5pt]
(\hat{u},\hat{v})(\th,s)=( \hat{u},\hat{v})(\th+2\pi,s), &(\th,s)\in \bT\times(0,+\i), \\[5pt]
\hat{u}(\th,0)=\hat{v}(\th,0)=\hat{u}(\th,+\i)=\hat{v}(\th,+\i)=0, &\th\in \bT,
 \end{array}
\right.
\end{align}
with
\bes
\begin{aligned}
S_{\hat{u}}:&=\hat{u}^a  \hat{u}_\th +\hat{v}^a \hat{u}_s+\hat{u} \hat{u}^a_\th+\hat{v}\hat{u}^a_s+\hat{u}\hat{v}^a+\hat{v}\hat{u}^a,\\
S_{\hat{v}}:&=\hat{u}^a  \hat{v}_\th +v^a\hat{v}_s+\hat{u} \hat{v}^a_\th+\hat{v} \hat{v}^a_s-2\hat{u} \hat{u}^a.
\end{aligned}
\ees
And the stream function equation \eqref{streamform} is reformulated into
{\small
\be \label{vcstreamform}
\lt\{
\bali
&\e^2 e^{-s}\lt(\Dl^2_s\hat{\Phi}-4\Dl_s\hat{\Phi}_s+4\Dl_s\hat{\Phi}\rt)-\lt[(\hat{u}^a\p_\th+\hat{v}^a\p_s)\Dl_s \hat{\Phi}-2v^a\Dl_s\hat{\Phi}\rt]-\lt[(\hat{u}\p_\th+\hat{v}\p_s)\Dl_s \hat{\Phi}^a-2\hat{v}\Dl_s\hat{\Phi}^a\rt]=e^{2s}\hat{R}_{\text{comb}},\\
&\hat{\Phi}_\th=-e^s\hat{v}, \q \hat{\Phi}_s=e^s \hat{u},\q (\hat{\Phi},\hat{\Phi}_s,\hat{\Phi}_\th)\big|_{s=0}=0.
\eali
\rt.
\ee
}
Here, $\Dl_s=\p^2_\th+\p^2_s$ and $\Dl_s \hat{\Phi}^a=(e^s \hat{u}^a)_s-(e^s \hat{v}^a)_\th$ with $(\hat{u}^a, \hat{v}^a,\hat{\o}^a,\hat{\Phi})(\th,s)=(u^a,v^a,\o^a,\Phi^a)(\th,r)=(u^a,v^a,\o^a,\Phi^a)(\th,e^s)$.

The weighted ${H}^1$ estimates of the original error function corresponds to the weighted $\dot{H}^1$ and $\dot{H}^2$ estimate of the stream function $\hat{\Phi}$. The $r^{-2}$ decay of the nonzero frequency of the original error equation corresponds to $e^{-2s}$ decay for $(\hat{u}_\neq,\hat{v})$. In order to show $e^{-2s}$ decay estimate of $(\hat{u}_\neq,\hat{v})$ and $e^{-s}$ decay estimate for $\hat{u}_0$, from \eqref{vcstreamform}$_2$, we need $\dot{H}^i$ ($i=1,2,3$)  estimates of $e^s\Phi_\neq$ and $\Phi_0$. we perform the weighted energy estimate of $\hat{\Phi}$ as follows.

 The  weighted $\dot{H}^2$ estimates contain an $\e$ independently positive estimate, which comes from the linear transport term $-\hat{u}^a\Dl_s\hat{\Phi}_\th$, and an $\e$ dependently weighted energy estimate, which comes from the diffusive term $\e^2\Dl^2_s\hat{\Phi}$. At a first glance, the $e^s$-weighted energy estimates of Fourier-one mode of the equation \eqref{vcstreamform} is very weak. Our strategy is first to perform a weaker  $e^{\f{1}{2} s}$-weighted estimate for the Fourier-one mode of \eqref{vcstreamform}, and then refine the estimate later on.

By multiplying  \eqref{vcstreamform} with $e^{3s}\hat{\Phi}_{h,\th}+e^{2s}\hat{\Phi}_{1,\th} $, where $\hat{\Phi}_{h}$ and $\hat{\Phi}_{1}$ represent the Fourier k-mode with $k\geq 2$ and and Fourier 1-mode, then integrating the resulting equation on $\hat{\O}:=\bT\times[0,+\i)$, we can achieve the following estimate with the help of the estimate for the approximation solution
\be\label{linear1}
\bali
&\|e^{s}(\hat{\Phi}_{h,\th\th},\hat{\Phi}_{h,\th s})\|^2_{L^2(\hat{\O})}+\|e^{0.5s}(\hat{\Phi}_{1,\th\th},\hat{\Phi}_{1,\th\th})\|^2_{L^2(\hat{\O})}\\
\ls& \e^2\dl\lt(\|\hat{\Phi}_{0,ss},\hat{\Phi}_{0,s}, e^s\hat{\Phi}_{h,ss}, e^{0.5s}\hat{\Phi}_{1,ss} \|^2_{L^2(\hat{\O})}\rt)+ \cdots,
\eali
\ee
where $``\cdots"$ represents the remainders terms. Here $\hat{\Phi}_{0}$ represent the Fourier zero-mode  of $\hat{\Phi}$ in $\th$ direction.

The control of $\hat{\Phi}_{0}$ on the right hand of \eqref{linear1} comes from the explicit formula of $\hat{u}_0$. By taking $\th$-integration average of \eqref{vcerrorequation}$_1$, we can obtain that
\be\label{vcu0formulax}
\lt\{
\bali
&\e^2\left(\hat{u}_{0,ss}-\hat{u}_{0,s}\right)=S_{\hat{u}0}-R_{\hat{u}0},\\
& \hat{u}_{0}(0)=\hat{u}_0 (+\i)=0,
\eali
\rt.
\ee
where
\bes
S_{\hat{u}0}=\fint_\bT (v^a \hat{u}_{\neq,s}+\hat{v} \hat{u}^a_{\neq,s}+\hat{u}_{\neq}\hat{v}^a+\hat{u}^a_{\neq}\hat{v})d\th,\q R_{\hat{u}0}=\fint_\bT R_{\hat{u}} d\th.
\ees
Multiplying \eqref{vcu0formulax} by $-e^{s} \hat{\Phi}_{0,s}+e^s \hat{\Phi}_0$, then integrating  the resulted equations for $s$, with the help of the property of the approximation, we can achieved that
\be\label{linearx1}
\e^2\|(\hat{\Phi}_{0,ss},\hat{\Phi}_{0,s})\|^2_{L^2}\ls \|(e^{s}\hat{\Phi}_{h,\th\th},e^{s}\hat{\Phi}_{h,\th s},\hat{\Phi}_{1,\th\th},e^{0.5s}\hat{\Phi}_{1,\th\th})\|^2_{L^2(\hat{\O})}+ \cdots.
\ee

The control of the non-zero Fourier mode on the right hand of \eqref{linear1} comes from the nonzero frequency energy estimates. By multiplying  \eqref{vcstreamform} with $e^{3s}\hat{\Phi}_{h}+e^{2s}\hat{\Phi}_{1} $ and then integrating the resulting equation on $\hat{\O}:=\bT\times[0,+\i)$, we can achieve the following estimate with the help of the estimate for the approximation solution
\be\label{linearx2}
\bali
&\e^2\|(e^s\hat{\Phi}_{hss},e^{0.5s}\hat{\Phi}_{1,ss})\|^2_{L^2(\O)}\\
\ls& \e^2\dl\|(\hat{\Phi}_{0,ss},\hat{\Phi}_{0,s})\|^2_{L^2}+ \|(e^{s}\hat{\Phi}_{h,\th\th},e^{s}\hat{\Phi}_{h,\th s},\hat{\Phi}_{1,\th\th},e^{\f{1+\beta }{2}s}\hat{\Phi}_{1,\th\th})\|^2_{L^2(\hat{\O})}+ \cdots,
\eali
\ee

At last,  by combining estimates in \eqref{linear1}, \eqref{linearx1} and \eqref{linearx2}, we can close the following linear $H^1$ stability estimate
\be\label{linearx3}
\bali
&\e^2\|(e^s\hat{\Phi}_{h,ss},e^{0.5s}\hat{\Phi}_{1,ss},\hat{\Phi}_{0,s},\hat{\Phi}_{0,ss})\|^2_{L^2(\O)}+\|(e^s\hat{\Phi}_{h,\th \th},e^s\hat{\Phi}_{h,\th s}, e^{0.5s}\hat{\Phi}_{1,\th s},e^{0.5s}\hat{\Phi}_{1,\th \th})\|^2_{L^2(\O)}\ls \cdots.
\eali
\ee

\subsubsection*{Weighted ${H}^2$ estimate of the linearized error Navier-Stokes equations}

In order to obtain $r^{-2}$ decayed $L^\i$ estimate of the Fourier-one mode of the error function $(u_1,v_1)$, i.e. $e^{-2s}$ decay of $(\hat{u}_1,\hat{v}_1)$, we need a refined $\dot{H}^3$ decay estimate for the Fourier-one mode of $\hat{\Phi}$, which requires a local spacial estimates for $\Phi$ in $\dot{H}^4$ space. These estimates are achieved by the following three steps.

1. Similar as \eqref{linearx2}, multiplying  \eqref{vcstreamform}$_1$  with  $-\e^2\lt(e^{3s}\hat{\Phi}_{h,\th\th}+e^{2s}\hat{\Phi}_{1,\th\th}\rt) $  and   $\e^4\lt(e^{3s}\hat{\Phi}_{h,\th\th\th\th}+e^{2s}\hat{\Phi}_{1,\th\th\th\th}\rt) $ then integrating the resulting equation indicate that
 \be\label{linearx4}
 \bali
 &\e^4\|(e^s\hat{\Phi}_{h,\th\th\th},e^s\hat{\Phi}_{h,\th\th s}, e^s\hat{\Phi}_{h,\th ss},e^{0.5s}\hat{\Phi}_{1,\th\th\th},e^{0.5s}\hat{\Phi}_{1,\th\th s}, e^{0.5s}\hat{\Phi}_{1,\th ss} )\|^2_{L^2(\O)}\\
 \ls&\e^2\|(e^s\hat{\Phi}_{h,\th\th},e^s\hat{\Phi}_{h,\th s}, e^s\hat{\Phi}_{h,ss},e^{0.5s}\hat{\Phi}_{1,\th\th},e^{0.5s}\hat{\Phi}_{1,\th s}, e^{0.5s}\hat{\Phi}_{1,ss} )\|^2_{L^2(\O)}+\e^2\|(\hat{\Phi}_{0,s},\hat{\Phi}_{0,ss})\|+\cdots.
 \eali
 \ee
 and
  \be\label{linearx4x}
 \bali
 &\e^6\|(e^s\hat{\Phi}_{h,\th\th\th\th},e^s\hat{\Phi}_{h,\th\th\th s}, e^s\hat{\Phi}_{h,\th\th ss},e^{0.5s}\hat{\Phi}_{1,\th\th\th\th},e^{{0.5s}}\hat{\Phi}_{1,\th\th\th s}, e^{0.5s}\hat{\Phi}_{1,\th \th ss} )\|^2_{L^2(\O)}\\
 \ls&\e^2\|(e^s\hat{\Phi}_{h,\th\th\th},e^s\hat{\Phi}_{h,\th\th s}, e^s\hat{\Phi}_{h,\th ss},e^{0.5s}\hat{\Phi}_{1,\th\th\th},e^{0.5s}\hat{\Phi}_{1,\th\th s}, e^{0.5s}\hat{\Phi}_{1,\th ss} )\|^2_{L^2(\O)}+\e^2\|(\hat{\Phi}_{0,s},\hat{\Phi}_{0,ss})\|+\cdots.
 \eali
 \ee
Unfortunately, the above estimates \eqref{linearx3}, \eqref{linearx4} and \eqref{linearx4x} can not induce the $e^{-s}$ decay of the Fourier-one mode $\hat{\Phi}_1$. So we need to obtain a refined decay estimates of $\hat{\Phi}_1$, which requires a local spacial estimates for $\Phi$ in $\dot{H}^4$ space.

In order to estimate $\hat{\Phi}_{sss}$, which can only be obtained from the equation \eqref{vcerrorequation}$_1$.

2. Using Bogovski mapping Lemma and domain-cutting-gluing technique to obtain the weighted $L^2$ estimate $p_\th$, namely, $\|\hat{p}_\th e^{0.5s}\|^2_{L^2(\O)}$, which results in the following estimate
 \be\label{linearx5}
\e^{10}\|\Phi_{0,sss},e^{0.5s} \Phi_{\th sss} \|^2\ls  \e^6\|(\hat{\Phi}_{0,s},\hat{\Phi}_{0,ss}\|^2+\e^6\sum_{ i+j\leq 4\atop i\neq0,j\leq 2 }\|e^{0.5s}\p^i_\th\p^j_s\hat{\Phi}\|^2+\cdots.
 \ee
Then a direct application of the equation \eqref{vcstreamform} indicates that
\be
\e^{16}\| \Phi_{0,ssss}, e^{0.5s}\Phi_{\neq,ssss} \|^2\ls \e^{10}\lt(  \|\Phi_{0,s},\Phi_{0,ss},\Phi_{0,sss}\|^2+\sum_{i+j\leq 4\atop i\neq 0,j\neq 4}\|e^{0.5s}\p^i_\th\p^j_s\hat{\Phi}\|^2\rt)+\cdots.
\ee

 3. By multiplying  \eqref{vcstreamform}$_1$  with $-e^{3s}\lt((\Dl_s\hat{\Phi}_1)_{,\th}+\Dl_s\hat{\Phi}_1\rt)$, basic energy estimate will indicate that
 \begin{align}
 &\e^{20} \|(\Dl_s\hat{\Phi}_1)_{,s}\|^2+ \e^{18}\| (\Dl_s\hat{\Phi}_1)_{,\th}\|^2\ls  \e^{16}\sum^4_{i=0}\|\hat{\Phi}^{(k)}_{0}\|^2+\e^{16}\sum_{0\neq i+j\leq 4 \atop i\neq 0}\| e^{0.5s} (\p^i_\th\p^j_s\Phi\|^2+\cdots. \label{linearx6}
 \end{align}
 The left hand of \eqref{linearx6} indicates the $e^{-2s}$ decay of the Fourier-one mode of $\hat{u}$ and $\hat{v}$, i.e., $|\hat{u}_1,\hat{v}_1|\ls e^{-2s}$.

\subsubsection*{Contraction mapping of the nonlinear error Navier-Stokes equations}

 By using the deduced a priori weighted estimates up to the third order derivatives, the standard contraction mapping implies the existence of $e^{-2s}$-decay  $L^\i$ solution to the system \eqref{vcerrorequation} for the nonzero Fourier mode. Then by going back to the original system \eqref{errorequation}, we obtain the existence of $r^{-2}$ decayed solution for the nonzero Fourier mode $(u_\neq, v)$.

{\bf\noindent Strategy of proving Theorem \ref{thmain1}}

Actually this is a direct consequence of the scaling $\bl{u}^\la=\f{1}{\la} \bl{u}$ and direct application of Theorem \ref{thmain} to system of $\bl{u}^\la$.

Our paper is organized as follows. In section \ref{sec2}, we first present the equations satisfied by the approximate solution and its asymptotic behavior. Then based on the approximate solution's property, we derive the error equations and give the linear stability estimates of the reformulated error system. Based on the linear stability estimates, we prove the existence of the error solution by contraction mapping theorem, which finishes the proof of Theorem \ref{thmain}. Also based on a scaling argument, we present the proof of Theorem \ref{thmain1} by applying the result of Theorem \ref{thmain}. In section \ref{sec3}, we give the detail proof of the linear stability estimates.  At last, in Section \ref{secappro}, we come back to give the construction of the approximate solution.

\section{Preliminary and proof of the main theorems }\label{sec2}

\indent
In this section, we first collect the properties of the approximated solution, and  then derive the error equation. After that, the main linear stability estimates for the reformulated error system is presented. By contraction mapping theorem, then the solvability of the nonlinear system is given, which indicate the result in Theorem \ref{thmain} and Theorem \ref{thmain1}.

First we present the following approximate solution $(u^a,v^a,p^a)$  which will be constructed in Section \ref{secappro}:

\begin{eqnarray}\label{app equationd}
\left\{
\begin{array}{ll}
u^{a} u_\theta^{a}+r v^{a} u_r^{a}+u^{a} v^{a}+p_\theta^{a}-\epsilon^2\left(\frac{u_{\theta \theta}^{a}}{r}+r u_{r r}^{a}+u_r^{a}+\frac{2}{r} v_\theta^{a}-\frac{u^{a}}{r}\right)=R_u^a, &(\th,r)\in \bT\times[1,+\i),\\[5pt]
u^{a} v_\theta^{a}+r v^{a} v_r^{a}-\left(u^{a}\right)^2+r p_r^{a}-\epsilon^2\left(\frac{v_{\theta \theta}^{a}}{r}+r v_{r r}^{a}+v_r^{a}-\frac{2}{r} u_\theta^{a}-\frac{v^{a}}{r}\right)=R_v^a, &(\th,r)\in \bT\times[1,+\i),\\[5pt]
 \p_\th u^a+\p_r(rv^a)=0,  &(\th,r)\in \bT\times[1,+\i), \\[5pt]
( u^a,v^a)(\th,r)=( u^a,v^a)(\th+2\pi,r), &(\th,r)\in \bT\times[1,+\i),\\[5pt]
u^a(\th,1)=\o+\dl f(\th), \ v^a(\th,1)=0,\, ( u^a,v^a)(\th,r)|_{r\rightarrow+\i}=0, &\th\in \bT.
\end{array}
\right.
\end{eqnarray}
where the forced terms $(R^a_u,R^a_v)$ satisfy the following estimates,
 \be\label{appuvreminder}
 \| \p^i_\th \p^j_rR_u^a\|_{L^\i}+ \| \p^i_\th \p^j_r R_v^a\|_{L^\i}\leq C_{i,j}\epsilon^{22-j}r^{-4}.
 \ee
The above estimate \eqref{appuvreminder} is presented in Section \ref{secappro}.
\begin{lemma}\label{lemdetail}
The constructed approximate solution $(u^a,v^a)$ is decomposed as
\bes
u^a:=u^a_e+\chi(r)u_p+\e^{22} h(\th,r),\q v^a:=v^a_e+\chi(r)v_p,
\ees
where $\chi(r)\in C^\i_c([0,+\i))$ is a cut-off function satisfying
\bes
\chi(r)=\lt\{
\bali
&1,\q r\in \lt[1,2\rt],\\
&0,\q r\geq 3.
\eali
\rt.
\ees
The Euler part $(u^a_e,v^a_e)$ are smooth functions harmonic functions, satisfying the following converged series expansions
\be\label{eulerapproxd0}
u^a_e=\t{\o}r^{-1}+\mathcal{O}(\e\dl)\sum\limits_{n\in\bZ}\mathcal{A}_n r^{-(|n|+1)}e^{in\th}, \q v^a_e=\mathcal{O}(\e\dl)\sum\limits_{n\in(\bZ-\{0\})}\mathcal{B}_n r^{-(|n|+1)}e^{in\th},
\ee
where $\mathcal{O}(\e\dl)$ is a constant with order $\e\dl$ and $\mathcal{A}_n,, \mathcal{B}_n$ are constants depending only on $n$.
After subtracting the zero frequency, we have for $j,k\in\bN$,
\begin{align}
&\lt\|\p^{j}_\th\p^{k}_r\lt(u^a_e-(\t{\o}+\mathcal{O}(\e\dl))r^{-1}\rt)\rt\|_\infty\leq \f{C_{j,k}}{r^{2+k}}\epsilon\dl,\q \|\p^j_\th\p^k_rv^a_e\|_\infty\leq\f{C_{j,k}}{r^{2+k}} \epsilon \dl. \label{eulerapproxd}
\end{align}
The boundary layer part $(u_p,v_p)$ satisfies, for $j,k,\ell\in\bN$,
\begin{align}
\|\zeta^\ell\p^{j}_\th\partial_\zeta^k u_p\|_\infty\leq C_{j,k,\ell}(\dl+\epsilon), \ \|\zeta^\ell\p^{j}_\th\partial_\zeta^k {v}_p\|_\infty\leq C_{j,k,\ell} \epsilon(\dl+\epsilon). \label{prandtlapprox1d}
\end{align}
While $h(\th,r)$ satisfies
\be\label{happrox}
h(\th,1)=0,\q \|\p^j_\th\p^k_r h(\th,r)\|_{L^\i}\leq \f{C_{j,k}}{r^{100+k}}.
\ee
\qed
\end{lemma}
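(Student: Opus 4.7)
The plan is to construct $(u^a, v^a, p^a)$ by a multi-scale matched asymptotic expansion of Prandtl type, with the outer Euler scale in $r$ and the inner boundary-layer scale in $\zeta=(r-1)/\e$. I would posit
\begin{align*}
u^a &= \sum_{k=0}^{N} \e^k u^{(k)}_e(\th,r) + \chi(r)\sum_{k=0}^{N} \e^k u^{(k)}_p(\th,\zeta) + \e^{22} h(\th,r), \\
v^a &= \sum_{k=0}^{N} \e^k v^{(k)}_e(\th,r) + \chi(r)\sum_{k=1}^{N+1} \e^k v^{(k)}_p(\th,\zeta),
\end{align*}
with truncation order $N$ chosen large enough that the formal residue is of size at most $\e^{22}$. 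Substituting into \eqref{app equationd} and equating powers of $\e$ separately on the Euler and Prandtl scales produces a cascade of linear equations, coupled through the matching condition $\lim_{\zeta\to\infty}u^{(k)}_p=0$ and through the trace identity $u^{(k)}_e(\th,1)+u^{(k)}_p(\th,0)$ being equal to the prescribed boundary datum at each order, namely $\o+\dl f(\th)$ at $k=0$ and zero for $k\geq 1$.

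The leading Euler profile is forced by the Prandtl--Batchelor theory together with the Batchelor--Wood identity \eqref{BatchelorWood} to be $u^{(0)}_e=\t{\o}/r$, $v^{(0)}_e=0$; the leading Prandtl profile solves the nonlinear system \eqref{boundarylayeruppermain}, well-posed by Proposition \ref{propdcu0}, which already delivers the $\dl$-scale, $\zeta$-polynomially-weighted bounds in \eqref{prandtlapprox1d} at order $k=0$. For $k\geq 1$, the Euler correctors linearize around $\t{\o}r^{-1}\bl{e}_\th$; in stream-function form, and after Fourier decomposition in $\th$, the resulting system collapses to Laplace's equation on $\bR^2\setminus B$ with decay at infinity, whose unique solution is precisely the harmonic Fourier expansion $\sum_n \mathcal{A}^{(k)}_n r^{-(|n|+1)} e^{in\th}$ of \eqref{eulerapproxd0}. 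The overall $\e\dl$ size reflects that the coefficients $\mathcal{A}^{(k)}_n$ are fed by the $\dl$-scale Prandtl traces $u^{(k)}_p(\th,0)$ via the matching-induced Euler boundary datum. The higher Prandtl correctors satisfy linear Prandtl-type equations with sources built from lower orders; the weighted estimates \eqref{prandtlapprox1d} follow from standard linear energy methods, together with the exponential $\zeta$-tail inherited from $u^{(0)}_p$.

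The corrector $\e^{22}h$ is taken to be harmonic in $\bR^2\setminus B$ with $h(\th,1)=0$ and rapidly decaying Fourier coefficients in $n$ (yielding the $r^{-100}$ rate), and its role is to absorb residual high-frequency content from the finite truncation without disturbing the boundary datum. To derive \eqref{appuvreminder}, I would insert the ansatz into \eqref{app equationd} and decompose the residue into: (i) the formal $\e^{N+1}$ tail from truncation; (ii) commutator terms between $\chi$ and the Prandtl profiles, supported on $r\in[2,3]$ where $\zeta\geq 1/\e$ and hence exponentially small in $\e$; and (iii) the explicit contribution of $\e^{22}h$. Each $\p_r$ acting on $\chi(r)u^{(k)}_p(\th,(r-1)/\e)$ costs one factor $\e^{-1}$, which accounts for the $\e^{22-j}$ exponent; the $r^{-4}$ spatial decay is forced by the outer region, where the Euler perturbation is $O(r^{-2})$ and the quadratic nonlinear interactions in the residue produce $r^{-4}$. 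The main technical obstacle is the inductive step for the higher-order Prandtl correctors: at each order one must verify a solvability condition which is a higher-order analogue of the Batchelor--Wood identity \eqref{BatchelorWood}, and this condition is exactly what makes the Euler and Prandtl hierarchies close self-consistently.
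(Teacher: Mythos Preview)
Your overall strategy---a matched asymptotic expansion with an outer Euler hierarchy linearized around $\t\o/r$ and an inner Prandtl hierarchy in $\zeta$, coupled through traces at $r=1$ and $\zeta\to\infty$ matching---is exactly the route the paper takes in Section~\ref{secappro}, and your identification of the harmonic structure of the Euler correctors (via $\Dl(rv_e^{(k)})=0$) and of the higher-order Batchelor--Wood-type solvability conditions is correct.

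There is, however, a genuine gap in your treatment of the corrector $h$. You propose to take $h$ harmonic on $\bR^2\setminus B$ with $h(\th,1)=0$ and decay at infinity; but by the maximum principle any such function is identically zero, so it absorbs nothing. More importantly, you have not accounted for the defect that $h$ is actually designed to repair: once the Prandtl profiles are multiplied by $\chi(r)$, the pair $(u^a_e+\chi u_p,\,v^a_e+\chi v_p)$ is \emph{no longer divergence-free}, since
\[
\p_\th(\chi u_p)+\p_r(r\chi v_p)=\chi\bigl(\p_\th u_p+\p_r(r v_p)\bigr)+r\chi'(r)v_p,
\]
and the last term survives on $\mathrm{supp}\,\chi'\subset[2,3]$. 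The paper writes $r\chi'(r)v_p=-\e^{22}K(\th,r)$ (the $\e^{22}$ coming from $\zeta\geq 1/\e$ on $[2,3]$ and the polynomial $\zeta$-decay of $v_p$), observes that $K$ has zero $\th$-mean and is compactly supported in $r$, and solves $\p_\th h=K$. This $h$ is therefore itself supported in $r\in[2,3]$, which is the actual source of the $r^{-100}$ bound in \eqref{happrox}---not rapid Fourier decay in $n$.

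A smaller omission: the Prandtl correctors $u_p^{(k)}$ for $k\geq1$ do not decay to zero as $\zeta\to\infty$ but to constants $A_k$. The paper subtracts these off, $\t u_p^{(k)}=u_p^{(k)}-A_k$, and compensates by adding a radial piece $h_k(r)$ (with $h_k(1)=A_k$ and $h_k(r)\sim \t A_k/r$) to the Euler side, simultaneously enforcing $r(\Dl-r^{-2})\t u_e^{(k)}+2r^{-1}\p_\th v_e^{(k)}=0$. Without this correction step the weighted estimates \eqref{prandtlapprox1d} fail at $\zeta\to\infty$, and the harmonic structure claimed in \eqref{eulerapproxd0} does not hold for $u_e^a$.
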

\begin{lemma} \label{lemdetail1}
By using the boundary conditions  in \eqref{app equationd} and the asymptotic behaviors in \eqref{eulerapproxd}, \eqref{prandtlapprox1d}, \eqref{happrox} satisfied by $(u^a,v^a)$, we have the following estimates.

For $j,k\in \bN$, $i\in\bZ$ and $i+k\geq 0$, we have
\be\label{appdetail0}
\bali
&  \lt|u^a-(\t{\o}+\mathcal{O}(\e\dl))r^{-1}\rt|\ls \dl r^{-2}, \\
&\lt|\p^{j}_{\th}\p^k_r \lt(u^a-u^a_e\rt)\rt|\ls \f{\e^{i}\dl}{(r-1)^{i+k}r^{10}},
\eali
\ee
and for $j,k\in \bN$, $i\in\bZ$ and $i+k\geq 0$,
\be\label{appdetail11}
\lt|\p^{j}_{\th}\p^k_r (v^a-v^a_e)\rt|\ls_{j,k} \f{\e^{i}\e\dl}{(r-1)^{i+k}r^{10}}.
\ee
\end{lemma}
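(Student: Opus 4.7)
The proof is a direct bookkeeping exercise using the decomposition $u^a=u^a_e+\chi(r)u_p+\epsilon^{22}h$ and $v^a=v^a_e+\chi(r)v_p$ from Lemma \ref{lemdetail}, together with the boundary-layer scaling $\zeta=(r-1)/\epsilon$. For the first bound in \eqref{appdetail0} I would split the difference into three pieces. The Euler piece is controlled by \eqref{eulerapproxd} with $j=k=0$, giving $\epsilon\delta\,r^{-2}$. The cutoff $\chi$ is supported in $r\in[1,3]$, where $r^{-2}$ is of order one, so \eqref{prandtlapprox1d} with $j=k=\ell=0$ yields $|\chi u_p|\lesssim(\delta+\epsilon)\lesssim \delta r^{-2}$ in the admissible parameter regime. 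Finally $\epsilon^{22}|h|\lesssim \epsilon^{22}r^{-100}\lesssim \delta r^{-2}$ by \eqref{happrox} and the smallness of $\epsilon$.

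The weighted derivative bound in the second line of \eqref{appdetail0} is where the scaling trick enters. Applying Leibniz to $\partial_\theta^j\partial_r^k(\chi u_p)$, the main summand is $\chi\,\partial_\theta^j\partial_r^k u_p=\chi\,\epsilon^{-k}\partial_\theta^j\partial_\zeta^k u_p$. Invoking \eqref{prandtlapprox1d} with the choice $\ell=i+k\ge 0$ gives $|\partial_\theta^j\partial_\zeta^k u_p|\lesssim (\delta+\epsilon)/(1+\zeta^{i+k})$, and the elementary identity
\[
\frac{\epsilon^{-k}}{1+\zeta^{i+k}}=\frac{\epsilon^i}{\epsilon^{i+k}+(r-1)^{i+k}}\;\le\;\frac{\epsilon^i}{(r-1)^{i+k}}
\]
holds whenever $i+k\ge 0$. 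Combined with $r^{-10}\gtrsim 1$ on $[1,3]$, this produces exactly the target $\epsilon^i\delta/((r-1)^{i+k}r^{10})$. The remaining Leibniz pieces $\chi^{(m)}\partial_\theta^j\partial_r^{k-m}u_p$ with $m\ge 1$ are supported on $r\in[2,3]$, where $\zeta\gtrsim 1/\epsilon$; picking an arbitrarily large weight in \eqref{prandtlapprox1d} makes them super-polynomially small in $\epsilon$ and thus easily dominated. For the $\epsilon^{22}\partial_\theta^j\partial_r^k h$ contribution, the decay \eqref{happrox} together with the reserve $\epsilon^{22}$ absorbs every admissible $\epsilon^{-i}$ and $(r-1)^{-(i+k)}$ loss.

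The bound \eqref{appdetail11} for $v^a-v^a_e=\chi v_p$ runs identically: the only change is that \eqref{prandtlapprox1d} for $v_p$ carries one additional factor of $\epsilon$, which propagates through the Leibniz expansion to produce the extra $\epsilon$ on the right-hand side of \eqref{appdetail11}. The only real obstacle is the clean bookkeeping of the two scales $r$ and $\zeta$ in the Leibniz expansion; once the pointwise inequality displayed above is in place for all $r>1$ and all admissible $i$, the whole lemma reduces to substitution into \eqref{eulerapproxd}, \eqref{prandtlapprox1d} and \eqref{happrox}.
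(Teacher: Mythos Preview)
Your proposal is correct and follows essentially the same approach as the paper: both split $u^a-(\tilde\omega+\mathcal{O}(\epsilon\delta))r^{-1}$ into the Euler, $\chi u_p$, and $\epsilon^{22}h$ pieces for the first bound, and for the second bound both apply Leibniz to $\chi u_p$, convert $\partial_r^k$ to $\epsilon^{-k}\partial_\zeta^k$, and use the algebraic identity $\epsilon^{-k}=\epsilon^i\zeta^{i+k}/(r-1)^{i+k}$ (your displayed inequality is just the $\langle\zeta\rangle$-version of this) together with the fast decay of $u_p$ on $\mathrm{supp}\,\chi'$ to control the cross terms. The treatment of $v^a-v^a_e$ is likewise identical up to the extra factor of $\epsilon$ from \eqref{prandtlapprox1d}.
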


\begin{proof}
From \eqref{eulerapproxd} and \eqref{prandtlapprox1d} in Lemma \ref{lemdetail}, we see that
\begin{align}
\lt|u^a-(\t{\o}+O(\e\dl))r^{-1}\rt| \ls {\e\dl}{r^{-2}}+ \chi(r)|u_p|+\e^{22}|h|\ls {\dl}{r^{-2}}.
\end{align}
 This is the first one of \eqref{appdetail0}.

Also from  Lemma \ref{lemdetail}, we have
\begin{align}
\lt|\p^{j}_{\th}\p^k_r\lt(u^a-u^a_e\rt)\rt| \leq C_{j,k} |\p^{j}_{\th}\p^k_r(\chi(r)u_p)|+\e^{22}|\p^{j}_{\th}\p^k_r h|, \label{detailesti1}
\end{align}
From \eqref{prandtlapprox1d}, by using Leibniz formula, we see that
\begin{align}
|\p^{j}_{\th}\p^k_r(\chi(r)u_p)|\ls& C_{j,k} |\chi(r)\p^{j}_{\th}\p^k_ru_p|+C_{j,k} \sum^k_{\t{k}=1}| \chi^{(\t{k})}(r)\p^{k-\t{k}}_ru_p|\nn\\
\ls&  C_{j,k} \lt |\p^{j}_{\th} \zeta^{k}\f{1}{(r-1)^k}\p^k_\zeta u_p\rt|+C_{j,k} \sum^k_{\t{k}=1}|{\bf1}_{\text{spt}\chi'}\zeta^{k-\t{k}}\p^{k-\t{k}}_\zeta u_p|\nn\\
=& C_{j,k}  \lt|\chi(r) \f{\e^i}{(r-1)^{k+i}}\zeta^{i+k} \p^{j}_{\th}\p^k_\zeta u_p\rt|+C_{j,k} \sum^k_{\t{k}=1}\lt|{\bf1}_{\text{spt}\chi'}\zeta^{k-\t{k}}\p^{k-\t{k}}_\zeta u_p\rt|\nn\\
\ls & C_{j,k}\f{\e^i\dl}{(r-1)^{k+i}r^{10}}. \label{detailesti2}
\end{align}
Inserting \eqref{detailesti2} into \eqref{detailesti1}, we obtain the second one of \eqref{appdetail0}.

The estimate of \eqref{appdetail11} is the same as the second one of \eqref{appdetail0}, we omit the details.
\end{proof}

From Lemma \ref{lemdetail} and Lemma \ref{lemdetail1}, we have the following estimate for the variables-change approximate solution.
\begin{corollary} \label{vclemdetail} Let $s=\ln r$. Denote $\hat{u}^a(\th,s)=u^a(e^s, \th)=u^a(r, \th)$ and $\hat{v}^a(\th,s)=v^a(e^s, \th)=v^a(r, \th)$. Then we have

For $j,k\in \bN$, $i\in\bZ$ and $i+k\geq 0$, we have
\be\label{vcappdetail0}
\bali
& |\hat{u}^a(\th,s)-(\t{\o}+\mathcal{O}(\e\dl))e^{-s}|\ls \dl e^{-s}, \\
&\hat{u}^a_e=\t{\o}e^{-s}+ \mathcal{O}(\e\dl)\sum\limits_{n\in\bZ}\mathcal{A}_n e^{-(|n|+1)s} e^{in\th},\q \lt|\p^{j}_{\th}\p^k_s \lt(\hat{u}^a_e-(\t{\o}+\mathcal{O}(\e\dl) \mathcal{A}_0)e^{-s}\rt)\rt|\ls \e\dl e^{-2s},\\
& \lt|\p^{j}_{\th}\p^k_s \lt(\hat{u}^a(\th,s)-\hat{u}^a_e\rt)\rt|\ls \f{\e^{i}\dl}{s^{i+k}}e^{(k-10)s},\\
&\lt|\p^{j}_{\th}\p^k_s \lt(\hat{u}^a-(\t{\o}+\mathcal{O}(\e\dl) \mathcal{A}_0)e^{-s}\rt)\rt|\ls \e\dl e^{-2s}+\f{\e^{i}\dl}{s^{i+k}}e^{(k-10)s}.
\eali
\ee
and for $j,k\in \bN$, $i\in\bZ$ and $i+k\geq 0$,
\be\label{vcappdetail1}
\bali
&\hat{v}^a_e=\mathcal{O}(\e\dl)\sum\limits_{n\in(\bZ-\{0\})}\mathcal{B}_n e^{-(|n|+1)s} e^{in\th},\q \lt|\p^{j}_{\th}\p^k_s \hat{v}^a_e\rt|\ls \e\dl e^{-2s}\\
&\lt|\p^{j}_{\th}\p^k_s \lt(\hat{v}^a(\th,s)-\hat{v}^a_e(\th,s)\rt)\rt|\ls_{j,k}  \f{\e^{i}\e\dl}{s^{i+k}}e^{(k-10)s},\\
&\lt|\p^{j}_{\th}\p^k_s \hat{v}^a\rt|\ls_{j,k}   \e\dl e^{-2s}+ \f{\e^{i}\e\dl}{s^{i+k}}e^{(k-10)s}
\eali
\ee

\end{corollary}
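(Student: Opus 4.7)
The plan is to derive each bound by translating the $(r,\th)$-estimates of Lemma~\ref{lemdetail} and Lemma~\ref{lemdetail1} through the diffeomorphism $r=e^s$, $r\in[1,+\i)\leftrightarrow s\in[0,+\i)$. First I would establish the explicit series expansions: substituting $r=e^s$ into \eqref{eulerapproxd0} immediately yields
\begin{equation*}
\hat{u}^a_e=\t{\o}e^{-s}+\mathcal{O}(\e\dl)\sum_{n\in\bZ}\mathcal{A}_n e^{-(|n|+1)s}e^{in\th},\qquad \hat{v}^a_e=\mathcal{O}(\e\dl)\sum_{n\in(\bZ-\{0\})}\mathcal{B}_n e^{-(|n|+1)s}e^{in\th}.
\end{equation*}
Subtracting the $n=0$ term from $\hat{u}^a_e$ leaves only modes with $|n|\geq 1$, so every $\p_\th^{j}\p_s^{k}$ derivative of $\hat u^a_e-(\t{\o}+\mathcal{O}(\e\dl)\mathcal{A}_0)e^{-s}$ (resp.\ of $\hat v^a_e$) is a convergent series whose $n$-th summand is $\mathcal{O}(\e\dl)\,|n|^{j}(|n|+1)^{k}e^{-(|n|+1)s}$; summing over $|n|\geq 1$ yields the announced $\e\dl\, e^{-2s}$ control.

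Next I would handle the boundary-layer plus remainder contribution $\hat u^a-\hat u^a_e$ (and analogously for $v$). The key chain-rule identity is $\p_s=r\p_r$, which iterates into a finite sum $\p_s^{k}=\sum_{\ell=1}^{k}c_{k,\ell}\,r^{\ell}\p_r^{\ell}$ with Stirling-type coefficients, so
\begin{equation*}
|\p_\th^{j}\p_s^{k}(\hat u^a-\hat u^a_e)|\ls \sum_{\ell=1}^{k} r^{\ell}\,|\p_\th^{j}\p_r^{\ell}(u^a-u^a_e)|.
\end{equation*}
Applying Lemma~\ref{lemdetail1} with the shifted exponent $i':=i+k-\ell$ (admissible since the hypothesis $i+k\geq 0$ yields $i'+\ell\geq 0$) bounds the $\ell$-th summand by $\e^{i+k-\ell}\dl\,r^{\ell}(r-1)^{-(i+k)}r^{-10}$. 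On the support $r\in[1,3]$ of the boundary layer cutoff $\chi$, the elementary inequalities $s\leq r-1=e^s-1\leq e\,s$ give $r-1\asymp s$, while $r^{\ell}\asymp 1$ and $e^{(k-10)s}\asymp 1$; combined with $\e^{i+k-\ell}\leq\e^{i}$ (since $\ell\leq k$ and $\e\leq 1$) this delivers the target bound $\e^{i}\dl s^{-(i+k)}e^{(k-10)s}$. For $r\geq 3$ the difference reduces to $\e^{22}h$, and the super-fast decay $|\p_\th^{j}\p_r^{\ell}h|\ls r^{-100-\ell}$ from \eqref{happrox} makes the resulting estimate trivially smaller than the claimed one for any fixed $i,k$. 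The identical reasoning, now using the $\e$-improved bound \eqref{appdetail11} for $v^a-v^a_e$, produces the corresponding inequality in \eqref{vcappdetail1}.

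The combined estimates for $\hat u^a$ and $\hat v^a$ then follow from the triangle inequality by splitting
\begin{equation*}
\hat u^a-(\t{\o}+\mathcal{O}(\e\dl)\mathcal{A}_0)e^{-s}=\bigl(\hat u^a_e-(\t{\o}+\mathcal{O}(\e\dl)\mathcal{A}_0)e^{-s}\bigr)+(\hat u^a-\hat u^a_e),
\end{equation*}
and summing the two pieces; the $L^\i$ bound $|\hat u^a-(\t{\o}+\mathcal{O}(\e\dl))e^{-s}|\ls \dl e^{-s}$ is just the first inequality in \eqref{appdetail0} rewritten at $r=e^s$, using $e^{-2s}\leq e^{-s}$ for $s\geq 0$. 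The only delicate point is the equivalence $r-1\asymp s$ on $[1,3]$, which allows the singular factor $(r-1)^{-(i+k)}$ produced by Lemma~\ref{lemdetail1} to be rewritten as the singular factor $s^{-(i+k)}$; away from the boundary-layer support the boundary-layer piece vanishes and the remainder is controlled by the $\e^{22}$-smallness in \eqref{happrox}, so no further estimate is needed.
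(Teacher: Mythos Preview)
Your proposal is correct and follows exactly the approach the paper intends: the paper's own proof is simply ``The proof is just a direct consequence of Lemma~\ref{lemdetail} and Lemma~\ref{lemdetail1}. We omit the details,'' and you have supplied precisely those details---the substitution $r=e^s$ into the series \eqref{eulerapproxd0}, the chain-rule expansion $\p_s^{k}=\sum_{\ell}c_{k,\ell}r^{\ell}\p_r^{\ell}$, the shifted-exponent application of Lemma~\ref{lemdetail1}, and the equivalence $r-1\asymp s$ on the cutoff support. The only mild looseness (the $\e^{22}h$ contribution versus the $\dl$-factor in the target bound) is already present in the paper's Lemma~\ref{lemdetail1} itself and is harmless in all applications, so your argument matches the paper's both in method and in level of rigor.
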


\pf The proof is just a direct consequence of Lemma \ref{lemdetail} and Lemma \ref{lemdetail1}. We omit the details. \qed

From \eqref{appuvreminder}, by defining $(R^a_{\hat{u}},R^a_{\hat{v}}):=(R^a_{\hat{u}},R^a_{\hat{v}})(\th,e^s)$, we can obtain that
 \be\label{vcappuvreminder}
 \| \p^i_\th R_{\hat{u}}^a\|_{L^\i}+ \| \p^i_\th \p^j_r R_{\hat{v}}^a\|_{L^\i}\leq C_{i}\epsilon^{22}e^{-4s}.
 \ee
By  cancelling the pressure in \eqref{app equationd}, the approximate vorticity $\o^a:=\f{v^a_\th-(ru^a)_r}{r}$ satisfies
\be
(u^a \p_\th+ rv^a\p_r) \o^a -\e^2r\Dl \o^a=\f{1}{r}\p_\th R^a_v-\p_r R^a_u.
\ee
From our construction in Section \ref{secappro}, the approximate vorticity is decomposed as
\be
\o^a=\o^a_p+ \o^a_e, \text{ with } \o^a_e=\f{v^a_{e,\th}-(ru^a_e)_{,r}}{r}
\ee
where $\o^a_e$ is a constant by the harmonic properties of $(u^a_e,v^a_e)$.  From the representation of the approximate solution, $\o^a_p$ is smooth and supported in $\{1\leq r\leq 3\}$. So, we have
 \be\label{appomega}
R^a_{\o}:=\f{1}{r}\p_\th R^a_v-\p_r R^a_u=(u^a \p_\th+ rv^a\p_r) \o^a_p -\e^2r\Dl \o^a_p,
 \ee
is supported in $\{1\leq r\leq 3\}$. Combining this with \eqref{appuvreminder},  we have
\be\label{vcremainder1}
\lt| R^a_{\o}\rt|=\lt| \lt(\f{1}{r}\p_\th R^a_v-\p_r R^a_u\rt)(\th,r)\rt|\ls C\f{\e^{21}}{r^{100}}.
\ee
Set the error function by
\bes
u:=u^\epsilon-u^a,\ v:=v^\epsilon-v^a,\ p:=p^\epsilon-p^a,
\ees
Then there holds
\begin{align}\label{errorequ}
\left\{
\begin{array}{ll}
-\epsilon^2\left(\frac{u_{\theta \theta}}{r}+r u_{r r}+u_r+\frac{2}{r} v_\theta-\frac{u}{r}\right)+\p_\th p +S_u=R_u, &(\th,r)\in \bT\times[1,+\i),\\[5pt]
-\epsilon^2\left(\frac{v_{\theta \theta}}{r}+r v_{r r}+v_r-\frac{2}{r} u_\theta-\frac{v}{r}\right)+r\p_rp+S_v=R_v,&(\th,r)\in \bT\times[1,+\i),\\[5pt]
\p_\th u+\p_r(rv)=0,  &(\th,r)\in \bT\times[1,+\i),\\[5pt]
( u,v)(\th,r)=( u,v)(\th+2\pi,r), &(\th,r)\in \bT\times[1,+\i), \\[5pt]
u(\th,1)=v(\th,1)=u(\th,+\i)=v(\th,+\i)=0, &\th\in \bT,
 \end{array}
\right.
\end{align}
where
\be\label{susv}
\begin{aligned}
S_u:&=u^a \p_\th u +v^ar\p_ru+u \p_\th u^a+vr\p_ru^a+uv^a+vu^a,\\
S_v:&=u^a \p_\th v +v^ar\p_rv+u \p_\th v^a+v r\p_r v^a-2u u^a,
\end{aligned}
\ee
and the remainders
\bes
R_u:=R_u^a-(u\p_\th+v r\p_r)u-uv,\ R_v:=R_v^a-(u\p_\th+v r\p_r)v+u^2.
\ees

In order to give the solvability of system \eqref{errorequ} and its $r^{-2}$ asymptotic behavior at the spatial infinity, we will first derive a weighted $H^3$ energy estimates in the domain $\O$. In order to avoid the trouble caused by the pressure, it is more convenient to reformulate our system in the vorticity form and eliminate the pressure.

In polar coordinates, the vorticity $\o$ is defined by
\bes
\o=\f{v_\th-(ru)_r}{r}.
\ees
From the incompressibility and the fact $\int^{2\pi}_0 rvd\th=0$, there exists a $\th$-periodic stream function $\Phi$ such that
\bes
\Phi_r=u,\q \Phi_\th=-rv,
\ees
Actually we can take
\bes
\Phi(\th,r):=\int^r_{1} u(\th,\bar{r})d\bar{r},
\ees
which satisfies $\Phi(\th,1)=\p_\th(\th,1)=0$. Direct calculation implies that $-\Dl\Phi=\o$. Subtracting $\p_r$\eqref{errorequ}$_1$ from $\f{1}{r}\p_\th$\eqref{errorequ}$_2$, we can obtain that the vorticity $\o$ satisfies the following equation

\begin{align}
-\epsilon^2r\Dl\o+(u^a\p_\th+rv^a\p_r)\o+(u\p_\th+rv\p_r)\o^a=&\f{1}{r}\p_\th R^a_v-\p_rR^a_u-(u\p_\th+rv\p_r)\o\nn\\
                                                                     &:=R^a_{\o}+R_{\o}. \label{errorequcut2}
\end{align}

Define the zero frequency and non-zero frquency of a $\th$-periodic function $f(\th,r)$ by
 \bes
 f_0=\f{1}{2\pi}\int^{2\pi}_0 f(\th,r)d\th, \q f_{\neq}=f-f_0.
 \ees
In our a priori setting, $rv=o(1)$ as $r\rightarrow+\i$, then from the incompressibility and $v(\th,1)=0$,
\bes
\p_\th\Phi(\th,+\i)=\int^1_{+\i} u_\th(\th,r)dr=-\int^1_{+\i} (rv)_rdr=-v(\th,1)=0,
\ees
which indicates that $\Phi(\th,\i)=constant$. We have the following boundary condition for $\Phi$,
\bes
\Phi|_{r=1}=0,\q \Phi(\th,\i)=\text{constant}, \q \p_\th \Phi|_{r=1 \text{ and } +\i}=0,\q \p_r\Phi|_{r=1\text{ and }+\i}=0.
\ees
and noting that $\Phi_{\neq}(\th,\i)=\int^\i_{1} (u-u_0)dr=0$, we have the following boundary condition for $\Phi_{\neq}$,
\bes
\Phi_{\neq}|_{r=1 \text{ and } +\i}=0,\q \p_r\Phi_{\neq}|_{r=1 \text{ and } +\i}=u_{\neq}|_{r=1 \text{ and } +\i}=0,\q \p_\th\Phi_{\neq}|_{r=1 \text{ and } +\i}=0.
\ees

Since the optimal decay of the nonzero Fourier mode of approximate solution is $r^{-2}$, which comes from the Fourier 1-mode the approximate Euler solution. What can be expected to the decay of the error function  is order $r^{-2}$. In order to obtain this optimal decay order,  we use the following variable changes and new unknowns to make estimates in our later calculation, the linear structure of which is more convenient for us. Let $r=\ln s$. Define
\be
\hat{\Phi}(\th,s)=\Phi(\th,e^s)=\Phi(\th,r), \q (\hat{u},\hat{v})(\th,s)=(u,v)(\th,e^s)=(u,v)(\th,r),\q \hat{\o}(\th,s)=\o(\th,e^s)=\o(\th,r).
\ee
In new coordinates $(\th,s)$, we have
\be\label{vccoordinates}
r\p_r=\p_s,\, \hat{\Phi}_s=e^s \hat{u},\, \hat{\Phi}_\th=-e^s \hat{v},\, \Dl=e^{-2s}(\p^2_\th+\p^2_s):=e^{-2s}\Dl_s.
\ee
The system \eqref{errorequ}  is changed to
\begin{align}\label{vcerrorequuv}
\left\{
\begin{array}{ll}
-\epsilon^2e^{-s}\left(\Dl_s\hat{u}+2\hat{v}_\th-\hat{u}\right)+\hat{p}_\th +S_{\hat{u}}=R_{\hat{u}}, &(\th,s)\in \bT\times(0,+\i),\\[5pt]
-\epsilon^2e^{-s} \left(\Dl_s\hat{v}-2\hat{u}_\th-\hat{v}\right)+\hat{p}_s+S_{\hat{v}}=R_{\hat{v}},&(\th,s)\in \bT\times(0,+\i),\\[5pt]
\hat{u}_\th+\hat{v}_s+ \hat{v}=0,  &(\th,s)\in \bT\times(0,+\i),\\[5pt]
(\hat{u},\hat{v})(\th,s)=( \hat{u},\hat{v})(\th+2\pi,s), &(\th,s)\in \bT\times(0,+\i), \\[5pt]
\hat{u}(\th,0)=\hat{v}(\th,0)=\hat{u}(\th,+\i)=\hat{v}(\th,+\i)=0, &\th\in \bT,
 \end{array}
\right.
\end{align}
where
\be\label{vcsusv}
\begin{aligned}
S_{\hat{u}}:&=\hat{u}^a  \hat{u}_\th +\hat{v}^a \hat{u}_s+\hat{u} \hat{u}^a_\th+\hat{v}\hat{u}^a_s+\hat{u}\hat{v}^a+\hat{v}\hat{u}^a,\\
S_{\hat{v}}:&=\hat{u}^a  \hat{v}_\th +v^a\hat{v}_s+\hat{u} \hat{v}^a_\th+\hat{v} \hat{v}^a_s-2\hat{u} \hat{u}^a,
\end{aligned}
\ee
and the remainders
\bes
R_{\hat{u}}:=R_u^a(\th,e^s)-(\hat{u}\p_\th+\hat{v} \p_s)\hat{u}-\hat{u}\hat{v},\ R_{\hat{v}}:=R_v^a(\th,e^s)-(\hat{u}\p_\th+\hat{v} \p_s)\hat{v}+\hat{u}^2.
\ees
The boundary condition for $\hat{\Phi}$ is changed to
\be
\hat{\Phi}|_{s=0}=0,\q \hat{\Phi}(\th,\i)=\text{constant}, \q \p_\th \Phi|_{s=0 \text{ and } +\i}=0,\q \p_s\Phi|_{s=0\text{ and }+\i}=0.
\ee
By defining
\be
\hat{R}^a_\o={R}^a_{\o}(\th,e^s), \text{ and }  \hat{R}^a_\o=R^a_\o(\th,e^s):=-(\hat{u}\p_\th+\hat{v}\p_s)\hat{\o},
\ee then from \eqref{errorequcut2}, we have
\begin{align}
&\e^2 e^{-s}\lt(\Dl^2_s\hat{\Phi}-4\Dl_s\hat{\Phi}_s+4\Dl_s\hat{\Phi}\rt)\nn\\
&-\lt[(\hat{u}^a\p_\th+\hat{v}^a\p_s)\Dl_s \hat{\Phi}-2v^a\Dl_s\hat{\Phi}\rt]-\lt[(\hat{u}\p_\th+\hat{v}\p_s)\Dl_s \hat{\Phi}^a-2\hat{v}\Dl_s\hat{\Phi}^a\rt]\nn\\
&=e^{2s}\hat{R}^a_{\o}+e^{2s}\hat{R}_\o:=e^{2s}\hat{R}_{\text{comb}}. \label{vcerrorequ}
\end{align}
From \eqref{vcremainder1}, $\hat{R}^a_{\o}$ satisfies
\be\label{appomega}
\hat{R}^a_{\o}\ls C\e^{21} e^{-100s}.
\ee
The main estimate for the linear system \eqref{vcerrorequuv} or \eqref{vcerrorequ} is the following.
\begin{proposition}\label{proplinearstability}
 Let $(\hat{u},\hat{v})$ be a smooth solution of (\ref{vcerrorequuv}) and decay sufficiently fast at infinity. Then there exist $\epsilon_0>0, \dl_0>0$ such that for any $\epsilon\in (0,\epsilon_0), \dl\in(0,\dl_0)$, there hold

\noindent i), The main weighted derivatives' estimates:
\begin{align}\label{linearstability}
&\e^6\|(e^s\hat{\Phi}_{h,\th\th\th\th},e^s\hat{\Phi}_{h,\th\th\th s},e^s\hat{\Phi}_{h,\th\th ss},e^{0.5s}\hat{\Phi}_{1,\th\th\th\th},e^{0.5s}\hat{\Phi}_{1,\th\th\th s}, e^{0.5s}\hat{\Phi}_{1,\th\th ss},\hat{\Phi}_{0,ss}, \hat{\Phi}_{0,s} )\|^2_{L^2(\hat{\O})}\nn\\
\ls& \e^{-2}\|e^{4s}\hat{R}_{\text{comb}}\|^2_{L^2{(\hat{\O})}}+\e^{-2}\|e^{1.5s} R_{\hat{u}}\|^2_{L^2{(\hat{\O})}}.
\end{align}
\noindent ii) The third and fourth order normal derivatives' estimates:
\begin{align}
&\e^{16}\|  \Phi_{0,sss}, \Phi_{0,ssss}, e^{0.5s}\Phi_{\th sss}, e^{0.5s}\Phi_{\neq,ssss} \|^2\nn\\
\ls& \e^{6}\lt(\|\Phi_{0,s},\Phi_{0,ss}\|^2+\|e^{0.5s}(\hat{\Phi}_{\th\th\th\th},\hat{\Phi}_{\th\th\th s},\hat{\Phi}_{\th\th s s} )\|^2\rt)+\e^6\|e^{2.5s}(\p_\th R_{\hat{u}},\p_\th R_{\hat{v}}, R_{\hat{u}})\|^2_{L^2(\hat{\O})}\nn\\
  &+\e^{12}\|e^{3.5s}\hat{R}_{\text{comb}}\|^2_{L^2(\hat{\O})}.\label{linearstability1}
\end{align}
\noindent iii) The Refined decay estimates for the Fourier-one mode:
\begin{align}
&\e^{20}\| (e^s \Dl_s\hat{\Phi}_1)_{,s}\|^2_{L^2{\hat{\O}}}+ \e^{18}\| (e^s \Dl_s\hat{\Phi}_1)_{,\th}\|^2_{L^2{(\hat{\O})}}\nn\\
\ls & \e^{16}\sum^4_{i=0}\|\hat{\Phi}^{(k)}_{0}\|^2_{L^2{(\hat{\O})}}+\e^{16}\sum_{0\neq i+j\leq 4 \atop i\neq 0}\| e^{0.5s}\p^i_\th\p^j_s\Phi\|^2_{L^2{(\hat{\O})}}+\e^{18}\|e^{4s}\hat{R}_{\text{comb}}\|^2_{L^2{(\hat{\O})}}.\label{linearstability2}
\end{align}
\end{proposition}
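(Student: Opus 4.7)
The proof will follow closely the three-stage scheme sketched in the introduction: each conclusion is obtained by a distinct weighted energy estimate on the stream-function reformulation \eqref{vcerrorequ}, in which the pressure has already been eliminated. The guiding principle is to split $\hat\Phi$ into its zero Fourier mode $\hat\Phi_0$, its Fourier-one mode $\hat\Phi_1$, and the high-frequency tail $\hat\Phi_h=\sum_{k\ge 2}\hat\Phi_k$, and to perform weighted $L^2$ estimates with weights $1$, $e^{s/2}$, $e^s$ respectively, chosen to be compatible with the Euler decay rates displayed in \eqref{eulerapproxd0}. Two features are essential: the linear transport coefficient $\hat u^a\approx \t{\o}\,e^{-s}$ is strictly positive of known size by Corollary \ref{vclemdetail}, which supplies positivity of the dominant $\theta$-derivative quadratic form, and the non-leading part of $(\hat u^a,\hat v^a)$ carries an extra factor $\epsilon\delta$, which allows cross terms to be absorbed when $\epsilon,\delta$ are small.

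For item (i) I would test \eqref{vcerrorequ} against $e^{3s}\hat\Phi_{h,\theta}+e^{2s}\hat\Phi_{1,\theta}$. After integration by parts the principal transport contribution $\hat u^a\Delta_s\hat\Phi_\theta$ produces the positive $\theta$-energy appearing on the left of \eqref{linear1}, while the biharmonic $\epsilon^2 e^{-s}\Delta_s^2\hat\Phi$ generates a right-hand side proportional to $\epsilon^2\delta$ times normal derivatives of $\hat\Phi$. The latter is closed by two auxiliary inequalities: testing the zero-mode ODE \eqref{vcu0formulax} against $-e^s\hat\Phi_{0,s}+e^s\hat\Phi_0$ gives \eqref{linearx1}, bounding $\epsilon^2\|(\hat\Phi_{0,s},\hat\Phi_{0,ss})\|^2$ by the nonzero-mode $\theta$-energy, while testing \eqref{vcerrorequ} directly against $e^{3s}\hat\Phi_h+e^{2s}\hat\Phi_1$ gives \eqref{linearx2}, controlling $\epsilon^2\|(e^s\hat\Phi_{h,ss},e^{0.5s}\hat\Phi_{1,ss})\|^2$ by the same quantities. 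Adding the three inequalities and absorbing cross terms with small $\epsilon,\delta$ yields \eqref{linearstability}.

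For item (ii) I iterate with two extra $\theta$-derivatives: I would test \eqref{vcerrorequ} by $-\epsilon^2(e^{3s}\hat\Phi_{h,\theta\theta}+e^{2s}\hat\Phi_{1,\theta\theta})$ and then by $\epsilon^4(e^{3s}\hat\Phi_{h,\theta\theta\theta\theta}+e^{2s}\hat\Phi_{1,\theta\theta\theta\theta})$, reproducing \eqref{linearx4} and \eqref{linearx4x}. The purely normal derivatives $\hat\Phi_{0,sss}$, $\hat\Phi_{0,ssss}$, and $\hat\Phi_{\neq,ssss}$ do not appear from these tests; for them I return to the primitive velocity system \eqref{vcerrorequuv} and read off $\hat u_{,ss}$ from \eqref{vcerrorequuv}$_1$. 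This requires a weighted $L^2$ bound on $\hat p_\theta$, obtained by a Bogovski-type lift performed on the two overlapping strips $\{0\le s\le 1\}$ and $\{s\ge 1\}$ (on the latter the rotational symmetry makes the standard construction available), glued by a partition of unity; this supplies \eqref{linearx5}. Re-reading $\Delta_s^2\hat\Phi$ from \eqref{vcerrorequ} then controls $\hat\Phi_{\neq,ssss}$ in terms of already-estimated lower-order quantities and completes \eqref{linearstability1}.

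For item (iii), estimates \eqref{linearstability} and \eqref{linearstability1} only produce the weight $e^{s/2}$ on the Fourier-one mode, whereas the sharp $r^{-2}$ decay of $(\hat u_1,\hat v_1)$ claimed in Theorem \ref{thmain} requires weight $e^s$ on $\Delta_s\hat\Phi_1$. To achieve this, I project \eqref{vcerrorequ} onto Fourier mode $k=1$, regard the result as a second-order equation for $\Delta_s\hat\Phi_1$ in $s$, and test against $-e^{3s}((\Delta_s\hat\Phi_1)_\theta+\Delta_s\hat\Phi_1)$; the positivity of $\hat u^a$ recovers the tangential energy $\|(\Delta_s\hat\Phi_1)_\theta\|^2$ with the correct weight, and the normal part $\|(\Delta_s\hat\Phi_1)_s\|^2$ is produced by the diffusive term, yielding \eqref{linearx6}. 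The main obstacle throughout is the narrow margin on the Fourier-one mode: its weight $e^{s/2}$ is barely integrable against $e^{-s}$, and every cross-mode coupling must carry the smallness factor $\epsilon\delta$ inherited from the non-leading part of $\hat u^a$ and from $\hat v^a$ via Corollary \ref{vclemdetail}, otherwise the closure fails. Managing this bookkeeping across the three mode regimes and four orders of derivatives, together with the Bogovski pressure lift in item (ii), is where the main technical work lies.
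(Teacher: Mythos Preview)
Your overall scheme matches the paper's almost exactly: the multipliers $e^{3s}\hat\Phi_{h,\theta}+e^{2s}\hat\Phi_{1,\theta}$, $e^{3s}\hat\Phi_h+e^{2s}\hat\Phi_1$, $-e^s\hat\Phi_{0,s}+e^s\hat\Phi_0$ for the zero-mode ODE, the $\epsilon^2$- and $\epsilon^4$-weighted tangential iterates, and $-e^{3s}(\Delta_s\hat\Phi_{1,\theta}+\Delta_s\hat\Phi_1)$ for the refined Fourier-one estimate are precisely the ones used. Two points deserve correction.

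First, an organizational slip: the three $H^2$-level tests you list under item (i) give only \eqref{linearx3}, i.e.\ control of second-order derivatives of $\hat\Phi$. Since \eqref{linearstability} itself is a fourth-order statement (it contains $e^s\hat\Phi_{h,\theta\theta\theta\theta}$, $e^{0.5s}\hat\Phi_{1,\theta\theta ss}$, etc.), the $\epsilon^2$- and $\epsilon^4$-multiplier tests you describe under item (ii) are in fact needed to close item (i), not item (ii). Item (ii) concerns only the purely normal derivatives $\hat\Phi_{0,sss},\hat\Phi_{0,ssss},\hat\Phi_{\neq,ssss}$ and $\hat\Phi_{\theta sss}$, which come from Bogovski plus reading the equation; the higher tangential tests play no role there.

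Second, the Bogovski step as you describe it has a gap. A lift on a single unbounded strip $\{s\ge 1\}$ does not give what is needed: Bogovski is a bounded-domain tool, and even granting some construction on the half-strip, it would not by itself produce the \emph{weighted} bound $\|e^{2.5s}\hat p_\theta\|_{L^2}$ that is required to control $e^{0.5s}\hat\Phi_{\theta sss}$. The paper instead cuts into unit strips $O_k=\mathbb T\times[k,k+1]$, applies Bogovski on each with a constant independent of $k$ (the aspect ratio is fixed), obtains $\|\hat p_\theta\|_{L^2(O_k)}^2\lesssim \epsilon^4 e^{-4k}\|(\hat\Phi_{\theta\theta\theta},\hat\Phi_{\theta\theta s},\hat\Phi_{\theta ss})\|_{L^2(O_k)}^2+\ldots$, multiplies by $e^{5k}\sim e^{5s}$, and sums over $k$. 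This dyadic-in-$s$ cutting-gluing is what manufactures the exponential weight, and your two-piece decomposition does not.
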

  \qed

Proof of Proposition \ref{proplinearstability} contains three parts: the first one is the main linear estimate in \eqref{linearstability}, which will be presented in Section \ref{subsec3.1} and Section \ref{subsec3.2.1}. Actually, the linear estimate \eqref{linearstability} is enough for closing the nonlinear system and obtain an existence result for the nonlinear system \eqref{vcerrorequuv}. However, From this a prior estimate, we can only obtain that the high frequency Fourier mode of the solution decay as $e^{-2s}$ as $s\rightarrow+\i$, i.e. $|(\hat{u}_{h},\hat{v}_h)|\ls e^{-2s}$ by Sobolev embedding, which correspond to as $r^{-2}$ decay for $({u}_{h},{v}_h)$. The Fourier-one mode only have $e^{-1.5s}$ order decay. In order to obtain the optimal $e^{-2s}$ decay for $\hat{u}_{1},\hat{v}_1$. A refined weighted estimate \eqref{linearstability2} is performed in Section \ref{subsec3.3}. Unfortunately, such refined estimate will produce some slowly decayed term up to the fourth order derivatives' of the stream function $\hat{\Phi}$. In order to control these slowly decayed term, we need some weighted estimates of the stream function $\hat{\Phi}$ up to fourth derivative, which is the stability \eqref{linearstability1}, which is realized in Section \ref{subsec3.2.2} and Section \ref{subsec3.2.3}.

\subsection{Proof of Theorem \ref{thmain}}
\indent

For system \ref{errorequ}, we have the following proposition.
\begin{proposition}\label{propexistenceuv}
There exist $\epsilon_0>0,\dl_0>0$ such that for any $\epsilon\in (0,\epsilon_0), \dl\in (0,\dl_0)$, the error equations (\ref{errorequ}) have a unique solution $(\hat{u},\hat{v})$ which satisfies
\be\label{errorestix}
\lt\|\lt(u-{u}_0,{v}\rt)\rt\|_\infty\leq C\e^{4}r^{-2},
\ee
where $u_0=\fint u(\th,r)d\th$ is the Fourier-zero mode of $u$, satisfying $|u_0|\leq C\e^4 r^{-1}$.
\end{proposition}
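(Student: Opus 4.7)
The plan is to realize $(\hat u,\hat v)$ as the unique fixed point of an iteration map built on the linear system underlying \eqref{vcerrorequuv}, with contraction driven by the three weighted estimates of Proposition \ref{proplinearstability}. Throughout I would work in $(\th,s)$ with $s=\ln r$, passing to the stream function via $\hat\Phi_s=e^s\hat u$, $\hat\Phi_\th=-e^s\hat v$. Introduce a Banach space $X$ whose norm $\|(\hat u,\hat v)\|_X$ is the square root of the sum of all the quantities on the left-hand sides of \eqref{linearstability}, \eqref{linearstability1} and \eqref{linearstability2}, so that Proposition \ref{proplinearstability} reads schematically
\[
\|(\hat u,\hat v)\|_X\;\ls\;\e^{-1}\|e^{4s}\hat R_{\mathrm{comb}}\|_{L^2}+\e^{-1}\|e^{1.5s}R_{\hat u}\|_{L^2}+\e^{3}\|e^{2.5s}(R_{\hat u},\p_\th R_{\hat u},\p_\th R_{\hat v})\|_{L^2}+\cdots.
\]
For $(\tilde u,\tilde v)\in B_\eta\subset X$, define $T(\tilde u,\tilde v)$ to be the unique solution of \eqref{vcerrorequuv} in which the forcings are
\[
R_{\hat u}=R_u^a-(\tilde u\p_\th+\tilde v\p_s)\tilde u-\tilde u\tilde v,\qquad R_{\hat v}=R_v^a-(\tilde u\p_\th+\tilde v\p_s)\tilde v+\tilde u^2;
\]
the existence of this solution and the a priori estimate come directly from Proposition \ref{proplinearstability}.

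The second step is to show that $T$ sends a small ball into itself and is a contraction. The approximation remainders contribute at most $\e^{22}$ to every weighted norm on the right, because of \eqref{vcappuvreminder} and the pointwise bound $|\hat R^a_\o|\ls \e^{21}e^{-100s}$ from \eqref{appomega}. For the quadratic part I would use Sobolev embedding in the two-dimensional strip $\bT\times[0,+\i)$ applied to the information encoded in $\|\cdot\|_X$: the fourth-order high-mode piece of \eqref{linearstability}--\eqref{linearstability1} controls $\|e^s\hat\Phi_h\|_{L^\i}$, the refined estimate \eqref{linearstability2} controls $\|e^s\hat\Phi_1\|_{L^\i}$ and hence delivers the optimal $e^{-2s}$ decay for mode one, while the zero-mode $H^2_s$ piece of \eqref{linearstability} produces $|\hat u_0|\ls e^{-s}$ together with analogous bounds on derivatives. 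These pointwise bounds translate into
\[
\|e^{1.5s}R_{\hat u}^{\mathrm{NL}}\|_{L^2}+\|e^{2.5s}(R_{\hat u}^{\mathrm{NL}},\p_\th R_{\hat u}^{\mathrm{NL}},\p_\th R_{\hat v}^{\mathrm{NL}})\|_{L^2}+\|e^{4s}\hat R_{\mathrm{comb}}^{\mathrm{NL}}\|_{L^2}\;\ls\;\|(\tilde u,\tilde v)\|_X^{\,2},
\]
so $\|T(\tilde u,\tilde v)\|_X\ls \e^{22}+\eta^2$. Choosing $\eta\approx\e^4$ closes the ball, and a parallel bilinear estimate on $T(\tilde u_1,\tilde v_1)-T(\tilde u_2,\tilde v_2)$ yields Lipschitz constant $\ls\eta\ll 1$, so the Banach fixed point theorem produces a unique $(\hat u,\hat v)\in B_\eta$.

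Finally, the pointwise decay \eqref{errorestix} is read off the fixed-point bound $\|(\hat u,\hat v)\|_X\ls \e^4$: Sobolev embedding combined with \eqref{linearstability2} gives $|\hat u_1|+|\hat v_1|\ls \e^4 e^{-2s}$, the high-mode pieces of \eqref{linearstability} and \eqref{linearstability1} give $|\hat u_h|+|\hat v_h|\ls \e^4 e^{-2s}$, and averaging $\eqref{vcerrorequuv}_1$ over $\th$ yields an elementary ODE for $\hat u_0$ whose solution satisfies $|\hat u_0|\ls \e^4 e^{-s}$. Since $r=e^s$, these translate exactly into $|u-u_0|+|v|\ls \e^4 r^{-2}$ and $|u_0|\ls \e^4 r^{-1}$. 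The main obstacle that I would expect is the closing-the-ball step: the zero mode decays only like $e^{-s}$ while all other modes decay like $e^{-2s}$, so the worst quadratic cross terms $\hat u_0 \hat v$ and $\hat u_0\p_\th\hat u_\neq$ decay only like $e^{-3s}$ and must be paired against the heavy weights up to $e^{4s}$ appearing in the linear estimates; controlling these borderline interactions forces one to exploit the $L^2_s$ character of the norm together with the smallness $\eta=\e^4$ of the ball radius. This is the same phenomenon that forced the authors in Proposition \ref{proplinearstability} to perform two successive weighted estimates (first $e^{0.5s}$, then the refined $e^{s}$) for the mode-one part of $\hat\Phi$.
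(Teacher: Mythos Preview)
Your proposal is correct and follows essentially the same route as the paper: a contraction mapping in an energy space assembled from all three estimates of Proposition \ref{proplinearstability}, with the pointwise decay extracted mode by mode via Sobolev-type embeddings (the refined estimate \eqref{linearstability2} being exactly what delivers the sharp $e^{-2s}$ decay of mode one). The only discrepancy is bookkeeping of $\epsilon$-powers---because the embeddings cost up to $\epsilon^{-20}$ and the quadratic estimates up to $\epsilon^{-36}$, the paper closes the ball at $\|\hat\Phi\|_E^2\le\epsilon^{39}$ rather than your schematic $\eta\approx\epsilon^4$---but the architecture is identical.
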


We first apply the contraction mapping theorem to prove the existence and decay estimate for the error equations (\ref{vcerrorequuv}). Then by changing the variables back from $(\th,s)$ to $(\th,r)$, we can give the the existence and decay estimate for the error equations (\ref{errorequ}). The result in Proposition \ref{propexistenceuv} is a direct consequence of the following Proposition.

\begin{proposition}\label{vcpropexistence}
There exist $\epsilon_0>0,\dl_0>0$ such that for any $\epsilon\in (0,\epsilon_0), \dl\in (0,\dl_0)$, the error equations (\ref{vcerrorequuv}) have a unique solution $(\hat{u},\hat{v})$ which satisfies
\be\label{vcerrorestix}
\lt\|\lt(\hat{u}-\hat{u}_0,\hat{v}\rt)\rt\|_\infty\leq C\e^{4} e^{-2s},
\ee
where $\hat{u}_0=\fint \hat{u}(\th,s)d\th$ is the Fourier-zero mode of $\hat{u}$, satisfying $|\hat{u}_0|\leq C\e^4 e^{-s}$.
\end{proposition}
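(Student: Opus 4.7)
My plan is a contraction-mapping argument: I would set up a weighted Banach space $X$ adapted to the left-hand sides of the three a priori estimates in Proposition \ref{proplinearstability}, and realize $(\hat{u},\hat{v})$ as a fixed point of the operator that, given an input field, solves the \emph{linear} version of \eqref{vcerrorequuv}--\eqref{vcerrorequ} with the nonlinear pieces of the remainders $R_{\hat{u}}$, $R_{\hat{v}}$, $\hat{R}_{\text{comb}}$ frozen at that input.

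Concretely, I would let $X$ consist of stream functions $\hat{\Phi}$ satisfying $\hat{\Phi}|_{s=0}=\hat{\Phi}_s|_{s=0}=\hat{\Phi}_\th|_{s=0}=0$ with finite combined norm
\begin{equation*}
\|\hat{\Phi}\|_X := \e^3\|A_1(\hat{\Phi})\|_{L^2(\hat{\O})} + \e^{8}\|A_2(\hat{\Phi})\|_{L^2(\hat{\O})} + \e^{10}\|A_3(\hat{\Phi})\|_{L^2(\hat{\O})},
\end{equation*}
where $A_1,A_2,A_3$ collect exactly the quantities appearing on the left-hand sides of \eqref{linearstability}, \eqref{linearstability1}, \eqref{linearstability2} respectively, and I would take as the fixed-point ball $\mathcal{B}_M:=\{\hat{\Phi}\in X: \|\hat{\Phi}\|_X\le M\e^4\}$ for a constant $M$ to be tuned. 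The map $\mathcal{T}:\mathcal{B}_M\to X$ sends $\t{\Phi}\mapsto \hat{\Phi}$, where $(\t{u},\t{v})$ is recovered from $\t{\Phi}$ via \eqref{vccoordinates}, the bilinear pieces $-(\t{u}\p_\th+\t{v}\p_s)\t{u}-\t{u}\t{v}$, $-(\t{u}\p_\th+\t{v}\p_s)\t{v}+\t{u}^2$, $-(\t{u}\p_\th+\t{v}\p_s)\t{\o}$ are inserted into the right-hand sides, and $\hat{\Phi}$ solves the resulting linear problem (whose solvability is standard via Galerkin or elliptic regularization, the uniform bound coming from Proposition \ref{proplinearstability}).

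To check that $\mathcal{T}$ sends $\mathcal{B}_M$ into itself and is a contraction, I would first convert the $X$-norm control into pointwise decay via Sobolev embedding in $(\th,s)$, obtaining $|\t{u}_{\neq}|+|\t{v}|\ls \|\t{\Phi}\|_X\,\e^{-3}e^{-2s}$ and $|\t{u}_0|\ls \|\t{\Phi}\|_X\,\e^{-3}e^{-s}$, together with companion bounds on a few derivatives; the refined Fourier-one estimate \eqref{linearstability2} is precisely what upgrades the first-mode decay from $e^{-1.5s}$ to the sharp $e^{-2s}$. Bilinear bounds on the nonlinear remainders (split as $L^\infty\times L^2$, with cross terms against $\hat{u}^a,\hat{v}^a,\hat{\o}^a$ absorbed using Corollary \ref{vclemdetail} and the bounds \eqref{vcappdetail0}, \eqref{vcappdetail1}) then produce $\|\mathcal{T}(\t{\Phi})\|_X\ls M^2\e^5 + \e^{7}$, which is inside $\mathcal{B}_M$ for $M$ large and $\e$ small. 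The same bookkeeping applied to $\mathcal{T}(\t{\Phi}_1)-\mathcal{T}(\t{\Phi}_2)$ delivers a contraction constant of order $M\e$, so a unique fixed point exists; reading off the Sobolev embedding at that fixed point yields \eqref{vcerrorestix} together with the companion bound $|\hat{u}_0|\ls \e^4 e^{-s}$.

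The main obstacle I anticipate is the heterogeneous weight structure built into Proposition \ref{proplinearstability}: the Fourier-one mode is controlled only with the weaker weight $e^{0.5s}$, while nonlinear products of nonzero modes can again generate a Fourier-one contribution. Organising the product estimates mode-by-mode so that such cross contributions remain compatible with the $e^{0.5s}$-weighted $L^2$ norms (rather than demanding the unavailable $e^{s}$ weight) is where the care lies. Once this mode-wise accounting is in place, each individual bilinear bound reduces to a routine Cauchy--Schwarz/Hardy argument, and the two small parameters $\e$ and $\dl$ comfortably absorb the accumulated constants.
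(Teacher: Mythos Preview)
Your proposal follows essentially the same route as the paper: a contraction-mapping argument in a weighted energy space built from the three linear stability estimates of Proposition~\ref{proplinearstability}, with Sobolev embedding converting the energy control into the pointwise $e^{-2s}$ decay (the refined Fourier-one piece \eqref{linearstability2} being precisely what sharpens the one-mode from $e^{-1.5s}$ to $e^{-2s}$), and bilinear $L^\infty\times L^2$ bounds closing the fixed point. The only discrepancy is quantitative bookkeeping: your $\e$-losses are too optimistic. The refined one-mode estimate carries weights $\e^{20}$ and $\e^{18}$, so the embedding for $\hat u_1,\hat v_1$ costs roughly $\e^{-10}$, not $\e^{-3}$; correspondingly the paper works in a ball $\{\|\hat\Phi\|_E^2\le \e^{39}\}$ and obtains the nonlinear bound $\|\hat\Phi\|_E^2\lesssim \e^{40}+\e^{-38}\|\tilde{\hat\Phi}\|_E^4$ rather than your $M^2\e^5+\e^7$. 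This does not affect the strategy---the approximate-solution remainder is $O(\e^{22})$, leaving ample room---but your stated powers would not survive a line-by-line check.
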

{\bf\noindent Proof of Proposition \ref{vcpropexistence}}
\begin{proof}
Define a smooth function class $\mathcal{C}:=\{(\t{\hat{u}},\t{\hat{v}})\in \mathcal{C}\}$, where $(\t{\hat{u}},\t{\hat{v}})$ satisfies
\begin{align}\label{iterative conditon}
\left\{
\begin{array}{ll}
 \t{\hat{u}}_\th+\t{\hat{v}}_s+ \t{\hat{v}}=0,\\[5pt]
(\t{\hat{u}},\t{\hat{v}})(\th,s)=(\t{\hat{u}},\t{\hat{v}})(\th+2\pi,s),\\[5pt]
(\t{\hat{u}},\t{\hat{v}})(\th,0)=(\t{\hat{u}},\t{\hat{v}})(\th,+\i)=0.
\end{array}
\right.
\end{align}
From \eqref{iterative conditon}$_1$, by defining the stream function $\t{\hat{\Phi}}:=\int^s_0 e^{\t{s}} \t{\hat{u}}(\th,\t{s})d\t{s}$, we have
\be
\t{\hat{\Phi}}_s=e^s\t{\hat{u}},\q \t{\hat{\Phi}}_\th=-e^s\t{\hat{v}}.
\ee
Then we consider the following linear problem:
\begin{align*}
\left\{
\begin{array}{ll}
-\epsilon^2e^{-s}\left(\Dl_s\hat{u}+2\hat{v}_\th-\hat{u}\right)+\hat{p}_\th +S_{\hat{u}}=R_{\t{\hat{u}}}, &(\th,s)\in \bT\times(0,+\i),\\[5pt]
-\epsilon^2e^{-s} \left(\Dl_s\hat{v}-2\hat{u}_\th-\hat{v}\right)+\hat{p}_s+S_{\hat{v}}=R_{\t{\hat{v}}},&(\th,s)\in \bT\times(0,+\i),\\[5pt]
\hat{u}_\th+\hat{v}_s+ \hat{v}=0,  &(\th,s)\in \bT\times(0,+\i),\\[5pt]
(\hat{u},\hat{v})(\th,s)=( \hat{u},\hat{v})(\th+2\pi,s), &(\th,s)\in \bT\times(0,+\i), \\[5pt]
\hat{u}(\th,0)=\hat{v}(\th,0)=\hat{u}(\th,+\i)=\hat{v}(\th,+\i)=0, &\th\in \bT,
 \end{array}
\right.
\end{align*}
where
\bes
\begin{aligned}
S_{\hat{u}}:&=\hat{u}^a  \hat{u}_\th +\hat{v}^a \hat{u}_s+\hat{u} \hat{u}^a_\th+\hat{v}\hat{u}^a_s+\hat{u}\hat{v}^a+\hat{v}\hat{u}^a,\\
S_{\hat{v}}:&=\hat{u}^a  \hat{v}_\th +v^a\hat{v}_s+\hat{u} \hat{v}^a_\th+\hat{v} \hat{v}^a_s-2\hat{u} \hat{u}^a,
\end{aligned}
\ees
and the remainders
\bes
R_{\t{\hat{u}}}:=R_u^a(\th,e^s)-(\t{\hat{u}}\p_\th+\t{\hat{v}} \p_s)\t{\hat{u}}-\t{\hat{u}}\t{\hat{v}},\ R_{\t{\hat{v}}}:=R_v^a(\th,e^s)-(\t{\hat{u}}\p_\th+\t{\hat{v}} \p_s)\t{\hat{v}}+\t{\hat{u}}^2.
\ees

From the a priori linear stability estimates in \eqref{linearstability}, \eqref{linearstability1} and \eqref{linearstability2}, we define the following energy space for $\hat{\Phi}:=\int^s_0 e^{\t{s}}\hat{u}(\th,\t{s})d\t{s}$:
\begin{align}
\|\hat{\Phi}\|^2_{E}:=&\e^{20} \| (e^{s} \Dl_s\hat{\Phi}_1)_{,s}\|^2+\e^{18}\| (e^{s} \Dl_s\hat{\Phi}_1)_{,\th}\|^2+\e^{16}\|  \Phi_{0,sss}, \Phi_{0,ssss}, e^{0.5s}\Phi_{\th sss}, e^{0.5s}\Phi_{\neq,ssss} \|^2\nn\\
                                         +& \e^6\|(e^s\hat{\Phi}_{h,\th\th\th\th},e^s\hat{\Phi}_{h,\th\th\th s},e^s\hat{\Phi}_{h,\th\th ss},e^{0.5s}\hat{\Phi}_{1,\th\th\th\th},e^{0.5s}\hat{\Phi}_{1,\th\th\th s}, e^{0.5s}\hat{\Phi}_{1,\th\th ss},\hat{\Phi}_{0,ss}, \hat{\Phi}_{0,s} )\|^2.
\end{align}
According the definition of the energy space $\|\hat{\Phi}\|^2_{E}$ and Sobolev embedding, we have the following $L^\i$ estimates.

{\bf\noindent $L^\i$ of the Fourier-zero mode} Direct by Sobolev embedding, we have
\begin{align}
&\|e^s\hat{u}_0\|_{L^\i}=\|\hat{\Phi}_{0,s}\|_{L^\i}\ls \|\hat{\Phi}_{0,s}\|_{L^2}+\|\hat{\Phi}_{0,ss}\|_{L^\i}\ls \e^{-6}\|\hat{\Phi}\|^2_{E}. \label{0modeinfinity}
\end{align}
{\bf\noindent $L^\i$ of the Fourier-one mode} By direct calculation, we can see that
\be\label{1moderelation}
(e^{2s}v_{1,s})_{,s}=-e^{s}(\Dl_s\hat{\Phi}_1)_{,\th},\q (e^{2s}u_{1,s})_{,s}=e^s\Dl_s\hat{\Phi}_{1,s}=(e^s\Dl_s\hat{\Phi}_1)_{,s}-e^s\Dl_s\hat{\Phi}_1.
\ee
From the definition of Fourier-one mode, by integration by parts and H\"{o}lder inequality, we see that
\begin{align}
&|e^{2s}A^{\hat{v}}_1(s)|= e^{2s} \lt|\int^\i_s \lt(A^{\hat{v}}_1\rt)_sd\t{s}\rt|=\lt|e^{2s}\int^\i_s e^{-2\t{s}}e^{2\t{s}}\lt(A^{\hat{v}}_1\rt)_s d\t{s}\rt|\nn\\
            =&\lt|-e^{2s}\f{1}{2}\int^\i_s e^{2\t{s}}\lt(A^{\hat{v}}_1\rt)_sde^{-2\t{s}}\rt|=\lt|e^{2s}\f{1}{2}\int^\i_s \lt(e^{2\t{s}}\lt(A^{\hat{v}}_1\rt)_s\rt)_{,s} e^{-2\t{s}}ds\rt|\nn\\
            \ls& \lt(\int^\i_s \lt|\lt(e^{2\t{s}}\lt(A^{\hat{v}}_1\rt)_s\rt)_{,s}\rt|^2d\t{s}\rt)^{1/2} \ls \lt\|(e^{2{s}}\hat{v}_{1,{s}})_{,s}\rt\|_{L^2(\hat{\O})} =\lt\|e^{s}(\Dl_s\hat{\Phi}_1)_{,\th}\rt\|_{L^2(\hat{\O})}\ls \e^{-18}\|\hat{\Phi}\|^2_{E}. \label{1modeinfinity}
\end{align}
The same, we can obtain that $|e^{2s}B^{\hat{v}}_1(s)|\ls \e^{-18}\|\hat{\Phi}\|^2_{E}$. Then we obtain that
\be
|e^{2s}\hat{v}_1(\th, s)|\ls \e^{-18}\|\hat{\Phi}\|^2_{E}.  \label{1modeinfinity1}
\ee
Then, the same as \eqref{1modeinfinity}, using \eqref{1moderelation} and Ponicar\'{e} inequality in $\th$ variable, we can obtain that

\begin{align}
 |e^{2s}A^{\hat{u}}_1(s)|\ls& \lt\|\lt(e^{2s}\hat{u}_{1,s}\rt)_{,s}\rt\|_{L^2(\hat{\O})}\ls \lt\|(e^s\Dl_s\hat{\Phi}_1)_{,s}\rt\|_{L^2(\hat{\O})}+\lt\|e^s\Dl_s\hat{\Phi}_1\rt\|_{L^2(\hat{\O})}\nn\\
 \ls& \lt\|(e^s\Dl_s\hat{\Phi}_1)_{,s}\rt\|_{L^2(\hat{\O})}+\lt\|(e^s\Dl_s\hat{\Phi}_1)_{,\th}\rt\|_{L^2(\hat{\O})}\ls \e^{-20}\|\hat{\Phi}\|^2_{E}.
\end{align}
The same, we can obtain that $|e^{2s}B^{\hat{u}}_1(s)|\ls \e^{-20}\|\hat{\Phi}\|^2_{E}$. The above two indicate that
\begin{align}
|e^{2s}\hat{u}_1|\ls \e^{-20}\|\hat{\Phi}\|^2_{E}.\label{1modeinfinity2}
\end{align}
{\bf\noindent $L^\i$ of the high frequency Fourier-zero mode} Direct Sobolev embedding, we see that
\begin{align}
&\|e^{2s} \hat{u}_{h}\|_{L^\i}=\|e^s \hat{\Phi}_{h,s}\|_{L^\i}\ls \|e^s (\hat{\Phi}_{h,\th s},\hat{\Phi}_{h,s s},\hat{\Phi}_{h,\th ss})\|_{L^2}\ls  \e^{-6}\|\hat{\Phi}\|^2_{E}\nn\\
&\|e^{2s} \hat{v}_{h}\|_{L^\i}=\|e^s \hat{\Phi}_{h,\th}\|_{L^\i}\ls \|e^s (\hat{\Phi}_{h,\th \th},\hat{\Phi}_{h,\th s},\hat{\Phi}_{h,\th \th s})\|_{L^2}\ls  \e^{-6}\|\hat{\Phi}\|^2_{E}.\label{hmodeinfinity}
\end{align}

By the linear stability estimate in Proposition \ref{proplinearstability}, we deduce that there exist $\epsilon_0>0, \dl_0>0$ such that for any $\epsilon\in (0,\epsilon_0), \dl\in(0,\dl_0)$, there holds
\begin{align}
&\|\hat{\Phi}\|^2_{E}\leq  \e^{-2}\|e^{4s}\t{\hat{R}}_{\text{comb}}\|^2+\e^{-2}\|e^{2.5s}(\p_\th R_{\t{\hat{u}}},\p_\th R_{\t{\hat{v}}},R_{\t{\hat{u}}})\|^2_{L^2(\hat{\O})},\label{energyfinal0}
\end{align}
where
\begin{align}
\t{\hat{R}}_{\text{comb}}=&\hat{R}^a_{\o}-(\t{\hat{u}}\p_\th+\t{\hat{v}}\p_s)\t{\o}=\hat{R}^a_{\o}+(\t{\hat{u}}\p_\th+\t{\hat{v}}\p_s)(e^{-2s}\Dl_s\t{\hat{\Phi}})\nn\\
                    =&\hat{R}^a_{\o}+e^{-2s} \t{\hat{u}}\Dl_s\t{\hat{\Phi}}_\th+e^{-2s}\t{\hat{v}}\Dl_s\t{\hat{\Phi}}_s-2e^{-2s}\t{\hat{v}}\Dl_s\t{\hat{\Phi}}.
\end{align}
Now we give some estimates for the right hand of \eqref{energyfinal0}.  From \eqref{appomega}, the $L^\i$ estimate in \eqref{0modeinfinity}, \eqref{1modeinfinity1}, \eqref{1modeinfinity2} and \eqref{hmodeinfinity}, we see
\begin{align}
 \|e^{4s}\t{\hat{R}}_{\text{comb}}\|^2\ls&  \|e^{4s}{\hat{R}}^a_{\o}\|^2+  \|e^{2s} \t{\hat{u}}\Dl_s\t{\hat{\Phi}}_\th\|^2+ \|e^{2s} \t{\hat{v}}\Dl_s\t{\hat{\Phi}}_s\|^2+\|e^{2s} \t{\hat{v}}\Dl_s\t{\hat{\Phi}}\|^2\nn\\
 \ls& \e^{42}+  \e^{-20}\|\t{\hat{\Phi}}\|_{E}\lt(\|e^{s} \Dl_s\t{\hat{\Phi}}_\th\|^2+ \|\Dl_s\t{\hat{\Phi}}_s\|^2+\|\Dl_s\t{\hat{\Phi}}\|^2\rt)\nn\\
 \ls& \e^{42}+  \e^{-20}\|\t{\hat{\Phi}}\|^2_{E}\lt(\|e^{s} \Dl_s\t{\hat{\Phi}}_{1,\th}\|^2+\|e^{s} \Dl_s\t{\hat{\Phi}}_{h,\th}\|^2 + \|\Dl_s\t{\hat{\Phi}}_s\|^2+\|\Dl_s\t{\hat{\Phi}}\|^2\rt)\nn\\
 \ls&\e^{42}+\e^{-36}\|\t{\hat{\Phi}}\|^4_{E}. \label{energyfinal2}
\end{align}

From \eqref{appuvreminder}, the $L^\i$ estimate in \eqref{0modeinfinity}, \eqref{1modeinfinity1}, \eqref{1modeinfinity2} and \eqref{hmodeinfinity}, we see
\begin{align}
 \|e^{2.5s}{{R}}_{\t{\hat{u}}}\|^2\ls&  \|e^{2.5s}{\hat{R}}^a_{\hat{u}}\|^2+\|e^{2.5s} \t{\hat{u}}\p_\th \t{\hat{u}}\|^2+ \|e^{2.5s} \t{\hat{v}}\t{\hat{u}}_s\|^2+\|e^{2.5s} \t{\hat{u}}\t{\hat{v}}\|^2\nn\\
 \ls& \e^{44}+  \e^{-20}\|\t{\hat{\Phi}}\|_{E}\lt(\|e^{0.5s}\t{\hat{\Phi}}_{\th s}\|^2+ \|e^{-0.5s}(\t{\hat{\Phi}}_s,\t{\hat{\Phi}}_{ss})\|^2+\|e^{-0.5s}\t{\hat{\Phi}}_s\|^2\rt)\nn\\
 \ls& \e^{44}+\e^{-36}\|\t{\hat{\Phi}}\|^4_{E}. \label{energyfinal3}
\end{align}
Similarly, we can obtain that
\be
 \|e^{2.5s}\lt(\p_\th{{R}}_{\t{\hat{u}}}, \p_\th{{R}}_{\t{\hat{u}}} \rt)\|^2\ls \e^{44}+ \e^{-36}\|\t{\hat{\Phi}}\|^4_{E}. \label{energyfinal4}
\ee
Inserting \eqref{energyfinal2},  \eqref{energyfinal3} and \eqref{energyfinal4} into \eqref{energyfinal0}, we can see that
\be\label{energyfinal4}
\|\hat{\Phi}\|^2_{E}\leq  \e^{40}+\e^{-38}\|\t{\hat{\Phi}}\|^4_{E}.
\ee
Let $E=\{(\hat{u},\hat{v})\in C^\infty: (\hat{u},\hat{v})\ \text{ satisfies} \ (\ref{iterative conditon})\ \text{ and } \ \|(\hat{\Phi})\|_E<+\infty\}$.
Thus, due to \eqref{energyfinal4}, there exist $\epsilon_0>0, \dl_0>0$ such that for any $\epsilon\in (0,\epsilon_0), \dl\in(0,\dl_0)$, the operator
\beas
(\t{\hat{u}},\t{\hat{v}})=(e^{-s}\t{\hat{\Phi}}_s, -e^{-s}\t{\hat{\Phi}}_\th )\mapsto (\hat{u},\hat{v})=(e^{-s}{\hat{\Phi}}_s, -e^{-s}{\hat{\Phi}}_\th)
\eeas
maps the ball $B:=\{\t{\hat{\Phi}}: \|\t{\hat{\Phi}}\|^2_E\leq \epsilon^{39}\}$  in $B$ into itself.

Moreover, for every two pairs $(\t{\hat{u}}^1, \t{\hat{v}}^1)$ and $(\t{\hat{u}}^2, \t{\hat{v}}^2)$ in the ball, we have
\bes
\|(\hat{\Phi}^1-\hat{\Phi}^2\|^2_E\leq C\epsilon^{-38}(\|\t{\hat{\Phi}}^1\|_E^2+\|\t{\hat{\Phi}}^2\|_E^2)\|(\t{\hat{\Phi}}^1-\t{\hat{\Phi}}^2\|_E^2.
\ees
This estimate follows the same line as estimate (\ref{energyfinal4}) line by line. We omit the details.   Hence, there exist $\epsilon_0>0,\dl_0>0$ such that for any $\epsilon\in (0,\epsilon_0), \dl\in (0,\dl_0)$, the operator
\beas
(\t{u},\t{v}) \mapsto (u,v)
\eeas
maps the ball $B:=\{(\hat{u},\hat{v}): \|\hat{\Phi}\|^2_E\leq \epsilon^{39}\}$ into itself and is a contraction mapping. By the fixed point Theorem, system \eqref{errorequ} have a solution satisfying
 \bes
 \|\hat{\Phi}\|^2_E\leq \e^{39}.
 \ees
 By estimate \eqref{0modeinfinity}, \eqref{1modeinfinity1}, \eqref{1modeinfinity2} and \eqref{hmodeinfinity}, we can obtain \eqref{vcerrorestix}.  This completes the proof of Proposition \ref{vcpropexistence}.
\end{proof}

By changing variables from $(\th,s)$ back to $(\th,r)$ and unknowns $(\hat{u},\hat{v},\hat{\Phi})$ back to $({u},{v},{\Phi})$, we finish proof of Proposition \ref{propexistenceuv}. \qed

Now we can give the proof of Theorem \ref{thmain}.
\begin{proof}
Combining Proposition \ref{propexistenceuv} and the constructed approximate solution in Section \ref{secappro}, we easily obtain Theorem \ref{thmain}.
\end{proof}

\subsection{Proof of Theorem \ref{thmain1}}\label{stabilitytheorem}

Denote $\bl{u}^\la=\la^{-1} \bl{u}$. From \eqref{ns1} and \eqref{nsboundary}, we see that
\begin{equation}\label{ns1existence}
\left \{
\begin {array}{ll}
\bl{u}^\la\cdot\na \bl{u}^\la +\na p^\la-\la^{-1}\Dl \bl{u}^\la=0,\\ [5pt]
\na\cdot\bl{u}^\la=0,\\
\bl{u}^\la\cdot \boldsymbol{\tau}\big|_{\p B}=1 +\dl f(\th),\q \bl{u}^\la\cdot \boldsymbol{n}\big|_{\p B}=0,\\
\bl{u}^\la\big|_{r\rightarrow\i}=0,
\end{array}
\right.
\end{equation}

By using the result in Theorem \ref{thmain} with $\e=\f{1}{\s{\la}}$ and remembering estimate \eqref{solestix}, we see that there exists a $\la_0$, when $\la\geq \la_0$, system \eqref{ns1existence} have a solution $\bl{u}^\la$ satisfying
\bes
\lt|u^\la(\th,r)-(\t{\o}+\mathcal{O}(\la^{-1/2}\dl))r^{-1}\rt|\ls C\dl r^{-2},\q \lt|v^\la(\th,r)\rt|\leq C\la^{-1/2}\dl r^{-2},
\ees
where $\t{\o}:=\s{\fint^{2\pi}_0 (1+\dl f(\th))d\th}\approx 1+O(\dl)$. Now we go back to $\bl{u}$, we see that  there exists a $\la_0$, when $\la\geq \la_0$, system \eqref{ns1} have a solution $\bl{u}$ satisfying
\bes
\lt|u(\th,r)-(\la+\mathcal{O}(\la \dl)+\mathcal{O}(\la^{1/2}\dl))r^{-1}\rt|\ls C\la\dl r^{-2},\q \lt|v(\th,r)\rt|\leq C\la^{1/2}\dl r^{-2},
\ees
which is \eqref{xxx}. \qed

\section{Linear stability estimates of the error equation}  \label{sec3}

In this section, we mainly give the linear stability estimate in Proposition \ref{proplinearstability}. Before that, we give one useful   lemma concerning with one-dimensional Hardy inequality, which will be frequently used in the following proof.

\begin{lemma}
For function $f(s)\in C^1([0,+\i))$ with $f(0)=0$ and $\lim\limits_{s\rightarrow+\i} f^2(s)/s=0$, we have,
\be\label{vchardy1}
\int^{+\i}_{0} s^{-2} f^2(s) ds \leq 4\int^{+\i}_{0} (f_s)^2ds .
\ee
Let $f(s)\in C^1([0,+\i)$.  For any $0\neq\al\in\bR$, $\lim\limits_{s\rightarrow+\i} f^2(s) e^{\al s}=0$ and $f(0)=0$ for $\al<0$, then we have
\be\label{vchardy2}
\int^{+\i}_{1} e^{\al s}f^2 ds \leq \f{4}{\al^2}\int^{+\i}_{0}e^{\al s}(f_s)^2ds.
\ee
\end{lemma}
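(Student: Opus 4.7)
Both inequalities are classical one-dimensional Hardy-type inequalities, and the plan is to prove them by the standard trick of writing the weight as a derivative, integrating by parts, and applying Cauchy–Schwarz.

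For \eqref{vchardy1}, I would write $s^{-2} = -\tfrac{d}{ds}(s^{-1})$ and integrate by parts against $f^2(s)$ on $(0,+\infty)$:
\begin{equation*}
\int_0^{+\infty} s^{-2} f^2\, ds = -\Bigl[s^{-1} f^2\Bigr]_0^{+\infty} + 2\int_0^{+\infty} s^{-1} f\, f_s\, ds.
\end{equation*}
The upper boundary term vanishes by the assumption $f^2(s)/s \to 0$ as $s\to+\infty$. The lower boundary term also vanishes: since $f \in C^1$ and $f(0)=0$, near the origin $f(s)=O(s)$, so $f^2(s)/s = O(s)\to 0$. Applying Cauchy--Schwarz to the remaining integral yields
\begin{equation*}
\int_0^{+\infty} s^{-2} f^2\, ds \leq 2\Bigl(\int_0^{+\infty} s^{-2} f^2 \, ds\Bigr)^{1/2} \Bigl(\int_0^{+\infty} f_s^2\, ds\Bigr)^{1/2},
\end{equation*}
and dividing by the common factor (after verifying it is finite, which one can do first by a truncation argument on $[\eta, N]$ and then sending $\eta\to 0$, $N\to+\infty$) gives the claimed constant $4$.

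For \eqref{vchardy2}, I would use the analogous identity $e^{\alpha s} = \tfrac{1}{\alpha}\tfrac{d}{ds}(e^{\alpha s})$ and integrate by parts:
\begin{equation*}
\int_0^{+\infty} e^{\alpha s} f^2\, ds = \frac{1}{\alpha}\Bigl[e^{\alpha s} f^2\Bigr]_0^{+\infty} - \frac{2}{\alpha}\int_0^{+\infty} e^{\alpha s} f\, f_s\, ds.
\end{equation*}
The decay hypothesis at $+\infty$ kills the upper boundary term. For the lower boundary term at $s=0$: if $\alpha<0$ we invoke $f(0)=0$; if $\alpha>0$ the contribution is $-\tfrac{1}{\alpha}f^2(0)\leq 0$, which only helps the inequality and can be dropped. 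In either case we obtain
\begin{equation*}
\int_0^{+\infty} e^{\alpha s} f^2\, ds \leq \frac{2}{|\alpha|}\Bigl(\int_0^{+\infty} e^{\alpha s} f^2\, ds\Bigr)^{1/2}\Bigl(\int_0^{+\infty} e^{\alpha s} f_s^2\, ds\Bigr)^{1/2}
\end{equation*}
by Cauchy--Schwarz, and squaring gives the constant $4/\alpha^2$.

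The only subtle point, and the one I expect to be the main (minor) obstacle, is justifying the integration by parts and the division step when one does not know a priori that the left-hand side is finite. I would handle this by carrying out the argument first on a truncated interval $[\eta, N]$ with $0<\eta<N<\infty$, using only the endpoint contributions there, then letting $\eta\to 0^+$ (using $f(0)=0$ when required) and $N\to+\infty$ (using the stated decay at infinity) and invoking monotone convergence; the case when the right-hand side is infinite is trivial. Everything else is routine.
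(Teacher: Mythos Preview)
Your proposal is correct and follows essentially the same approach as the paper: write the weight as a derivative, integrate by parts, and apply Cauchy--Schwarz to obtain the constant $4$ (respectively $4/\alpha^2$). Your treatment is in fact more careful than the paper's, which omits the discussion of boundary terms at $s=0$ and the finiteness/truncation justification.
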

\begin{proof}
Using integration by parts, the Cauchy inequality and H\"{o}lder inequality, we have
\begin{align}
\int^{+\i}_{0}s^{-2} f^2 dr =&-\int^{+\i}_{0}  f^2(s) d s^{-1}=-\int^{+\i}_{0} 2s^{-1} f f_s ds\nn\\
                       \leq &2\lt(\int^{+\i}_{0} s^{-2} f^2 ds\rt)^{1/2}\lt(\int^{+\i}_{0} (f_s)^2 ds\rt)^{1/2},\label{vchardyproof2}
\end{align}
which indicates \eqref{vchardy1}. For the second one \eqref{vchardy2}, also by using integration by part, we have
\begin{align}
&\int^{+\i}_{0} e^{\al s}f^2 ds=\f{1}{\al}\int^{+\i}_{0}  f^2 de^{\al s} =\f{1}{\al}e^{\al s}f^2(s)\big|^{+\i}_0-\f{1}{\al}\int^{+\i}_{1}2e^{\al s}  f f_s ds=-\f{2}{\al}\int^{+\i}_{0} e^{\al s}  f f_s ds.\nn
\end{align}
Then the same as \eqref{vchardyproof2} by using H\"{o}lder inequality, we have \eqref{vchardy2}.

\end{proof}

\subsection{$\dot{H}^1$ and $\dot{H}^2$ estimate for the stream function $\hat{\Phi}$} \label{subsec3.1}

\indent

In this subsection, we give the weighted $\e$ independently tangential derivatives' estimate for the variable-change solution $(\hat{u},\hat{v})$, which is carried out in the stream function \eqref{vcerrorequ}. We have the following lemma.

\subsubsection{The $\e$-independent positive estimate for the stream function $\hat{\Phi}$}\label{subspositive}

\begin{lemma}\label{lempositive}
 Let $(u,v)$ be a smooth solution of (\ref{vcerrorequ}), then there exist $\epsilon_0>0, \dl_0>0$ such that for any $\epsilon\in (0,\epsilon_0), \dl\in(0,\dl_0)$, there holds,
 \begin{align}\label{vcestipositive}
&\int_{\hat{\O}}\lt( e^{2s}\hat{\Phi}^2_{h,\th \th}+ e^{2s} \hat{\Phi}^2_{h,\th s}+ e^{s}\hat{\Phi}^2_{1,\th \th}+ e^s \hat{\Phi}^2_{1,\th s}\rt)\nn\\
\ls & \e^2\dl\int_{\hat{\O}} \lt(e^{2s}\hat{\Phi}^2_{h,ss}+e^{s}\hat{\Phi}^2_{1,ss}+\hat{\Phi}^2_{0,ss}+\hat{\Phi}^2_{0,s}\rt)+ \int_{\hat{\O}}e^{8s}\hat{R}^2_{\text{comb}}.
\end{align}
Here the constant $C$ is independent of $\e_0$, $\dl_0$ and $\hat{\O}:=\bT\times (0,+\i)$.
\end{lemma}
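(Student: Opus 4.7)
The plan is to test the stream function equation \eqref{vcerrorequ} against the $\theta$-mean-zero multiplier $e^{3s}\hat{\Phi}_{h,\theta}+e^{2s}\hat{\Phi}_{1,\theta}$ and integrate over $\hat{\Omega}$; this multiplier automatically kills the zero-mode projection of the equation. The two weights are tuned to the strength of the $\theta$-Poincar\'e inequality on the corresponding Fourier subspaces: for $k\geq 2$ one has $\|\hat{\Phi}_{h,\theta}\|\leq \tfrac12\|\hat{\Phi}_{h,\theta\theta}\|$, which can afford the weight $e^{3s}$, whereas for the 1-mode the tight identity $\|\hat{\Phi}_{1,\theta}\|=\|\hat{\Phi}_{1,\theta\theta}\|$ forces the lighter weight $e^{2s}$.

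The coercivity comes from the linear transport term $-\hat{u}^a\partial_\theta\Delta_s\hat{\Phi}$. Splitting $\hat{u}^a=\tilde{\omega}e^{-s}+(\hat{u}^a-\tilde{\omega}e^{-s})$ via Corollary \ref{vclemdetail}, the leading piece contributes
\begin{equation*}
-\tilde{\omega}\!\int_{\hat{\Omega}}\!e^{2s}\Delta_s\hat{\Phi}_{h,\theta}\,\hat{\Phi}_{h,\theta}\,d\theta\,ds\;-\;\tilde{\omega}\!\int_{\hat{\Omega}}\!e^{s}\Delta_s\hat{\Phi}_{1,\theta}\,\hat{\Phi}_{1,\theta}\,d\theta\,ds.
\end{equation*}
Integration by parts in $\theta$ (periodic) converts the $\partial_\theta^2$ piece of $\Delta_s$ into $+\hat{\Phi}_{\theta\theta}^2$, and integration by parts in $s$ (using $\hat{\Phi}|_{s=0}=0$ and decay as $s\to\infty$) converts the $\partial_s^2$ piece into $+\hat{\Phi}_{\theta s}^2$ together with the weight-derivative cross terms $-2\tilde{\omega}\int e^{2s}\hat{\Phi}_{h,\theta}^2$ and $-\tfrac{\tilde{\omega}}{2}\int e^s\hat{\Phi}_{1,\theta}^2$. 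Both cross terms are absorbed by the respective Poincar\'e inequalities, delivering a positive constant multiple of the LHS of \eqref{vcestipositive}.

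All remaining terms are allocated to the right-hand side. The perturbation $\hat{u}^a-\tilde{\omega}e^{-s}$ is $\mathcal{O}(\delta)$ in the boundary-layer region (supported near $s=0$) and $\mathcal{O}(\epsilon\delta\,e^{-2s})$ in the Euler region; $\hat{v}^a$ is of size $\mathcal{O}(\epsilon(\delta+\epsilon))$ in the boundary layer and $\mathcal{O}(\epsilon\delta\,e^{-2s})$ outside; and the approximate vorticity $\Delta_s\hat{\Phi}^a$ entering the nonlinear coupling $(\hat{u}\partial_\theta+\hat{v}\partial_s)\Delta_s\hat{\Phi}^a$ is $\delta$-small and boundary-layer-localized. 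After integrating by parts on the fourth-order diffusive operator $\epsilon^2e^{-s}(\Delta_s^2-4\Delta_s\partial_s+4\Delta_s)\hat{\Phi}$ so as to move derivatives onto the test function, weighted Cauchy--Schwarz distributes these small pre-factors onto second-derivative quantities and produces the remainder $\epsilon^2\delta(e^{2s}\hat{\Phi}_{h,ss}^2+e^{s}\hat{\Phi}_{1,ss}^2+\hat{\Phi}_{0,ss}^2+\hat{\Phi}_{0,s}^2)$; the zero-mode terms appear only through off-diagonal Fourier couplings mediated by the non-zero modes of $\hat{u}^a,\hat{v}^a$. Finally, the source $e^{2s}\hat{R}_{\mathrm{comb}}$ tested against the multiplier yields $\int e^{8s}\hat{R}_{\mathrm{comb}}^2$ by weighted Cauchy--Schwarz, the absorbable half being controlled by Poincar\'e.

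The main obstacle is the bookkeeping of the fourth-order diffusive term together with the off-diagonal Fourier couplings: the integration-by-parts scheme on $\epsilon^2e^{-s}\Delta_s^2\hat{\Phi}$ must redistribute the $e^{2s}$ weight and the four derivatives on $\hat{\Phi}$ without creating uncontrolled third-order quantities, and the one-dimensional Hardy inequalities \eqref{vchardy1}--\eqref{vchardy2} are used systematically to trade weighted lower-order quantities for higher-order derivatives. Likewise, since the equation does not decouple exactly into Fourier modes because $\hat{u}^a,\hat{v}^a,\Delta_s\hat{\Phi}^a$ all carry non-zero Fourier content, the $\mathcal{O}(\epsilon\delta)$ smallness of these off-diagonal coefficients must compensate for the unfavorable $e^{3s}$ weight in every cross-mode estimate.
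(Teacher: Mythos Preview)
Your proposal is correct and follows essentially the same route as the paper: the same multiplier $e^{3s}\hat{\Phi}_{h,\theta}+e^{2s}\hat{\Phi}_{1,\theta}$, the same extraction of coercivity from $-\hat{u}^a\Delta_s\hat{\Phi}_\theta$ via the split $\hat{u}^a\approx\tilde{\omega}e^{-s}+\mathcal{O}(\delta)$ and the mode-by-mode Poincar\'e absorption of the weight cross terms, and the same disposition of the $\hat{v}^a$, $\Delta_s\hat{\Phi}^a$, diffusion, and source terms by weighted Cauchy--Schwarz together with the Hardy inequalities \eqref{vchardy1}--\eqref{vchardy2}. One small bookkeeping point: the diffusion term by itself yields $\epsilon^4\delta^{-1}\int(e^{2s}\hat{\Phi}_{h,ss}^2+e^{s}\hat{\Phi}_{1,ss}^2)$ after the $\delta$-weighted Cauchy split (cf.\ \eqref{vcpositive5}), not $\epsilon^2\delta$; the $\epsilon^2\delta$ prefactor on the right of \eqref{vcestipositive} comes from the $\hat{v}^a$- and $\Delta_s\hat{\Phi}^a$-coupling terms, so keep these two sources of $\hat{\Phi}_{ss}$ separate in your write-up.
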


\pf Multiplying \eqref{vcerrorequ} by $e^{3s}\hat{\Phi}_{h,\th}+e^{2s}\hat{\Phi}_{1,\th}$ and integrating the resulted equation on $\hat{\O}:\bT\times[0,+\i)$ to obtain that
\begin{align}
&-\int_{\hat{\O}} [e^{3s}\hat{\Phi}_{h,\th}+e^{2s}\hat{\Phi}_{1,\th}]\lt[\hat{u}^a \Dl_s \hat{\Phi}_\th+ \hat{v}^a \Dl_s \hat{\Phi}_s -2 \hat{v}^a \Dl_s\hat{\Phi}\rt]\nn\\
&-\int_{\hat{\O}} [e^{3s}\hat{\Phi}_{h,\th}+e^{2s}\hat{\Phi}_{1,\th}\hat{u} ]\lt[\hat{u}\Dl_s \hat{\Phi}^a_\th+ \hat{v} \Dl_s \hat{\Phi}^a_s -2 \hat{v} \Dl_s\hat{\Phi}^a\rt]\nn\\
&+\e^2  [e^{3s}\hat{\Phi}_{h,\th}+e^{2s}\hat{\Phi}_{1,\th}]\lt(\Dl^2_s\hat{\Phi}-4\Dl_s\hat{\Phi}_s+4\Dl_s\hat{\Phi}\rt)=\int_{\hat{\O}}  [e^{3s}\hat{\Phi}_{h,\th}+e^{2s}\hat{\Phi}_{1,\th}] e^{2s}\hat{R}_{\text{comb}}.\label{vcpositive0}
\end{align}
Here and later, for simplification of notation, we omit the integral variables $d\th ds$ if no ambiguity is caused. We will estimate terms in \eqref{vcpositive0} one by one.

{\bf\noindent Estimates of $\boldsymbol{-\int_{\hat{\O}} [e^{3s}\hat{\Phi}_{h,\th}+e^{2s}\hat{\Phi}_{1,\th}]\hat{u}^a\Dl_s \hat{\Phi}_\th}$}: The left hand of \eqref{vcestipositive} comes from this term. We first deal with $-\int_{\hat{\O}} e^{3s}\hat{\Phi}_{h,\th}\hat{u}^a\Dl_s \hat{\Phi}_\th$, the other term is similar. By direct integration by parts and boundary condition for $\hat{\Phi}_{\th}$, we obtain that
\begin{align}
&-\int_{\hat{\O}} e^{3s}\hat{\Phi}_{h,\th}\hat{u}^a \Dl_s \hat{\Phi}_\th\nn\\
=& -\lt(\t{\o}+\mathcal{O}(\e\dl)\mathcal{A}_0\rt)\int_{\hat{\O}} e^{2s} \hat{\Phi}_{h,\th}  \Dl_s \hat{\Phi}_{h,\th}-\int_{\hat{\O}} \lt[\hat{u}^a-\lt(\t{\o}+\mathcal{O}(\e\dl)\mathcal{A}_0\rt)e^{-s}\rt] e^{3s} \hat{\Phi}_{h,\th} \Dl_s \hat{\Phi}_\th\nn\\
=&\lt(\t{\o}+\mathcal{O}(\e\dl)\mathcal{A}_0\rt)\int_{\hat{\O}}  e^{2s}\lt(\hat{\Phi}^2_{h,\th\th}+\hat{\Phi}^2_{h,\th s}-2\hat{\Phi}_{h,\th}\rt)-\int_{\hat{\O}} \lt[\hat{u}^a-\lt(\t{\o}+\mathcal{O}(\e\dl)\mathcal{A}_0\rt)e^{-s}\rt] e^{3s} \hat{\Phi}_{h,\th} \Dl_s \hat{\Phi}_\th. \label{vcpositive-1}
\end{align}
Using the Fourier series representation of $\hat{\Phi}$, we see that
\begin{align}
& \int_{\hat{\O}}  e^{2s}\lt(\hat{\Phi}^2_{h,\th\th}+\hat{\Phi}^2_{h,\th s}-2\hat{\Phi}^2_{h,\th}\rt)\geq\f{1}{2}\int_{\hat{\O}}  e^{2s}\lt(\hat{\Phi}^2_{h,\th\th}+\hat{\Phi}^2_{h,\th s}\rt). \label{vcpositive-2}
\end{align}
Here we have used the following facts
\begin{align}
 &\int_{\hat{\O}}  e^{2s}\lt(\hat{\Phi}^2_{h,\th\th}+\hat{\Phi}^2_{h,\th s}-2\hat{\Phi}^2_{h,\th}\rt)\nn\\
 =&\pi\int^\i_0e^{2s} \sum\limits_{k\geq 2}\lt(k^4|(A^{\hat{\Phi}}_{k}(s),B^{\hat{\Phi}}_{k}(s))|^2+k^2|(A^{\hat{\Phi}}_{k}(s)_{,s},B^{\hat{\Phi}}_{k}(s)_{,s})|^2-2k^2|(A^{\hat{\Phi}}_{k}(s),B^{\hat{\Phi}}_{k}(s))|^2\rt)ds\nn\\
 \geq &\pi\int^\i_0e^{2s} \sum\limits_{k\geq 2}\lt(\f{1}{2}k^4|(A^{\hat{\Phi}}_{k}(s),B^{\hat{\Phi}}_{k}(s))|^2+k^2|(A^{\hat{\Phi}}_{k}(s)_{,s},B^{\hat{\Phi}}_{k}(s)_{,s})|^2\rt)ds\geq \f{1}{2}\int_{\hat{\O}}  e^{2s}\lt(\hat{\Phi}^2_{h,\th\th}+\hat{\Phi}^2_{h,\th s}\rt).
\end{align}
Using integration by parts, the fourth one of estimate in \eqref{vcappdetail0}, Cauchy inequality, Poincar\'{e} inequality in $\th$ variable, Hardy inequality \eqref{vchardy1} and \eqref{vchardy2}, we have
\begin{align}
&\lt|-\int_{\hat{\O}} \lt[\hat{u}^a-\lt(\t{\o}+\mathcal{O}(\e\dl)\mathcal{A}_0\rt)e^{-s}\rt] e^{3s} \hat{\Phi}_{h,\th} \Dl_s \hat{\Phi}_\th\rt|\nn\\
\ls &\dl \int_{\hat{\O}} e^{s} \lt( |\hat{\Phi}_{h,\th}|+|s^{-1}\hat{\Phi}_{h,\th}|+ |\hat{\Phi}_{h,\th\th}| + |\hat{\Phi}_{h,\th s}|\rt)\lt( |\hat{\Phi}_{\th\th}|+ |\hat{\Phi}_{\th s}| \rt)\nn\\
\ls &\dl \int_{\hat{\O}} e^{s} \lt(\hat{\Phi}^2_{\th\th}+ \hat{\Phi}^2_{\th s}\rt)=\dl \int_{\hat{\O}} e^{s} \lt(\hat{\Phi}^2_{h,\th\th}+ \hat{\Phi}^2_{h,\th s}\rt)+\dl\int_{\hat{\O}} e^{s} \lt(\hat{\Phi}^2_{1,\th\th}+ \hat{\Phi}^2_{1,\th s}\rt). \label{vcpositive-3}
\end{align}
Inserting \eqref{vcpositive-3} and \eqref{vcpositive-2} into \eqref{vcpositive-1}, we achieve that for small $\e_0$ and $\dl_0$, when $\e\leq \e_0$ and $\dl\leq \dl_0$,
\begin{align}
&-\int_{\hat{\O}} e^{3s}\hat{\Phi}_{h,\th} \hat{u}^a\hat{\Phi}_\th   \Dl_s \hat{\Phi}_{\th}\gs\int_{\hat{\O}} e^{2s} \lt(\hat{\Phi}^2_{h,\th\th}+\hat{\Phi}^2_{h,\th s}\rt)-\dl\int_{\hat{\O}} e^{s} \lt(\hat{\Phi}^2_{1,\th\th}+ \hat{\Phi}^2_{1,\th s}\rt).\label{vcpositive-4}
\end{align}
Similarly, we can obtain that  for small $\e_0$ and $\dl_0$, when $\e\leq \e_0$ and $\dl\leq \dl_0$,
\begin{align}
&-\int_{\hat{\O}} e^{2s}\hat{\Phi}_{1,\th}\hat{u}^a\hat{\Phi}_\th \Dl_s \hat{\Phi}_{\th}\gs\int_{\hat{\O}} e^{s} \lt(\hat{\Phi}^2_{1,\th\th}+\hat{\Phi}^2_{1,\th s}\rt)-\dl\int_{\hat{\O}} \lt(\hat{\Phi}^2_{h,\th\th}+ \hat{\Phi}^2_{h,\th s}\rt).\label{vcpositive1x}
\end{align}
Then combining \eqref{vcpositive-4} and \eqref{vcpositive1x}, we achieve that for small $\e_0$ and $\dl_0$, when $\e\leq \e_0$ and $\dl\leq \dl_0$,
\begin{align}
-\int_{\hat{\O}} [e^{3s}\hat{\Phi}_{h,\th}+e^{2s}\hat{\Phi}_{1,\th}]\hat{u}^a\Dl_s \hat{\Phi}_\th\gs \int_{\hat{\O}} \lt(e^{2s}\hat{\Phi}^2_{h,\th\th}+e^{2s}\hat{\Phi}^2_{h,\th s}\rt)+ \lt(e^{s}\hat{\Phi}^2_{1,\th\th}+e^{s}\hat{\Phi}^2_{1,\th s}\rt). \label{vcpositive1}
\end{align}

{\bf\noindent Estimates of $\boldsymbol{-\int_{\hat{\O}}[e^{3s}\hat{\Phi}_{h,\th} +e^{2s}\hat{\Phi}_{1,\th}]\hat{v}^a \Dl_s \hat{\Phi}_s+2\int_{\hat{\O}}[e^{3s}\hat{\Phi}_{h,\th} +e^{2s}\hat{\Phi}_{1,\th}] \hat{v}^a \Dl_s\hat{\Phi}}$}: We also only deal with the term concerning multiplier $e^{3s}\hat{\Phi}_{h,\th}$, the other is similar.  By integration by parts, we have
\begin{align}
&-\int_{\hat{\O}}e^{3s}\hat{\Phi}_{h,\th} \hat{v}^a \Dl_s \hat{\Phi}_s+2\int_{\hat{\O}}e^{3s}  \hat{\Phi}_{h,\th}\hat{v}^a \Dl_s\hat{\Phi}\nn\\
=&\int_{\hat{\O}}e^{3s}\lt(\hat{\Phi}_{h,\th s} \hat{v}^a+\hat{\Phi}_{h,\th } \hat{v}^a_s+ 5\hat{v}^a \hat{\Phi}_{h,\th}  \rt) \Dl_s \hat{\Phi}\nn\\
=&\int_{\hat{\O}}e^{3s}\lt(\hat{\Phi}_{h,\th s} \hat{v}^a+\hat{\Phi}_{h,\th } \hat{v}^a_s+ 5\hat{v}^a \hat{\Phi}_{h,\th}  \rt) \lt(\hat{\Phi}_{\th\th}+\hat{\Phi}_{0,ss}+\hat{\Phi}_{\neq,ss}\rt) .\label{vcpositive2}
\end{align}
From estimate \eqref{vcappdetail1}, we have
\be
\lt|\hat{v}^a\rt|\ls \e\dl e^{-2s}, \q \lt|\hat{v}^a_s\rt|\ls \e\dl (1+s^{-1})e^{-2s}. \label{vcpositive3}
\ee
Using the above estimate \eqref{vcpositive3}, Cauchy inequality, Poincar\'{e} inequality in $\th$ variable, Hardy inequality \eqref{vchardy1} and \eqref{vchardy2}, we obtain that
\begin{align}
&\lt|\int_{\hat{\O}}e^{3s}\lt(\hat{\Phi}_{h,\th s} \hat{v}^a+\hat{\Phi}_{h,\th } \hat{v}^a_s+ 5\hat{v}^a \hat{\Phi}_{h,\th s} \rt) \lt(\hat{\Phi}_{\th\th}+\hat{\Phi}_{\neq,ss}\rt)\rt|\nn\\
\ls& \e\dl\int_{\hat{\O}} e^{s}\lt(s^{-1}|\hat{\Phi}_{h,\th}|+|\hat{\Phi}_{h,\th}|+|\hat{\Phi}_{h,\th s}|\rt)\lt(|\hat{\Phi}_{h,\th \th}|+|\hat{\Phi}_{\neq,ss}|\rt)\nn\\
\ls&\dl\int_{\hat{\O}} e^{s}\lt(\hat{\Phi}^2_{\th s}+\hat{\Phi}^2_{\th \th}\rt)+\e^2\dl \int_{\hat{\O}} e^{s}\hat{\Phi}^2_{\neq,ss}\nn\\
\ls&\dl \int_{\hat{\O}} e^{s} \lt(\hat{\Phi}^2_{\th\th}+ \hat{\Phi}^2_{\th s}\rt)+\e^2\dl\int_{\hat{\O}}e^s\hat{\Phi}^2_{\neq,ss}ds. \label{vcpositive2x1}
\end{align}
Then by the series expansion representation of $v^a_e$ in the first one of \eqref{vcappdetail1}, we have
\begin{align}
&\int_{\hat{\O}}e^{3s}\lt(\hat{\Phi}_{h,\th s} \hat{v}^a+\hat{\Phi}_{h,\th } \hat{v}^a_s+ 5\hat{v}^a \hat{\Phi}_{h,\th} \rt)\hat{\Phi}_{0,ss}\nn\\
=&\underbrace{\int_{\hat{\O}}e^{3s}\lt(\hat{\Phi}_{h,\th s} (\hat{v}^a-\hat{v}^a_e)+\hat{\Phi}_h,{\th } (\hat{v}^a-\hat{v}^a_e)_s+ 5(\hat{v}^a-\hat{v}^a_e) \hat{\Phi}_{h,\th} \rt)\hat{\Phi}_{0,ss}}_{I_1}\nn\\
 &+\underbrace{\int_{\hat{\O}}e^{3s}\lt(\hat{\Phi}_{h,\th s} \hat{v}^a_{eh}+\hat{\Phi}_{h,\th} \hat{v}^a_{eh,s}+ 5\hat{v}^a_{eh} \hat{\Phi}_{h,\th} \rt)\hat{\Phi}_{0,ss}}_{I_2} \label{vcpositive2x2}
\end{align}
By using the second estimate in \eqref{vcappdetail1}, Cauchy inequality, Poincar\'{e} inequality in $\th$ variable, Hardy inequality \eqref{vchardy1} and \eqref{vchardy2}, we obtain that
\begin{align}
|I_1|\ls& \e\dl \int_{\hat{\O}} \lt(|\hat{\Phi}_{h,\th s}|+s^{-1}|\hat{\Phi}_{h,\th }| +  |\hat{\Phi}_{h,\th}| \rt)\hat{\Phi}_{0,ss}\ls \dl \int_{\hat{\O}}e^{s}\lt(\hat{\Phi}^2_{h,\th s}+\hat{\Phi}^2_{h,\th\th} \rt)+\e^2\dl \int_{\hat{\O}} \hat{\Phi}^2_{0,ss}. \label{vcpositive2x3}
\end{align}
By using the first estimate in \eqref{vcappdetail1}, Cauchy inequality, Poincar\'{e} inequality in $\th$ variable, we obtain that
\begin{align}
|I_2|\ls& \e\dl \int_{\hat{\O}}e^s \lt(|\hat{\Phi}_{h,\th s}|+|\hat{\Phi}_{h,\th }| +  |\hat{\Phi}_{h,\th}| \rt)\hat{\Phi}_{0,ss} \ls \dl \int_{\hat{\O}}e^{2s}\lt(\hat{\Phi}^2_{h,\th s}+\hat{\Phi}^2_{h,\th\th} \rt)+\e^2\dl \int_{\hat{\O}} \hat{\Phi}^2_{0,ss}. \label{vcpositive2x4}
\end{align}

Inserting estimates in \eqref{vcpositive2x3} and \eqref{vcpositive2x4} into \eqref{vcpositive2x2}, and then inserting the resulted estimate and estimate \eqref{vcpositive2x1} into \eqref{vcpositive2} to obtain that
\begin{align}
&\lt|-\int_{\hat{\O}}e^{3s}\hat{\Phi}_{h,\th} \hat{v}^a \Dl_s \hat{\Phi}_s +2\int_{\hat{\O}}e^{3s} \hat{\Phi}_{h,\th}\hat{v}^a  \Dl_s\hat{\Phi}\rt|\nn\\
\ls &\dl\int_{\hat{\O}}\lt(e^{2s}\hat{\Phi}^2_{h,\th s}+e^{2s}\hat{\Phi}^2_{h,\th\th}+e^{s}\hat{\Phi}^2_{1,\th s}+e^{s}\hat{\Phi}^2_{1,\th\th}\rt)+ \e^2\dl\int_{\hat{\O}} \lt(e^{s} \hat{\Phi}^2_{h,ss}+\hat{\Phi}^2_{0,ss}\rt). \label{vcpositive2x5}
\end{align}
Similarly, we can obtain that
\begin{align}
&\lt|-\int_{\hat{\O}}e^{2s}\hat{\Phi}_{1,\th} \hat{v}^a \Dl_s \hat{\Phi}_s +2\int_{\hat{\O}}e^{2s} \hat{\Phi}_{1,\th}\hat{v}^a  \Dl_s\hat{\Phi}\rt|\nn\\
\ls &\dl\int_{\hat{\O}}\lt(e^{s}\hat{\Phi}^2_{h,\th s}+e^{s}\hat{\Phi}^2_{h,\th\th}+e^{s}\hat{\Phi}^2_{1,\th s}+e^{s}\hat{\Phi}^2_{1,\th\th}\rt)+ \e^2\dl\int_{\hat{\O}} \lt(e^{s} \hat{\Phi}^2_{h,ss}+\hat{\Phi}^2_{0,ss}\rt). \label{vcpositive2x7}
\end{align}
Combining the above two, we achieve that
\begin{align}
&\lt|-\int_{\hat{\O}}[e^{3s}\hat{\Phi}_{h,\th} +e^{2s}\hat{\Phi}_{1,\th}]\hat{v}^a \Dl_s \hat{\Phi}_s+2\int_{\hat{\O}}[e^{3s}\hat{\Phi}_{h,\th} +e^{2s}\hat{\Phi}_{1,\th}] \hat{v}^a \Dl_s\hat{\Phi}\rt|\nn\\
\ls& \dl\int_{\hat{\O}}\lt(e^{2s}\hat{\Phi}^2_{h,\th s}+e^{2s}\hat{\Phi}^2_{h,\th\th}+e^{s}\hat{\Phi}^2_{1,\th s}+e^{s}\hat{\Phi}^2_{1,\th\th}\rt)+ \e^2\dl\int_{\hat{\O}} \lt(e^{s} \hat{\Phi}^2_{h,ss}+\hat{\Phi}^2_{0,ss}\rt).  \label{vcpositive2x6}
\end{align}

{\bf\noindent Estimates of $\boldsymbol{-\int_{\hat{\O}}[e^{3s}\hat{\Phi}_{h,\th} +e^{2s}\hat{\Phi}_{1,\th}]\hat{u} \Dl_s \hat{\Phi}^a_\th}$}:

We rewrite the term $-\int_{\hat{\O}}e^{3s} \hat{\Phi}_{h,\th}\hat{u} \Dl_s \hat{\Phi}^a_\th$ as
\begin{align}
&-\int_{\hat{\O}}e^{3s} \hat{\Phi}_{h\th}\hat{u} \Dl_s \hat{\Phi}^a_\th= -\int_{\hat{\O}}e^{2s}\hat{\Phi}_{h,\th} \hat{\Phi}_s \Dl_s \hat{\Phi}^a_\th=-\int_{\hat{\O}}e^{2s} \hat{\Phi}_{h,\th} \lt(\hat{\Phi}_{\neq,s}+\hat{\Phi}_{0,s}\rt) \Dl_s \hat{\Phi}^a_\th\nn\\
=&-\int_{\hat{\O}}e^{2s} \hat{\Phi}_{h,\th} \hat{\Phi}_{\neq,s} \Dl_s \hat{\Phi}^a_\th-\int_{\hat{\O}}e^{2s}\hat{\Phi}_{h,\th}\hat{\Phi}_{0,s}\Dl_s \hat{\Phi}^a_\th\nn\\
=&\underbrace{-\int_{\hat{\O}}e^{2s} \hat{\Phi}_{h,\th} \hat{\Phi}_{\neq,s} \Dl_s \hat{\Phi}^a_\th}_{I_3}\underbrace{-\int_{\hat{\O}}e^{2s} \hat{\Phi}_{h,\th}\hat{\Phi}_{0,s}\Dl_s\lt( \hat{\Phi}^a_\th-\hat{\Phi}^a_{e,\th}\rt)}_{I_4}\underbrace{-\int_{\hat{\O}}e^{2s} \hat{\Phi}_{h,\th}\hat{\Phi}_{0,s}\Dl_s \hat{\Phi}^a_{e,\th}}_{I_5}. \label{vcpositive3x1}
\end{align}
 Using the third estimate of \eqref{vcappdetail1}, we see that
\begin{align}
&\lt|\Dl_s\hat{\Phi}^a_\th\rt|=\lt|\Dl_s\lt(e^s\hat{v}^a\rt)\rt|\ls \e\dl e^{-s}(1+\f{1}{s^2}),\nn\\
&\lt|\Dl_s\lt(\hat{\Phi}^a_\th-\hat{\Phi}^a_{e,\th}\rt)\rt|=\lt|\Dl_s\lt(e^s\lt(\hat{v}^a-\hat{v}^a_{e}\rt)\rt)\rt|\ls \e\dl s^{-2}e^{-3s}.  \label{vcpositive3x2}
\end{align}
Using the first one of the above estimates \eqref{vcpositive3x2}, Cauchy inequality, Poincar\'{e} inequality in $\th$ variable, Hardy inequality \eqref{vchardy1} and \eqref{vchardy2}, we obtain that
\begin{align}
|I_3|\ls& \e\dl\int_{\hat{\O}}e^{s} (1+s^{-2}) |\hat{\Phi}_{h,\th}| |\hat{\Phi}_{\neq,s}|\ls \dl\int_{\hat{\O}}e^{s} \lt(\hat{\Phi}^2_{h,\th\th}+\hat{\Phi}^2_{h,\th s}\rt)+\e^2\dl\int_{\hat{\O}}e^{s} \hat{\Phi}^2_{\neq,ss}.  \label{vcpositive3x3}
\end{align}
Using the second one of the above estimates \eqref{vcpositive3x2}, Cauchy inequality, Poincar\'{e} inequality in $\th$ variable, Hardy inequality \eqref{vchardy1} and \eqref{vchardy2}, we obtain that
\begin{align}
|I_4|\ls& \e\dl\int_{\hat{\O}}e^{-s} s^{-2} |\hat{\Phi}_{h,\th}| |\hat{\Phi}_{0,s}|\ls \dl\int_{\hat{\O}}e^{-s}\hat{\Phi}^2_{h,\th s}+\e^2\dl\int_{\hat{\O}}e^{-s} \hat{\Phi}^2_{0,ss}\nn\\
 \ls&\dl \int_{\hat{\O}} e^{-s} \lt(\hat{\Phi}^2_{h,\th\th}+ \hat{\Phi}^2_{h,\th s}\rt)+\e^2\dl \int_{\hat{\O}} e^{-s}\hat{\Phi}^2_{0,ss}.  \label{vcpositive3x4}
\end{align}
Using the first one of \eqref{vcappdetail1} and Cauchy inequality and Poincar\'{e} inequality in $\th$ variable, we see that
\begin{align}
|I_5|\ls   \e\dl\int_{\hat{\O}}e^{s} |\hat{\Phi}_{h,\th}||\hat{\Phi}_{0,s}|\ls \dl \int_{\hat{\O}} e^{2s} \hat{\Phi}^2_{h,\th\th}+\e^2\dl \int_{\hat{\O}}\hat{\Phi}^2_{0,s}. \label{vcpositive3x5}
\end{align}
Inserting \eqref{vcpositive3x3}, \eqref{vcpositive3x4}, \eqref{vcpositive3x5} into \eqref{vcpositive3x1}, we achieve that
\begin{align}
&\lt|-\int_{\hat{\O}}e^{3s} \hat{\Phi}_{h,\th}\hat{u} \Dl_s \hat{\Phi}^a_\th\rt|\ls \dl\int_{\hat{\O}}e^{s}\lt(\hat{\Phi}^2_{h,\th s}+\hat{\Phi}^2_{h,\th\th}  \rt)+ \e^2\dl\int^{\i}_0 \lt(e^{s}\hat{\Phi}^2_{\neq,ss}+\hat{\Phi}^2_{0,ss} +\hat{\Phi}^2_{0,s}\rt). \label{vcpositive3x7}
\end{align}
Similarly, we can obtain that
\begin{align}
&\lt|-\int_{\hat{\O}}e^{2s} \hat{\Phi}_{1,\th}\hat{u} \Dl_s \hat{\Phi}^a_\th\rt|\ls \dl\int_{\hat{\O}}e^{s}\lt(\hat{\Phi}^2_{1,\th s}+\hat{\Phi}^2_{1,\th\th}  \rt)+ \e^2\dl\int^{\i}_0 \lt(e^{s}\hat{\Phi}^2_{\neq,ss}+\hat{\Phi}^2_{0,ss}\rt). \label{vcpositive3x8}
\end{align}
Then combining the above two, we see that
\begin{align}
&\lt|-\int_{\hat{\O}}[e^{3s} \hat{\Phi}_{h,\th}+e^{2s} \hat{\Phi}_{1,\th}]\hat{u} \Dl_s \hat{\Phi}^a_\th\rt|\ls \dl\int_{\hat{\O}}e^{s}\lt(\hat{\Phi}^2_{\th s}+\hat{\Phi}^2_{\th\th}  \rt)+ \e^2\dl\int_{\hat{\O}} \lt(e^{s}\hat{\Phi}^2_{\neq,ss}+\hat{\Phi}^2_{0,ss} +\hat{\Phi}^2_{0,s}\rt). \label{vcpositive3x6}
\end{align}

{\bf\noindent Estimates of $\boldsymbol{-\int_{\hat{\O}}[e^{3s} \hat{\Phi}_{h,\th}+e^{2s} \hat{\Phi}_{1,\th}] [\hat{v} \Dl_s \hat{\Phi}^a_s +2 v \Dl_s\hat{\Phi}^a]}$}:

Using the fourth one of estimates \eqref{vcappdetail0} and \eqref{vcappdetail1}, we see that
\begin{align}
&\lt|\Dl_s\hat{\Phi}^a_s\rt|=\lt|\Dl_s(e^s\hat{u}^a)\rt|=\lt|\Dl_s(e^s(\hat{u}^a-\t{\o}e^{-s}-\mathcal{O}(\e\dl)e^{-s})\rt|\ls \dl e^{-s} (1+s^{-2})\nn\\
&\lt|\Dl_s\hat{\Phi}^a\rt|=\lt|-(e^s\hat{v}^a)_\th+ \p_s(e^s(\hat{u}^a-\t{\o}e^{-s}-O(\e\dl)e^{-s}))\rt|\ls e^{-s}\dl (1+s^{-1}).  \label{vcpositive3x7}
\end{align}
Then by using Poincar\'{e} inequality in $\th$ variable, Hardy inequality \eqref{vchardy1} and \eqref{vchardy2}, we achieve that
\begin{align}
&\lt|-\int_{\hat{\O}}e^{3s}\hat{\Phi}_{h,\th} [\hat{v} \Dl_s \hat{\Phi}^a_s +2\hat{v}\Dl_s\hat{\Phi}^a]\rt|=\lt|\int_{\hat{\O}}e^{2s}\hat{\Phi}_{h,\th}[\hat{\Phi}_\th  \Dl_s \hat{\Phi}^a_s -2 \hat{\Phi}_\th \Dl_s\hat{\Phi}^a]\rt| \nn\\
\ls& \dl\int_{\hat{\O}}e^s (1+s^{-2})|\hat{\Phi}_\th\hat{\Phi}_{h,\th}|\ls\dl\int_{\hat{\O}} e^s\lt(\hat{\Phi}^2_{\th\th}+\hat{\Phi}^2_{\th s}\rt). \nn
\end{align}
Similarly,
\be
\lt|-\int_{\hat{\O}}e^{2s}\hat{\Phi}_{1,\th} [\hat{v} \Dl_s \hat{\Phi}^a_s +2\hat{v}\Dl_s\hat{\Phi}^a]\rt|\ls \dl\int_{\hat{\O}} e^s\lt(\hat{\Phi}^2_{\th\th}+\hat{\Phi}^2_{\th s}\rt). \nn
\ee
Then, combining the above two, we achieve that
\begin{align}
\lt|-\int_{\hat{\O}}[e^{3s} \hat{\Phi}_{h,\th}+e^{2s} \hat{\Phi}_{1,\th}] [\hat{v} \Dl_s \hat{\Phi}^a_s +2 v \Dl_s\hat{\Phi}^a]\rt|\ls \dl\int_{\hat{\O}} e^s\lt(\hat{\Phi}^2_{\th\th}+\hat{\Phi}^2_{\th s}\rt) \label{vcpositive4}
\end{align}

{\bf\noindent Estimates of $\boldsymbol{\e^2 \int_{\hat{\O}}[e^{2s} \hat{\Phi}_{h,\th}+e^{s} \hat{\Phi}_{1,\th}]\lt(\Dl^2_s\hat{\Phi}-4\Dl_s\hat{\Phi}_s+4\Dl_s\hat{\Phi}\rt)}$}:

Using integration by parts and series expansion for $\hat{\Phi}$, we obtain that
\begin{align}
&\e^2 \int_{\hat{\O}}e^{2s} \hat{\Phi}_{h,\th}\lt(\Dl^2_s\hat{\Phi}-4\Dl_s\hat{\Phi}_s+4\Dl_s\hat{\Phi}\rt)\nn\\
=& \e^2 \int_{\hat{\O}}  \Dl_s\hat{\Phi}_{h}\lt(8e^{2s}\hat{\Phi}_{h,\th s}+ 16e^{2s}\hat{\Phi}_{h,\th}\rt)=\e^2 \int_{\hat{\O}} \lt(8e^{2s}\hat{\Phi}_{h,\th s}\hat{\Phi}_{h,\th \th}+ 8e^{2s}\hat{\Phi}_{h,s s}\hat{\Phi}_{h,\th s}+32e^{2s} \hat{\Phi}_{h,\th s}\hat{\Phi}_{h}\rt)\nn\\
=&\underbrace{\e^2 \int_{\hat{\O}} \lt(8e^{2s}\hat{\Phi}_{h,\th s}\hat{\Phi}_{h,\th \th}+ 8e^{2s}\hat{\Phi}_{h, s s}\hat{\Phi}_{h,\th s}+32e^{2s} \hat{\Phi}_{h,\th s}\hat{\Phi}_{h}\rt)}_{I_6}
\end{align}
Using Cauchy inequality, Poincar\'{e} inequality in $\th$ variable, we see that
\begin{align}
|I_6|\ls \dl  \int_{\hat{\O}} e^{2s}\lt(\hat{\Phi}^2_{h,\th s}+\hat{\Phi}^2_{h,\th \th}\rt)+\e^4\dl^{-1} \int_{\hat{\O}} e^{2s}\hat{\Phi}^2_{h,ss}.
\end{align}
Similarly, we can obtain that
\begin{align}
&\e^2 \int_{\hat{\O}}e^{2s} \hat{\Phi}_{h,\th}\lt(\Dl^2_s\hat{\Phi}-4\Dl_s\hat{\Phi}_s+4\Dl_s\hat{\Phi}\rt)\ls\dl  \int_{\hat{\O}} e^{s}\lt(\hat{\Phi}^2_{1,\th s}+\hat{\Phi}^2_{1,\th \th}\rt)+\e^4\dl^{-1} \int_{\hat{\O}} e^{s}\hat{\Phi}^2_{1,ss}
\end{align}
Then the above two indicate that
\begin{align}
&\e^2 \int_{\hat{\O}}[e^{2s} \hat{\Phi}_{h,\th}+e^{s} \hat{\Phi}_{1,\th}]\lt(\Dl^2_s\hat{\Phi}-4\Dl_s\hat{\Phi}_s+4\Dl_s\hat{\Phi}\rt)\nn\\
\ls&  \dl  \int_{\hat{\O}}  \lt(e^{2s}\hat{\Phi}^2_{h,\th s}+e^{2s}\hat{\Phi}^2_{h,\th \th}+e^s\hat{\Phi}^2_{1,\th s}+e^s\hat{\Phi}^2_{1,\th \th}\rt)+\e^4\dl^{-1} \int_{\hat{\O}} \lt(e^{2s}\hat{\Phi}^2_{h,ss}+ e^{s}\hat{\Phi}^2_{1,ss}\rt). \label{vcpositive5}
\end{align}
Inserting \eqref{vcpositive1}, \eqref{vcpositive2x6}, \eqref{vcpositive3x6}, \eqref{vcpositive4} and \eqref{vcpositive5} into \eqref{vcpositive0}, and for small $\e_0$ and $\dl_0$, when $\e<\e_0$, $\dl<\dl_0$, we can achieve that
\begin{align}
&\int_{\hat{\O}}\lt( e^{2s}\hat{\Phi}^2_{h,\th s}+ e^{2s} \hat{\Phi}^2_{h,\th s}+ e^{s}\hat{\Phi}^2_{1,\th s}+ e^s \hat{\Phi}^2_{1,\th s}\rt)\nn\\
\ls & \e^2\dl\int_{\hat{\O}} \lt(e^{2s}\hat{\Phi}^2_{h,ss}+e^{s}\hat{\Phi}^2_{1,ss}+\hat{\Phi}^2_{0,ss}+\hat{\Phi}^2_{0,s}\rt)+ \int_{\hat{\O}}[e^{5s} \hat{\Phi}_{h,\th}+e^{4s} \hat{\Phi}_{1,\th}]\hat{R}_{\text{comb}}|.
\end{align}
This is \eqref{vcestipositive} by using Cauchy inequality for the last term of the above inequality. \qed

In order to close the second order derivative estimate for the stream function $\hat{\Phi}$, we still need to give the control of the first term on the right hand of \eqref{vcestipositive}, which will be achieved by the following lemma through basic energy estimates. The energy estimates are divided into the estimate for the Fourier zero mode of $\hat{\Phi}_0$, which comes from directly from \eqref{vcerrorequuv}$_1$, and the estimates for the non-zero frequency of $\hat{\Phi}_{\neq}$, which comes from \eqref{vcerrorequ}.

\subsubsection{ The $\dot{H}^2$ energy estimates for the stream function $\hat{\Phi}$} \label{subsenergy}

\begin{lemma}\label{lemlinear}
 Let $(\hat{u},\hat{v})$ be a smooth solution of (\ref{vcerrorequuv}), then there exist $\epsilon_0>0, \dl_0>0$ such that for any $\epsilon\in (0,\epsilon_0) $ and $ \dl\in(0,\dl_0)$, there holds
\begin{align}
&\e^2\int^\i_0 \lt(\hat{\Phi}^2_{0,ss}+\hat{\Phi}^2_{0,s}\rt)ds\ls \dl\int_{\hat{\O}} \lt(e^{2s}\hat{\Phi}^2_{h,\th\th}+e^{2s}\hat{\Phi}^2_{h,\th s}+e^{s}\hat{\Phi}^2_{1,\th\th}+e^{s}\hat{\Phi}^2_{1,\th s}\rt)+\e^{-2}\int_{\hat{\O}}e^{3s} R^2_{\hat{u}0}. \label{vczeroesti}
\end{align}
\end{lemma}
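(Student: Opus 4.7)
The plan is to reduce $(\ref{vcerrorequuv})_1$ to a one-dimensional ODE for the Fourier-zero mode $\hat{u}_0(s)$, rewrite it in terms of the stream function $\hat\Phi_0$, and extract the weighted $\dot H^1$--$\dot H^2$ bound via two complementary energy tests.

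First I would integrate $(\ref{vcerrorequuv})_1$ in $\th$ over $\bT$: the pressure gradient $\fint\hat p_\th\,d\th$ and the term $\fint \hat v_\th\,d\th$ vanish by periodicity, and $\fint \Dl_s\hat u\,d\th=\hat u_{0,ss}$. In $\fint S_{\hat u}\,d\th$ the contributions that pair the zero mode of the approximate solution against a $\th$-derivative of the error also drop out, leaving the ODE
\begin{equation*}
-\e^2 e^{-s}(\hat u_{0,ss}-\hat u_0)+S_{\hat u0}=R_{\hat u0},\qquad \hat u_0(0)=\hat u_0(+\i)=0,
\end{equation*}
with $S_{\hat u0}=\fint_{\bT}(\hat v^a\hat u_{\neq,s}+\hat v\hat u^a_{\neq,s}+\hat u_\neq\hat v^a+\hat u^a_\neq\hat v)\,d\th$. (A parallel $\th$-average of the incompressibility together with $\hat v_0(0)=0$ gives $\hat v_0\equiv 0$, justifying the identification $\hat\Phi_{0,\th}\equiv 0$.) Using $\hat\Phi_{0,s}=e^s\hat u_0$ and the identity $\hat u_{0,ss}-\hat u_0=e^{-s}(\hat\Phi_{0,sss}-2\hat\Phi_{0,ss})$, this becomes
\begin{equation*}
\e^2(\hat\Phi_{0,sss}-2\hat\Phi_{0,ss})=e^{2s}(S_{\hat u0}-R_{\hat u0}),
\end{equation*}
with $\hat\Phi_0(0)=\hat\Phi_{0,s}(0)=0$ and decay of $\hat\Phi_{0,s},\hat\Phi_{0,ss}$ at infinity coming from the a priori class of the fixed-point iteration.

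Next I would test this stream-function ODE against two multipliers in turn. Testing against $-\hat\Phi_{0,s}$, the identity $\int\hat\Phi_{0,s}\hat\Phi_{0,sss}\,ds=-\int\hat\Phi_{0,ss}^2\,ds$ (all boundary terms vanish) together with $\int\hat\Phi_{0,s}\hat\Phi_{0,ss}\,ds=\tfrac12[\hat\Phi_{0,s}^2]_0^\i=0$ yields
\begin{equation*}
\e^2\int_0^\i\hat\Phi_{0,ss}^2\,ds=\int_0^\i e^{2s}\hat\Phi_{0,s}(R_{\hat u0}-S_{\hat u0})\,ds.
\end{equation*}
Testing against $-\hat\Phi_0$ and integrating by parts twice (using $\hat\Phi_0(0)=0$ and $\hat\Phi_{0,s}(\i)=\hat\Phi_{0,ss}(\i)=0$) produces
\begin{equation*}
2\e^2\int_0^\i\hat\Phi_{0,s}^2\,ds=\int_0^\i e^{2s}\hat\Phi_0(S_{\hat u0}-R_{\hat u0})\,ds,
\end{equation*}
so summing gives the full LHS of \eqref{vczeroesti}. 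For the $R_{\hat u0}$ contributions I would apply weighted Cauchy--Schwarz, pairing $e^{1.5s}R_{\hat u0}$ with the appropriate half-power of $\e$ to deposit the clean term $\e^{-2}\int e^{3s}R_{\hat u0}^2\,ds$ while leaving a small multiple of $\e^2\|\hat\Phi_{0,s}\|^2+\e^2\|\hat\Phi_{0,ss}\|^2$ to be absorbed. For the $S_{\hat u0}$ contributions I would substitute the bilinear formula and insert the approximate-solution bounds of Corollary~\ref{vclemdetail} (namely $|\hat v^a|, |\hat v^a_s|, |\hat u^a_\neq|, |\hat u^a_{\neq,s}|\lesssim\e\dl\, e^{-2s}$, with the mild $s^{-1}$ singularity harmlessly absorbed by Hardy \eqref{vchardy2}), then convert the error factors via $\hat u_\neq=e^{-s}\hat\Phi_{\neq,s}$ and $\hat v=-e^{-s}\hat\Phi_\th$. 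Poincar\'e in $\th$, separating the Fourier-1 piece from the high modes $k\ge 2$, explains the asymmetric weights $e^{2s}$ and $e^s$ on the RHS of \eqref{vczeroesti}: the extra factor $k\ge 2$ for the high modes upgrades the weight by one power of $e^s$.

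The hardest step will be the second test, because the multiplier $\hat\Phi_0$ does not decay as $s\to\i$; in general it tends to the constant $\int_0^\i e^t\hat u_0\,dt$. I would circumvent this by exploiting $\hat\Phi_0(0)=0$ and the fundamental-theorem-of-calculus representation $\hat\Phi_0(s)=\int_0^s\hat\Phi_{0,t}\,dt$, combined with the rapid exponential decay of $S_{\hat u0}$ inherited from the estimates on the approximate solution. As an alternative, one can integrate by parts in $s$ against the product structure of $S_{\hat u0}$ (for example $\hat v^a\hat u_{\neq,s}=(\hat v^a\hat u_\neq)_s-\hat v^a_s\hat u_\neq$), transferring the $s$-derivative off of $\hat u_\neq$ and onto $e^{2s}\hat\Phi_0\hat v^a$, whose behaviour at infinity is controlled by the $\e\dl e^{-2s}$ decay of $\hat v^a$. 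Once this is done, the resulting cross terms are of the form $\dl\int(e^{2s}\hat\Phi_{h,\th\th}^2+e^{2s}\hat\Phi_{h,\th s}^2+e^s\hat\Phi_{1,\th\th}^2+e^s\hat\Phi_{1,\th s}^2)$ plus absorbable multiples of the LHS, yielding exactly the inequality \eqref{vczeroesti}.
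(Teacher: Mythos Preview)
Your plan is essentially the paper's: average $(\ref{vcerrorequuv})_1$ in $\th$ to get a scalar ODE for $\hat u_0$, test against multipliers built from $\hat\Phi_0$ and $\hat\Phi_{0,s}$, then feed in the approximate--solution bounds and Hardy/Poincar\'e. The integration--by--parts trick you mention (shifting $\partial_s$ off $\hat u_{\neq}$ onto $\hat v^a$) is exactly what the paper does to avoid $\hat u_{\neq,s}$ on the right.

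There is one concrete slip in your multiplier choice. You first pass to the stream--function form $\e^2(\hat\Phi_{0,sss}-2\hat\Phi_{0,ss})=e^{2s}(S_{\hat u0}-R_{\hat u0})$ and then test against $-\hat\Phi_{0,s}$ and $-\hat\Phi_0$. That puts an extra $e^{s}$ into every source integral: for instance the remainder pairing becomes $\int e^{2s}\hat\Phi_{0,s}R_{\hat u0}$. Your proposed Cauchy split ``pair $e^{1.5s}R_{\hat u0}$ with \dots'' then forces the absorbable piece to be $\e^2\int e^{s}\hat\Phi_{0,s}^2$, which is \emph{not} dominated by the left side $\e^2\int\hat\Phi_{0,s}^2$. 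If instead you balance so that the absorbable piece is unweighted, you are pushed to $\e^{-2}\int e^{4s}R_{\hat u0}^2$ (and even $e^{5s}$ for the test against $\hat\Phi_0$ once you bring in Hardy on $\int e^{-s}\hat\Phi_0^2$), i.e.\ a strictly weaker conclusion than \eqref{vczeroesti}.

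The paper avoids this by keeping the $e^{-s}$ on the viscous term and testing the $\hat u_0$ ODE directly against the single multiplier $-e^{s}\hat\Phi_{0,s}+e^{s}\hat\Phi_0$. The $e^{-s}$ and $e^{s}$ cancel, the left side integrates exactly to $\e^2\int(\hat\Phi_{0,ss}^2+\hat\Phi_{0,s}^2)$ plus a harmless boundary term $\hat\Phi_0^2\big|_{+\infty}$, and the source integrals carry only $e^{s}$. Then $\big|\int e^{s}R_{\hat u0}\hat\Phi_0\big|\le (\int e^{-s}\hat\Phi_0^2)^{1/2}(\int e^{3s}R_{\hat u0}^2)^{1/2}$ together with Hardy \eqref{vchardy2} (with $\alpha=-1$, using $\hat\Phi_0(0)=0$) gives $\int e^{-s}\hat\Phi_0^2\le 4\int e^{-s}\hat\Phi_{0,s}^2\le 4\int\hat\Phi_{0,s}^2$, producing precisely the $\e^{-2}\int e^{3s}R_{\hat u0}^2$ in \eqref{vczeroesti}. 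In your framework the fix is simply to test your stream--function ODE against $-e^{-s}\hat\Phi_{0,s}$ and $e^{-s}\hat\Phi_0$ (or, equivalently, do not clear the $e^{-2s}$ to the right before testing); everything else in your outline then goes through.
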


\pf Now taking $\th$ average of \eqref{vcerrorequuv}$_1$ in $\bT$, we see that in the new variables $(\th,s)$ and unknowns $(\hat{u},\hat{v})$,
\be\label{vcu0formula}
\lt\{
\bali
&\e^2\left(\hat{u}_{0,ss}-\hat{u}_{0,s}\right)=S_{\hat{u}0}-R_{\hat{u}0},\\
& \hat{u}_{0}(0)=\hat{u}_0 (+\i)=0,
\eali
\rt.
\ee
where
\bes
S_{\hat{u}0}=\fint_\bT (v^a \hat{u}_{\neq,s}+\hat{v} \hat{u}^a_{\neq,s}+\hat{u}_{\neq}\hat{v}^a+\hat{u}^a_{\neq}\hat{v})d\th,\q R_{\hat{u}0}=\fint_\bT R_{\hat{u}} d\th.
\ees
Multiplying \eqref{vcu0formula} by $-e^{s} \hat{\Phi}_{0,s}+e^s \hat{\Phi}_0$, then integrating  the resulted equations for $s$ variable and using integration by parts to obtain
\begin{align}
&\e^2\int^\i_0 \lt(\hat{\Phi}^2_{0,ss}+\hat{\Phi}^2_{0,s}\rt)ds+\hat{\Phi}^2_{0}\big|_{+\i}\nn\\
=&\f{1}{2\pi}\int_{\hat{\O}} (\hat{v}^a \hat{u}_{\neq,s}+\hat{v} \hat{u}^a_{\neq,s}+\hat{u}_{\neq}\hat{v}^a+\hat{u}^a_{\neq}\hat{v})\lt(-e^{s} \hat{\Phi}_{0,s}+e^s \hat{\Phi}_0\rt)+\f{1}{2\pi}\int_{\hat{\O}}R_{\hat{u}0}\lt(-e^{s} \hat{\Phi}_{0,s}+e^s \hat{\Phi}_0\rt)\nn\\
=&\underbrace{\f{1}{2\pi}\int_{\hat{\O}} (-\hat{v}^a_s \hat{u}_{\neq}+\hat{v} \hat{u}^a_{\neq,s}+\hat{u}^a_{\neq}\hat{v})\lt(-e^{s} \hat{\Phi}_{0,s}+e^s \hat{\Phi}_0\rt)+\f{1}{2\pi}\int_{\hat{\O}}\hat{v}^a \hat{u}_{\neq}e^s\hat{\Phi}_{0,ss} }_{J_1}+\f{1}{2\pi}\int_{\hat{\O}}R_{\hat{u}0}\lt(-e^{s} \hat{\Phi}_{0,s}+e^s \hat{\Phi}_0\rt). \label{vczeromode1}
\end{align}
From the fourth one of \eqref{vcappdetail0} and the third one of \eqref{vcappdetail1} in Lemma \ref{vclemdetail}, we obtain that
\be
|\hat{v}^a|\ls \e\dl e^{-2s},\q |\hat{u}^a_{\neq,s}|\ls\e\dl(1+s^{-2}) e^{-2s},\q |\hat{v}^a_s|+|\hat{u}^a_{\neq}|\ls \e\dl(1+s^{-1})e^{-2s}. \label{vczeromode2}
\ee
Then inserting \eqref{vczeromode2} into \eqref{vczeromode1}, using Cauchy Poincar\'{e} inequality in $\th$ variable, Hardy inequalities \eqref{vchardy1} and\eqref{vchardy2}, we can obtain that
\begin{align}
|J_1|\ls& \e\dl\int_{\hat{\O}}e^{-s}\lt( (1+s^{-2}) |\hat{v}|+(1+s^{-1}) |\hat{u}_{\neq}|\rt)\lt(|\hat{\Phi}_{0,s}|+|\hat{\Phi}_{0}|\rt)+\e\dl\int_{\hat{\O}}e^{-s}|\hat{u}_{\neq}| | \hat{\Phi}_{0,ss}|\nn\\
     \ls& \e^2\dl\int_{\hat{\O}} (1+s^{-2}) e^{-s}\lt(\hat{\Phi}^2_{0,s}+\hat{\Phi}^2_{0}\rt)+ \hat{\Phi}^2_{0,ss}+ \dl\int_{\hat{\O}}  e^{-s}\lt((1+s^{-2})\hat{v}^2+\hat{u}^2_{\neq}\rt)\nn\\
     \ls &\e^2\dl\int_{\hat{\O}} e^{-s}\hat{\Phi}^2_{0,ss}+ \dl\int_{\hat{\O}}  e^{-3s}\lt((1+s^{-2})\hat{\Phi}^2_\th+(e^s\hat{u}_{\neq})^2\rt)\nn\\
     =&\e^2\dl\int_{\hat{\O}} e^{-s}\hat{\Phi}^2_{0,ss}+ \dl\int_{\hat{\O}}  e^{-3s}\lt((1+s^{-2})\hat{\Phi}^2_\th+\Phi^2_{\neq,s}\rt)\nn\\
     \ls&\e^2\dl\int_{\hat{\O}} e^{-s}\hat{\Phi}^2_{0,ss}+ \dl\int_{\hat{\O}} e^{-3s}|\lt(\hat{\Phi}^2_{\th\th}+\hat{\Phi}^2_{\th s}\rt).  \label{vczeromode3}
\end{align}

Inserting \eqref{vczeromode3} into \eqref{vczeromode1} and using Cauchy inequality and Hardy inequality \eqref{vchardy2} for the last term, we obtain  \eqref{vczeroesti}.\qed

The weighted energy estimate for the non zero frequency of $\hat{\Phi}$ will be carried out in the framework of the stream function, under the reformulation of \eqref{vcerrorequ}. We have the following Lemma.

\begin{lemma}\label{lemhener}
 Let $(u,v)$ be a smooth solution of (\ref{errorequ}), then there exist $\epsilon_0>0, \dl_0>0$ such that for any $\epsilon\in (0,\epsilon_0)$ and $\dl\in(0,\dl_0)$, there holds
\begin{align}\label{estihener}
&\e^2 \int_{\hat{\O}}  \lt(e^{2s}\hat{\Phi}^2_{h,ss}+e^{2s}\hat{\Phi}^2_{h,\th s}+e^{2s}\hat{\Phi}^2_{h,\th\th}+e^{s}\hat{\Phi}^2_{1,ss}+e^{s}\hat{\Phi}^2_{1,\th s}+e^{s}\hat{\Phi}^2_{1,\th\th}\rt)\nn\\
\ls&  \int_{\hat{\O}}\lt(e^{2s}\hat{\Phi}^2_{h,\th\th}+e^{2s}\hat{\Phi}^2_{h,\th s}+e^{s}\hat{\Phi}^2_{1,\th\th}+e^{s}\hat{\Phi}^2_{1,\th s}\rt)+\e^2\dl \int_{\hat{\O}}\hat{\Phi}^2_{0,ss}+\hat{\Phi}^2_{0,s}+\e^{-2}\int_{\hat{\O}}e^{8s}\hat{R}^2_{\text{comb}}.
\end{align}
\end{lemma}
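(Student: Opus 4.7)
Parallel to Lemma~\ref{lempositive}, the strategy is to multiply the stream-function equation \eqref{vcerrorequ} by the test function $e^{3s}\hat{\Phi}_h + e^{2s}\hat{\Phi}_1$ and integrate over $\hat{\O}$. The two different weights (one for each Fourier block) reflect the decay the argument can afford: $e^{-s}$ for high modes versus roughly $e^{-s/2}$ for the Fourier-one mode. The biharmonic term on the left of \eqref{vcerrorequ} will produce the weighted $\dot H^2$ energy appearing on the LHS of \eqref{estihener}; the linear transport and reaction terms will be controlled by the weighted tangential-derivative energies supplied by Lemma~\ref{lempositive} together with a small $\e^2\dl$ multiple of the zero-mode quantities; and the forcing will be handled by Cauchy with parameter $\e^2$.

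\textbf{Principal term.} Applied to $\e^2 e^{-s}\bigl(\Dl_s^2\hat{\Phi}-4\Dl_s\hat{\Phi}_s+4\Dl_s\hat{\Phi}\bigr)$, integrating by parts twice in each variable (using $\hat{\Phi}|_{s=0}=\p_s\hat{\Phi}|_{s=0}=\p_\th\hat{\Phi}|_{s=0}=0$ and decay as $s\to+\i$) yields, modulo lower-order commutators from the $s$-weights, the coercive contribution
$$
\e^2 \int_{\hat{\O}}\bigl(e^{2s}|\Dl_s\hat{\Phi}_h|^2 + e^s|\Dl_s\hat{\Phi}_1|^2\bigr).
$$
Expanding each $|\Dl_s\hat{\Phi}|^2 = \hat{\Phi}_{\th\th}^2 + 2\hat{\Phi}_{\th\th}\hat{\Phi}_{ss} + \hat{\Phi}_{ss}^2$ and converting the cross term via one IBP in $\th$ into $|\hat{\Phi}_{\th s}|^2$ reproduces exactly the six $\dot H^2$-quantities on the LHS of \eqref{estihener}. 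Commutators from the lower-order pieces $-4\Dl_s\hat{\Phi}_s$ and $4\Dl_s\hat{\Phi}$, as well as from differentiating the weights in $s$, are bounded by Cauchy with a small parameter and reabsorbed into this leading positive block.

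\textbf{Transport and reaction terms.} For $-(e^{3s}\hat{\Phi}_h + e^{2s}\hat{\Phi}_1)\bigl[(\hat u^a\p_\th+\hat v^a\p_s)\Dl_s\hat{\Phi} - 2\hat v^a\Dl_s\hat{\Phi}\bigr]$, one IBP in $\th$ against the leading shear $\hat u^a\sim\t{\o}e^{-s}$ reduces the piece to an integral of $\Dl_s\hat{\Phi}$ against a $\th$-derivative, bounded by Cauchy by the first summand on the RHS of \eqref{estihener}; the remainder $\hat u^a-(\t{\o}+\mathcal{O}(\e\dl))e^{-s}$ carries a small factor $\dl$ by \eqref{vcappdetail0} and is absorbed into the LHS, while the $\hat v^a$-pieces are smaller by an extra $\e\dl$ via \eqref{vcappdetail1}. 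For $-(e^{3s}\hat{\Phi}_h + e^{2s}\hat{\Phi}_1)\bigl[(\hat u\p_\th+\hat v\p_s)\Dl_s\hat{\Phi}^a - 2\hat v\Dl_s\hat{\Phi}^a\bigr]$, Corollary~\ref{vclemdetail} gives $|\Dl_s\hat{\Phi}^a|,|\Dl_s\hat{\Phi}^a_s|,|\Dl_s\hat{\Phi}^a_\th|\ls\e\dl e^{-s}(1+s^{-2})$; splitting $\hat u=\hat u_0+\hat u_\neq$ and applying the Hardy inequalities \eqref{vchardy1}--\eqref{vchardy2} together with Poincar\'e in $\th$ produces exactly the $\e^2\dl\int(\hat{\Phi}_{0,ss}^2+\hat{\Phi}_{0,s}^2)$ term on the RHS, plus further small-$\dl$ contributions to the LHS.

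\textbf{Forcing and main obstacle.} Cauchy with weight $\e^2$ on $\int_{\hat{\O}} e^{2s}\hat R_{\text{comb}}(e^{3s}\hat{\Phi}_h+e^{2s}\hat{\Phi}_1)$ yields the desired term $\e^{-2}\int e^{8s}\hat R_{\text{comb}}^2$ together with an $\e^2\int(e^{4s}\hat{\Phi}_h^2+e^{2s}\hat{\Phi}_1^2)$ piece reabsorbed by Poincar\'e in $\th$ against $\int e^{2s}\hat{\Phi}_{h,\th\th}^2$ and $\int e^s\hat{\Phi}_{1,\th\th}^2$ on the LHS. The delicate point will be the Fourier-one bookkeeping: the weaker $e^s$-weight allows no slack, so every commutator produced by IBP in $s$ must be tracked, since an extra factor of $e^s$ cannot be compensated by the Fourier-one energy alone; moreover the cross-block interactions hidden inside $\hat{\Phi}=\hat{\Phi}_0+\hat{\Phi}_1+\hat{\Phi}_h$ in the transport terms must be split carefully so that no bad term leaks from the $h$-block into the $1$-block. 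Once this is controlled, the estimate \eqref{estihener} follows by choosing $\e_0$ and $\dl_0$ sufficiently small to absorb the $\dl$-small terms.
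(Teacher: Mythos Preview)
Your proposal is correct and follows essentially the same route as the paper: multiply \eqref{vcerrorequ} by $e^{3s}\hat{\Phi}_h+e^{2s}\hat{\Phi}_1$, extract the coercive $\dot H^2$ block from the biharmonic piece, bound the transport/reaction terms via the shear decomposition and Corollary~\ref{vclemdetail}, and close with Cauchy on the forcing. One small slip: in your Cauchy split of the forcing the absorbable piece should carry weight $e^{2s}$ (not $e^{4s}$) for the $h$-block, since $e^{5s}\hat{\Phi}_h\hat R_{\text{comb}}=(e^{s}\hat{\Phi}_h)(e^{4s}\hat R_{\text{comb}})$; with the correct weight your Poincar\'e absorption goes through exactly as you describe.
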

\pf

Multiplying \eqref{vcerrorequ} by $e^{3s} \hat{\Phi}_{h}+e^{2s} \hat{\Phi}_{1}$ and integrating the resulted equation on $\hat{\O}:\bT\times[0,+\i)$ to obtain that
\begin{align}
&\e^2 \int_{\hat{\O}} [e^{3s} \hat{\Phi}_{h}+e^{2s} \hat{\Phi}_{1}]\lt(\Dl^2_s\hat{\Phi}-4\Dl_s\hat{\Phi}_s+4\Dl_s\hat{\Phi}\rt)\nn\\
&-\int_{\hat{\O}}  [e^{3s} \hat{\Phi}_{h}+e^{2s} \hat{\Phi}_{1}][\hat{u}^a \Dl_s \hat{\Phi}_{\th}+ \hat{v}^a \Dl_s \hat{\Phi}_s-2  \hat{v}^a \Dl_s\hat{\Phi}]\nn\\
&-\int_{\hat{\O}}[e^{2s} \hat{\Phi}_{h}+e^{s} \hat{\Phi}_{1}][ \hat{u} \Dl_s \hat{\Phi}^a_\th+ \hat{v} \Dl_s \hat{\Phi}^a_s -2  \hat{v} \Dl_s\hat{\Phi}^a]=\int_{\hat{\O}} [e^{5s} \hat{\Phi}_{h}+e^{4s} \hat{\Phi}_{1}]\hat{R}_{\text{comb}}.\label{henergy0}
\end{align}
We will estimate terms in \eqref{henergy0} one by one.

{\bf\noindent Estimates of $\boldsymbol{\e^2 \int_{\hat{\O}}[e^{2s} \hat{\Phi}_{h}+e^{s} \hat{\Phi}_{1}]\lt(\Dl^2_s\hat{\Phi}-4\Dl_s\hat{\Phi}_s+4\Dl_s\hat{\Phi}\rt)}$}:

Direct integration by parts indicate that

\begin{align}
&\e^2 \int_{\hat{\O}} e^{2s} \hat{\Phi}_{h}\lt(\Dl^2_s\hat{\Phi}-4\Dl_s\hat{\Phi}_s+4\Dl_s\hat{\Phi}\rt)\nn\\
=& \e^2\int_{\hat{\O}}  e^{2s} \lt[ \hat{\Phi}^2_{h,\th\th}+2\hat{\Phi}^2_{h,\th s}+\hat{\Phi}^2_{h,ss}\rt]+ \e^2\int_{\hat{\O}} e^{2s}\lt[ -12\hat{\Phi}^2_{h,\th}-24\hat{\Phi}^2_{h,s}+32\hat{\Phi}^2_{h}\rt]\nn\\
\geq & \e^2 \int_{\hat{\O}}  e^{2s}\lt[ \hat{\Phi}^2_{h,\th\th}+2\hat{\Phi}^2_{h,\th s}+\hat{\Phi}^2_{h,ss}\rt]-C \e^2\int_{\hat{\O}}  e^{2s}\int_{\hat{\O}}  \lt[ \hat{\Phi}^2_{h,\th}+\hat{\Phi}^2_{h,s}+\hat{\Phi}^2_{h}\rt].
\end{align}
Similarly, by integration by parts, we can see that
\begin{align}
&\e^2 \int_{\hat{\O}} e^{s} \hat{\Phi}_{1}\lt(\Dl^2_s\hat{\Phi}-4\Dl_s\hat{\Phi}_s+4\Dl_s\hat{\Phi}\rt)\nn\\
\geq & \e^2 \int_{\hat{\O}}  e^{s}\lt[ \hat{\Phi}^2_{1,\th\th}+2\hat{\Phi}^2_{1,\th s}+\hat{\Phi}^2_{1,ss}\rt]-C \e^2\int_{\hat{\O}}  e^{s}\int_{\hat{\O}}  \lt[ \hat{\Phi}^2_{1,\th}+\hat{\Phi}^2_{1,s}+\hat{\Phi}^2_{1}\rt].
\end{align}
Combining the above two, we see that
\begin{align}
&\e^2 \int_{\hat{\O}}[e^{2s} \hat{\Phi}_{h}+e^{s} \hat{\Phi}_{1}]\lt(\Dl^2_s\hat{\Phi}-4\Dl_s\hat{\Phi}_s+4\Dl_s\hat{\Phi}\rt)\nn\\
\gs& \e^2 \int_{\hat{\O}} \lt[ e^{2s}\hat{\Phi}^2_{h,\th\th}+2 e^{2s}\hat{\Phi}^2_{h,\th s}+ e^{2s}\hat{\Phi}^2_{h,ss}+  e^{s}\hat{\Phi}^2_{1,\th\th}+2 e^{s}\hat{\Phi}^2_{1,\th s}+ e^{s}\hat{\Phi}^2_{1,ss}\rt]\nn\\
  &-C \e^2 \int_{\hat{\O}}  \lt[  e^{2s}\hat{\Phi}^2_{h,\th}+ e^{2s}\hat{\Phi}^2_{h,s}+ e^{2s}\hat{\Phi}^2_{h}+ e^{s}\hat{\Phi}^2_{1,\th}+ e^{s}\hat{\Phi}^2_{1,s}+ e^{s}\hat{\Phi}^2_{1}\rt]. \label{henergy1}
\end{align}

{\bf\noindent Estimates of $\boldsymbol{-\int_{\hat{\O}}  [e^{3s} \hat{\Phi}_{h}+e^{2s} \hat{\Phi}_{1}][\hat{u}^a \Dl_s \hat{\Phi}_{\th}+ \hat{v}^a \Dl_s \hat{\Phi}_s-2  \hat{v}^a \Dl_s\hat{\Phi}]}$}:
Using integration by parts, the incompressibility, the third estimates of \eqref{vcappdetail0} and \eqref{vcappdetail1}, Cauchy inequality, we obtain that
\begin{align}
&-\int_{\hat{\O}} e^{3s} \hat{\Phi}_{h}[\hat{u}^a \Dl_s \hat{\Phi}_{\th}+ \hat{v}^a \Dl_s \hat{\Phi}_s-2  \hat{v}^a \Dl_s\hat{\Phi}]\nn\\
=&\int_{\hat{\O}} e^{3s}\Dl_s\hat{\Phi} \lt(\hat{u}^a\hat{\Phi}_{h,\th}+ \hat{v}^a\hat{\Phi}_{h,s}+4 \hat{v}^a\hat{\Phi}_{h}\rt)\nn\\
=&(\t{\o}+\mathcal{O}(\e\dl)\mathcal{A}_0) \int_{\hat{\O}} e^{2s} \Dl_s\hat{\Phi}_h\hat{\Phi}_{h,\th}+ \int_{\hat{\O}} e^{3s}\lt(\hat{u}^a-(\t{\o}+\mathcal{O}(\e\dl)\mathcal{A}_0)e^{-s}\rt)\Dl_s\hat{\Phi} \hat{\Phi}_{h,\th}\nn\\
 &+ \int_{\hat{\O}} e^{3s} \Dl_s \hat{\Phi} \hat{v}^a\lt(\hat{\Phi}_{h,s}+4\hat{\Phi}_{h}\rt)\nn\\
 \ls&  \int_{\hat{\O}} e^{2s} \lt(\hat{\Phi}^2_{h,s}+\hat{\Phi}^2_{h,\th}\rt)+\e\dl \int_{\hat{\O}} e^{s}(1+s^{-1})|\Dl_s\hat{\Phi}| |\hat{\Phi}_{h,\th}|+\e\dl \int_{\hat{\O}} e^{s} |\Dl_s \hat{\Phi}| \lt(|\hat{\Phi}_{h,s}|+|\hat{\Phi}_{h}|\rt)\nn\\
 \ls&   \int_{\hat{\O}} e^{2s} \lt(\hat{\Phi}^2_{h,\th s}+\hat{\Phi}^2_{h,\th\th}\rt)+ \e^2\dl  \int_{\hat{\O}} \hat{\Phi}^2_{ss}.
\end{align}
Similarly, we can obtain that
\begin{align}
&-\int_{\hat{\O}} e^{2s} \hat{\Phi}_{h}[\hat{u}^a \Dl_s \hat{\Phi}_{\th}+ \hat{v}^a \Dl_s \hat{\Phi}_s-2  \hat{v}^a \Dl_s\hat{\Phi}]\ls \int_{\hat{\O}} e^{s} \lt(\hat{\Phi}^2_{1,\th s}+\hat{\Phi}^2_{1,\th\th}\rt)+ \e^2\dl  \int_{\hat{\O}} \hat{\Phi}^2_{ss}.
\end{align}
Combining the above two, we obtain that
\begin{align}
&\lt|-\int_{\hat{\O}}  [e^{2s} \hat{\Phi}_{h}+e^{s} \hat{\Phi}_{1}][\hat{u}^a \Dl_s \hat{\Phi}_{\th}+ \hat{v}^a \Dl_s \hat{\Phi}_s-2  \hat{v}^a \Dl_s\hat{\Phi}]\rt|\nn\\
\ls& \int_{\hat{\O}}  \lt(e^{2s}\hat{\Phi}^2_{h,\th s}+e^{2s}\hat{\Phi}^2_{h,\th\th}+ e^{s}\hat{\Phi}^2_{1,\th s}+e^s\hat{\Phi}^2_{1,\th\th}\rt)+ \e^2\dl  \int_{\hat{\O}} \hat{\Phi}^2_{ss}. \label{henergy2}
\end{align}

{\bf\noindent Estimates of $\boldsymbol{-\int_{\hat{\O}}[e^{3s} \hat{\Phi}_{h}+e^{2s} \hat{\Phi}_{1}][ \hat{u} \Dl_s \hat{\Phi}^a_\th+ \hat{v} \Dl_s \hat{\Phi}^a_s -2  \hat{v} \Dl_s\hat{\Phi}^a]}$}:

Using \eqref{vcpositive3x2}, \eqref{vcpositive3x7}, Cauchy inequality and Hardy inequality \eqref{vchardy1}, \eqref{vchardy2}, we see that
\begin{align}
&-\int_{\hat{\O}}e^{3s} \hat{\Phi}_{h}[ \hat{u} \Dl_s \hat{\Phi}^a_\th+ \hat{v} \Dl_s \hat{\Phi}^a_s -2  \hat{v} \Dl_s\hat{\Phi}^a]\nn\\
=&-\int_{\hat{\O}} e^{2s} \hat{\Phi}_{h} \hat{\Phi}_s \Dl_s \hat{\Phi}^a_\th+\int_{\hat{\O}} e^{2s}\hat{\Phi}_{h} \hat{\Phi}_\th \Dl_s \hat{\Phi}^a_s +2\int_{\hat{\O}} e^{2s} \hat{\Phi}_{h} \hat{\Phi}_\th \Dl_s\hat{\Phi}^a\nn\\
\ls& \e\dl\int_{\hat{\O}} e^{s} (1+s^{-2})|\hat{\Phi}_{h}| |\hat{\Phi}_s|+\dl\int_{\hat{\O}} e^{s}(1+s^{-2}) |\hat{\Phi}_{h}| |\hat{\Phi}_\th|\nn\\
\ls &\dl \int_{\hat{\O}} e^{2s} \lt(\hat{\Phi}^2_{h,\th s}+\hat{\Phi}^2_{h,\th\th}\rt)+\e^2\dl \int_{\hat{\O}} \lt(\hat{\Phi}^2_{s s}+\hat{\Phi}^2_{s}\rt).
\end{align}

Similarly, we can obtain that
\begin{align}
&\lt|-\int_{\hat{\O}}e^{2s} \hat{\Phi}_{1}[ \hat{u} \Dl_s \hat{\Phi}^a_\th+ \hat{v} \Dl_s \hat{\Phi}^a_s -2  \hat{v} \Dl_s\hat{\Phi}^a]\rt|\ls \int_{\hat{\O}} e^{s} \lt(\hat{\Phi}^2_{1,\th s}+\hat{\Phi}^2_{1,\th\th}\rt)+ \e^2\dl  \int_{\hat{\O}} \hat{\Phi}^2_{ss}+\hat{\Phi}^2_{s}.
\end{align}
Combining the above two, we obtain that
\begin{align}
&\lt|-\int_{\hat{\O}}[e^{3s} \hat{\Phi}_{h}+e^{2s} \hat{\Phi}_{1}][ \hat{u} \Dl_s \hat{\Phi}^a_\th+ \hat{v} \Dl_s \hat{\Phi}^a_s -2  \hat{v} \Dl_s\hat{\Phi}^a]\rt|\nn\\
\ls& \int_{\hat{\O}}  \lt(e^{2s}\hat{\Phi}^2_{h,\th s}+e^{2s}\hat{\Phi}^2_{h,\th\th}+ e^{s}\hat{\Phi}^2_{1,\th s}+e^s\hat{\Phi}^2_{1,\th\th}\rt)+ \e^2\dl  \int_{\hat{\O}} \lt(\hat{\Phi}^2_{ss}+ \hat{\Phi}^2_{s}\rt). \label{henergy3}
\end{align}

Inserting \eqref{henergy1}, \eqref{henergy2} and \eqref{henergy3} into \eqref{henergy0} and using Cauchy inequality for the last term,  we obtain \eqref{estihener}, \qed

\subsection{$\dot{H}^3$ and $\dot{H}^4$ energy estimates for the stream function $\hat{\Phi}$} \label{subsec3.2.1}

We first following the proof of Lemma \ref{lemhener} to give the tangential third and fourth derivatives' estimate of the solution. Then by using Bogoviskii Lemma and a cutting-gluing to give the weighted estimate for the tangential derivative of the pressure, i.e. $\hat{p}_\th$, which will induce the third normal derivative estimate for the stream function, i.e. $\hat{\Phi}_{sss}$. Then a direct application of equation of \eqref{vcerrorequ} give the fourth normal derivative estimate for the stream function, i.e. $\hat{\Phi}_{ssss}$.

\subsubsection{The weighted tangential derivatives' estimates of $\hat{\Phi}$ up to the fourth order. } \label{subsec3.2.1}

Almost the same as Lemma \ref{lemhener}, we have the following Lemma.
\begin{lemma}\label{propcombener2}
 Let $(\hat{u},\hat{v})$ be a smooth solution of (\ref{vcerrorequuv}), then there exist $\epsilon_0>0, \dl_0>0$ such that for any $\epsilon\in (0,\epsilon_0)$ and $ \dl\in(0,\dl_0)$, there holds
\begin{align}\label{esticombener}
&\e^4 \int_{\hat{\O}} \lt( e^{s}\hat{\Phi}^2_{1,\th ss}+ e^{s}\hat{\Phi}^2_{1,\th\th s}+ e^{s}\hat{\Phi}^2_{1,\th\th\th}+e^{2s}\hat{\Phi}^2_{h,\th ss}+e^{2s}\hat{\Phi}^2_{h,\th \th s}+e^{2s}\hat{\Phi}^2_{h,\th\th\th}\rt)\nn\\
\ls& C\e^2\int_{\hat{\O}} \lt( e^{s}\hat{\Phi}^2_{1,\th\th}+e^{s}\hat{\Phi}^2_{1,\th s}+e^{2 s} \hat{\Phi}^2_{h,\th\th}+e^{2 s}\hat{\Phi}^2_{h,\th s}\rt)\nn\\
+&\e^2 \int_{\hat{\O}} \lt(\hat{\Phi}^2_{ss}+\hat{\Phi}^2_{s}\rt)+\int_{\hat{\O}}e^{8s}\hat{R}^2_{\text{comb}},
\end{align}
and
\begin{align}\label{esticombener2}
&\e^6 \int_{\hat{\O}}  \lt( e^{s}\hat{\Phi}^2_{1,\th\th ss}+ e^{s}\hat{\Phi}^2_{1,\th\th\th s}+ e^{s}\hat{\Phi}^2_{1,\th\th\th\th}+e^{2s}\hat{\Phi}^2_{h,\th\th ss}+e^{2s}\hat{\Phi}^2_{h,\th\th \th s}+e^{2s}\hat{\Phi}^2_{h,\th\th\th\th}\rt)\nn\\
\ls& C\e^4\int_{\hat{\O}}  \lt( e^{s}\hat{\Phi}^2_{1,\th\th\th}+e^{s}\hat{\Phi}^2_{1,\th\th s}+e^{2 s} \hat{\Phi}^2_{h,\th\th\th}+e^{2 s}\hat{\Phi}^2_{h,\th\th s}\rt)\nn\\
+& \e^4\int_{\hat{\O}} \lt(\hat{\Phi}^2_{ss}+\hat{\Phi}^2_{s}\rt)+\e^2\int_{\hat{\O}} e^{8s}\hat{R}^2_{\text{comb}}.
\end{align}
\end{lemma}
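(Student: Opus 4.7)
The plan is to mirror the proof of Lemma \ref{lemhener} with modified multipliers that carry additional tangential derivatives. Specifically, for \eqref{esticombener} I would multiply the stream function equation \eqref{vcerrorequ} by $-\e^2(e^{3s}\hat{\Phi}_{h,\th\th} + e^{2s}\hat{\Phi}_{1,\th\th})$ and integrate over $\hat{\O}$; for \eqref{esticombener2} I would use the multiplier $\e^4(e^{3s}\hat{\Phi}_{h,\th\th\th\th} + e^{2s}\hat{\Phi}_{1,\th\th\th\th})$. The extra $\e^2$ (resp. $\e^4$) factor in front of the test function is exactly what produces the $\e^4$ (resp. $\e^6$) coefficient on the LHS: paired against the principal diffusive term $\e^2 e^{-s}\Delta_s^2\hat{\Phi}$ in \eqref{vcerrorequ}, it yields the prefactor $\e^4$ (resp. $\e^6$) seen on the left of \eqref{esticombener}--\eqref{esticombener2}.

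The estimation then proceeds term by term exactly as in the proof of Lemma \ref{lemhener}. For the diffusion term, integration by parts in $\th$ (using periodicity) and in $s$ (using the boundary conditions $\hat{\Phi}|_{s=0}=0$ and $\p_s\hat{\Phi}|_{s=0,+\i}=0$) converts the principal contribution into a positive sum of weighted third (resp.\ fourth) tangential-derivative norms, matching the LHS of \eqref{esticombener}--\eqref{esticombener2}. The lower-order pieces $-4\Delta_s\hat{\Phi}_s+4\Delta_s\hat{\Phi}$ generate cross terms that are absorbed by Cauchy's inequality either into the LHS principal quantity or into the $\e^2\int(\hat{\Phi}^2_{ss}+\hat{\Phi}^2_s)$ term on the RHS.

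For the two transport terms, the approach duplicates the treatment of
\[
-\int[e^{3s}\hat{\Phi}_h + e^{2s}\hat{\Phi}_1]\bigl[\hat{u}^a\Delta_s\hat{\Phi}_\th + \hat{v}^a\Delta_s\hat{\Phi}_s - 2\hat{v}^a\Delta_s\hat{\Phi}\bigr]
\]
and of the analogous $\hat{u}\Delta_s\hat{\Phi}^a_\th + \hat{v}\Delta_s\hat{\Phi}^a_s - 2\hat{v}\Delta_s\hat{\Phi}^a$ term in Lemma \ref{lemhener}: decompose $\hat{u}^a = (\tilde{\o} + \mathcal{O}(\e\dl)\mathcal{A}_0)e^{-s} + \text{remainder}$, apply the decay bounds of Corollary \ref{vclemdetail} to the remainder and to $\hat{v}^a,\,\Delta_s\hat{\Phi}^a_\th,\,\Delta_s\hat{\Phi}^a_s,\,\Delta_s\hat{\Phi}^a$, then use Poincar\'e in $\th$ together with the Hardy inequalities \eqref{vchardy1}--\eqref{vchardy2}. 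These contributions are all either absorbed by $\dl$-smallness into the LHS or directly controlled by the RHS of \eqref{esticombener}--\eqref{esticombener2}. Finally, the forcing term is estimated by Cauchy's inequality, and the extra $\e^2$ (resp.\ $\e^4$) from the multiplier lowers the $\e$-power in front of $\int e^{8s}\hat{R}^2_{\text{comb}}$ from $\e^{-2}$ (as in Lemma \ref{lemhener}) to $1$ (resp.\ $\e^2$).

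The main obstacle is the bookkeeping of the commutators arising when $\p_\th^2$ (or $\p_\th^4$) is integrated by parts onto the $\th$-dependent coefficients $\hat{u}^a, \hat{v}^a$: each such derivative costs one application of Corollary \ref{vclemdetail} on additional $\th$-derivatives of the approximate solution. In particular, for \eqref{esticombener2} one must track terms with up to four $\th$-derivatives falling on the approximate solution, all of which obey the same smallness thanks to \eqref{vcappdetail0}--\eqref{vcappdetail1}. Once these commutators are verified to fit either into the $\dl$-absorbable piece or into the RHS of \eqref{esticombener}--\eqref{esticombener2}, the estimates close in the same manner as in Lemma \ref{lemhener}.
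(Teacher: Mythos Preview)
Your proposal is correct and matches the paper's approach exactly: the paper's proof simply says to multiply \eqref{vcerrorequ} by $\e^2[e^{3s}\hat{\Phi}_{h,\th\th}+e^{2s}\hat{\Phi}_{1,\th\th}]$ and $\e^4[e^{3s}\hat{\Phi}_{h,\th\th\th\th}+e^{2s}\hat{\Phi}_{1,\th\th\th\th}]$ and then follow the proof of Lemma \ref{lemhener}. Your elaboration of the term-by-term estimates and the commutator bookkeeping is a faithful expansion of what ``following the proof of Lemma \ref{lemhener}'' entails.
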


\pf By multiplying \eqref{vcerrorequ} by $\e^2[e^{3s} \hat{\Phi}_{h,\th\th}+e^{2s} \hat{\Phi}_{1,\th\th}]$ and $\e^4[e^{3s} \hat{\Phi}_{h,\th\th\th\th}+e^{2s} \hat{\Phi}_{1,\th\th\th\th}]$ and then integrating the resulted equations in $\hat{\O}$, we can obtain \eqref{esticombener} and \eqref{esticombener2} by following the proof of Lemma \ref{lemhener}. \qed

{\noindent \bf Proof of \eqref{linearstability} in Proposition \ref{proplinearstability}. }

First by adding \eqref{vcestipositive} in Lemma \ref{lempositive}, \eqref{vczeroesti} in Lemma \ref{lemlinear} and \eqref{estihener} in Lemma \ref{lemhener}, we obtain that
\begin{align*}
&\|(e^s\hat{\Phi}_{h,\th\th},e^s\hat{\Phi}_{h,\th s},\e e^s\hat{\Phi}_{h, ss},e^{0.5s}\hat{\Phi}_{1,\th\th},e^{0.5s}\hat{\Phi}_{1,\th s}, \e e^{0.5s}\hat{\Phi}_{1, ss},\e \hat{\Phi}_{0,ss}, \e \hat{\Phi}_{0,s} )\|^2_{L^2(\hat{\O})}\nn\\
\ls& \e^{-2}\|e^{4s}\hat{R}_{\text{comb}}\|^2_{L^2{(\hat{\O})}}+\e^{-2}\|e^{1.5s} R_{\hat{u}}\|^2_{L^2{(\hat{\O})}}.
\end{align*}
Adding the above estimates with \eqref{esticombener} and \eqref{esticombener2} in Proposition \ref{propcombener2}, we achieve the first estimate \eqref{linearstability} in Proposition \ref{proplinearstability}.

\subsubsection{The weighted estimates of $\hat{\Phi}_{sss}$. }\label{subsec3.2.2}

 We first use the Bogovskii Lemma and a cutting-gluing  technique to give the weighted estimate for the tangential derivative of the pressure, i.e. $\hat{p}_\th$, and then give the third normal derivative estimate for the stream function, i.e. $\hat{\Phi}_{sss}$. We have the following Lemma.

\begin{lemma}\label{lemlinearss}
 Let $(\hat{u},\hat{v})$ be a smooth solution of (\ref{vcerrorequuv}),, then there exist $\epsilon_0>0, \dl_0>0$ such that for any $\epsilon\in (0,\epsilon_0)$ and $ \dl\in(0,\dl_0)$, there hold
\begin{align}
&\e^{10}\|\hat{\Phi}_{0,sss},e^{0.5s} \hat{\Phi}_{\th sss} \|^2\ls  \e^6\|(\hat{\Phi}_{0,s},\hat{\Phi}_{0,ss}\|^2+\e^6\sum_{ i+j\leq 4\atop i\neq0,j\leq 2 }\|e^{0.5s}\p^i_\th\p^j_s\hat{\Phi}\|^2+\e^6\|e^{2.5s}(\p_\th R_{\hat{u}},\p_\th R_{\hat{v}}, R_{\hat{u}})\|^2_{L^2(\hat{\O})}. \label{sssesti8}
\end{align}
\end{lemma}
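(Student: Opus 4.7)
The plan is to reduce the bound on $\hat\Phi_{0,sss}$ and $e^{0.5s}\hat\Phi_{\th sss}$ to weighted bounds on the second $s$-derivatives of the velocity through the stream-function identities $\hat\Phi_{sss} = e^s(\hat u + 2\hat u_s + \hat u_{ss})$ and $\hat\Phi_{\th sss} = e^s(\hat u_\th + 2\hat u_{\th s} + \hat u_{\th ss})$, to treat the $\th$-zero mode through the explicit ODE satisfied by $\hat u_{0,ss}$, and to extract the non-zero modes by a weighted $L^2$ estimate of $\hat p_{\th\th}$ produced by a strip-by-strip Bogovskii argument.

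For the zero mode, the $\th$-averaged first momentum equation \eqref{vcu0formula} reads $\e^2(\hat u_{0,ss} - \hat u_{0,s}) = S_{\hat u,0} - R_{\hat u,0}$; combining with the algebraic identity $\hat\Phi_{0,sss} = 3\hat\Phi_{0,ss} - 2\hat\Phi_{0,s} + \e^{-2}e^s(S_{\hat u,0} - R_{\hat u,0})$ and using the decay estimates of Corollary \ref{vclemdetail} on the approximate-solution factors in $S_{\hat u,0}$ yields $\e^{10}\|\hat\Phi_{0,sss}\|^2 \ls \e^{10}(\|\hat\Phi_{0,s}\|^2 + \|\hat\Phi_{0,ss}\|^2) + \e^6\|e^{2.5s}R_{\hat u}\|^2 + \e^6\sum\|e^{0.5s}\p_\th^i\p_s^j\hat\Phi\|^2$, which fits into the right-hand side of \eqref{sssesti8}.

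For the non-zero modes, differentiating the first momentum equation of \eqref{vcerrorequuv} in $\th$ gives $\e^2 e^{-s}\hat u_{\th ss} = \hat p_{\th\th} - \p_\th R_{\hat u} + \p_\th S_{\hat u} - \e^2 e^{-s}(\hat u_{\th\th\th} + 2\hat v_{\th\th} - \hat u_\th)$, reducing the non-zero part of \eqref{sssesti8} to a weighted $L^2$ bound on $\hat p_{\th\th}$ with weight $e^{2.5s}$. I partition $[0,+\infty)$ into overlapping strips $J_n = [n, n+2]$ carrying a smooth partition of unity $\{\chi_n^2\}$, set $\Sigma_n := \bT\times J_n$, and on each strip invoke Bogovskii's lemma (with a constant uniform in $n$ by translation invariance) to produce a vector field $\Psi^n$ supported in $\Sigma_n$ satisfying $\text{div}\,\Psi^n = \chi_n^2 \hat p_{\th\th}$ and $\|\Psi^n\|_{H^1(\Sigma_n)} \leq C\|\chi_n\hat p_{\th\th}\|_{L^2(\Sigma_n)}$; the zero-integral compatibility is automatic from $\th$-periodicity. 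Testing $\chi_n\hat p_{\th\th}$ against itself and integrating by parts, the gradient of $\hat p_{\th\th}$ is paired with $\Psi^n$, and one further integration by parts in $\th$ (respectively, in $s$) moves the extra derivative back onto $\Psi^n$ so that $\hat p_{\th\th}$ and $\hat p_{\th s}$ can be substituted directly from the $\th$-differentiated momentum equations without producing a second $\th$-derivative on the forcing. The sole dangerous contribution is the viscous $\e^2 e^{-s}\hat u_{\th ss}$, which is integrated by parts once more in $s$ (boundary terms vanish since $\Psi^n|_{\p\Sigma_n} = 0$) to trade $\hat u_{\th ss}$ for $\hat u_{\th s}$ paired against $\p_s\Psi^n$ or $\Psi^n$, each controlled by $\|\Psi^n\|_{H^1}\leq C\|\chi_n\hat p_{\th\th}\|$. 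Cauchy--Schwarz, Young's inequality, absorption of $\|\chi_n\hat p_{\th\th}\|$, multiplication by $e^{5n}$, and summation in $n$ then produce $\|e^{2.5s}\hat p_{\th\th}\|^2 \ls \e^4\|e^{1.5s}\hat u_{\th s}\|^2 + \|e^{2.5s}(\p_\th R_{\hat u}, \p_\th R_{\hat v})\|^2 + (\text{l.o.t.})$; since $\hat u_{\th s} = e^{-s}(\hat\Phi_{\th ss} - \hat\Phi_{\th s})$, the first summand is controlled by the $j\leq 2$ terms in the $\sum\|e^{0.5s}\p_\th^i\p_s^j\hat\Phi\|^2$ block, while the lower-order contributions from $\p_\th S_{\hat u}, \p_\th S_{\hat v}, \hat u_{\th\th\th}, \hat v_{\th\th}, \hat u_\th$ are disposed of by H\"older, the Hardy inequalities \eqref{vchardy1}--\eqref{vchardy2}, and Corollary \ref{vclemdetail}. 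Plugging this bound back into the identity for $\e^2 e^{-s}\hat u_{\th ss}$ and multiplying by $\e^{10}$ closes the non-zero part of \eqref{sssesti8}.

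The main obstacle is the Bogovskii argument above: the chain of integrations by parts must be orchestrated so that exactly one $\th$-derivative falls on each of $R_{\hat u}, R_{\hat v}$ (matching the $\p_\th R_{\hat u}, \p_\th R_{\hat v}$ on the right of \eqref{sssesti8}), the viscous $\hat u_{\th ss}$ is reduced in a single extra $s$-integration by parts to $\hat u_{\th s}$ (so that the resulting stream-function derivative stays within the $j\leq 2$ restriction of the RHS sum), and the exponential weight $e^{5n}$ interacts cleanly with the uniform-in-$n$ Bogovskii constant so that the strip estimates glue into a single global weighted estimate. The $\p_\th S_{\hat u}, \p_\th S_{\hat v}$ pieces require a separate but routine Hölder-type analysis relying on Corollary \ref{vclemdetail}.
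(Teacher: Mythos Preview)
Your strategy coincides with the paper's: handle the zero mode through the averaged ODE \eqref{vcu0formula}, and for the non-zero modes obtain a weighted $L^2$ bound on the pressure gradient via a strip-by-strip Bogovskii construction, then back out $\hat\Phi_{sss}$ and $\hat\Phi_{\th sss}$ from the first momentum equation. The paper uses disjoint strips $O_k=\bT\times[k,k+1]$, applies Bogovskii to $\hat p_\th$, and tests the once-$\th$-differentiated momentum equations directly against the field $V$; it then says the $\hat\Phi_{\th sss}$ estimate follows ``similarly''. Your overlapping strips with a partition of unity and your direct jump to $\hat p_{\th\th}$ are harmless variants of the same idea.

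There is, however, a bookkeeping slip in your integration-by-parts scheme. After the ``further integration by parts in $\th$'' that brings you to $\int \hat p_{\th\th}\,\p_\th\Psi^{n,\th}+\int \hat p_{\th s}\,\p_\th\Psi^{n,s}$, the viscous contribution $\e^2 e^{-s}\hat u_{\th ss}$ substituted from the $\th$-differentiated first equation is paired with $\p_\th\Psi^{n,\th}$, \emph{not} with $\Psi^{n,\th}$. A further $s$-integration by parts would then land on $\p_s\p_\th\Psi^{n,\th}$, a second derivative of the Bogovskii field that the $H^1$ bound of Lemma~\ref{lembogov} does not control --- so the pairing ``against $\p_s\Psi^n$ or $\Psi^n$'' you claim is not what actually appears. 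The remedy is simple: do not perform a global $\th$-IBP before substitution. Instead substitute $\hat p_{\th\th\th},\hat p_{\th\th s}$ directly from the twice-$\th$-differentiated momentum equations (so the test function remains $\Psi^n$), and then integrate by parts \emph{selectively}: in $s$ on the viscous piece $\e^2 e^{-2s}\hat\Phi_{\th\th sss}$, producing $\hat\Phi_{\th\th ss}$ against $\p_s\Psi^n\in L^2$ (this stays within the $j\le 2$ constraint on the right of \eqref{sssesti8}); and in $\th$ on the forcing pieces $\p_{\th\th}R_{\hat u},\p_{\th\th}R_{\hat v}$, producing $\p_\th R_{\hat u},\p_\th R_{\hat v}$ against $\p_\th\Psi^n\in L^2$. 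This is precisely what the paper does (with one fewer $\th$-derivative) when it tests \eqref{sssesti1} against $V$, integrating the viscous $\hat\Phi_{\th sss}$ term in $s$ and the forcing $\p_\th R_{\hat u}$ term in $\th$.
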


The third normal derivative estimates correspond to the second normal derivative of $\hat{u}$. From \eqref{vcerrorequuv}$_1$, we need  the weighted $L^2$ estimate of $\hat{p}_{\th}$, which can be obtained by applying the following Bogovskii Lemma.

\begin{lemma}\label{lembogov}
 Define the domain $O_k:=\bT \times[k,k+1]$, $0\leq k\in\bN$, then for $f\in L^2(O_k)$ with
\bes
\int_{O_k} f(\th,s)d\th ds=0,
\ees
there exists a vector $V=(V^\th,V^s)\in H^1_0(O_k)$, s. t.
\bes
\bali
&\partial_\theta V^\th+\partial_s V^s=f,\q \| V\|_{H^1(O_k)}\leq  C \|f\|_{L^2(O_k)}.
\eali
\ees
where $\na=(\p_\th,\p_s)$ and the constant $C$ is independent of $k$.
\end{lemma}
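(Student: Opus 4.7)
The plan is to reduce this $k$-dependent estimate to a single Bogovskii-type estimate on the fixed reference strip $O_0=\mathbb{T}\times[0,1]$, whereupon the $k$-independence of the constant $C$ follows automatically from translation invariance.

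Concretely, given $f\in L^2(O_k)$ with $\int_{O_k} f=0$, I would set $\tilde f(\theta,\tilde s):=f(\theta,\tilde s+k)$ on $O_0$. The translation $s\mapsto s-k$ is both an $L^2$- and an $H^1$-isometry, commutes with $\partial_\theta$ and $\partial_s$, and carries $H^1_0(O_0)$ onto $H^1_0(O_k)$. Hence $\tilde f$ has zero mean on $O_0$ with $\|\tilde f\|_{L^2(O_0)}=\|f\|_{L^2(O_k)}$, and any $\tilde V\in H^1_0(O_0)$ satisfying $\partial_\theta \tilde V^\theta+\partial_s \tilde V^s=\tilde f$ and $\|\tilde V\|_{H^1(O_0)}\leq C_0\|\tilde f\|_{L^2(O_0)}$ pulls back via $V(\theta,s):=\tilde V(\theta,s-k)$ to the desired vector field on $O_k$, with exactly the same constant $C_0$. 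Thus only the existence on the single domain $O_0$ needs to be established.

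On $O_0$, which is a smooth bounded Lipschitz domain (a compact manifold whose boundary is two disjoint circles), this is the classical Bogovskii lemma; see Galdi's monograph on the Navier--Stokes equations. A concrete construction that respects the $\theta$-periodicity is to decompose $f$ by Fourier series in $\theta$. The zero mode $f_0(s)$ obeys $\int_0^1 f_0=0$ by the global mean-zero hypothesis, so $V^s_0(s):=\int_0^s f_0(\tau)\,d\tau$ lies in $H^1_0(0,1)$ with $\|V^s_0\|_{H^1}\lesssim\|f_0\|_{L^2}$ by Poincar\'e. For the residue $f-f_0$, which has pointwise zero $\theta$-mean, one covers $\mathbb{T}$ by finitely many arcs, uses a subordinate partition of unity to split $f-f_0$ into pieces supported in star-shaped slabs $\mathrm{arc}_j\times[0,1]$ (correcting each piece's mean by the preceding zero-mode device so as to preserve the mean-zero condition locally), and applies the explicit Bogovskii integral formula on each star-shaped piece. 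Summing the local solutions yields the required $\tilde V\in H^1_0(O_0)$ together with the estimate.

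The only point to monitor is whether the standard Bogovskii machinery is compatible with the periodic identification in $\theta$; since $O_0$ is a genuine bounded Lipschitz domain and the construction is essentially local, no substantive obstacle arises, and I expect the verification to amount to standard bookkeeping. The essential content of the lemma, as used later, is precisely the $k$-uniformity, which comes for free once the reduction to $O_0$ is in place.
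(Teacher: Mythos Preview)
Your proposal is correct and follows essentially the same approach as the paper: the paper simply cites Lemma~III.3.1 of Galdi's monograph and observes that the constant there depends only on the ratio of diameter to inner radius, which is the same for every $O_k$, so the constant is uniform. Your reduction by translation to the single reference domain $O_0$ is an equivalent (and slightly more explicit) way of making the same point; the additional detail you provide on handling the $\theta$-periodicity is more than the paper offers, but is not needed once the citation to Galdi is accepted.
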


Here we only stated a special case of Lemma III.3.1 of \cite{Galdi:2011} which works for general domains and the constant $C$ depends on the ratio of the diameter and inner radius of the domain. For the above domain $O_k$, the diameter and inner radius are fixed, so we have an absolute constant. Note that the Bogovskii function may not be unique. For our purpose, we will choose and fix one for the relevant domain.

%
{\bf\noindent Proof of Lemma \ref{lemlinearss}} Using Lemma \ref{lembogov} for $f=\hat{p}_\th$, there exists a vector $V=(V^\th,V^s)\in H^1_0(O_k)$, s. t.
\be\label{bogov2}
\bali
&\partial_\theta V^\th+\partial_s V^s=\hat{p}_\th,\q \|V\|_{H^1(O_k)}\leq  C \|\hat{p}_\th\|_{L^2(O_k)}.
\eali
\ee

By taking $\th$ derivative on \ref{vcerrorequuv}$_{1,2}$, we obtain that
\be\label{sssesti1}
\begin{aligned}
&-\e^2e^{-s}\lt(\Dl_s \hat{u}_\th-\hat{u}_\th+2\hat{v}_{\th\th}\rt)+\hat{p}_{\th\th}+\p_\th S_{\hat{u}}=\p_\th R_{\hat{u}},\\
&-\e^2e^{-s}\lt(\Dl_s \hat{v}_\th-\hat{v}_\th-2\hat{u}_{\th\th}\rt)+ \hat{p}_{\th s}+\p_\th S_{\hat{v}}=\p_\th R_{\hat{v}},
\end{aligned}
\ee
By direct calculation, we have
\begin{align}
&\Dl_s \hat{u}-\hat{u}+2\hat{v}_{\th}=e^{-s}\lt(\hat{\Phi}_{\th\th s}+\hat{\Phi}_{sss}-2\hat{\Phi}_{s s}-2\hat{\Phi}_{\th\th}\rt),\label{sssesti2}\\
&\Dl_s \hat{v}-\hat{v}-2\hat{u}_{\th}=-e^{-s}\lt(\hat{\Phi}_{\th\th \th}+\hat{\Phi}_{\th ss}\rt). \label{sssesti3}
\end{align}
Multiplying \eqref{sssesti1}$_1$ by$-V^\th $, \eqref{sssesti1}$_2$ by$-V^s $, and integrations on $O_k$ to obtain
\begin{align}
& \int_{O_k} \hat{p}^2_{\theta}=\int \hat{p}_\th  \lt(\partial_\theta V^\th+\partial_s V^s\rt)=  \int_{O_k} \left(-\hat{p}_{\th\th}V^{\th}-\hat{p}_{s\theta} V^s\right)\nn\\
=& -\epsilon^2 \int_{O_k}e^{-s}\lt(\Dl_s \hat{u}_\th-\hat{u}_\th+2\hat{v}_{\th\th}\rt) V^\th-\epsilon^2\int_{O_k} e^{-s}\lt(\Dl_s \hat{v}_\th-\hat{v}_\th-2\hat{u}_{\th\th}\rt) V^s \nn\\
& + \int_{O_k}\lt(\partial_\theta S_{\hat{u}} V^\th+\partial_\theta S_{\hat{v}} V^s\rt) -\int_{O_k} \lt(\partial_\theta R_{\hat{u}} V^\th+\partial_\theta R_{\hat{v}} V^s\rt).\nn
\end{align}
By using \eqref{sssesti2}, integration by parts, Hardy inequality, Poincar\'{e} inequality and \eqref{bogov2}, we have
\begin{align}
&-\e^2\int_{O_k}e^{-s}\lt(\Dl_s \hat{u}_\th-\hat{u}_\th+2\hat{v}_{\th\th}\rt) V^\th   \nn\\
=&-\e^2\int_{O_k} e^{-2s}\lt(\hat{\Phi}_{\th\th\th s}+\hat{\Phi}_{\th sss}-2\hat{\Phi}_{\th s s}-2\hat{\Phi}_{\th \th\th}\rt) V^\th\nn\\
\ls& \e^2\|e^{-2s}(\hat{\Phi}_{\th\th s},\hat{\Phi}_{\th ss},\hat{\Phi}_{\th \th \th} )\|_{L^2(O_k)}\|\nabla V^\th\|_{L^2(O_k)}.
\end{align}
The same by using \eqref{sssesti3}, integration by parts, Hardy inequality, Poincar\'{e} inequality and \eqref{bogov2}, we can obtain that
\begin{align}
&-\e^2\int_{O_k} e^{-s}\lt(\Dl_s \hat{v}_\th-\hat{v}_\th-2\hat{u}_{\th\th}\rt) V^s \nn\\
=&-\e^2\int_{O_k} e^{-2s}\lt(\hat{\Phi}_{\th\th\th \th}+\hat{\Phi}_{\th\th ss}\rt) V^s\ls \e^2\|e^{-2s}(\hat{\Phi}_{\th\th \th},\hat{\Phi}_{\th ss} )\|_{L^2(O_k)}\|\nabla V^s\|_{L^2(O_k)}.\nn
\end{align}
By using integration by parts and Hardy inequality, we have
\begin{align}
\begin{array}{ll}
\begin{aligned}
&\int_{O_k}\lt(\partial_\theta S_{\hat{u}} V^\th+\partial_\theta S_{\hat{v}} V^s\rt)\nn\\
\ls& \|S_{\hat{u}} \|_{L^2(O_k)}\|\p_\th V^\th\|_{L^2(O_k)}+\|S_{\hat{v}} \|_{L^2(O_k)}\|\p_\th V^s\|_{L^2(O_k)}\nn\\
\ls&  \|(S_{\hat{u}},S_{\hat{v}})\|_{L^2(O_k)}\|\hat{p}_\th \|_{L^2(O_k)}.
\end{aligned}
&
\begin{aligned}
&\int_{O_k}\lt(\partial_\theta R_{\hat{u}} V^\th+\partial_\theta R_{\hat{v}} V^s\rt)\nn\\
\ls& \|R_{\hat{u}} \|_{L^2(O_k)}\|\p_\th V^\th\|_{L^2(O_k)}+\|\p_\th R_{\hat{v}} \|_{L^2(O_k)}\| V^s\|_{L^2(O_k)}\nn\\
\ls&  \|(R_{\hat{u}},\p_\th R_{\hat{v}})\|_{L^2(O_k)}\|\hat{p}_\th \|_{L^2(O_k)}.
\end{aligned}
\end{array}
\end{align}
Combing the above estimates, we obtain that
\begin{align}
\|p_\th \|^2_{L^2(O_k)}\ls& \e^4 e^{-4k} \lt\|\lt(\hat{\Phi}_{\th\th\th },\hat{\Phi}_{\th\th s},\hat{\Phi}_{\th s s} \rt)\rt\|^2_{L^2(O_k)} \nn\\
                    &+\|(S_{\hat{u}},S_{\hat{v}},R_{\hat{u}},\p_\th R_{\hat{v}})\|^2_{L^2(O_k)}.\nn
\end{align}
Since in $O_k$, $e^{-s}\approx e^{-k}$.   By multiplying the above inequality by $e^{5k}$, we obtain that
\begin{align}
\|p_\th e^{2.5s}\|^2_{L^2(O_k)}\ls&\e^4  \lt\|e^{0.5s}\lt(\hat{\Phi}_{\th\th\th },\hat{\Phi}_{\th\th s},\hat{\Phi}_{\th s s} \rt)\rt\|^2_{L^2(O_k)}+\|e^{2.5s}(S_{\hat{u}},S_{\hat{v}},R_{\hat{u}},\p_\th R_{\hat{v}})\|^2_{L^2(O_k)}.
\end{align}
Summing $k$ over $k$ from $0$ to $+\i$, we can obtain that
\begin{align}
\|p_\th e^{2.5s} \|^2 \ls& \e^4\lt\|e^{0.5s}\lt(\hat{\Phi}_{\th\th\th },\hat{\Phi}_{\th\th s},\hat{\Phi}_{\th s s} \rt)\rt\|^2_{L^2(\hat{\O})}+\|e^{2.5s}(S_{\hat{u}},S_{\hat{v}},R_{\hat{u}},\p_\th R_{\hat{v}})\|^2_{L^2(\hat{\O})}. \label{sssesti4}
\end{align}
Then from the equation \eqref{sssesti2} and \eqref{vcerrorequuv}$_1$, we have
\begin{align}
-\e^2 \Phi_{sss}=\epsilon^2\lt( \Phi_{\th\th s}-2 \Phi_{s s}-2 \Phi_{\th\th}\rt)+e^{2s}\lt( \hat{p}_\th+  S_{\hat{u}}- R_{\hat{u}}\rt),\label{sssesti9}
\end{align}
which after using Hardy inequality, indicates that
\begin{align}
\e^4\|e^{0.5s} \Phi_{\neq,sss} \|^2\ls \e^4 \|e^{0.5s}(\hat{\Phi}_{\th\th s},\hat{\Phi}_{\neq, s s},\hat{\Phi}_{\th\th})\|^2+\|p_\th e^{2.5s}\|^2+\|e^{2.5s}(S_{\hat{u}},R_{\hat{u}})\|^2. \label{sssesti5}
\end{align}
Also direct by taking $L^2$ norm of \eqref{sssesti9}, we have
\begin{align}
\e^4\| \Phi_{0,sss} \|^2\ls \e^4 \|\hat{\Phi}_{0, s s}\|^2+\|e^{2s}(S_{\hat{u}0},R_{\hat{u}0})\|^2. \label{sssesti10}
\end{align}
Combining \eqref{sssesti4} and \eqref{sssesti5}, we arrive at
\begin{align}
\e^4\|e^{0.5s} \Phi_{\neq,sss} \|^2\ls \e^4 \lt(\|e^{0.5s} (\hat{\Phi}_{\th\th s},\hat{\Phi}_{\th s s},\hat{\Phi}_{\th\th\th})\|^2\rt)+\|e^{2.5s}(S_{\hat{u}},S_{\hat{v}},R_{\hat{u}},\p_\th R_{\hat{v}})\|^2_{L^2(\hat{\O})}. \label{sssesti6}
\end{align}
From the representation of $S_{\hat{u}}$ and $S_{\hat{v}}$ and using estimates in \eqref{vcappdetail0} and \eqref{vcappdetail1}, it is not hard to obtain that
\be
\|e^{2s}S_{\hat{u}0}\|+ \|e^{2.5s}\lt(S_{\hat{u}},S_{\hat{u}}\rt) \|^2\ls \|(\hat{\Phi}_{0,s},\hat{\Phi}_{0,ss},e^{0.5s}\hat{\Phi}_{\th s},e^{0.5s}\hat{\Phi}_{\th\th} )\|^2. \label{sssesti7}
\ee
Combining \eqref{sssesti10}, \eqref{sssesti6} and \eqref{sssesti7}, we see arrive at
\begin{align}
&\e^4\| \Phi_{0,sss}\|^2+\e^4\|e^{0.5s} \Phi_{\neq,sss} \|^2\ls  \|(\hat{\Phi}_{0,s},\hat{\Phi}_{0,ss}\|^2+\sum_{ i+j\leq 3\atop i\neq0,j\neq 3}\|e^{0.5s}\p^i_\th\p^j_s\hat{\Phi}\|^2+\|e^{2.5s}(R_{\hat{u}},\p_\th R_{\hat{v}})\|^2_{L^2(\hat{\O})}.
\end{align}
Similar estimates as $e^{0.5s} \Phi_{\neq,sss}$, we can achieve that
\begin{align}
&\e^4\|e^{0.5s} \Phi_{\th sss} \|^2\ls  \|(\hat{\Phi}_{0,s},\hat{\Phi}_{0,ss}\|^2+\sum_{ i+j\leq 4\atop i\neq0,j\leq 2}\|e^{0.5s}\p^i_\th\p^j_s\hat{\Phi}\|^2+\|e^{2.5s}(\p_\th R_{\hat{u}},\p_\th R_{\hat{v}})\|^2_{L^2(\hat{\O})}.
\end{align}
The above two indicate \eqref{sssesti8} after multiplying $\e^6$. \qed

\subsubsection{The weighted estimates of $\hat{\Phi}_{ssss}$. }\label{subsec3.2.3}
 We have the following Lemma.

\begin{lemma}\label{lemlinearssss}
Let $(\hat{u},\hat{v})$ be a smooth solution of (\ref{vcerrorequ}), then there exist $\epsilon_0>0, \dl_0>0$ such that for any $\epsilon\in (0,\epsilon_0)$  and $ \dl\in(0,\dl_0)$, there hold
\begin{align}
\e^{16}\| \Phi_{0,ssss}, e^{0.5s}\Phi_{\neq,ssss} \|^2\ls \e^{10}\lt(  \|\Phi_{0,s},\Phi_{0,ss},\Phi_{0,sss}\|^2+\sum_{i+j\leq 4\atop i\neq 0,j\neq 4}\|e^{0.5s}\p^i_\th\p^j_s\hat{\Phi}\|^2\rt)+\e^{12}\|e^{3.5s}\hat{R}_{\text{comb}}\|^2_{L^2(\hat{\O})}. \label{ssssesti}
\end{align}
\end{lemma}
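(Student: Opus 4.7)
The plan is to isolate $\hat{\Phi}_{ssss}$ directly from the stream-function equation \eqref{vcerrorequ} and then feed every lower-order term back into bounds already obtained in Lemmas \ref{lempositive}--\ref{lemlinearss}. Writing $\Dl_s^2\hat{\Phi}=\hat{\Phi}_{\th\th\th\th}+2\hat{\Phi}_{\th\th ss}+\hat{\Phi}_{ssss}$ and $\Dl_s\hat{\Phi}_s=\hat{\Phi}_{\th\th s}+\hat{\Phi}_{sss}$, I would rearrange \eqref{vcerrorequ} into the pointwise identity
\begin{equation*}
\e^2 e^{-s}\hat{\Phi}_{ssss}=e^{2s}\hat{R}_{\text{comb}}+T+\e^2 e^{-s}\Bigl[4\hat{\Phi}_{sss}+4\hat{\Phi}_{\th\th s}-\hat{\Phi}_{\th\th\th\th}-2\hat{\Phi}_{\th\th ss}-4\hat{\Phi}_{ss}-4\hat{\Phi}_{\th\th}\Bigr],
\end{equation*}
where $T$ collects the two transport brackets of \eqref{vcerrorequ}, each of which depends on $\hat{\Phi}$ only through at most third-order derivatives.

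For the zero Fourier mode I would take the $\th$-average, which annihilates every $\p_\th$-bearing term; the resulting $L^2_s$ bound, multiplied by $\e^{12}$, gives $\e^{16}\|\hat{\Phi}_{0,ssss}\|^2\lesssim\e^{16}\|\hat{\Phi}_{0,ss},\hat{\Phi}_{0,sss}\|^2+\e^{12}\|e^{3s}\hat R_{\text{comb},0}\|^2+\e^{12}\|e^{s}T_0\|^2$, and both the $\e^{16}$ and $\e^{12}e^{3s}$ factors fit under the targets $\e^{10}$ and $\e^{12}e^{3.5s}$ respectively since $\e$ is small. For the non-zero modes I would multiply the identity by $e^{0.5s}$, square, and integrate; this yields $\e^4\|e^{0.5s}\hat{\Phi}_{\neq,ssss}\|^2\lesssim\|e^{3.5s}\hat R_{\text{comb}}\|^2+\|e^{1.5s}T\|^2+\e^4\sum_{i+j\le 4,\,i\ne 0}\|e^{0.5s}\p^i_\th\p^j_s\hat{\Phi}\|^2$, where the two pure-$s$ entries $\hat{\Phi}_{\neq,ss}$ and $\hat{\Phi}_{\neq,sss}$ from the bracket are upgraded to terms carrying at least one $\p_\th$ via Poincar\'e in $\th$. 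Multiplying this by $\e^{12}$ produces the desired $\e^{16}$ on the left, $\e^{12}\|e^{3.5s}\hat{R}_{\text{comb}}\|^2$, and $\e^{16}\sum\|e^{0.5s}\p^i_\th\p^j_s\hat{\Phi}\|^2\le\e^{10}\sum\|e^{0.5s}\p^i_\th\p^j_s\hat{\Phi}\|^2$.

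The delicate step is the bound $\|e^{1.5s}T\|^2\lesssim\|\hat{\Phi}_{0,s},\hat{\Phi}_{0,ss},\hat{\Phi}_{0,sss}\|^2+\sum_{i+j\le 4,\,i\ne 0}\|e^{0.5s}\p^i_\th\p^j_s\hat{\Phi}\|^2$. For the approximate-solution bracket $(\hat{u}^a\p_\th+\hat{v}^a\p_s)\Dl_s\hat{\Phi}-2\hat{v}^a\Dl_s\hat{\Phi}$ the pointwise decay $|\hat{u}^a|\lesssim e^{-s}$ and $|\hat{v}^a|\lesssim\e\dl e^{-2s}$ supplied by Corollary \ref{vclemdetail} converts the weight $e^{1.5s}$ into $e^{0.5s}$ acting on third-order derivatives of $\hat{\Phi}$, exactly the quantities permitted by the right-hand side. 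For the error-solution bracket $(\hat{u}\p_\th+\hat{v}\p_s)\Dl_s\hat{\Phi}^a-2\hat{v}\Dl_s\hat{\Phi}^a$, I would use that $\Dl_s\hat{\Phi}^a=-e^{2s}\hat{\o}^a$ with $\hat{\o}^a_e$ constant and $\hat{\o}^a_p$ supported in $\{0\le s\le\ln 3\}$, so $\p_\th\Dl_s\hat{\Phi}^a$ and $\p_s\Dl_s\hat{\Phi}^a$ are compactly supported in $s$; combined with the a priori $L^\i$-bounds on $(\hat{u},\hat{v})$ supplied by the iterative hypothesis in Proposition \ref{vcpropexistence}, this makes their contribution absorbable into $\sum\|e^{0.5s}\p^i_\th\p^j_s\hat{\Phi}\|^2$ after a single application of Hardy's inequality \eqref{vchardy2}. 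Collecting everything yields \eqref{ssssesti}.
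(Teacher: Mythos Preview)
Your overall strategy is exactly the paper's: isolate $\hat{\Phi}_{ssss}$ pointwise from \eqref{vcerrorequ}, take the weighted $L^2$ norm, and multiply through by $\e^{12}$. The treatment of the viscous bracket, the zero-mode reduction, the Poincar\'e upgrade of $\hat{\Phi}_{\neq,ss},\hat{\Phi}_{\neq,sss}$, and the approximate-solution transport term via $|\hat{u}^a|\lesssim e^{-s}$, $|\hat{v}^a|\lesssim\e\dl e^{-2s}$ are all fine and match the paper's ``directly taking $L^2$ weighted norm and using estimates in \eqref{vcappdetail0} and \eqref{vcappdetail1}.''

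There is, however, a genuine gap in your handling of the error-solution bracket $(\hat{u}\p_\th+\hat{v}\p_s)\Dl_s\hat{\Phi}^a-2\hat{v}\Dl_s\hat{\Phi}^a$. First, ``$\hat{\o}^a_e$ constant'' does not by itself give compact support of $\p_s\Dl_s\hat{\Phi}^a$: since $\Dl_s\hat{\Phi}^a=-e^{2s}\hat{\o}^a$, a nonzero constant would produce the growing term $-2\hat{\o}^a_e e^{2s}$. What you actually need (and what the construction in Section~\ref{secappro} gives) is $\hat{\o}^a_e\equiv 0$; even then the $\e^{22}h$ corrector spoils exact compact support, so the argument should rest on decay, not support. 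Second, invoking ``the a priori $L^\infty$-bounds on $(\hat{u},\hat{v})$ supplied by the iterative hypothesis in Proposition~\ref{vcpropexistence}'' is circular: Proposition~\ref{vcpropexistence} is proved \emph{from} Proposition~\ref{proplinearstability}, of which the present lemma is a part. Moreover, an $L^\infty$ bound on $(\hat{u},\hat{v})$ does not place the contribution into the required form $\sum\|e^{0.5s}\p^i_\th\p^j_s\hat{\Phi}\|^2$.

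The correct (and simpler) route is the one the paper takes: write $\hat{u}=e^{-s}\hat{\Phi}_s$, $\hat{v}=-e^{-s}\hat{\Phi}_\th$ and use the pointwise bounds $|\Dl_s\hat{\Phi}^a_\th|\lesssim\e\dl e^{-s}(1+s^{-2})$, $|\Dl_s\hat{\Phi}^a_s|\lesssim\dl e^{-s}(1+s^{-2})$, $|\Dl_s\hat{\Phi}^a|\lesssim\dl e^{-s}(1+s^{-1})$ already recorded in \eqref{vcpositive3x2} and \eqref{vcpositive3x7}. With the weight $e^{1.5s}$ this yields $e^{-0.5s}(1+s^{-2})|\hat{\Phi}_s|$ and $e^{-0.5s}(1+s^{-2})|\hat{\Phi}_\th|$, and then Hardy \eqref{vchardy1}--\eqref{vchardy2} plus Poincar\'e in $\th$ deliver exactly $\|\hat{\Phi}_{0,s},\hat{\Phi}_{0,ss},\hat{\Phi}_{0,sss}\|^2+\sum_{i+j\le 4,\,i\ne 0,\,j\ne 4}\|e^{0.5s}\p^i_\th\p^j_s\hat{\Phi}\|^2$, which is the right-hand side of \eqref{ssssesti}.
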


\pf From \eqref{vcerrorequ}, we see that
\begin{align}
&\e^2 e^{-s}\lt(\Dl^2_s\hat{\Phi}-4\Dl_s\hat{\Phi}_s+4\Dl_s\hat{\Phi}\rt)\nn\\
&-\lt[(\hat{u}^a\p_\th+\hat{v}^a\p_s)\Dl_s \hat{\Phi}-2v^a\Dl_s\hat{\Phi}\rt]-\lt[(\hat{u}\p_\th+\hat{v}\p_s)\Dl_s \hat{\Phi}^a-2\hat{v}\Dl_s\hat{\Phi}^a\rt]\nn\\
&=e^{2s}\hat{R}^a_{\o}+e^{2s}\hat{R}_\o:=e^{2s}\hat{R}_{\text{comb}}.
\end{align}
Then, we obtain that
\begin{align}
&\e^2\hat{\Phi}_{ssss}=\e^2\lt(-\hat{\Phi}_{\th\th\th\th}-2\hat{\Phi}_{\th\th ss}+4 \Dl_s\hat{\Phi}_s-\Dl_s\hat{\Phi}\rt)\nn\\
&+e^{s}\lt[(\hat{u}^a\p_\th+\hat{v}^a\p_s)\Dl_s \hat{\Phi}-2v^a\Dl_s\hat{\Phi}\rt]+e^{s}\lt[(\hat{u}\p_\th+\hat{v}\p_s)\Dl_s \hat{\Phi}^a-2\hat{v}\Dl_s\hat{\Phi}^a\rt]+e^{3s}\hat{R}_{\text{comb}}.
\end{align}

Then, by directly taking $L^2$ weighted norm and using estimates in \eqref{vcappdetail0} and \eqref{vcappdetail1}, we can obtain that
\begin{align}
\e^4\| \Phi_{0,ssss}, e^{0.5s}\Phi_{\neq,ssss} \|^2\ls \e^{-2}\lt(  \|\Phi_{0,s}\Phi_{0,ss},\Phi_{0,sss}\|^2+\sum_{i+j\leq 4\atop i\neq 0,j\neq 4}\|e^{0.5s}\p^i_\th\p^j_s\hat{\Phi}\|^2\rt)+\|e^{3.5s}\hat{R}_{\text{comb}}\|^2_{L^2(\hat{\O})}.
\end{align}
which is \eqref{ssssesti} after multiplying $\e^{12}$.

{\noindent \bf Proof of \eqref{linearstability1} in Proposition \ref{proplinearstability}. }

By adding \eqref{ssssesti} in Lemma \ref{lemlinearssss} and \eqref{sssesti8} in Lemma \ref{lemlinearss}, we obtain \eqref{linearstability1} in Proposition \ref{proplinearstability}.

\subsection{Refined weighted estimates for the Fourier one mode of $\hat{\Phi}$} \label{subsec3.3}

\begin{lemma} \label{lem1fposi}
 Let $\hat{\Phi}$ be a smooth solution of (\ref{vcerrorequ}), then there exist $\epsilon_0>0, \dl_0>0$ such that for any $\epsilon\in (0,\epsilon_0)$ and $\dl\in(0,\dl_0)$, there holds,
\begin{align}
&\e^{20} \int_{\hat{\O}} | (e^{s} \Dl_s\hat{\Phi}_1)_{,s}|^2+ \e^{18}\int^{\i}_0 | (e^{s} \Dl_s\Phi_{1})_{,\th}|^2\nn\\
\ls & \e^{16}\sum^4_{i=0}\int^\i_0|\hat{\Phi}^{(k)}_{0}|^2ds+\e^{16}\sum_{0\neq i+j\leq 4 \atop i\neq 0}\int_{\hat{\O}} e^{s} (\p^i_\th\p^j_s\Phi)^2+\e^{18}\int_{\hat{\O}}e^{8 s}\hat{R}^2_{comb}. \label{vc1festiposi}
\end{align}
Here the constant $C$ is independent of $\e_0$, $\dl_0$.
\end{lemma}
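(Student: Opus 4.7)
The plan is to perform a single weighted energy estimate on the Fourier-one projection of the stream equation \eqref{vcerrorequ}, choosing a test function that simultaneously extracts positive control of $(e^s\Dl_s\hat{\Phi}_1)_{,\th}$ from the leading transport term and of $(e^s\Dl_s\hat{\Phi}_1)_{,s}$ from the diffusive term.

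First I project \eqref{vcerrorequ} onto the Fourier-one mode. Splitting $\hat{u}^a=(\t{\o}+\mathcal{O}(\e\dl)\mathcal{A}_0)e^{-s}+[\hat{u}^a-(\t{\o}+\mathcal{O}(\e\dl)\mathcal{A}_0)e^{-s}]$ and treating $\hat{v}^a$ as a perturbation, the Fourier-one component reads, schematically,
\bes
\e^2 e^{-s}\lt[\Dl_s^2\hat{\Phi}_1-4\Dl_s\hat{\Phi}_{1,s}+4\Dl_s\hat{\Phi}_1\rt]-(\t{\o}+\mathcal{O}(\e\dl)\mathcal{A}_0)e^{-s}(\Dl_s\hat{\Phi}_1)_{,\th}=\mathcal{E}+e^{2s}\hat{R}_{\mathrm{comb},1},
\ees
where $\mathcal{E}$ collects the contribution of $\hat{u}^a-(\t{\o}+\mathcal{O}(\e\dl)\mathcal{A}_0)e^{-s}$, of $\hat{v}^a$, of the ``error-transport'' $(\hat{u}\p_\th+\hat{v}\p_s)\Dl_s\hat{\Phi}^a-2\hat{v}\Dl_s\hat{\Phi}^a$, and of the mode-coupling introduced by the projection. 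I then test this equation against
\bes
-\e^{18}\,e^{3s}\bigl[(\Dl_s\hat{\Phi}_1)_{,\th}+\Dl_s\hat{\Phi}_1\bigr]
\ees
and integrate over $\hat{\O}=\bT\times(0,+\i)$.

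The two main positive contributions arise as follows. The leading transport piece paired with the $(\Dl_s\hat{\Phi}_1)_{,\th}$ part of the test function produces
\bes
(\t{\o}+\mathcal{O}(\e\dl)\mathcal{A}_0)\,\e^{18}\int_{\hat{\O}}e^{2s}|(\Dl_s\hat{\Phi}_1)_{,\th}|^2=(\t{\o}+\mathcal{O}(\e\dl)\mathcal{A}_0)\,\e^{18}\|(e^s\Dl_s\hat{\Phi}_1)_{,\th}\|_{L^2(\hat{\O})}^2,
\ees
while the pairing of the transport term with $\Dl_s\hat{\Phi}_1$ integrates to zero in $\th$. The leading diffusive piece paired with $-\e^{18}e^{3s}\Dl_s\hat{\Phi}_1$, after integration by parts twice in $(\th,s)$ using the boundary conditions $\hat{\Phi},\,\p_s\hat{\Phi}|_{s=0,+\i}=0$, produces a term of the form
\bes
\e^{20}\int_{\hat{\O}}e^{2s}\bigl(|(\Dl_s\hat{\Phi}_1)_{,s}|^2-|(\Dl_s\hat{\Phi}_1)_{,\th}|^2\bigr)+\text{l.o.t.}
\ees
For Fourier mode $k=1$ the identity $\int_{\bT}|(\Dl_s\hat{\Phi}_1)_{,\th}|^2\,d\th=\int_{\bT}|\Dl_s\hat{\Phi}_1|^2\,d\th$ lets me rewrite the indefinite piece, and since $\e^{20}\ll\e^{18}$ the negative $\e^{20}\int e^{2s}|(\Dl_s\hat{\Phi}_1)_{,\th}|^2$ byproduct is absorbed by the positive $\e^{18}$ transport term. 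Combining both and using $|(e^s\Dl_s\hat{\Phi}_1)_{,s}|^2\ls e^{2s}(|\Dl_s\hat{\Phi}_1|^2+|(\Dl_s\hat{\Phi}_1)_{,s}|^2)$, I recover both $\e^{20}\|(e^s\Dl_s\hat{\Phi}_1)_{,s}\|_{L^2(\hat{\O})}^2$ and $\e^{18}\|(e^s\Dl_s\hat{\Phi}_1)_{,\th}\|_{L^2(\hat{\O})}^2$ on the LHS. The lower-order diffusive correction $-4\Dl_s\hat{\Phi}_s+4\Dl_s\hat{\Phi}$ is $\e^2$-smaller than the main diffusion and is absorbed by Cauchy--Schwarz.

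It then remains to dominate $\mathcal{E}$ and the forcing by the RHS of \eqref{vc1festiposi}. The smallness bounds $|\hat{u}^a-(\t{\o}+\mathcal{O}(\e\dl)\mathcal{A}_0)e^{-s}|\ls\e\dl$ and $|\hat{v}^a|\ls\e\dl e^{-2s}$ from \eqref{vcappdetail0}--\eqref{vcappdetail1} supply the small factor needed to absorb the subleading transport contributions into the positive LHS. Mode-coupling terms as well as the error-transport $(\hat{u}\p_\th+\hat{v}\p_s)\Dl_s\hat{\Phi}^a-2\hat{v}\Dl_s\hat{\Phi}^a$ are handled by Cauchy--Schwarz and the Hardy inequalities \eqref{vchardy1}--\eqref{vchardy2}, using the previously controlled norms $\|\hat{\Phi}_0^{(k)}\|$ for $k\leq 4$ and $\|e^{0.5s}\p_\th^i\p_s^j\hat{\Phi}\|$ for $i+j\leq 4,\,i\neq0$ (available from Lemmas \ref{lempositive}--\ref{propcombener2} together with Lemmas \ref{lemlinearss}--\ref{lemlinearssss}); after redistributing weights via the $e^{3s}$ factor, each such term is estimated by $C\e^{16}$ times one of the norms on the RHS. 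The forcing $e^{2s}\hat{R}_{\mathrm{comb},1}$ paired with the test function yields $\e^{18}\|e^{4s}\hat{R}_{\mathrm{comb}}\|^2$ by Cauchy--Schwarz, putting half of the weight on the already-positive LHS. The main obstacle will be the careful bookkeeping of weights in these integrations by parts so that the negative $-\e^{20}\int e^{2s}|(\Dl_s\hat{\Phi}_1)_{,\th}|^2$ byproduct is strictly absorbed, and so that the many error pieces can be estimated with the specific exponential weight $e^{0.5s}$ that appears on the RHS of \eqref{vc1festiposi}; this is precisely why the multiplier $-\e^{18}e^{3s}[(\Dl_s\hat{\Phi}_1)_{,\th}+\Dl_s\hat{\Phi}_1]$ is used, since the $e^{3s}$ pairs with the $e^{-s}$ inside the diffusion and transport to give the $e^{2s}$ positive weight matching $\|(e^s\Dl_s\hat{\Phi}_1)_{,\bullet}\|^2$.
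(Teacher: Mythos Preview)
Your multiplier $-\e^{18}e^{3s}\bigl[(\Dl_s\hat{\Phi}_1)_{,\th}+\Dl_s\hat{\Phi}_1\bigr]$ is exactly the paper's choice (the paper applies the two pieces separately and then combines), and the overall structure---positivity of $\|(e^s\Dl_s\hat{\Phi}_1)_{,\th}\|^2$ from the transport, positivity of $\|(e^s\Dl_s\hat{\Phi}_1)_{,s}\|^2$ from the diffusion, absorption of the $\e^{20}$ negative piece into the $\e^{18}$ positive one---is correct.

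There is, however, a genuine gap in your handling of the diffusive integrations by parts. You claim the boundary terms vanish by $\hat{\Phi}|_{s=0}=\hat{\Phi}_s|_{s=0}=0$, but the quantities being integrated by parts are $\Dl_s\hat{\Phi}_1$ and $(\Dl_s\hat{\Phi}_1)_{,s}$, and these do \emph{not} vanish at $s=0$: indeed $\Dl_s\hat{\Phi}_1|_{s=0}=\hat{\Phi}_{1,ss}|_{s=0}$ and $(\Dl_s\hat{\Phi}_1)_{,s}|_{s=0}=\hat{\Phi}_{1,sss}|_{s=0}$ are generically nonzero. The paper (working in the variables $\t{\mathfrak{f}},\t{\mathfrak{g}}$ with $e^s\Dl_s\hat{\Phi}_{1,\th}=\t{\mathfrak{g}}'\cos\th-\t{\mathfrak{f}}'\sin\th$) picks up boundary contributions of the form $\e^2\bigl(|\t{\mathfrak{f}}''\t{\mathfrak{g}}'|+|\t{\mathfrak{g}}''\t{\mathfrak{f}}'|\bigr)\big|_{s=0}$ and $\e^2\bigl(|\t{\mathfrak{f}}''\t{\mathfrak{f}}'|+|\t{\mathfrak{g}}''\t{\mathfrak{g}}'|\bigr)\big|_{s=0}$, and bounds them via the trace/Sobolev inequality by $\e^2\sum_{i+j\le4}\|\p_\th^i\p_s^j\hat{\Phi}\|^2$. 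This is the \emph{actual} reason the $\dot{H}^4$ norms (Lemmas \ref{lemlinearss}--\ref{lemlinearssss}) appear on the right-hand side of \eqref{vc1festiposi}; you instead attribute the need for those norms to the mode-coupling and error-transport terms, which is a misattribution. A minor related point: the lower-order diffusive pieces $-4\Dl_s\hat{\Phi}_s+4\Dl_s\hat{\Phi}$ are \emph{not} $\e^2$-smaller than $\Dl_s^2\hat{\Phi}$---they carry the same $\e^2e^{-s}$ prefactor---and in the paper they contribute both to the positive term and to the boundary terms in the same computation.
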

\pf Remembering the Fourier series representation of $\hat{\Phi}$, we have
\be
\hat{\Phi}_1=A^{\hat{\Phi}}_1(s)\cos\th+B^{\hat{\Phi}}_1(s)\sin\th.
\ee Then noting $\hat{\Phi}_{1,\th}=-e^{s} \hat{v}_1$, direct calculation indicates that
\be
e^s\Dl_s\hat{\Phi}_{1,\th}=(e^{2s}(e^{-s}B^{\hat{\Phi}}_1)_{,s})_{,s}\cos\th-(e^{2s}(e^{-s}A^{\hat{\Phi}}_1)_{,s})_{,s}\sin\th=- (e^{2s} v_{1,s})_{,s}.
\ee
For notation simplification, we denote
\be\label{vc1posi-2}
\hat{\Phi}_1={\mathfrak{f}}(s)\cos\th+{\mathfrak{g}}(s)\sin\th,\q e^{2s}(e^{-s}\mathfrak{f})'=\t{\mathfrak{f}},\q e^{2s}(e^{-s}\mathfrak{g})'=\t{\mathfrak{g}}.
\ee
Then by this notations, we have
\begin{align}
&e^s\Dl_s\hat{\Phi}_{1,\th}=(\t{\mathfrak{g}}'(s)\cos\th-\t{\mathfrak{f}}'(s)\sin\th=- (e^{2s} v_{1,s})_{,s},\label{vc1posi-4}\\
&(e^s\Dl_s\hat{\Phi}_1)_{,s}=(e^{2s} (e^{-s}\mathfrak{f})_{,s})_{,s}\cos\th+(e^{2s} (e^{-s}\mathfrak{g})_{,s})_{,s}\sin\th=\t{\mathfrak{f}}^{''}\cos\th+\t{\mathfrak{g}}^{''}\sin\th. \label{vc1posi-5}
\end{align}
Multiplying \eqref{vcerrorequ} by $-e^{3s}\Dl_s\hat{\Phi}_{1,\th}=e^{2s}\lt(\t{\mathfrak{g}'}\cos\th-\t{\mathfrak{f}'}\sin\th\rt)$ and integrating the resulted equation on $\hat{\O}:=\bT\times[0,+\i)$ to obtain that
\begin{align}
&\int_{\hat{\O}} e^{3s}\Dl_s\hat{\Phi}_{1,\th} \hat{u}^a\Dl_s \hat{\Phi}_{\th}+\int_{\hat{\O}}e^{3s}\Dl_s\hat{\Phi}_{1,\th} \hat{v}^a \Dl_s \hat{\Phi}_s -2\int_{\hat{\O}}e^{3s}\Dl_s\hat{\Phi}_{1,\th}  \hat{v}^a\Dl_s\hat{\Phi}\nn\\
&+\int_{\hat{\O}}e^{3s}\Dl_s\hat{\Phi}_{1,\th} \hat{u} \Dl_s \hat{\Phi}^a_\th+\int_{\hat{\O}}e^{3s}\Dl_s\hat{\Phi}_{1,\th} \hat{v} \Dl_s \hat{\Phi}^a_s -2\int_{\hat{\O}}e^{3s}\Dl_s\hat{\Phi}_{1,\th} \hat{v}\Dl_s\hat{\Phi}^a\nn\\
&-\e^2 \int_{\hat{\O}}e^{2s}\Dl_s\hat{\Phi}_{1,\th}\lt(\Dl^2_s\hat{\Phi}-4\Dl_s\hat{\Phi}_s+4\Dl_s\hat{\Phi}\rt)=-\int_{\hat{\O}} e^{5s}\Dl_s\hat{\Phi}_{1,\th} \hat{R}_{\text{comb}}.\label{vc1posi0}
\end{align}

{\bf\noindent Estimates of $\boldsymbol{\int_{\hat{\O}} e^{3s}\Dl_s\hat{\Phi}_{1,\th} \hat{u}^a\Dl_s \hat{\Phi}_{\th}}$}: The first term of the left hand of \eqref{vc1festiposi} comes from this term. By direct integration by parts and boundary condition for $\hat{\Phi}_{\th}$, we obtain that
\begin{align}
&\int_{\hat{\O}}e^{3s}\Dl_s\hat{\Phi}_{1,\th} \hat{u}^a\Dl_s \hat{\Phi}_{\th}\nn\\
=& \lt(\t{\o}+\mathcal{\e\dl}\mathcal{A}_0\rt)\int_{\hat{\O}}e^{2s}\Dl_s\hat{\Phi}_{1,\th}  \Dl_s \hat{\Phi}_{1,\th}+\int_{\hat{\O}} \lt[\hat{u}^a+\lt(\t{\o}-\mathcal{\e\dl}\mathcal{A}_0\rt)e^{-s}\rt] e^{3s}\Dl_s\hat{\Phi}_{1,\th} \Dl_s \hat{\Phi}_\th.\nn\\
=& \lt(\t{\o}+\mathcal{\e\dl}\mathcal{A}_0\rt)\int_{\hat{\O}}|e^s\Dl_s\hat{\Phi}_{1,\th}|^2+\int_{\hat{\O}} \lt[\hat{u}^a-\lt(\t{\o}+\mathcal{\e\dl}\mathcal{A}_0\rt)e^{-s}\rt] e^{3s}\Dl_s\hat{\Phi}_{1,\th} \Dl_s \hat{\Phi}_\th. \label{vc1posi-1}
\end{align}
Using integration by parts, estimates in \eqref{vcappdetail0}, Cauchy inequality, Poincar\'{e} inequality in $\th$ variable, Hardy inequality \eqref{vchardy1} and \eqref{vchardy2}, we have
\begin{align}
&\lt|-\int_{\hat{\O}} \lt[\hat{u}^a-\lt(\t{\o}+\mathcal{\e\dl}\mathcal{A}_0\rt)e^{-s}\rt]e^{3s}\Dl_s\hat{\Phi}_{1,\th}\Dl_s \hat{\Phi}_\th\rt|\nn\\
\ls &\dl \int_{\hat{\O}} \lt|e^{s}\Dl_s\hat{\Phi}_{1,\th}\Dl_s \hat{\Phi}_\th\rt|\ls \dl \int_{\hat{\O}} |e^s\Dl_s\hat{\Phi}_{1,\th}|^2+ \dl \int_{\hat{\O}}  |\Dl_s \hat{\Phi}_\th|^2. \label{vc1posi-3}
\end{align}
Inserting \eqref{vc1posi-3} into \eqref{vc1posi-1}, we achieve that for small $\e_0$ and $\dl_0$, when $\e\leq \e_0$ and $\dl\leq \dl_0$,
\begin{align}
\int_{\hat{\O}} e^{3s}\Dl_s\hat{\Phi}_{1,\th} \hat{u}^a\Dl_s \hat{\Phi}_{\th} \gs  \int_{\hat{\O}} |e^s\Dl_s\hat{\Phi}_{1,\th}|^2-\dl \int_{\hat{\O}} |\Dl_s \hat{\Phi}_\th|^2.\label{vc1posi1}
\end{align}

{\bf\noindent Estimates of $\boldsymbol{\int_{\hat{\O}}e^{3s}\Dl_s\hat{\Phi}_{1,\th}[ \hat{v}^a \Dl_s \hat{\Phi}_s-2 \hat{v}^a \Dl_s\hat{\Phi}]}$}: By Cauchy inequality and the third one of \eqref{vcappdetail1}, we have
\begin{align}
&\lt|\int_{\hat{\O}}e^{3s}\Dl_s\hat{\Phi}_{1,\th}[ \hat{v}^a \Dl_s \hat{\Phi}_s-2 \hat{v}^a \Dl_s\hat{\Phi}]\rt|\ls \dl \int_{\hat{\O}}|e^s\Dl_s\hat{\Phi}_{1,\th}|^2+\e^2\dl \int_{\hat{\O}}\lt( |\Dl_s\hat{\Phi}|^2+|\Dl_s\hat{\Phi}_s|^2\rt).\label{vc1posi2}
\end{align}

{\bf\noindent Estimates of $\boldsymbol{\int_{\hat{\O}}e^{3s}\Dl_s\hat{\Phi}_{1,\th} \hat{u} \Dl_s \hat{\Phi}^a_\th}$}:
 Using estimates \eqref{vcpositive3x2},  after rewriting this term and applying Cauchy inequality, we see that
\begin{align}
&\lt|\int_{\hat{\O}}e^{3s}\Dl_s\hat{\Phi}_{1,\th}\hat{u} \Dl_s \hat{\Phi}^a_\th\rt|=\lt|\int_{\hat{\O}} e^{2s}\Dl_s\hat{\Phi}_{1,\th} \hat{\Phi}_s \Dl_s \hat{\Phi}^a_\th\rt|\nn\\
=&\lt|\int_{\hat{\O}}e^{2s}\Dl_s\hat{\Phi}_{1,\th}\hat{\Phi}_{0,s} \Dl_s (e^{s}(\hat{v}^a-\hat{v}^a_{e1}))\rt|+\lt|\int_{\hat{\O}}e^{2s}\Dl_s\hat{\Phi}_{1,\th} \hat{\Phi}_{\neq,s} \Dl_s (e^{s}\hat{v}^a)\rt|\nn\\
\ls& \dl\int_{\hat{\O}}e^{-s} (1+s^{-1}) |e^s\Dl_s\hat{\Phi}_{1,\th}|| \hat{\Phi}_{0,s}|+ \dl\int_{\hat{\O}} (1+s^{-1}) |e^s\Dl_s\hat{\Phi}_{1,\th}|| \hat{\Phi}_{\neq,s}|\nn\\
\ls& \dl\int_{\hat{\O}}|e^s\Dl_s\hat{\Phi}_{1,\th}|^2+ \dl\int_{\hat{\O}}\lt(|\hat{\Phi}_{0,s}|^2+|\hat{\Phi}_{0,ss}|^2+e^s|\hat{\Phi}_{\neq,ss}|^2\rt). \label{vc1posi3x1}
\end{align}

{\bf\noindent Estimates of $\boldsymbol{\int_{\hat{\O}}e^{3s}\Dl_s\hat{\Phi}_{1,\th}[\hat{v} \Dl_s \hat{\Phi}^a_s -2 \hat{v} \Dl_s\hat{\Phi}^a]}$}:

Using estimates \eqref{vcpositive3x7}, Cauchy inequality, Poincar\'{e} inequality in $\th$ variable, Hardy inequality \eqref{vchardy1} and \eqref{vchardy2}, we achieve that
\begin{align}
&\lt|\int_{\hat{\O}}e^{3s}\Dl\hat{\Phi}_{1,\th}[\hat{v} \Dl_s \hat{\Phi}^a_s -2 v \Dl_s\hat{\Phi}^a]\rt|=\lt|\int_{\hat{\O}}e^{2s}\Dl_s\hat{\Phi}_{1,\th}[\hat{\Phi}_\th \Dl_s \hat{\Phi}^a_s -2 \hat{\Phi}_\th  \Dl_s\hat{\Phi}^a]\rt| \nn\\
\ls& \dl \int_{\hat{\O}} |e^s\Dl_s\hat{\Phi}_{1,\th}|^2 +\int_{\hat{\O}} (1+s^{-4}) | \hat{\Phi}_{\th}|^2\ls\dl \int_{\hat{\O}} |e^s\Dl_s\hat{\Phi}_{1,\th}|^2+ \dl\int_{\hat{\O}} e^{s}\lt( | \hat{\Phi}_{\th}|^2+| \hat{\Phi}_{\th ss}|^2\rt).  \label{vc1posi4}
\end{align}

{\bf\noindent Estimates of $\boldsymbol{-\e^2 \int_{\hat{\O}} e^{2s}\Dl_s\hat{\Phi}_{1,\th}\lt(\Dl^2_s\hat{\Phi}-4\Dl_s\hat{\Phi}_s+4\Dl_s\hat{\Phi}\rt)}$}:

From \eqref{vc1posi-2} and \eqref{vc1posi-4}, by direct calculation, we see that
\begin{align}
\Dl^2_s\hat{\Phi}_1-4\Dl_s\hat{\Phi}_{1,s}+4\Dl_s\hat{\Phi}_1=& \lt(\mathfrak{f}^{(4)}-4 \mathfrak{f}^{(3)}+2 \mathfrak{f}^{(2)}+4 \mathfrak{f}^{'}-3 \mathfrak{f}\rt)\cos\th+\lt(\mathfrak{g}^{(4)}-4 \mathfrak{g}^{(3)}+2 \mathfrak{g}^{(2)}+4 \mathfrak{g}^{'}-3 \mathfrak{g}\rt)\sin\th.\nn\\
      =&e^{-s} \lt(\t{\mathfrak{f}}^{(3)}-6 \t{\mathfrak{f}}^{(2)}-8\t{ \mathfrak{f}}'\rt)\cos\th+e^{-s}\lt(\t{\mathfrak{g}}^{(3)}-6 \t{\mathfrak{g}}^{(2)}-8\t{ \mathfrak{g}}'\rt)\sin\th.\label{vcposi8}\\
 -e^s\Dl_s\hat{\Phi}_{1,\th}=& -\t{\mathfrak{g}}'\cos\th+\t{\mathfrak{f}}'\sin\th.
\end{align}
Then using integration by parts, we obtain that
\begin{align}
&-\e^2 \int_{\hat{\O}}e^{2s}\Dl_s\hat{\Phi}_{1,\th} \lt(\Dl^2_s\hat{\Phi}-4\Dl_s\hat{\Phi}_s+4\Dl_s\hat{\Phi}\rt)=-\e^2 \int_{\hat{\O}}e^{2s}\Dl_s\hat{\Phi}_{1,\th} \lt(\Dl^2_s\hat{\Phi}_{1}-4\Dl_s\hat{\Phi}_{1,s}+4\Dl_s\hat{\Phi}_{1}\rt)\nn\\
=& \e^2 \int_{\hat{\O}}e^{s}\lt(-\t{\mathfrak{g}}'\cos\th+\t{\mathfrak{f}}'\sin\th \rt)\lt\{ \lt(\mathfrak{f}^{(4)}-4 \mathfrak{f}^{(3)}+2 \mathfrak{f}^{(2)}+4 \mathfrak{f}^{'}-3 \mathfrak{f}\rt)\cos\th\rt.\nn\\
  &\qq \lt.+\lt(\mathfrak{g}^{(4)}-4 \mathfrak{g}^{(3)}+2 \mathfrak{g}^{(2)}+4 \mathfrak{g}^{'}-3 \mathfrak{g}\rt)\sin\th\rt\}\nn\\
=& \e^2 \int_{\hat{\O}}\lt(-\t{\mathfrak{g}}'\cos\th+\t{\mathfrak{f}}'\sin\th  \rt)\lt\{ \lt(\t{\mathfrak{f}}^{(3)}-6 \t{\mathfrak{f}}^{(2)}-8\t{ \mathfrak{f}}'\rt)\cos\th+ \lt(\t{\mathfrak{g}}^{(3)}-6 \t{\mathfrak{g}}^{(2)}-8\t{ \mathfrak{g}}'\rt)\sin\th\rt\}\nn\\
=& \e^2\pi \int^\i_0 \t{\mathfrak{f}}'\lt(\t{\mathfrak{g}}^{(3)}-6 \t{\mathfrak{g}}^{(2)}-8\t{ \mathfrak{g}}'\rt)-\lt(\t{\mathfrak{f}}^{(3)}-6 \t{\mathfrak{f}}^{(2)}-8\t{ \mathfrak{f}}'\rt)\mathfrak{g}'.
\end{align}
By integration by parts and Cauchy inequality, we see that
\begin{align}
&\lt|-\e^2 \int_{\hat{\O}} e^{2s}\Dl_s\hat{\Phi}_{1,\th} \lt(\Dl^2_s\hat{\Phi}-4\Dl_s\hat{\Phi}_s+4\Dl_s\hat{\Phi}\rt)\rt|\nn\\
\ls&\e^2 \lt(|\t{\mathfrak{f}}^{''}\t{\mathfrak{g}}^{'}|\big|_{s=0}+|\t{\mathfrak{g}}^{''}\t{\mathfrak{f}}^{'}|\big|_{s=0}\rt)+\dl \int^\i_0 \lt(|\t{\mathfrak{f}}^{'}|^2+|\t{\mathfrak{g}}^{'}|^2\rt)+\e^2\dl\int^\i_0 \lt(|\t{\mathfrak{f}}^{''}|^2+|\t{\mathfrak{g}}^{''}|^2\rt)\nn\\
\ls& \e^2 \sum_{0\neq i+j\leq 4}\int_{\hat{\O}} (\p^i_\th\p^j_s\Phi)^2+\dl\int_{\hat{\O}} | e^{s}\Dl_s\hat{\Phi}_{1,\th}|^2+ \e^2\dl \int_{\hat{\O}} |  (e^{s}\Dl_s\hat{\Phi}_1)_{,s}|^2. \label{vc1posi5}
\end{align}
Here at the last line of the above inequality, we used Sobolev embedding for the first term, and \eqref{vc1posi-4} and \eqref{vc1posi-5} for the second and third term.

Inserting  \eqref{vc1posi2}, \eqref{vc1posi3x1}, \eqref{vc1posi4}, \eqref{vc1posi5} and \eqref{vc1posi1} into \eqref{vc1posi0}, and for small $\dl_0$, when $\dl<\dl_0$, we can achieve that
\begin{align}
&\int^{\i}_0 |  e^{s}\Dl_s\hat{\Phi}_{1,\th}|^2\nn\\
\ls & \e^2\dl \int_{\hat{\O}} |  (e^{s}\Dl_s\hat{\Phi}_{1})_{,s}|^2+\sum^4_{i=0}\hat{\Phi}^{(k)}_{0}(s)+\sum_{0\neq i+j\leq 4 \atop i\neq 0}\int_{\hat{\O}} e^{s} (\p^i_\th\p^j_s\Phi)^2+\lt|\int_{\hat{\O}} e^{5s}\Dl_s\hat{\Phi}_{1,\th}\hat{R}_{comb}\rt|.\label{vc1posi6}
\end{align}

Now we go to estimate the first term on the righthand of \eqref{vc1posi6}. Multiplying \eqref{vcerrorequ} by $ -e^{3s}\Dl_s\hat{\Phi}_{1}=- e^{2s}\lt(\t{\mathfrak{f}'}\cos\th+\t{\mathfrak{g}'}\sin\th\rt)$ and integrating the resulted equation on $\hat{\O}$ to obtain that
\begin{align}
&-\e^2 \int_{\hat{\O}}e^{2s}\Dl_s\hat{\Phi}_{1} \lt(\Dl^2_s\hat{\Phi}-4\Dl_s\hat{\Phi}_s+4\Dl_s\hat{\Phi}\rt)\nn\\
&+\int_{\hat{\O}} e^{3s}\Dl_s\hat{\Phi}_{1} \lt\{\hat{u}^a\Dl_s \hat{\Phi}_{\th}+ \hat{v}^a \Dl_s \hat{\Phi}_s -2 \hat{v}^a\Dl_s\hat{\Phi}\rt\}\nn\\
&+\int_{\hat{\O}}e^{3s}\Dl_s\hat{\Phi}_{1} \lt\{\hat{u} \Dl_s \hat{\Phi}^a_\th+ \hat{v} \Dl_s \hat{\Phi}^a_s -2 \hat{v}\Dl_s\hat{\Phi}^a\rt\}\nn\\
&=-\int_{\hat{\O}} e^{5s}\Dl_s\hat{\Phi}_{1} \hat{R}_{\text{comb}}.\label{vc1xener0}
\end{align}
{\bf\noindent Estimates of $\boldsymbol{-\e^2 \int_{\hat{\O}}e^{2s}\Dl_s\hat{\Phi}_{1}\lt(\Dl^2_s\hat{\Phi}-4\Dl_s\hat{\Phi}_s+4\Dl_s\hat{\Phi}\rt)}$}:

Using \eqref{vc1posi-5} and \eqref{vcposi8}, we obtain that
\begin{align}
&-\e^2 \int_{\hat{\O}}e^{2s}\Dl_s\hat{\Phi}_{1}\lt(\Dl^2_s\hat{\Phi}-4\Dl_s\hat{\Phi}_s+4\Dl_s\hat{\Phi}\rt)\nn\\
=&-\e^2 \int_{\hat{\O}}e^{2s}\Dl_s\hat{\Phi}_{1}\lt(\Dl^2_s\hat{\Phi}_{1}-4\Dl_s\hat{\Phi}_{1,s}+4\Dl_s\hat{\Phi}_{1}\rt)\nn\\
=& -\e^2 \int_{\hat{\O}}\lt(\t{\mathfrak{f}}'\cos\th+\t{\mathfrak{g}}'\sin\th \rt)\lt\{ \lt(\t{\mathfrak{f}}^{(3)}-6 \t{\mathfrak{f}}^{(2)}-8\t{ \mathfrak{f}}'\rt)\cos\th+ \lt(\t{\mathfrak{g}}^{(3)}-6 \t{\mathfrak{g}}^{(2)}-8\t{ \mathfrak{g}}'\rt)\sin\th\rt\}\nn\\
=& -\e^2\pi \int^\i_0 \t{\mathfrak{f}}'\lt(\t{\mathfrak{f}}^{(3)}-6 \t{\mathfrak{f}}^{(2)}-8\t{ \mathfrak{f}}'\rt)+\lt(\t{\mathfrak{g}}^{(3)}-6 \t{\mathfrak{g}}^{(2)}-8\t{ \mathfrak{g}}'\rt)\t{\mathfrak{g}}'.
\end{align}
By integration by parts and Cauchy inequality, we see that
\begin{align}
&\lt|\e^2 \int_{\hat{\O}}e^{2s}\Dl_s\hat{\Phi}_{1} \lt(\Dl^2_s\hat{\Phi}-4\Dl_s\hat{\Phi}_s+4\Dl_s\hat{\Phi}\rt)\rt|\nn\\
\gs &\e^2\int^\i_0 \lt(|\t{\mathfrak{f}}^{''}|^2+|\t{\mathfrak{g}}^{''}|^2\rt) -C\e^2 \lt(|\t{\mathfrak{f}}^{''}\t{\mathfrak{f}}^{'}|+|\t{\mathfrak{g}}^{''}\t{\mathfrak{g}}^{'}|\rt)\big|_{s=0}-C\e^2 \int^\i_0\lt(|\t{\mathfrak{f}}^{'}|^2+|\t{\mathfrak{g}}^{'}|^2\rt)\nn\\
\gs & \e^2 \int_{\hat{\O}} | (e^{s}\Dl_s\hat{\Phi}_{1})_{,s}|^2  -C\e^2 \sum_{0\neq i+j\leq 4}\int_{\hat{\O}} (\p^i_\th\p^j_s\Phi)^2-C\e^2 \int^{\i}_0 | e^{s}\Dl_s\hat{\Phi}_{1,\th}|^2. \label{vc1fxener1}
\end{align}

{\bf\noindent Estimates of $\boldsymbol{\int_{\hat{\O}} e^{3s}\Dl_s\hat{\Phi}_{1}\lt[\hat{u}^a\Dl_s \hat{\Phi}_{\th}+ \hat{v}^a \Dl_s \hat{\Phi}_s -2 \hat{v}^a\Dl_s\hat{\Phi}\rt]}$}:  By Cauchy inequality, the third estimate in \eqref{vcappdetail1} and Poincar\'{e} inequality, we have
\begin{align}
&\lt|-\int_{\hat{\O}}e^{3s}\Dl_s\hat{\Phi}_{1}\lt\{\hat{u}^a\Dl_s \hat{\Phi}_{\th}+ \hat{v}^a \Dl_s \hat{\Phi}_s -2 \hat{v}^a\Dl_s\hat{\Phi}\rt\}\rt|\nn\\
=&\lt|-\int_{\hat{\O}}e^{3s}\Dl_s\hat{\Phi}_{1} \lt\{(\hat{u}^a-(\t{\o}+\mathcal{A}_0)e^{-s})\Dl_s \hat{\Phi}_{\th}+ \hat{v}^a \Dl_s \hat{\Phi}_s -2 \hat{v}^a\Dl_s\hat{\Phi}\rt\}\rt|\nn\\
 &+(\t{\o}+\mathcal{A}_0)\underbrace{\lt|\int_{\hat{\O}}e^{2s}\Dl_s\hat{\Phi}_{1} \Dl_s \hat{\Phi}_{1,\th}\rt|}_{=0}\nn\\
\ls &\lt|\int_{\hat{\O}} |(e^{s}\Dl_s\hat{\Phi}_{1})| \lt(|\Dl_s \hat{\Phi}_{\th}|+ |\Dl_s \hat{\Phi}_s|+|\Dl_s\hat{\Phi}|\rt\}\rt|\nn\\
\ls&  \nu \int_{\hat{\O}} |e^{s}\Dl_s\hat{\Phi}_{1,\th}|^2+  \int_{\hat{\O}} \lt(|\Dl_s \hat{\Phi}_{\th}|^2+|\Dl_s \hat{\Phi}_{s}|^2+|\Dl_s \hat{\Phi}|^2\rt) .\label{vc1fxener2}
\end{align}

{\bf\noindent Estimates of $\boldsymbol{\int_{\hat{\O}}e^{3s}\Dl_s\hat{\Phi}_{1} \lt[\hat{u} \Dl_s \hat{\Phi}^a_\th+ \hat{v} \Dl_s \hat{\Phi}^a_s -2 \hat{v}\Dl_s\hat{\Phi}^a\rt]}$}:
We rewrite this term, use estimates in \eqref{vcpositive3x2} and \eqref{vcpositive3x7} and Cauchy inequality to obtain that
\begin{align}
&\lt|-\int_{\hat{\O}}e^{3s}\Dl_s\hat{\Phi}_{1} \lt\{\hat{u} \Dl_s \hat{\Phi}^a_\th+ \hat{v} \Dl_s \hat{\Phi}^a_s -2 \hat{v}\Dl_s\hat{\Phi}^a\rt\}\rt|\nn\\
=&\lt|\int_{\hat{\O}}e^{2s}\Dl_s\hat{\Phi}_{1} \lt\{ \hat{\Phi}_s (\Dl_s \hat{\Phi}^a_\th-\Dl_s \hat{\Phi}^a_{e1,\th})- \hat{\Phi}_\th \Dl_s \hat{\Phi}^a_s+2\hat{\Phi}_\th \Dl_s\hat{\Phi}^a\rt\}\rt|\nn\\
\ls& \dl\int_{\hat{\O}} e^{s}\Dl_s\hat{\Phi}_{1}  \lt( e^{-s}(1+s^{-1}) |\hat{\Phi}_s|+ (1+s^{-2}) |\hat{\Phi}_{\th}|\rt)\nn\\
\ls&  \nu \int_{\hat{\O}} |e^{s}\Dl_s\hat{\Phi}_{1,\th}|^2 + \int_{\hat{\O}} \lt(|\hat{\Phi}_s|^2+|\hat{\Phi}_{ss}|^2+e^{ s} \hat{\Phi}^2_{\th ss}\rt). \label{vc1fxener3}
\end{align}

Inserting \eqref{vc1fxener1}, \eqref{vc1fxener2} and \eqref{vc1fxener3} into \eqref{vc1xener0}, and for small $\dl_0$, when $\dl<\dl_0$, we can achieve that
\begin{align}
& \e^2 \int_{\hat{\O}} | (e^{s}\Dl_s\hat{\Phi}_{1})_{,s}|^2\nn\\
\ls &\sum^4_{i=0}\|\hat{\Phi}^{(k)}_{0}\|^2_{L^2(\hat{\O})}+\sum_{0\neq i+j\leq 4 \atop i\neq 0}\int_{\hat{\O}} e^{s} (\p^i_\th\p^j_s\Phi)^2+C\nu \int^{\i}_0 | e^{s}\Dl_s\hat{\Phi}_{1,\th}|^2+\lt|\int_{\hat{\O}}e^{5s}\Dl_s\hat{\Phi}_{1}\hat{R}_{comb}\rt|.\label{vc1fxener4}
\end{align}
Combining \eqref{vc1posi6} and \eqref{vc1fxener4} and using Cauchy inequality, we obtain that
\begin{align}
&\e^2 \int_{\hat{\O}} | (e^{s}\Dl_s\hat{\Phi}_{1})_{,s}|^2+ \int^{\i}_0 |(e^{s}\Dl_s\hat{\Phi}_{1})_{,\th}|^2\nn\\
\ls & \sum^4_{i=0}\|\hat{\Phi}^{(k)}_{0}\|^2_{L^2(\hat{\O})}+\sum_{0\neq i+j\leq 4 \atop i\neq 0}\int_{\hat{\O}} e^{s} (\p^i_\th\p^j_s\Phi)^2+\int_{\hat{\O}}e^{8 s}\hat{R}^2_{comb},
\end{align}
which is \eqref{vc1festiposi} after multiplied by $\e^{18}$.

\section{Construction of approximate solutions}\label{secappro}
\indent
In this section, we construct an approximate solution of the Navier-Stokes equations (\ref{nspolar}) by the procedure of matched asymptotic expansion which can be proceeded as follows.

\bes
(u^{(0)}_e,v^{(0)}_e)\rightarrow (u^{(0)}_p,v^{(1)}_p)\rightarrow (u^{(1)}_e,v^{(1)}_e)\rightarrow (u^{(1)}_p,v^{(2)}_p)\rightarrow (u^{(2)}_e,v^{(2)}_e)\cdots.
\ees

%
%
%
%
\begin{itemize}
\item We use $(u_e,v_e)$ to denote the Euler asymptotic expansion away from the boundary, while $(u_p,v_p)$ to denote the asymptotic expansion near the boundary $\{r=1\}$.
\item The superscript on the Euler and the boundary layer asymptotic expansion represents the matching order of $\e$.
\item The equations satisfied by $(u_e,v_e)$ and $(u_p,v_p)$ will be obtained by solving \eqref{ns} by matching the order of $\e$. The asymptotic expansion will be solved step by step starting from $(u^{(0)}_e,v^{(0)}_e):=(\f{\t{\o}}{r}, 0)$.
\end{itemize}

Define the boundary layer variable $\zeta=\f{r-1}{\e}$. Now we are ready to write the formal asymptotic expansion away from the boundary and near the boundary.

{\noindent\bf Euler expansions away from the boundary}

Away from the boundary, we make the following formal expansions
\be\label{eulerextension}
\begin{aligned}
&u^{\e}(\th,r)=u^e(r)+\sum^{k_0}_{k=1}\e^{k}u^{(k)}_e(\th,r)+\text{h.o.t.},\\[5pt]
&v^{\e}(\th,r)=\sum^{k_0}_{k=1}\e^{k}v^{(k)}_e(\th,r)+\text{h.o.t.},\ p^{\e}(\th,r)=\sum^{k_0}_{k=1}\e^{k}p^{(k)}_e(\th,r)+\text{h.o.t.}.
\end{aligned}
\ee
Here and in what follows, ``h.o.t." means higher order terms.

{\noindent\bf Boundary layer expansions near the boundary $\{r=1\}$}

Near the boundary, we make the following formal expansions
\be\label{boundaryextensionu}
\begin{aligned}
u^{\e}(\th,r)=&u^e(r)+u_p^{(0)}(\th,\zeta)+\sum^{k_0}_{k=1}\e^{k}\lt(u^{(k)}_e(\th,r)+u_p^{(k)}(\th,\zeta)\rt)+\text{h.o.t.},\\[5pt]
v^{\e}(\th,r)=&\sum^{k_0}_{k=1}\e^{k}\lt(v^{(k)}_e(\th,r)+v^{(k)}_p(\th,\zeta)\rt)+\text{h.o.t.},\q p^{\e}(\th,r)=\sum^{k_0}_{k=1}\e^{k}\lt(p^{(k)}_e(\th,r)+p^{(k)}_p(\th,\zeta)\rt)+\text{h.o.t.}.
\end{aligned}
\ee
The boundary condition is matched by
\begin{align}
&u^e(r)\big|_{r=1}+u_p^{(0)}\big|_{\zeta=0}={\o}+\dl f(\th),\q u_e^{(k)}\big|_{r=1}+u_p^{(k)}\big|_{\zeta=0}=0,\q 1\leq k\in\bN, \nn\\
&v_e^{(k)}\big|_{r=1}+v_p^{(k)}\big|_{\zeta=0}=0,\q 1\leq k\in\bN.\nn
\end{align}

%
Next we deduce the equations satisfied by these expansions.
\subsection{Equations for lower order expansions}\label{lowerexpansion}
\subsubsection{Equations for $(u_p^{(0)},v_p^{(1)},p_p^{(1)})$}
\indent

By substituting the outer boundary layer expansion \eqref{boundaryextensionu} into (\ref{nspolar}) and collecting the $\epsilon-0$th order terms, we obtain
 the following steady boundary layer equations for $(u_p^{(0)},v_p^{(1)},p_p^{(1)})$
\be\label{boundarylayeru0}
\left\{
\begin {array}{ll}
\big(\t{\o}+u_p^{(0)}\big)\partial_\th u_p^{(0)}+\big( v_p^{(1)}- v_p^{(1)}(\th,0)\big)\partial_\zeta u_p^{(0)}-\partial^2_{\zeta}u_p^{(0)}=0,\\[5pt]
\p_\zeta p^{(1)}_p=0,\\[5pt]
\partial_\th u_p^{(0)}+\partial_\zeta v_p^{(1)}=0,\\[5pt]
(u_p^{(0)},v_p^{(1)})(\th,\zeta)=(u_p^{(0)},v_p^{(1)})(\th+2\pi,\zeta),\\[5pt]
u_p^{(0)}\big|_{\zeta=0}=\o+\dl f(\th)-\t{\o},\q \lim\limits_{\zeta\rightarrow +\infty}(u_p^{(0)},v_p^{(1)},p_p^{(1)})=0.
\end{array}
\right.
\ee
From the \eqref{boundarylayeru0}$_{2,5}$ for the requirement of $p^{(1)}_p$, we have $p^{(1)}_p\equiv 0$.
\subsubsection{Equations for $(u_e^{(1)},v_e^{(1)},p_e^{(1)})$}
\indent

Inserting the Euler expansion \eqref{eulerextension} into \eqref{nspolar} and collecting the $\epsilon-1$th order terms, we deduce that $(u_e^{(1)},v_e^{(1)},p_e^{(1)})$ satisfies the following linearized Euler equations
\be\label{middleeuler1}
\left \{
\begin{array}{ll}
u^e(r) \partial_\th u_e^{(1)}+rv_e^{(1)}\p_r u^e(r)+u^e(r)v^{(1)}_e+\partial_\th p_e^{(1)}=0,\\[5pt]
u^e(r) \partial_\th v_e^{(1)}-2u^e(r) u^{(1)}_e+r\partial_r p_e^{(1)}=0,\\[5pt]
\partial_\th u_e^{(1)}+r\partial_r v_e^{(1)}+v_e^{(1)}=0,\\
(u_e^{(1)},v_e^{(1)})(\th,r)=(u_e^{(1)},v_e^{(1)})(\th+2\pi,r),\\
 v_e^{(1)}|_{r=1}=-v_p^{(1)}|_{\zeta=0},\ v_e^{(1)}|_{r=+\i}=0,
\end{array}
\right.
\ee
where $v_p^{(1)}$ is obtained from \eqref{boundarylayeru0}.

\subsubsection{Equations for $(u_p^{(1)},v_p^{(2)},p_p^{(2)})$}

\indent

By substituting the boundary layer expansion \eqref{boundaryextensionu} into (\ref{nspolar}) and collecting the $\epsilon-1$th order terms, we obtain
 the following steady boundary layer equations for $(u_p^{(1)},v_p^{(2)},p_p^{(2)})$
 \be\label{boundarylayeru1}
 \lt\{
 \begin{aligned}
 &(\t{\o}+u^{(0)}_p)\p_\th u^{(1)}_p+(v^{(2)}_p-v^{(2)}_p(\th,0))\p_\zeta u^{(0)}_p-\p^2_\zeta u^{(1)}_p\\
 &+u^{(1)}_p\p_\th u^{(0)}_p+(v^{(1)}_e(\th,1)+v^{(1)}_p)\p_\zeta u^{(1)}_p=f_{p,1}(\th,\zeta),\\
 &-\p_\zeta p^{(2)}_p=g_{p,1}(\th,\zeta),\\
 &\p_\th u^{(1)}_p+\p_\zeta v^{(2)}_p+\p_\zeta (\zeta v^{(1)}_p)=0,
 \end{aligned}
 \rt.
 \ee
 where the force terms ${f}_{p,1}$ and ${g}_{p,1}$ are defined as
\begin{align}
{f}_{p,1}(\theta, \zeta):= & \zeta \partial^2_{ \zeta} {u}_p^{(0)}+\partial_\zeta {u}_p^{(0)} \nn\\
& -{u}_p^{(0)}\left(\partial_\theta u_e^{(1)}\left(\theta, 1\right)+v_e^{(1)}\left(\theta, 1\right)+{v}_p^{(1)}\right)-\left(\zeta(u^e)^{\prime}\left(1\right) +u_e^{(1)}\left(\theta, 1\right)\right) \partial_\theta {u}_p^{(0)} \nn\\
& -\left(\partial_r v_e^{(1)}\left(\theta, 1\right)+v_e^{(1)}\left(\theta, 1\right)+{v}_p^{(1)}\right) \zeta \partial_\zeta u_p^{(0)}-\left[u^e\left(1\right)+ (u^e)^{\prime}\left(1\right)\right] {v}_p^{(1)},\nn\\
g_{p,1}(\th,\zeta):=&u^{(0)}_p \p_\th v^{(1)}_e(x,1)+(u^e(1)+u^{(0)}_p)\p_\th v^{(1)}_p\nn\\
                    &+(v^{(1)}_e(\th,1)+v^{(1)}_p)\p_\zeta v^{(1)}_p-\p^2_\zeta v^{(1)}_p\nn\\
                    &-2\lt(u^e(1)+u^{(0)}_p\rt)u^{(1)}_p-2u^{(0)}_p\lt[u^e(1)+(u^e)'(1)\rt].\nn
\end{align}

\subsubsection{Equations for $(u_e^{(2)},v_e^{(2)},p_e^{(2)})$}
\indent

Inserting the Euler expansion \eqref{eulerextension} into \eqref{ns} and collecting the $\epsilon-2$th order terms, we deduce that $(u_e^{(2)},v_e^{(2)},$ $ p_e^{(2)})$ satisfies the following linearized Euler equations

\be\label{middleeuler2}
\left \{
\begin{array}{ll}
u^e(r) \partial_\th u_e^{(2)}+rv_e^{(2)}\p_r u^e(r)+u^e(r)v^{(2)}_e+\partial_\th p_e^{(2)}=0,\\[5pt]
u^e(r) \partial_\th v_e^{(2)}-2u^e(r) u^{(2)}_e+r\partial_r p_e^{(2)}=0,\\[5pt]
\partial_\th u_e^{(2)}+r\partial_r v_e^{(2)}+v_e^{(2)}=0,\\
(u_e^{(2)},v_e^{(2)})(\th,r)=(u_e^{(2)},v_e^{(2)})(\th+2\pi,r),\\
 v_e^{(2)}|_{r=1}=-\hat{v}_p^{(2)}|_{\zeta=0},\ v_e^{(2)}|_{r=+\i}=0,
\end{array}
\right.
\ee

where ${v}_p^{(2)}$ is obtained from \eqref{boundarylayeru1}.

\subsection{Solvability of the lower order asymptotic expansions}
\indent

Now we give the solvability of the equations satisfied by the lower order asymptotic expansions in Section \ref{lowerexpansion}. The order in which we solve these equations are as follows
{\small
\begin{align*}
(u^e(r),0)\rightarrow (u_p^{(0)},v_p^{(1)})\rightarrow (u_e^{(1)},v_e^{(1)})\rightarrow(u_p^{(1)},v_p^{(2)})\rightarrow (u_e^{(2)},v_e^{(2)})\cdots.
\end{align*}
}
\subsubsection{The boundary layer system \eqref{boundarylayeru0} and its solvability}
\indent
A necessary condition for the solvability of the system \eqref{boundarylayeru0} is stated in the following Lemma. One can also refer to \cite[Lemma 2.1]{FeiGLT:2023CMP} or \cite[Lemma 2.1]{FeiGLT:2024ADV}.
\begin{lemma}\label{lembw}
If the system \eqref{boundarylayeru0} has a solution $(\bar{u}_p^{(0)}, v_p^{(1)})$, which satisfies
\beas
&&\t{\o}+{u}_p^{(0)}(\th,\zeta)>0, \q \|{u}_p^{(0)}\|_\infty<+\i,\ \forall\ \zeta\geq 0,
\eeas
then there holds
\begin{align}
\t{\o}^2=\o^2+\frac{\o\dl}{\pi}\int_0^{2\pi}f(\th)d\th+\frac{\dl^2}{2\pi}\int_0^{2\pi}f^2(\th)d\th.\nn
\end{align}
\end{lemma}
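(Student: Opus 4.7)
My plan is to integrate the tangential boundary-layer momentum equation against suitable multipliers. For convenience I introduce the shifted unknowns
\[
U(\theta,\zeta):=\tilde\omega+u_p^{(0)}(\theta,\zeta),\qquad V(\theta,\zeta):=v_p^{(1)}(\theta,\zeta)-v_p^{(1)}(\theta,0),
\]
so that the incompressibility $\partial_\theta u_p^{(0)}+\partial_\zeta v_p^{(1)}=0$ becomes $\partial_\theta U+\partial_\zeta V=0$ with $V(\theta,0)=0$. The assumed boundedness together with the positivity $U=\tilde\omega+u_p^{(0)}>0$ and the far-field condition \eqref{boundarylayeru0}$_5$ force $U(\theta,+\infty)=\tilde\omega$, $\partial_\zeta U(\theta,+\infty)=0$, and $V(\theta,+\infty)=-v_p^{(1)}(\theta,0)$. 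Using incompressibility, \eqref{boundarylayeru0}$_1$ can be recast in conservation form as
\[
\partial_\theta(U^{2})+\partial_\zeta(VU)=\partial_\zeta^{2}U.
\]

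Next I would integrate this identity over $\mathbb{T}\times[0,+\infty)$ and, separately, its multiplication by $U$ over the same domain. The $\partial_\theta$ pieces vanish by periodicity, while the $\partial_\zeta$ pieces reduce to boundary values by the fundamental theorem of calculus. A preliminary integration in $\theta$ of the incompressibility over $[0,+\infty)$ gives the zero-mean condition $\int_0^{2\pi}v_p^{(1)}(\theta,0)\,d\theta=0$, which makes the boundary contribution at $\zeta=+\infty$ of the $\partial_\zeta(VU)$ term drop out. The remaining boundary values at $\zeta=0$ are $U(\theta,0)=\omega+\delta f(\theta)$ and $V(\theta,0)=0$. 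A careful combination of the zeroth-, first-, and second-moment integrations (i.e.\ against $1$, $U$, and $U^{2}-\tilde\omega^{2}$) is designed to cancel the dissipation integral $\int_0^{2\pi}\int_0^{+\infty}(\partial_\zeta U)^{2}\,d\zeta d\theta$ that a priori appears in each single identity, leaving the clean moment relation
\[
\int_0^{2\pi}U(\theta,0)^{2}\,d\theta=2\pi\tilde\omega^{2}.
\]
Substituting $U(\theta,0)=\omega+\delta f(\theta)$ and expanding $(\omega+\delta f)^{2}=\omega^{2}+2\omega\delta f+\delta^{2}f^{2}$ then yields, after division by $2\pi$, exactly the Batchelor--Wood formula.

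The main obstacle is arranging the cancellation of the dissipation term: each natural energy identity (multiplying by $1$ gives $\int\partial_\zeta U(\theta,0)\,d\theta=0$, multiplying by $U$ gives $\int U(\theta,0)\partial_\zeta U(\theta,0)\,d\theta=-\int\int(\partial_\zeta U)^{2}$, etc.) leaves either a flux or a dissipation term, and none individually produces Batchelor--Wood. A cleaner alternative, which I would fall back on if the direct moment combination proves elusive, is to match radial pressure gradients at $r=1$: from the Navier--Stokes radial equation \eqref{nspolar}$_2$ evaluated at the wall (where $v^{\epsilon}$ and $\partial_\theta v^{\epsilon}$ vanish) one reads $\partial_r p^{\epsilon}|_{r=1}=(\omega+\delta f(\theta))^{2}+O(\epsilon^{2})$; from the outer Euler profile $u^{e}=\tilde\omega/r$ one has $\partial_r p^{e}|_{r=1^{+}}=\tilde\omega^{2}$. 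Since \eqref{boundarylayeru0}$_2$ forces the leading-order boundary-layer pressure $p_p^{(1)}$ to be $\zeta$-independent, the $\theta$-averaged radial pressure derivative is continuous across the boundary layer at leading order, and equating the two expressions above after averaging in $\theta$ produces $\overline{(\omega+\delta f)^{2}}=\tilde\omega^{2}$, which is Batchelor--Wood.
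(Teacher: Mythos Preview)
The paper itself does not give a self-contained proof but refers to \cite{FeiGLT:2023CMP,FeiGLT:2024ADV}; still, the essential mechanism is standard and differs from both of your suggestions. Your moment approach cannot close: multiplying the conservation law by $1$, $U$, $U^{2}-\tilde\omega^{2}$ in $(\theta,\zeta)$ coordinates produces, after integration, identities of the form $\int_0^{2\pi} U(\theta,0)^{k}\,U_\zeta(\theta,0)\,d\theta=-k\iint U^{k-1}U_\zeta^{2}$, and no finite linear combination of these eliminates both the wall-flux terms and the dissipation integrals. So the ``careful combination'' you describe does not actually yield $\int U(\theta,0)^{2}\,d\theta=2\pi\tilde\omega^{2}$. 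Your fallback pressure-matching argument is the correct physical heuristic but is not a proof of the lemma as stated: the lemma is a claim about solutions of the boundary-layer system \eqref{boundarylayeru0} alone, with no reference to the Navier--Stokes radial equation or the outer Euler pressure.

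The missing ingredient is the von Mises transformation, and it is exactly here that the hypothesis $U=\tilde\omega+u_p^{(0)}>0$ (which you never use) becomes essential. Define the stream function $\psi$ by $\psi_\zeta=U$, $\psi(\theta,0)=0$; positivity makes $(\theta,\psi)$ a valid coordinate system on $\mathbb{T}\times[0,\infty)$. Writing $\hat U(\theta,\psi)=U(\theta,\zeta)$, the equation \eqref{boundarylayeru0}$_1$ transforms (after the standard chain-rule computation) into
\[
2\,\partial_\theta\hat U=\partial_\psi^{2}\bigl(\hat U^{2}\bigr).
\]
Integrating in $\theta$ over one period kills the left side, so $\int_0^{2\pi}\hat U^{2}\,d\theta$ is affine in $\psi$; since it tends to $2\pi\tilde\omega^{2}$ as $\psi\to\infty$, it is identically $2\pi\tilde\omega^{2}$. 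Evaluating at $\psi=0$ then gives $\int_0^{2\pi}(\omega+\delta f)^{2}\,d\theta=2\pi\tilde\omega^{2}$, which is precisely the asserted formula.
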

Next we need to solve the steady boundary layer equations (\ref{boundarylayeru0}), one can refer to \cite[Corollary 2.4]{FeiGLT:2023CMP} for the solvability of system \eqref{boundarylayeru0}. We have the following result.
\begin{proposition}\label{propdcu0}
 There exists $\dl_0>0$ such that for any $\dl\in (0,\dl_0)$ and any $j,k,\ell\in \mathbb{N}\cup \{0\}$, the system (\ref{boundarylayeru0}) has a unique solution $(u^{(0)}_p,v^{(1)}_p)$ which satisfies, $\forall\ \zeta\geq 0$,
\bes
\begin{aligned}
&\lt\|\big<\zeta\big>^\ell\partial_\th^j\partial_\zeta^k( {u}_p^{(0)}, {v}_p^{(1)}) \rt\|_{L^\i}\leq C_{j,k,\ell}\dl,\q \int_0^{2\pi}{v}_p^{(1)}(\th,\zeta)d \th=0.
\end{aligned}
\ees
\end{proposition}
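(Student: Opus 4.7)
The plan is to solve \eqref{boundarylayeru0} as a $\th$-periodic nonlinear parabolic problem via a contraction mapping argument in a weighted function space, with the Batchelor-Wood formula of Lemma \ref{lembw} supplying the solvability condition for the zero Fourier mode in $\th$. First I would use the incompressibility together with $v_p^{(1)}\rightarrow 0$ as $\zeta\rightarrow +\i$ to write
\begin{equation*}
v_p^{(1)}(\th,\zeta)-v_p^{(1)}(\th,0)=-\int_0^\zeta \p_\th u_p^{(0)}(\th,\zeta')\,d\zeta',
\end{equation*}
thereby reducing the system to a single nonlinear integro-parabolic equation for $u_p^{(0)}$ alone, with $\th$ playing the role of a periodic time variable and $\zeta\in[0,+\i)$ the spatial variable.

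Next I would study the linear operator $LU=\t{\o}\p_\th U-\p^2_\zeta U$ on $\bT\times[0,+\i)$ with Dirichlet data $U|_{\zeta=0}=g(\th)$ and decay at $\zeta=+\i$. Fourier expansion $U=\sum_k U_k(\zeta)e^{ik\th}$ gives, for each $k\neq 0$, the unique decaying solution $U_k(\zeta)=\hat{g}_k e^{-\s{ik\t{\o}}\,\zeta}$ with positive real part of the exponent, producing exponential decay in $\zeta$ and arbitrary smoothness in both variables. The zero mode $U_0$ satisfies $U_0''=0$ with $U_0(0)=\hat{g}_0$ and $U_0(+\i)=0$, which is solvable precisely when $\hat{g}_0=0$. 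A direct Taylor expansion shows that the Batchelor-Wood choice of $\t{\o}$ makes the zero mode of $\o+\dl f(\th)-\t{\o}$ of order $\dl^2$ rather than $\dl$, and taking the $\th$-average of the full nonlinear equation yields an ODE of the form $\p^2_\zeta \bar{u}=-\overline{(v_p^{(1)}-v_p^{(1)}(\th,0))\p_\zeta u_p^{(0)}}$ for $\bar{u}=\f{1}{2\pi}\int_0^{2\pi} u_p^{(0)}d\th$, whose right-hand side is quadratically small and consistently determines $\bar{u}$ at each iterate.

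For the nonlinear problem, I would rescale $u_p^{(0)}=\dl U$ so that the boundary data is of order one and the nonlinearities carry a factor of $\dl$, and then define the weighted Banach space
\begin{equation*}
X_{J,L}:=\lt\{U:\ \|U\|_{X_{J,L}}:=\sum_{j+k\leq J,\ 0\leq \ell\leq L}\|\langle \zeta\rangle^\ell \p^j_\th\p^k_\zeta U\|_{L^\i(\bT\times[0,+\i))}<+\i\rt\},
\end{equation*}
and iterate $U^{\mathrm{old}}\mapsto U^{\mathrm{new}}$ by solving the equation obtained by freezing $U^{\mathrm{old}}$ in the nonlinear coefficients. Solvability of each iterate follows from the Fourier analysis above together with the parabolic maximum principle (the positivity $\t{\o}+\dl U^{\mathrm{old}}>0$ is preserved for small $\dl$); the polynomial weights $\langle\zeta\rangle^{\ell}$ are propagated by multiplying the equation by $\langle\zeta\rangle^{2\ell}$ and performing energy estimates that exploit the exponential-in-$\zeta$ decay of the nonzero Fourier modes; higher derivative control is obtained by differentiating the linearized equation. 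For $\dl\leq \dl_0$ sufficiently small, the iteration is a contraction in $X_{J,L}$, yielding the unique solution obeying the stated estimates. Finally, the identity $\int_0^{2\pi}v_p^{(1)}(\th,\zeta)d\th=0$ follows at once from integrating the continuity equation \eqref{boundarylayeru0}$_3$ in $\th$: $\p_\zeta\int_0^{2\pi}v_p^{(1)}d\th=0$, so the integral is constant in $\zeta$, and the decay at $\zeta=+\i$ forces it to vanish identically.

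The main obstacle is the interplay between the zero Fourier mode in $\th$ and the Batchelor-Wood compatibility condition that singles out $\t{\o}$: one must verify that this compatibility is preserved under the iteration scheme, which requires carefully tracking the cancellations in the $\th$-average of the nonlinearity and treating the $\dl^2$-size zero mode of $\bar{u}$ as a quadratic correction produced by the iteration itself rather than as free data. A secondary but delicate technical point is the arbitrary polynomial decay in $\zeta$, which, despite the boundary data being independent of $\zeta$, is made possible by the exponential-in-$\zeta$ decay provided by the heat-type operator acting on each nonzero Fourier mode in $\th$.
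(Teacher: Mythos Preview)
The paper does not give its own proof of this proposition; it simply cites \cite[Corollary~2.4]{FeiGLT:2023CMP}. Your outline is therefore not comparable to the paper's argument, but it is a reasonable direct attack on the periodic Prandtl problem, and the structural observations you make (the heat-type operator $\t{\o}\p_\th-\p^2_\zeta$ giving exponential decay on nonzero modes, the reduction to a single equation via incompressibility, the derivation of $\int_0^{2\pi}v_p^{(1)}\,d\th=0$) are all correct.

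The genuine gap is the zero-mode step, which you flag but do not close. For the $k=0$ Fourier mode the homogeneous solutions of $\p^2_\zeta U_0=0$ are $1$ and $\zeta$, neither decaying; so once you impose $\bar u(\infty)=0$ and $\p_\zeta\bar u(\infty)=0$, the value $\bar u(0)$ is \emph{determined} by the right-hand side, not prescribable. In your iteration you therefore cannot simultaneously impose $\bar u^{\mathrm{new}}(0)=\o+\dl\bar f-\t\o$ and decay at infinity: the two are compatible only if a nonlinear identity holds, and that identity is exactly the Batchelor--Wood relation. Lemma~\ref{lembw} gives this as a \emph{necessary} condition; what your scheme needs is sufficiency, i.e.\ that at the fixed point the ODE-determined value $\bar u(0)$ actually equals $\o+\dl\bar f-\t\o$. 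Saying the right side is ``quadratically small and consistently determines $\bar u$'' is a size statement, not the required identity. One clean way to repair this is to drop the zero-mode boundary condition inside the iteration, run the contraction with $\bar u(0)$ floating, and then verify a~posteriori---by integrating the full nonlinear equation against $1$ in $\th$ and from $0$ to $\infty$ in $\zeta$ and using the divergence structure $(\t\o+u)u_\th+(V-V_0)u_\zeta=\p_\th\big(\tfrac12(\t\o+u)^2\big)+\p_\zeta\big((V-V_0)u\big)+u_\th u$---that the Batchelor--Wood choice of $\t\o$ forces $\bar u(0)$ to match. This is the missing computation, and without it the contraction does not produce a solution of \eqref{boundarylayeru0} with the stated boundary data.
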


\subsubsection{The linearized Euler system for $(u_e^{(1)}, v_e^{(1)}, p_e^{(1)})$ and its solvability}

\begin{proposition}\label{propeulerorder1}
There exists $\dl_0>0$ such that for any $\dl\in(0,\dl_0)$, the linearized Euler equations (\ref{middleeuler1}) have a solution $(u_e^{(1)}, v_e^{(1)}, p_e^{(1)})$ which satisfies
\begin{align}\label{euler1esti}
&\lt\|\partial^j_\th\partial^{k}_r \lt( u_e^{(1)}-\f{\t{A}_1}{r}, v_e^{(1)}\rt)\rt\|_{L^\i}\leq \dl\f{C_{j,k}}{r^{k+2}}, \ \forall\ j,k\geq 0,
\end{align}
for some constant $\t{A}_1$ satisfying $|\t{A}_1|\ls\dl$.
\end{proposition}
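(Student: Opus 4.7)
The plan is to exploit that the background flow $u^e(r) = \t{\o}/r$ is irrotational, so that \eqref{middleeuler1} reduces to a Laplace-type problem for a stream function. Introduce $\Phi^{(1)}$ by $\Phi^{(1)}_r = u_e^{(1)}$ and $\Phi^{(1)}_\th = -rv_e^{(1)}$, and eliminate the pressure by taking $\f{1}{r}\p_\th$ of the second equation of \eqref{middleeuler1} minus $\p_r$ of the first. Using $\p_r u^e = -\t{\o}/r^2$, the two $v_e^{(1)}$ terms in the first equation cancel identically, and the resulting identity reads $\t{\o}\,r^{-1}\p_\th \o^{(1)} = 0$, where $\o^{(1)} := -\Dl\Phi^{(1)} = (v^{(1)}_{e,\th} - (ru_e^{(1)})_r)/r$. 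Hence $\o^{(1)}$ depends only on $r$.

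Expanding $\Phi^{(1)}(\th,r) = \Phi_0(r) + \sum_{n\neq 0}\Phi_n(r)e^{in\th}$, each non-zero Fourier mode is harmonic, i.e.\ satisfies the equidimensional equation $r^2\Phi_n'' + r\Phi_n' - n^2\Phi_n = 0$, whose decaying solution at infinity is $\Phi_n(r) = c_n r^{-|n|}$. The boundary condition $v_e^{(1)}(\th,1) = -v_p^{(1)}(\th,0)$ together with $v_e^{(1)} = -\Phi_\th^{(1)}/r$ pins down $c_n = \widehat{v_p^{(1)}}(n)/(in)$, where $\widehat{v_p^{(1)}}(n)$ is the $n$-th Fourier coefficient of $v_p^{(1)}(\cdot,0)$. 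Proposition \ref{propdcu0} gives the rapid decay $|\widehat{v_p^{(1)}}(n)|\ls_N \dl\,|n|^{-N}$ for every $N$, ensuring absolute convergence of the series and all its derivatives.

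For the zero mode, Proposition \ref{propdcu0} states $\int_0^{2\pi}v_p^{(1)}(\th,0)\,d\th = 0$, so $V_0(1) = 0$; the $\th$-average of the divergence-free condition gives $(rV_0)' = 0$, hence $V_0\equiv 0$. The zero mode of $u_e^{(1)}$, however, is not constrained by the interior equations. Following the Prandtl--Batchelor prescription, I would impose that the zero-mode of $\o^{(1)}$ vanishes, giving $U_0(r) = \t{A}_1/r$ for some constant $\t{A}_1$ with $|\t{A}_1|\ls\dl$, whose precise value is fixed by a Batchelor--Wood compatibility condition at the next order of the asymptotic expansion. Term-by-term differentiation of the Fourier series, using that each non-zero mode contributes $|\p^j_\th\p^k_r\Phi_n(r)|\ls_{j,k,N}\dl\,|n|^{j-N}r^{-|n|-1-k}$ and summing over $n\neq 0$, then yields the bound $|\p^j_\th\p^k_r(u_e^{(1)}-\t{A}_1/r,\,v_e^{(1)})|\ls \dl\,r^{-2-k}$, which is exactly \eqref{euler1esti}. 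The pressure is recovered from the first equation of \eqref{middleeuler1} as $p_e^{(1)} = -\t{\o}\,u_e^{(1)}/r + P_0(r)$, with $P_0$ then determined algebraically from the second equation.

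The main obstacle is conceptual rather than technical: the linearized Euler problem \eqref{middleeuler1} does not intrinsically determine the zero mode $\t{A}_1$, so one must supplement it with an external matching condition coming from the Prandtl--Batchelor theory. Once that choice is made, the rest of the construction is an explicit Fourier calculation made possible by the spectral rapid decay of the Prandtl data $v_p^{(1)}(\cdot,0)$.
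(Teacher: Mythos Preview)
Your approach is essentially the paper's: eliminate the pressure to find that $\Dl(rv_e^{(1)})=0$ (equivalently, the nonzero Fourier modes of the stream function are harmonic), solve the exterior Dirichlet problem by explicit Fourier series using the boundary data $-v_p^{(1)}(\th,0)$, and read off the $r^{-2-k}$ decay from the rapid decay of the Fourier coefficients supplied by Proposition~\ref{propdcu0}. The one place you diverge is in the handling of the zero mode and the constant $\t{A}_1$. The paper simply sets $u_e^{(1)}(\th,r):=-\int_0^\th\p_r(rv_e^{(1)}(\bar\th,r))\,d\bar\th$, which has zero $\th$-mean, so Proposition~\ref{propeulerorder1} is established with $\t{A}_1=0$; no external condition is invoked at this stage. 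The nonzero $\t{A}_1$ appears only in a subsequent correction step, where $u_e^{(1)}$ is shifted by a radial function $h_1(r)\sim\t{A}_1/r$ so that its boundary trace at $r=1$ compensates the far-field constant $A_1=\lim_{\zeta\to\infty}u_p^{(1)}$ of the first-order Prandtl profile. This is a boundary-layer matching condition rather than a Batchelor--Wood compatibility at the next Euler order, so your attribution of the mechanism for $\t{A}_1$ is slightly off, though your recognition that the linearized Euler system leaves the zero mode free is entirely correct.
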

\begin{proof}
Subtracting $\p_\th(\ref{middleeuler1})_2$ from $r\p_r(\ref{middleeuler1})_1$ to eliminate the pressure $p_e^{(1)}$, we obtain that
\bes
u^e(r)\Dl(r v^{(1)}_e)=0.
\ees
where $\Dl=\p^2_r+\f{1}{r^2}\p^2_{\th}+\f{1}{r}\p_r$. Since $u^e(r)$ is non-degenerate outside the disc, we obtain that
\bes
\Dl(r v^{(1)}_e)=0.
\ees
Then we can obtain $v_e^{(1)}$ by solving the following linear boundary value problem
\bes
\lt\{
\bali
&\Dl (rv^{(1)}_e)=0,\\
&rv_e^{(1)}\big|_{r=1}=-{v}_p^{(1)}\big|_{\zeta=0}, rv_e^{(1)}\big|_{r=+\i}=0,\\
&rv_e^{(1)}(\th,r)=rv_e^{(1)}(\th+2\pi,r).
\eali
\rt.
\ees

The above equations can be solved by using the method of variables separation.  The solution can be explicitly given by
\bes
rv_e^{(1)}(\th,r)=\sum_{n\in\bZ}\f{-\f{1}{2\pi}\int^{2\pi}_0{v}_p^{(1)}\big|_{\zeta=0}e^{-in\th}d\th}{r^{|n|}}e^{in\th}.
\ees
By noting that when $n=0$,
\bes
\int^{2\pi}_0{v}_p^{(1)}\big|_{\zeta=0}d\th=0,
\ees
then we obtain that
\be\label{eulerfirst3}
rv_e^{(1)}(\th,r)=\sum_{n\in{\bZ/\{0\}}}\f{-\f{1}{2\pi}\int^{2\pi}_0{v}_p^{(1)}\big|_{\zeta=0}e^{-in\th}d\th}{r^{|n|}}e^{in\th}.
\ee

It is easy to see that
\bes
 \|\p^j_\th\p^k_r v_e^{(1)}\|_{L^\i}\leq C_{j,k}\dl\f{1}{r^{k+2}} \q \text{for } 0\leq k\in \bN.
\ees

After $v^{(1)}_e$ is given, we define
\bes
u^{(1)}_e(\th,r)=-\int^\th_0 \p_r(rv^{(1)}_e(\bar{\th},r))d\bar{\th},
\ees
which satisfies
\begin{eqnarray}
\left \{
\begin {array}{ll}
\partial_\th u_e^{(1)}+\partial_r(rv_e^{(1)})=0,\\[5pt]
u_e^{(1)}(\th,r)=u_e^{(1)}(\th+2\pi,r).\nonumber
\end{array}
\right.
\end{eqnarray}
Using \eqref{eulerfirst3}, we see that for $k\in\bN$
\bes
\|\p^j_\th\p^k_r u_e^{(1)}\|_{L^\i}\leq \dl \f{C_{j,k}}{r^{k+2}}.
\ees
After obtaining $(u_e^{(1)}, v_e^{(1)})$, we construct $p_e^{(1)}$ as following
\begin{align*}
p_e^{(1)}(\th,r):=\phi(r)- u^e(r)u_e^{(1)}(\th,r)-\p_r(ru^e(r)) \int^\th_0v_e^{(1)}(\bar{\th},r)d\bar{\th},
\end{align*}
which satisfies
\bes
\p_\th p_e^{(1)}+u^e(r)\p_{\th} u_e^{(1)}+\p_r(ru^e(r))v_e^{(1)}=0.
\ees
Let $\phi(r)$ be a function satisfying
\begin{align*}
r\phi'(r)+u^e(r)\p_\th v^{(1)}_e(0,r)-2u^e(r)u_e^{(1)}(0,r)=0.
\end{align*}
Combining the equations of $(u_e^{(1)}, v_e^{(1)})$, it's direct to obtain
\begin{align*}
u^e(r) \partial_\th v_e^{(1)}-2u^e(r) u^{(1)}_e+r\partial_rp_e^{(1)}=0.
\end{align*}
Hence, $(u_e^{(1)}, v_e^{(1)},p_e^{(1)})$ solves the equation (\ref{middleeuler1}) and satisfies (\ref{euler1esti}).

\end{proof}

\subsubsection{The linearized system for  $(u_p^{(1)},v_p^{(2)})$ and its solvability}
\indent

In this subsection, we consider the solvability of \eqref{boundarylayeru1}. We have the following Proposition.

\begin{proposition}\label{propdcu1}
There exists $\dl_0>0$ such that for any $\dl\in(0,\dl_0)$, the equations (\ref{boundarylayeru1}) have a unique solution $(u_p^{(1)},v_p^{(2)})$ which satisfies
\begin{align*}
\begin{aligned}
&\lt\|\partial_\th^j\partial_\zeta^k \big({u}_p^{(1)}-A_{1}\big)\big<\zeta\big>^\ell\rt\|_{L^\i}\leq C_{j,k,\ell}\dl,\\
&\lt\|\partial_\th^j\partial_\zeta^k v_p^{(2)}\big<\zeta\big>^\ell\rt\|_{L^\i}\leq C_{j,k,\ell}\dl,\q \int_0^{2\pi}v_p^{(2)}(\th,\zeta)dx=0, \ \forall\ \zeta\geq 0,
\end{aligned}
\end{align*}
where
$A_{1}:=\lim\limits_{\zeta\rightarrow +\infty} u_p^{(1)}(\th,\zeta)$ is a constant which satisfies $|A_{1}|\leq C\dl.$
\end{proposition}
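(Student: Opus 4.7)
The strategy parallels the treatment of $(u_p^{(0)},v_p^{(1)})$ in Proposition \ref{propdcu0}: reduce the coupled system \eqref{boundarylayeru1} to a closed scalar equation for $u_p^{(1)}$ via the divergence-free constraint, then exploit the smallness of $\delta$ (recall from Proposition \ref{propdcu0} that $u_p^{(0)},v_p^{(1)}=O(\delta)$) to solve the resulting linearized Prandtl-type equation by a contraction argument.

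First, integrating \eqref{boundarylayeru1}$_3$ from $0$ to $\zeta$ gives
\[
v_p^{(2)}(\th,\zeta)-v_p^{(2)}(\th,0)=-\int_0^\zeta \p_\th u_p^{(1)}(\th,\zeta')\,d\zeta'-\zeta\,v_p^{(1)}(\th,\zeta).
\]
Substituting this into \eqref{boundarylayeru1}$_1$ yields a closed linear integro-differential equation of the form
\[
\mathcal{L}\,u_p^{(1)}\;:=\;-\p_\zeta^2 u_p^{(1)}+\tilde{\omega}\,\p_\th u_p^{(1)}+v_e^{(1)}(\th,1)\,\p_\zeta u_p^{(1)}+\mathcal{R}[u_p^{(1)}]\;=\;f_{p,1},
\]
where the principal part is strictly parabolic with $\th$ playing the role of time (since $\tilde\omega>0$), and the remainder operator $\mathcal{R}$ is nonlocal in $\zeta$ with coefficients of size $O(\delta)$.

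Second, I would decompose $u_p^{(1)}=A_1+\bar{u}_p^{(1)}$ with $\bar{u}_p^{(1)}\to 0$ as $\zeta\to+\infty$. The matching condition at $\zeta=0$, namely $u_p^{(1)}(\th,0)=-u_e^{(1)}(\th,1)$, forces
\[
A_1\;=\;-\fint_{\bT} u_e^{(1)}(\th,1)\,d\th + O(\delta),
\]
so that the zero $\th$-mean of the boundary datum for $\bar u_p^{(1)}$ is compatible with the decay requirement at infinity; Proposition \ref{propeulerorder1} ensures $|A_1|\leq C\delta$. I would then solve the leading (constant-in-$\th$) linear problem $-\p_\zeta^2 \bar u+\tilde\omega\p_\th \bar u+v_e^{(1)}(\th,1)\p_\zeta \bar u=F$ by Fourier expansion in $\th\in\bT$: each non-zero mode satisfies a second-order ODE in $\zeta$ with complex damping $ik\tilde\omega$, yielding an exponentially decaying bounded solution by a standard shooting/variation of parameters argument, while the zero mode satisfies a solvable linear ODE with exponential decay.

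Third, the $O(\delta)$ remainder $\mathcal{R}$ is absorbed by a Banach fixed-point argument in the weighted Fr\'echet space
\[
\mathcal{X}\;:=\;\Bigl\{w\in C^\infty(\bT\times[0,+\infty)):\ \|\langle\zeta\rangle^\ell\p_\th^j\p_\zeta^k w\|_{L^\infty}<+\infty \ \forall\,j,k,\ell\Bigr\},
\]
which closes for $\delta\leq\delta_0$ sufficiently small, giving existence, uniqueness, and the claimed weighted bounds for $u_p^{(1)}-A_1$ by bootstrap regularity. Finally, $v_p^{(2)}$ is recovered from the integral formula above, and the zero-$\th$-mean property follows from integrating \eqref{boundarylayeru1}$_3$ over $\bT$ together with the zero-mean of $v_p^{(1)}$ proved in Proposition \ref{propdcu0}.

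The main obstacle will be propagating the polynomial decay in $\zeta$ uniformly to all derivatives and verifying that the compatibility condition at $\zeta=0$ (zero $\th$-mean of $\bar u_p^{(1)}|_{\zeta=0}$) is indeed what fixes $A_1$ consistently with the asymptotic decay; this is technically analogous to, but more involved than, the corresponding step for $(u_p^{(0)},v_p^{(1)})$ because of the additional nonlocal-in-$\zeta$ coupling introduced by eliminating $v_p^{(2)}$.
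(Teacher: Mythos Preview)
Your overall strategy---eliminate $v_p^{(2)}$ via the divergence relation, then solve the resulting linearized Prandtl-type equation by treating $-\p_\zeta^2+\t\o\p_\th$ as the leading operator and the $O(\dl)$ terms perturbatively---is sound and is essentially what underlies the result cited from \cite{FeiGLT:2023CMP}. The paper, however, organizes the reduction differently: it introduces a compactly supported cutoff $\kappa$ with $\kappa(0)=1$, $\int_0^\i\kappa=0$, and sets $u=u_p^{(1)}+u_e^{(1)}(\th,1)\kappa(\zeta)$, $v=v_p^{(2)}-v_p^{(2)}(\th,0)+\zeta v_p^{(1)}-\p_\th u_e^{(1)}(\th,1)\int_0^\zeta\kappa$. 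This simultaneously homogenizes \emph{both} boundary conditions ($u|_{\zeta=0}=v|_{\zeta=0}=0$) and reduces the system exactly to the standard form (2.47) of \cite{FeiGLT:2023CMP}, with the asymptotic condition $\p_\zeta u\to 0$ as $\zeta\to\i$; the constant $A_1$ then appears a posteriori as $\lim_{\zeta\to\i}u_p^{(1)}$. This is cleaner than your decomposition $u_p^{(1)}=A_1+\bar u_p^{(1)}$, which still leaves an inhomogeneous (nonconstant) boundary condition at $\zeta=0$.

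Two points in your outline need correction. First, the term $v_e^{(1)}(\th,1)\p_\zeta$ is \emph{not} constant in $\th$, so it cannot sit in your ``leading (constant-in-$\th$)'' operator if you want the Fourier modes to decouple; since $v_e^{(1)}=O(\dl)$ by Proposition~\ref{propeulerorder1}, it belongs in the remainder $\mathcal R$. Second, your mechanism for fixing $A_1$ is mis-stated: there is no requirement that the boundary datum for $\bar u_p^{(1)}$ have zero $\th$-mean. The nonzero Fourier modes decay regardless of their boundary values (the characteristic exponents of $-\p_\zeta^2+ik\t\o$ have nonzero real part), while for the zero mode one solves $-\bar u_0''+O(\dl)=F_0$ with $\bar u_0(0)=g_0-A_1$ and $\bar u_0(\i)=0$; the value of $A_1$ is determined by the latter decay requirement and involves the forcing $F_0$, not just $g_0$. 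Your formula $A_1=-\fint u_e^{(1)}(\th,1)\,d\th+O(\dl)$ happens to be correct because $f_{p,1}=O(\dl)$, but the justification you give for it is not.
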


Let $\kappa\in C_c^\infty ([0,+\i))$ satisfy
\begin{align*}
\kappa(0)=1,\ \int_0^{+\infty}\kappa(\zeta)d\zeta=0.
\end{align*}
For simplicity, we set
\begin{align*}
\bar{u}:&=\t{\o}+u_p^{(0)}, \ \bar{v}:=v_p^{(1)}+v^{(1)}_p(\th,1),\\
u:&=u_p^{(1)}+u_e^{(1)}(\th,1)\kappa(\zeta),\\
v:&=v_p^{(2)}-v_p^{(2)}(\th,0)+\zeta v_p^{(1)} -\p_\th u_e^{(1)}(\th,1)\int_0^\zeta \kappa(\bar{\zeta})d\bar{\zeta}.
\end{align*}
Then, the equations (\ref{boundarylayeru1}) reduce to
\be\label{boundarylayeru1r}
\left \{
\begin {array}{ll}
\bar{u}\partial_x u+\bar{v}\partial_\zeta u+u\partial_x \bar{u}+v\partial_\zeta\bar{u}-\partial^2_{\zeta}u=\bar{f},\\[7pt]
\partial_\th u+\partial_\zeta v=0,\\[5pt]
u(\th,\zeta)=u(\th+2\pi,\zeta),\ v(\th,\zeta)=v(\th+2\pi,\zeta)\\[5pt]
u|_{\zeta=0}=v|_{\zeta=0}=0,\  \lim\limits_{\zeta\rightarrow +\infty}\partial_\zeta u=0,
\end{array}
\right.
\ee
where $\bar{f}(\th,\zeta)$ is $2\pi$-periodic function and decays fast as $\zeta\rightarrow +\infty$ given as follows
\begin{align*}
\bar{f}=& f_{p,1}+\bar{u} \p_\th u_e^{(1)}(\th,1)\kappa(\zeta)\\
        &+\lt(\zeta v_p^{(1)} -\p_\th u_e^{(1)}(\th,1)\int_0^\zeta \kappa(\bar{\zeta})d\bar{\zeta}\rt)\p_\zeta \bar{u}\\
        &+ u_e^{(1)}(\th,1)\lt(\p_\th \bar{u}\kappa(\zeta)+\bar{v}\kappa'-\kappa^{''}(\zeta)\rt),
\end{align*}
 which satisfies
\bes
\lt\|\partial_\th^j\partial_\zeta^k \bar{f}\big<\zeta\big>^\ell\rt\|_{L^\i}\leq C_{j,k,\ell}\dl,\ \forall\ \zeta\geq 0.
\ees

This system \eqref{boundarylayeru1r} is exactly (2.47) in \cite{FeiGLT:2023CMP}. Reader can refer there for further detailed proof of Proposition \ref{propdcu1}. Here we omit the details.

Next, we construct the pressure $p_p^{(2)}(x,\zeta)$. Thanks to \eqref{boundarylayeru1}$_2$, we take
\bes
p_p^{(2)}(\th,\zeta)=\int^{+\i}_{\zeta} g_{p,1}(\th,\bar{\zeta})d\bar{\zeta}.
\ees
which satisfies
\begin{align*}
-\p_\zeta p_p^{(2)}(\th,\zeta)=g_1(\th,\zeta), \quad \lim_{\zeta\rightarrow +\infty}p_p^{(2)}(\th,\zeta)=0,
\end{align*}

\subsubsection{The linearized Euler system for $(u_e^{(2)},\ v_e^{(2)},\ p_e^{(2)})$ and its solvability}
\indent

Since the system \eqref{middleeuler2} is completely the same as the system \eqref{middleeuler1}, we omit the details to prove the solvability.

\subsubsection{Corrections of the expansions to the boundary layer equations and the Euler equations}
\indent

Since we want the solutions of the boundary layer equations decaying fast at infinity, we define
\bes
\t{u}^{(1)}_p:=u^{(1)}_p-A_1.
\ees
and use $\t{u}^{(1)}_p$ to replace the original ${u}^{(1)}_p$. Then we need to correct $u^{(1)}_e$ to $\t{u}^{(1)}_e$ such that they satisfy
\begin{align}
&\t{u}^{(1)}_e\big|_{r=1}=-u^{(1)}_p\big|_{\zeta=0}+A_1={u}^{(1)}_e\big|_{r=1}+A_1,\q \t{u}^{(1)}_e\big|_{r=+\i}=0,\nn
\end{align}
and $(\t{u}^{(1)}_e,v^{(1)}_e)$ is still a solution of the system \eqref{middleeuler1}.

Also, using divergence-free condition and a direct calculation, we see that
\bes
\p_\th \lt( r(\Dl-\f{1}{r^2})u_e^{(1)} +\f{2}{r}\p_\th v_e^{(1)}\rt)=0.
\ees
Let
\bes
\t{u}^{(1)}_e={u}^{(1)}_e+h_1(r),
\ees
where $\chi(r)\in C^\i_{c}([1,+\i))$ is a cut-off function to be determined later,  $h_1(r)$ satisfies the following ODE
\be \label{correction1}
\lt\{
\bali
&r(\Dl-\f{1}{r^2})h_1(r)=-r(\Dl-\f{1}{r^2})u_e^{(1)} +\f{2}{r}\p_\th v_e^{(1)}:=rf(r),\\
& h_1(1)=A_1,\q h_1(+\i)=0.
\eali
\rt.
\ee
By estimates of $u_e^{(1)}$ and $v_e^{(1)}$, we have
\bes
\|\p^{k}_r f(r)\|_{L^\i}\leq C_k\f{\dl}{r^{k+4}}.
\ees
The solution of \eqref{correction1} is given by
\begin{align}
h_1(r)=&\f{A_1+\f{1}{2}\int^{+\i}_1 f(s)(1-s^2)ds}{r}+\f{1}{2r}\int^{+\i}_r f(s)s^2ds-\f{1}{2}r\int^{+\i}_r f(s)ds\nn\\
       :=& \f{\t{A}_1}{r}+\f{1}{2r}\int^{+\i}_r f(s)s^2ds-\f{1}{2}r\int^{+\i}_r f(s)ds.\nn
\end{align}
 It is not hard to deduce that
\bes
\lt\|\p^{k}_r \lt(h_1(r)-\f{\t{A}_1}{r}\rt)\rt\|_{L^\i}\leq C_{k} \dl \f{1}{r^{k+2}}.
\ees
Obviously, $(\t{u}^{(1)}_e, v^{(1)}_e)$ is still a solution of the system \eqref{middleeuler1} and satisfies \eqref{euler1esti}. Also we have that
  \bes
 r(\Dl-\f{1}{r^2})\t{u}_e^{(1)} +\f{2}{r}\p_\th v_e^{(1)}\equiv 0.
  \ees
  By letting $\t{u}_p^{1}$ and $\t{u}_e^{(1)}$ to replace  ${u}_p^{1}$ and $u_e^{(1)}$, the equations for the first order boundary layer is invariant.

%
%

\subsection{Equations for higher order expansions and its solvability}
\indent

After performing the above lower order extensions, we can perform the higher order expansions inductively as follows.

\subsubsection{Equations for $(u_e^{(k)},v_e^{(k)},p_e^{(k)})$}\label{sec431}
\indent

For $k\geq 2$, by collecting the $\epsilon-k$th order terms of the Euler expansions in \eqref{eulerextension}, we deduce that $(u_e^{(1+\f{k}{3})},v_e^{(1+\f{k}{3})},$ $ p_e^{(1+\f{k}{3})})$ satisfies the following linearized Euler equations
\be\label{middleeulerk}
\left \{
\begin{array}{ll}
u^e(r) \partial_\th u_e^{(k)}+rv_e^{(k)}\p_r u^e(r)+u^e(r)v_e^{(k)} +\partial_\th p_e^{(k)}=f^{(k)}_e,\\[5pt]
u^e(r) \partial_\th v_e^{(k)}-2u^e(r)u_e^{(k)}+r\partial_r p_e^{(k)}=g^{(k)}_e,\\[5pt]
\partial_\th u_e^{(k)}+\partial_r(r v_e^{(k)})=0,\\
 v_e^{(k)}|_{r=1}=-{v}_p^{(k)}|_{\zeta=0},\ v_e^{(k)}|_{r=+\i}=0,\\
(u_e^{(k)}, v_e^{(k)})(\th,r)=(u_e^{(k)}, v_e^{(k)})(\th+2\pi,r),
\end{array}
\right.
\ee
where the forced term $f^{(k)}_e$ is defined as
\begin{align}
f^{(k)}_e:=&-\sum^{k}_{i=1} u_e^{(i)}\partial_\th u_e^{(k-i)}-r\sum^{k-1}_{i=0} v_e^{(i)}\partial_r u_e^{(k-i)}-\sum^{k-1}_{i=0} v_e^{(i)} u_e^{(k-i)}+r\lt(\Dl-\f{1}{r^2}\rt) u_e^{(k-2)}+\f{2}{r}\p_\th v_e^{(k-2)},\nn
\end{align}
and $g^{(k)}_e$ is defined as
\begin{align}
g^{(k)}_e:=&-\sum^{k-1}_{i=1} u_e^{(i)}\partial_\th v_e^{(k-i)}-r\sum^{k-1}_{i=1} v_e^{(i)}\partial_r v_e^{(k-i)}+\sum^{k-1}_{i=1} u_e^{(i)} u_e^{(k-i)}+\Dl (rv_e^{(k-2)}).\nn
\end{align}

{\noindent\bf Here we give another explanation why we choose the Euler shear flow satisfying}
\bes
u^e(r)=\f{B}{r}.
\ees

When $k=2$, the system \eqref{middleeulerk} is simplified to
\be\label{middleeuler3}
\left \{
\begin{array}{ll}
u^e(r) \partial_\th u_e^{(2)}+v_e^{(2)}\p_r(r u^e(r))+\partial_\th p_e^{(2)}=f^{(2)}_e,\\[5pt]
u_e(r) \partial_\th v_e^{(2)}-2u^e(r)u_e^{(2)}+r\partial_r p_e^{(2)}=g^{(2)}_e,\\[5pt]
\partial_\th u_e^{(2)}+\partial_r(r v_e^{(2)})=0,\\
 v_e^{(2)}|_{r=1}=-{v}_p^{(2)}|_{\zeta=0},\ v_e^{(2)}|_{r=+\i}=0,\\
(u_e^{(2)}, v_e^{(2)})(\th,r)=(u_e^{(2)}, v_e^{(2)})(\th+2\pi,r),
\end{array}
\right.
\ee
where the force terms $f^{(2)}_e$ and $g^{(2)}_e$ are defined by
\begin{align}
f^{(2)}_e:&=- u_e^{(1)}\partial_\th u_e^{(1)}-v_e^{(1)}\partial_r(r u_e^{(1)})+r(\Dl-\f{1}{r^2}) u^e(r),\label{bwx}\\
g^{(2)}_e:&=- u_e^{(1)}\partial_\th v_e^{(1)}-rv_e^{(1)}\partial_r v_e^{(1)}+(u_e^{(1)})^2.\nn
\end{align}

If $(u^e(r), 0)$ is the shear flow solution of the Euler equation, by taking $\th$ average of \eqref{middleeuler3}, we can see that it satisfies
\bes
\int^{2\pi}_0 f^{(2)}_e(\th,r)d\th\equiv 0.
\ees
Then from \eqref{bwx}, we see that

\begin{align}
&2\pi r\lt(\p^2_r+\f{1}{r}\p_r-\f{1}{r^2}\rt)u^e(r)=\int_0^{2\pi}v_e^{(1)}\partial_r(ru_e^{(1)})d\th\nn\\
=&\int_0^{2\pi}\partial_r(ru_e^{(1)})d(\int_0^\th v_e^{(1)}(\th',r)d\th')\nn\\
=&-\int_0^{2\pi}\partial_{ r}(r\p_\th u_e^{(1)})\int_0^\th v_e^{(1)}(\th',r)d\th' d\th\nn\\
=&\int_0^{2\pi}\partial_{r}(r\p_r(r v_e^{(1)}))\int_0^\th v_e^{(1)}(\th',r)d\th' d\th\nn\\
=&\int_0^{2\pi}r\lt(\Dl-\f{\p^2_\th}{r^2}\rt)(rv_e^{(1)})\int_0^\th v_e^{(1)}(\th',r)d\th' d\th\nn\\
=&-\int_0^{2\pi}\p^2_\th v_e^{(1)}\int_0^\th v_e^{(1)}(\th',r)d\th' d\th=\int_0^{2\pi}\p_\th v_e^{(1)} v_e^{(1)} d\th=0,\nn
\end{align}
which implies that $u^e(r)=Ar+\f{B}{r}$. Supplied with the boundary condition $u^e(+\i)=0$, we have $u^e_r=\f{B}{r}$. This also gives the explanation of the choice of the leading Euler equation. \qed

Now we show the solvability of the system \eqref{middleeulerk}.
\begin{proposition}\label{propeulerorderk}
There exists $\dl_0>0$ such that for any $\dl\in(0,\dl_0)$, the linearized Euler equations (\ref{middleeulerk}) has a solution $(u_e^{(k)}, v_e^{(k)}, p_e^{(k)})$ which satisfies
\be\label{eulerkesti}
\begin{aligned}
\lt\|\partial^i_\th\partial^{j}_r \lt(u_e^{(k)}-\f{\t{A}_k}{r},v_e^{(k)}\rt)\rt\|_{L^\i}\leq C_{i,j}\dl \f{1}{r^{j+2}}, \ \forall\ i,j\geq 0.
\end{aligned}
\ee
for some constant $\t{A}_k$ satisfying $|\t{A}_k|\ls \dl$.
\end{proposition}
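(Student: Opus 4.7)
The plan is to prove Proposition \ref{propeulerorderk} by induction on $k\ge 2$, with Proposition \ref{propeulerorder1} serving as the base case. Assume that \eqref{eulerkesti} (and its derivative version) holds for all orders $1\le j<k$, together with analogous estimates for the boundary layer profiles $(u_p^{(j)}, v_p^{(j+1)})$. The first step is to control the source terms $f_e^{(k)}, g_e^{(k)}$. Each nonlinear product $u_e^{(i)}\partial_\theta u_e^{(k-i)}$, $v_e^{(i)}\partial_r u_e^{(k-i)}$, $v_e^{(i)}u_e^{(k-i)}$ decays like $\delta r^{-3}$, because $\partial_\theta$ annihilates the $\tilde A_i/r$ contribution and $\partial_r(\tilde A_i/r)=-\tilde A_i/r^2$ already gains one power of $r^{-1}$. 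The viscous correction $r(\Delta-r^{-2})u_e^{(k-2)}$ also decays like $\delta r^{-3}$, since the operator $\Delta-r^{-2}$ annihilates $r^{-1}$ and so only the $O(\delta r^{-2})$ remainder of $u_e^{(k-2)}$ contributes. Thus $|\partial^i_\theta\partial^j_r f_e^{(k)}|+|\partial^i_\theta\partial^j_r g_e^{(k)}|\lesssim_{i,j}\delta r^{-3-j}$.

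Next, I eliminate the pressure as in the proof of Proposition \ref{propeulerorder1}. Applying $r\partial_r$ to the first equation of \eqref{middleeulerk}, subtracting $\partial_\theta$ of the second, and using $\partial_r(ru^e)=0$ for the Taylor--Couette shear $u^e=\tilde\omega/r$, the pressure terms cancel and the remaining expression reduces (via incompressibility) to $-\tilde\omega\Delta(rv_e^{(k)})$. This yields the Poisson problem
\begin{equation*}
\Delta(rv_e^{(k)})=S^{(k)}:=\tilde\omega^{-1}\bigl(\partial_\theta g_e^{(k)}-r\partial_r f_e^{(k)}\bigr),\q rv_e^{(k)}|_{r=1}=-v_p^{(k)}|_{\zeta=0},\q rv_e^{(k)}|_{r=+\infty}=0,
\end{equation*}
with $|\partial^i_\theta\partial^j_r S^{(k)}|\lesssim\delta r^{-3-j}$. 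Expanding in Fourier series in $\theta$, the $n$-mode solves an explicit second-order linear ODE in $r$ on $[1,\infty)$ with fundamental solutions $r^{|n|}, r^{-|n|}$, solvable by variation of parameters. Since $v_p^{(k)}|_{\zeta=0}$ has vanishing $\theta$-mean (by an inductive extension of Propositions \ref{propdcu0}--\ref{propdcu1}) and $f_{e,0}^{(k)}$ vanishes identically (this compatibility follows from $\theta$-averaging the first equation of \eqref{middleeulerk} and using $\partial_r(ru^e)=0$), the zero mode of $rv_e^{(k)}$ vanishes. The Green's-function estimates then yield $|\partial^i_\theta\partial^j_r v_e^{(k)}|\lesssim_{i,j}\delta r^{-2-j}$.

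Finally, $u_e^{(k)}$ is reconstructed from the divergence-free relation $\partial_\theta u_e^{(k)}+\partial_r(rv_e^{(k)})=0$ for the nonzero $\theta$-modes, while its zero mode $u_{e,0}^{(k)}(r)$ is obtained by solving the same Bessel-type ODE $(\Delta-r^{-2})u_{e,0}^{(k)}=F^{(k)}(r)$ that underlies the correction $h_1$ in Proposition \ref{propeulerorder1}, supplied with the matching condition $u_{e,0}^{(k)}(1)=A_k$, where $A_k$ is the far-field constant of the boundary-layer profile $u_p^{(k)}$. The explicit Green's formula separates the solution into a leading term $\tilde A_k/r$ plus a remainder of size $O(\delta r^{-2})$, yielding \eqref{eulerkesti}. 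The pressure $p_e^{(k)}$ is then reconstructed by direct integration of the first and second equations of \eqref{middleeulerk} exactly as at the end of the proof of Proposition \ref{propeulerorder1}. The principal obstacle is the inductive bookkeeping for the many nonlinear products in $f_e^{(k)}, g_e^{(k)}$, together with verifying the compatibility $f_{e,0}^{(k)}\equiv 0$ that underlies the solvability of the Poisson step.
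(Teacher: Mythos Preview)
Your overall strategy (eliminate the pressure, solve for $v_e^{(k)}$, reconstruct $u_e^{(k)}$ and $p_e^{(k)}$) is the same as the paper's, but you miss the central structural fact that makes it work: the source $S^{(k)}=\tilde\omega^{-1}(\partial_\theta g_e^{(k)}-r\partial_r f_e^{(k)})$ vanishes \emph{identically}, so $rv_e^{(k)}$ is harmonic, not merely a solution of a Poisson equation. The paper verifies this cancellation by splitting $r\partial_r f_e^{(k)}-\partial_\theta g_e^{(k)}$ into ten pieces and pairing them off using incompressibility together with the identities $\Delta(rv_e^{(i)})=0$ and $r(\Delta-r^{-2})u_e^{(i)}+2r^{-1}\partial_\theta v_e^{(i)}=0$ for $i<k$; the latter holds only for the \emph{corrected} lower-order profiles, and is precisely why the correctors $h_i$ are introduced at each step of the induction.

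This is not cosmetic. With a generic source obeying only $|S^{(k)}|\lesssim\delta r^{-3}$, your Green's-function argument does not yield $|v_e^{(k)}|\lesssim\delta r^{-2}$: for the Fourier-one mode the homogeneous solutions of $\psi''+r^{-1}\psi'-r^{-2}\psi=S_1$ are $r$ and $r^{-1}$, and variation of parameters produces a term $\tfrac{1}{2r}\int_1^r\rho^{2}S_1(\rho)\,d\rho\sim r^{-1}\log r$, hence $v_e^{(k)}\sim r^{-2}\log r$, contradicting \eqref{eulerkesti}. No sharpening of the individual bounds on $f_e^{(k)},g_e^{(k)}$ removes this logarithm, since the product of the zero mode $\tilde A_i/r$ with the Fourier-one part of $\partial_\theta u_e^{(k-i)}$ is genuinely $O(r^{-3})$; the log disappears only because $S^{(k)}\equiv 0$. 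A related issue: your justification of $f_{e,0}^{(k)}=0$ by ``$\theta$-averaging the first equation'' is circular---it shows the condition is \emph{necessary} for a solution, not that the given source (built from lower-order data) actually satisfies it. The paper checks this directly via integration by parts in $\theta$ on the nonlinear sums.
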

\indent

The idea of proof is the same as that of the system \eqref{middleeuler1}. First we cancel the pressure from \eqref{middleeulerk}$_1$ and \eqref{middleeulerk}$_2$ to obtain that
\be\label{middleeulerk1}
\lt\{
\bali
&-u^e(r) \Dl ( rv_e^{(k)}) =r\p_rf^{(k)}_e-\p_\th g^{(k)}_e,\\
&rv_e^{(k)}|_{r=1}=-{v}_p^{(k)}|_{\zeta=0},\ rv_e^{(k)}|_{r=+\i}=0,\\
&v_e^{(k)}(\th,r)=v_e^{(k)}(\th+2\pi,r).
\eali
\rt.
\ee
Noting in fact that $rv_e^{(k)}$ is harmonic, i.e.
\bes
r\p_rf^{(k)}_e-\p_\th g^{(k)}_e=0.
\ees
We give calculation here. Actually, by the procedure of the Euler expansions, we construct
\bes
r\lt(\Dl-\f{1}{r^2}\rt) u_e^{(k-2)}+\f{2}{r}\p_\th v_e^{(k-2)}=\Dl (rv_e^{(k-2)})=0,
\ees
Then a direct computation indicates that
\begin{align}
&r\p_rf^{(k)}_e-\p_\th g^{(k)}_e\nn\\
& =\underbrace{-\sum^{k}_{i=1} r\p_r u_e^{(i)}\partial_\th u_e^{(k-i)}}_{I_1}\underbrace{-\sum^{k}_{i=1} u_e^{(i)}r\p_r\partial_\th u_e^{(k-i)}}_{I_2}\nn\\
   &\quad\underbrace{-\sum^{k-1}_{i=0} r\p_rv_e^{(i)}\partial_r(r u_e^{(k-i)})}_{I_3}\underbrace{-\sum^{k-1}_{i=0} v_e^{(i)}r\p_r(\p_r(r u_e^{(k-i)}))}_{I_4}\nn\\
   &\quad\underbrace{+\sum^{k-1}_{i=1} \p_\th u_e^{(i)}\partial_\th v_e^{(k-i)}}_{I_5}+\underbrace{\sum^{k-1}_{i=1} u_e^{(i)}\partial^2_\th v_e^{(k-i)}}_{I_6}\nn\\
   &\quad+\underbrace{\sum^{k-1}_{i=1} r\p_\th v_e^{(i)}\partial_r v_e^{(k-i)}}_{I_7}+\underbrace{\sum^{k-1}_{i=1} rv_e^{(i)}\p_\th \partial_r v_e^{(k-i)}}_{I_8}.\nn\\
   &\quad-\underbrace{\sum^{k-1}_{i=1} \p_\th u_e^{(i)} u_e^{(k-i)}}_{I_{9}}-\underbrace{\sum^{k-1}_{i=1} u_e^{(i)}\p_\th  u_e^{(k-i)}}_{I_{10}}.\nn
\end{align}
By using the incompressibility and harmonic property of $r v_e^{(i)}$, we can obtain that
\bes
I_2+I_6+I_{10}=0.
\ees
By using the incompressibility and the property of
\bes
r\lt(\Dl-\f{1}{r^2}\rt) u_e^{(i)}+\f{2}{r}\p_\th v_e^{(i)}=0,
\ees
we can obtain that
\bes
I_1+I_{3}+I_{4}+I_{5}+I_{7}+I_{8}+I_{9}=0.
\ees
This indicates that $  \Dl (rv_e^{(k)})=0$. Then the system \eqref{middleeulerk1} is solved by solving the following linear boundary value problem
\bes
\lt\{
\bali
&- \Dl (rv_e^{(k)}) =0,\\
&rv_e^{(k)}|_{r=1}=-{v}_p^{(k)}|_{\zeta=0},\ rv_e^{(k)}|_{r=+\i}=0,\\
&v_e^{(k)}(\th,r)=v_e^{(k)}(\th+2\pi,r).
\eali
\rt.
\ees
The solution can be explicitly given by
\bes
rv_e^{(k)}(\th,r)=\sum_{n\in\bZ}\f{-\f{1}{2\pi}\int^{2\pi}_0{v}_p^{(k)}\big|_{\zeta=0}e^{-in\th}d\th}{r^{|n|}}e^{in\th}.
\ees
By noting that when $n=0$,
\bes
\int^{2\pi}_0{v}_p^{(k)}\big|_{\zeta=0}d\th=0,
\ees
then we obtain that
\bes
rv_e^{(k)}(\th,r)=\sum_{n\in{\bZ/\{0\}}}\f{-\f{1}{2\pi}\int^{2\pi}_0{v}_p^{(k)}\big|_{\zeta=0}e^{-in\th}d\th}{r^{|n|}}e^{in\th}.
\ees
It is easy to see that
\bes
 \|\p^j_\th\p^k_r v_e^{(k)}\|_{L^\i}\leq C_{j,k}\dl\f{1}{r^{k+2}} \q \text{for } 0\leq k\in \bN.
\ees
The zero mean of $v_e^{(k)}$ in $\th$ variable implies, after integrating \eqref{middleeulerk}$_1$ in $\th\in [0,2\pi]$, the compatibility condition: for any $r\in[1,+\i)$,
\be\label{middleeulerk3}
\int^{2\pi}_0 f^{(k)}_e(\th,r)d\th=0.
\ee

Actually by integration by parts, we have
\begin{align}
 &\sum^{k}_{i=0}\int^{2\pi}_0 u_e^{(i)}\partial_\th u_e^{(k-i)}d\th=-\sum^{k}_{i=0}\int^{2\pi}_0 \p_\th u_e^{(i)} u_e^{(k-i)}d\th=-\sum^{k}_{i=0}\int^{2\pi}_0u_e^{(k)}\partial_\th u_e^{(i)}d\th,\label{middleeulerk4}
\end{align}
which implies that
\bes
\sum^{k}_{i=0}\int^{2\pi}_0 u_e^{(i)}\partial_\th u_e^{(k-i)}d\th=0.
\ees
Using the incompressibility and harmonic property of $rv_e^{(i)}$, we have
\begin{align}
 &\sum^{k}_{i=0}\int^{2\pi}_0 v_e^{(i)}\partial_r(r u_e^{(k-i)})d\th\nn\\
& =\sum^{k}_{i=0}\int^{2\pi}_0 v_e^{(i)}\int^\th_0\p_r(r\partial_\th u_e^{(k-i)})(\bar{\th},r)d\bar{\th}d\th\nn\\
& =-\sum^{k}_{i=0}\int^{2\pi}_0 v_e^{(i)}\int^\th_0\p_r(r\partial_r(r v_e^{(k-i)}))(\bar{\th},r)d\bar{\th}d\th\nn\\
 & =\sum^{k}_{i=0}\int^{2\pi}_0 v_e^{(i)}\int^\th_0\partial^2_\th v_e^{(k-i)}(\bar{\th},r)d\bar{\th}d\th \label{middleeulerk5}\\
 & =\sum^{k}_{i=0}\int^{2\pi}_0 v_e^{(i)}\lt(\partial_\th v_e^{(k-i)}(\th,r)-\partial_\th v_e^{(k-i)}(0,r)\rt)d\th\nn \\
 & =0.\nn
\end{align}
In the last but second line of \eqref{middleeulerk5},  integration by parts (the same as \eqref{middleeulerk4}) implies that the first term is zero, while the fact that the second term is zero is due to
\bes
\int^{2\pi}_0 v_e^{(i)}d\th=0, \text{ for } \forall\ r\in[1,+\i).
\ees
Combining \eqref{middleeulerk4} and \eqref{middleeulerk5}, we obtain \eqref{middleeulerk3}.

After $v^{(k)}_e$ is given, we define
\bes
u^{(k)}_e(\th,r)=-\int^\th_0 \p_r(rv^{(k)}_e(\bar{\th},r))d\bar{\th},
\ees
which satisfies
\begin{eqnarray}
\left \{
\begin {array}{ll}
\partial_\th u_e^{(k)}+\partial_r(rv_e^{(k)})=0,\\[5pt]
u_e^{(k)}(\th,r)=u_e^{(k)}(\th+2\pi,r).\nonumber
\end{array}
\right.
\end{eqnarray}
Using \eqref{eulerfirst3}, we see that for $i,j\in\bN$
\bes
\|\p^i_\th\p^j_r u_e^{(k)}\|_{L^\i}\leq \dl \f{C_{i,j}}{r^{j+2}}.
\ees
After obtaining $(u_e^{(k)}, v_e^{(k)})$, we construct $p_e^{(k)}$ as following
\begin{align*}
p_e^{(k)}(\th,r):=\phi(r)- u^e(r)u_e^{(k)}(\th,r)-\p_r(ru^e(r)) \int^\th_0v_e^{(k)}(\bar{\th},r)d\bar{\th}+\int^\th_0f_e^{(k)}(\bar{\th},r)d\bar{\th},
\end{align*}
which satisfies
\bes
\p_\th p_e^{(k)}+u^e(r)\p_{\th} u_e^{(k)}+\p_r(ru^e(r))v_e^{(k)}=f_e^{(k)}.
\ees
Let $\phi(r)$ be a function satisfying
\begin{align*}
r\phi'(r)+u^e(r)\p_\th v^{(k)}_e(0,r)-2u^e(r)u_e^{(k)}(0,r)+g_e^{(k)}(0,r)=0.
\end{align*}
Combining the equations of $(u_e^{(k)}, v_e^{(k)})$, it's direct to obtain
\begin{align*}
u^e(r) \partial_\th v_e^{(k)}-2u^e(r) u^{(k)}_e+r\partial_rp_e^{(k)}=0.
\end{align*}
Hence, $(u_e^{(k)}, v_e^{(k)},p_e^{(k)})$ solves the equation (\ref{middleeulerk}) and satisfies (\ref{eulerkesti}). \ef

\subsubsection{Higher order boundary expansions near the boundary $\{r=1\}$}

\indent

By substituting the the outer boundary layer expansion \eqref{boundaryextensionu} into (\ref{nspolar}) and collecting the $\epsilon-k$th $(k\geq 2)$ order terms, we obtain
 the following steady boundary equations for $(u_p^{(k)},v_p^{(k+1)},p_p^{(k+1)})$
 \be\label{boundarylayeruk}
 \lt\{
 \begin{aligned}
 &(\t{\o}+u^{(0)}_p)\p_\th u^{(k)}_p+u^{(k)}_p\p_\th u^{(0)}_p+(v^{(1)}_e(\th,1)+v^{(1)}_p)\p_\zeta u^{(k)}_p-\p^2_\zeta u^{(k)}_p\\
 &+(v^{(k+1)}_p-v^{(k+1)}_p(\th,0))\p_\zeta u^{(0)}_p=f_{p,k}(\th,\zeta),\\
 &-\p_\zeta p^{(k+1)}_p=g_{p,k}(\th,\zeta),\\
 &\p_\th u^{(k)}_p+ \p_\zeta v^{(k+1)}_p+\p_\zeta(\zeta v_p^{(k)})=0,
 \end{aligned}
 \rt.
 \ee
 where the force term $f_{p,k}(\th,\zeta)$ and $g_{p,k}(\th,\zeta)$ are lower order terms, which decay fast at $\zeta$ infinity. The exact representations of $f_{p,k}$ and $g_{p,k}$ are not important here, we choose to omit to write it out.

For the system \eqref{boundarylayeruk}, we have the following proposition.

\begin{proposition}\label{propdcuk}
There exists $\dl_0>0$ such that for any $\dl\in(0,\dl_0)$, the equations (\ref{boundarylayeruk}) has a unique solution $(u_p^{(k)},v_p^{(k+1)})$ which satisfies
\begin{align*}
\begin{aligned}
&\lt\|\partial_\th^i\partial_\zeta^j \big({u}_p^{(k)}-A_{k}\big)\big<\zeta\big>^{\ell}\rt\|_{L^\i}\leq C_{i,j,\ell}\dl,\\
&\lt\|\partial_\th^i\partial_\zeta^j v_p^{(k+1)}\big<\zeta\big>^{\ell}\rt\|_{L^\i}\leq C_{i,j,\ell}\dl ,\q \int_0^{2\pi}v_p^{(k+1)}(\th,\zeta)d\th =0, \ \forall\ \zeta\geq 0,
\end{aligned}
\end{align*}
where
$A_{k}:=\lim\limits_{\zeta\rightarrow +\infty} u_p^{(k)}(\th,\zeta)$ is a constant which satisfies $|A_{k}|\leq C\dl.$
\end{proposition}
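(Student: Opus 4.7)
The plan is to proceed by induction on $k\geq 2$, with the inductive hypothesis that all lower order expansions $(u_p^{(j)},v_p^{(j+1)})$ for $j<k$ and the corresponding Euler correctors $(u_e^{(j)},v_e^{(j)},p_e^{(j)})$ for $j\leq k$ have been constructed with the stated decay properties (in particular the $\zeta$-decay with weight $\langle \zeta\rangle^\ell$ and the analogous correction to remove the limit constants $A_j$ from the Euler part, mirroring what was done in Section \ref{lowerexpansion} for $j=1$). Under this hypothesis, system \eqref{boundarylayeruk} is a \emph{linear} parabolic-in-$\theta$ system for the unknowns $(u_p^{(k)},v_p^{(k+1)})$ whose coefficients are \emph{independent of $k$}: namely, the transport coefficients are exactly $(\tilde{\o}+u_p^{(0)})$ and $(v_e^{(1)}(\th,1)+v_p^{(1)})$, the same pair governing the linearization performed in the $k=1$ case of Proposition \ref{propdcu1}. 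The data of the problem, i.e.\ the forcing $f_{p,k}$ and the boundary values $u_e^{(k)}(\th,1)$, $v_e^{(k)}(\th,1)$, are built from strictly lower order quantities and therefore, by the induction hypothesis, are smooth, $2\pi$-periodic in $\th$, and rapidly decaying as $\zeta\to +\infty$ (with arbitrary polynomial weight $\langle\zeta\rangle^\ell$) and of size $O(\dl)$.

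First, I would perform the change of unknowns that absorbs the inhomogeneous boundary data and the $\zeta v_p^{(k)}$ term from the divergence equation:
\begin{equation*}
u := u_p^{(k)} + u_e^{(k)}(\th,1)\kappa(\zeta), \qquad v := v_p^{(k+1)} - v_p^{(k+1)}(\th,0) + \zeta v_p^{(k)} - \partial_\th u_e^{(k)}(\th,1)\int_0^\zeta \kappa(\bar{\zeta})\,d\bar{\zeta},
\end{equation*}
with $\kappa\in C_c^\infty([0,+\infty))$ satisfying $\kappa(0)=1$, $\int_0^\infty \kappa =0$, exactly as in the $k=1$ reduction. The resulting system for $(u,v)$ has the form \eqref{boundarylayeru1r} with a new forcing term $\bar{f}_k$ that inherits the $O(\dl)$ size and the arbitrary polynomial decay in $\zeta$ from the inductive hypothesis and from $\partial_\th^i\partial_r^j(u_e^{(k)},v_e^{(k)})|_{r=1}$, bounded via \eqref{eulerkesti}. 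Then the solvability of \eqref{boundarylayeru1r} cited from \cite[\S 2]{FeiGLT:2023CMP} applies verbatim, yielding the existence of $(u,v)$ with the required weighted estimates, the vanishing of $\int_0^{2\pi} v\,d\th$, and the existence of the limit constant $A_k = \lim_{\zeta\to+\infty} u_p^{(k)}(\th,\zeta)$, which is in fact $\th$-independent and of size $O(\dl)$.

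Finally, to close the induction step and prepare the data for order $k+1$, I would perform the Euler corrector update: replace $u_p^{(k)}$ by $\tilde{u}_p^{(k)} := u_p^{(k)} - A_k$, and correct the Euler layer $u_e^{(k)}\mapsto \tilde{u}_e^{(k)} := u_e^{(k)} + h_k(r)$, where $h_k$ solves the one-dimensional ODE $r(\Delta - r^{-2})h_k = -r(\Delta-r^{-2})u_e^{(k)} + 2r^{-1}\partial_\th v_e^{(k)}$ with $h_k(1)=A_k$, $h_k(+\infty)=0$ — identical in form to \eqref{correction1}. The explicit solution formula for $h_k$ gives $|\partial_r^j(h_k - \tilde{A}_k/r)|\lesssim \dl\, r^{-j-2}$, so $(\tilde{u}_e^{(k)},v_e^{(k)})$ still satisfies \eqref{eulerkesti}, and the constructed pressure $p_p^{(k+1)}(\th,\zeta) = \int_\zeta^{+\infty} g_{p,k}(\th,\bar{\zeta})d\bar{\zeta}$ is well-defined by the fast decay of $g_{p,k}$.

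The main obstacle, if any, is bookkeeping: verifying that $f_{p,k}$ and $g_{p,k}$ truly are built only from previously constructed quantities and inherit the required $\langle\zeta\rangle^{-\ell}$ decay and $O(\dl)$ size. This reduces to a term-by-term inspection of the asymptotic matching, relying on the fact that $u_p^{(j)}-A_j$ and $v_p^{(j+1)}$ decay fast in $\zeta$ by the inductive hypothesis, and that $\partial_\th^i\partial_r^j(\tilde{u}_e^{(j)}-\tilde{A}_j/r,v_e^{(j)})$ are $O(\dl r^{-j-2})$ near $r=1$, which after Taylor expansion in $r-1 = \e\zeta$ produces smooth $\zeta$-polynomial coefficients of $O(\dl)$ size. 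No essentially new analytic difficulty arises beyond what has been resolved at orders $k=0,1$.
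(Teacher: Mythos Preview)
Your proposal is correct and follows essentially the same approach as the paper: the same cutoff-based change of unknowns $(u,v)$ reducing \eqref{boundarylayeruk} to a system of the form \eqref{boundarylayeru1r}, which is then solved by citing \cite{FeiGLT:2023CMP}, followed by the pressure construction via $p_p^{(k+1)}=\int_\zeta^{+\infty} g_{p,k}\,d\bar\zeta$. Your additional discussion of the inductive framework and the subsequent Euler corrector $h_k$ is accurate but goes slightly beyond the proposition itself (the paper places the $h_k$ construction in the next subsection).
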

\pf

Let $\kappa\in C_c^\infty ([0,+\infty))$ satisfy
\begin{align*}
\kappa(0)=1,\ \int_0^{+\infty}\kappa(\zeta)d\zeta=0.
\end{align*}
For simplicity, we set
\begin{align*}
\bar{u}:&=\t{\o}+u_p^{(0)}, \ \bar{v}:=v_p^{(1)}+v^{(1)}_e(\th,1),\\
u:&=u_p^{(k)}+u_e^{(k)}(\th,1)\kappa(\zeta),\\
v:&=v_p^{(k+1)}-v_p^{(k+1)}(\th,0)+\zeta v_p^{(k)} -\p_\th u_e^{(k)}(\th,1)\int_0^\zeta \kappa(\bar{\zeta})d\bar{\zeta}.
\end{align*}
Then, the equations (\ref{boundarylayeruk}) reduce to
\begin{eqnarray}\label{bluk1}
\left \{
\begin {array}{ll}
\bar{u}\partial_x u+\bar{v}\partial_\zeta u+u\partial_x \bar{u}+v\partial_\zeta\bar{u}-\partial^2_{\zeta}u=\bar{f}_{p,k},\\[7pt]
\partial_x u+\partial_\zeta v=0,\\[5pt]
u(x,\zeta)=u(x+2\pi,\zeta),\ v(x,\zeta)=v(x+2\pi,\zeta)\\[5pt]
u|_{\zeta=0}=v|_{\zeta=0}=0,\  \lim\limits_{x\rightarrow +\infty}\partial_\zeta u=0,
\end{array}
\right.
\end{eqnarray}
where $\bar{f}_{p,k}$ is $2\pi$-periodic function and decays fast as $\zeta\rightarrow +\infty$ and have the following estimate
\bes
\lt\|\partial_\th^i\partial_\zeta^j \bar{f}_{p,k}\big<\zeta\big>^\ell\rt\|_{L^\i}\leq C_{i,j,\ell}\dl,\ \forall\ \zeta\geq 0.
\ees

This system \eqref{bluk1} is solved  as \eqref{boundarylayeru1} and is exactly (2.47) in \cite{FeiGLT:2023CMP}. One can refer to there for further detailed proof of Proposition \ref{propdcuk}. Here we omit the details.

Next, we construct the pressure $p_p^{(k+1)}(\th,\zeta)$. Thanks to \eqref{boundarylayeruk}$_2$, we take
\bes
p_p^{(k+1)}(\th,\zeta)=\int^{+\i}_{\zeta}g_{p,k}(\th,\bar{\zeta})d\bar{\zeta}.
\ees
which satisfies
\begin{align*}
-\p_\zeta p_p^{(k+1)}(\th,\zeta)=g_{p,k}(\th,\zeta), \quad \lim_{\zeta\rightarrow +\infty}p_p^{(k+1)}(\th,\zeta)=0.
\end{align*}

\ef

\subsubsection{Correction of the boundary layer solutions and the Euler equations of higher order}
\indent

Since we want the solutions of the boundary layer equations decaying fast at infinity, we define
\bes
\t{u}^{(k)}_p:=u^{(k)}_p-A_{k},
\ees
and use $\t{u}^{(k)}_p$  to replace the original ${u}^{(k)}_p$. Then we need to correct $u^{(k)}_e$ to $\t{u}^{(k)}_e,$ such that they satisfy
\bes
\t{u}^{(k)}_e\big|_{r=1}=-u^{(k)}_p\big|_{\zeta=0}+A_{k}={u}^{(k)}_e\big|_{r=1}+A_{k},
\ees
and $(\t{u}^{(k)}_e,v^{(k)}_e)$ is still a solution of the system \eqref{middleeulerk}.

Also, using divergence-free condition and the harmonic property of  $rv_e^{(k)}$, a direct calculation implies that
\bes
\p_\th \lt( r(\Dl-\f{1}{r^2})u_e^{(k)} +\f{2}{r}\p_\th v_e^{(k)}\rt)=0.
\ees
Let
\bes
\t{u}^{(k)}_e={u}^{(k)}_e+h_{k}(r),
\ees
where $h_{k}(r)$ satisfies the following ODE
\be\label{correctionk}
\begin{aligned}
&r(\Dl-\f{1}{r^2})h_{k}(r)=-r(\Dl-\f{1}{r^2})u_e^{(k)} +\f{2}{r}\p_\th v_e^{(k)}:=rf_k(r),\\
&h_{k}(1)=A_{k},\q h_k(+\i)=0.
\end{aligned}
\ee
By estimates of $u_e^{(k)},v_e^{(k)}$, we have
\bes
\|\p^{j}_r f_k(r)\|_{L^\i}\leq C_j\f{\dl}{r^{j+4}}.
\ees
The solution of \eqref{correctionk} is given by
\begin{align}
h_k(r)=&\f{A_k+\f{1}{2}\int^{+\i}_1 f_k(s)(1-s^2)ds}{r}+\f{1}{2r}\int^{+\i}_r f_k(s)s^2ds-\f{1}{2}r\int^{+\i}_r f_k(s)ds\nn\\
      :=& \f{\t{A}_k}{r}+\f{1}{2r}\int^{+\i}_r f_k(s)s^2ds-\f{1}{2}r\int^{+\i}_r f_k(s)ds.\nn
\end{align}
It is not hard to deduce that
\bes
\lt\|\p^{j}_r \lt(h_k(r)-\f{\t{A}_k}{r}\rt)\rt\|_{L^\i}\leq C_{j} \dl \f{1}{r^{j+2}}.
\ees
Obviously, $(\t{u}^{(k)}_e, v^{(k)}_e)$ is still a solution of the system \eqref{middleeulerk} and satisfies \eqref{eulerkesti}. Also we have that
  \bes
 r(\Dl-\f{1}{r^2})\t{u}_e^{(k)} +\f{2}{r}\p_\th v_e^{(k)}\equiv 0.
  \ees
  By letting $\t{u}_p^{(k)}$ and $\t{u}_e^{(k)}$ to replace  ${u}_p^{(k)}$ and $u_e^{(k)}$, the equations for the $k$th order boundary layer \eqref{boundarylayeruk} is invariant. \ef

\subsection{Approximate solutions}\label{Approximate solutions}

\indent

In this subsection, we construct an approximate solution of the Navier-Stokes equations (\ref{nspolar}). First let $\chi(y)\in C^\i_c([R_1,+\i))$ be a function satisfying
\bes
\chi(r)=\lt\{
\bali
&1,\q r\in \lt[1,2\rt],\\
&0,\q r\geq 3.
\eali
\rt.
\ees
  Set
\begin{align*}
&{u}_{p}^a(\th,r):=\chi(r)\sum^{22}_{k=0}\e^{k}u^{(k)}_p:=\chi(r)u_p,\\
&{v}_{p}^a(\th,r):=\chi(r)\sum^{23}_{k=1}\e^{k}v^{(k)}_p:=\chi(r)v_p,\q{p}_{p}^a(\th,r):=\chi^2(r)\sum^{23}_{k=1}\e^{k}p^{(k)}_p:=\chi^2(r)p_p,
\end{align*}
and
\begin{align*}
u_e^a:=u^e(r)+\sum^{22}_{k=1}\e^{k}u^{(k)}_e, \q v_e^a:=\sum^{22}_{k=1}\e^{k}v^{(k)}_e,\q p_e^a:=\sum^{22}_{k=0}\e^{k}p^{(k)}_e.
\end{align*}

We construct an approximate solution $(u^a,v^a,p^a)$ by
\begin{align*}
u^a(\th,r):&=u_e^a+{u}_{p}^a+\epsilon^{22}h(\th,r),\q v^a(\th,r):=v_e^a+{v}_{p}^a,\q p^a(\th,r):=p_e^a+{p}_p^a,
\end{align*}
where the corrector $h(\th,r)$ will be given in Appendix, which satisfies
\begin{align*}
h(\th,1)=h(\th,+\i)=0, \ \|\partial_\th^i\partial_r^j h\|_2\leq C_{i,j}\f{1}{r^{j+100}},
\end{align*}
and makes $(u^a, v^a)$ be divergence-free
\begin{align*}
\p_\th u^a+\p_r(rv^a)=0.
\end{align*}
Moreover, $(u^a, v^a)$  satisfies the following boundary conditions
\begin{align*}
&( u^a,v^a)(\th,r)=( u^a,v^a)(\th+2\pi,r),\\[5pt]
&u^a(\th, 1)=\o+\dl f(\th), \ v^a(\th,1)=0,\\[5pt]
&u^a(\th,+\i)=0,\ v^a(\th,+\i)=0.
\end{align*}
By collecting the estimates in Proposition \ref{propeulerorder1} and Proposition \ref{propeulerorderk},  we deduce that
\bes
\lt \|\p^j_\th\p^{k}_r \lt(u^a_e-u^e(r)-\f{O(\e\dl)}{r},v^a_e \rt)\rt \|_\infty\leq C_{j,k}\epsilon(\dl+\epsilon)\f{1}{r^{k+2}},
\ees
where $O(\e\dl)=\sum^{22}_{k=1} \e\t{A}_k$.

By collecting the estimates in Proposition \ref{propdcu0}, Proposition \ref{propdcu1} and Proposition \ref{propdcuk}, one has
\begin{align}
\|\zeta^\ell\p^{j}_\th\partial_\zeta^k u_p^a\|_\infty\leq C_{j,k,\ell} (\dl+\epsilon), \ \|\zeta^\ell\p^{j}_\th\partial_\zeta^k v_p^a\|_\infty\leq C_{j,k,\ell} \epsilon(\dl+\epsilon).\nn
\end{align}


Finally, set
 \begin{align*}
 R_u^a:&=u^{a} u_\theta^{a}+r v^{a} u_r^{a}+u^{a} v^{a}+p_\theta^{a}-\epsilon^2\left(\frac{u_{\theta \theta}^{a}}{r}+r u_{r r}^{a}+u_r^{a}+\frac{2}{r} v_\theta^{a}-\frac{u^{a}}{r}\right), \\
R_v^a:&=u^{a} v_\theta^{a}+r v^{a} v_r^{a}-\left(u^{a}\right)^2+r p_r^{a}-\epsilon^2\left(\frac{v_{\theta \theta}^{a}}{r}+r v_{r r}^{a}+v_r^{a}-\frac{2}{r} u_\theta^{a}-\frac{v^{a}}{r}\right),
 \end{align*}
 then there holds for $j\leq 10$,
 \begin{align*}
\|\partial^j_\th R_u^a\|_2+\|\partial^j_\th R_v^a\|_2\leq C\f{\e^{22}}{r^4},
 \end{align*}
and $(u^a, v^a, p^a)$ satisfies
\begin{eqnarray*}
\left\{
\begin{array}{lll}
u^{a} u_\theta^{a}+r v^{a} u_r^{a}+u^{a} v^{a}+p_\theta^{a}-\epsilon^2\left(\frac{u_{\theta \theta}^{a}}{r}+r u_{r r}^{a}+u_r^{a}+\frac{2}{r} v_\theta^{a}-\frac{u^{a}}{r}\right)=R_u^a,\\[5pt]
u^{a} v_\theta^{a}+r v^{a} v_r^{a}-\left(u^{a}\right)^2+r p_r^{a}-\epsilon^2\left(\frac{v_{\theta \theta}^{a}}{r}+r v_{r r}^{a}+v_r^{a}-\frac{2}{r} u_\theta^{a}-\frac{v^{a}}{r}\right)=R_v^a,\\[5pt]
 \p_\th u^a+\p_r(rv^a)=0, \\[5pt]
( u^a,v^a)(\th,r)=( u^a,v^a)(\th+2\pi,r),\\[5pt]
u^a(\th, 1)=(\o+\dl f(\th)), \ v^a(\th,1)=0,\\[5pt]
u^a(\th,+\i)=0,\ v^a(\th,+\i)=0.
\end{array}
\right.
\end{eqnarray*}
This is exactly the constructed approximate solution that we stated in the beginning of Section \ref{sec2}. \qed

\section{Appendix}

In this section, we give a construction of corrector $h(\th,r)$ defined in section \ref{Approximate solutions}. Firstly, we give a simple lemma which is similar to Lemma 6.1 in Appendix B in \cite{FeiGLT:2023CMP}, hence we omit the proof.

\begin{lemma}\label{corector equation}
Assume that $K(\th,r)$ is a $2\pi$-periodic smooth function which satisfies
\bes
\int_0^{2\pi}K(\th,r)d\th=0, \ \forall\ r\in [1,+\i); \quad K(\th,1)=K(\th, +\i)=0,
\ees
then there exists a $2\pi$-periodic function $h(\th,r)$ such that
\begin{align}
&\partial_\th h(\th,r)=K(\th,r); \ h(\th,1)=h(\th, +\i)=0;\nonumber\\
&\int_0^{2\pi}h(\th,r)d\th=0, \ \|\partial_\th^j\partial_r^kh\|_2\leq C\|\partial_\th^j\partial_r^kK\|_2.\nn
\end{align}
\end{lemma}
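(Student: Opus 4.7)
The plan is to construct $h$ explicitly by antidifferentiating $K$ in $\th$ and then subtracting its $\th$-average to enforce zero mean. Define
\bes
H(\th,r):=\int^\th_0 K(\bar{\th},r)d\bar{\th}.
\ees
The hypothesis $\int^{2\pi}_0 K(\bar{\th},r)d\bar{\th}=0$ ensures $H(\th+2\pi,r)=H(\th,r)$, so $H$ is $2\pi$-periodic in $\th$. Now set
\bes
h(\th,r):=H(\th,r)-\f{1}{2\pi}\int^{2\pi}_0 H(\bar{\th},r)d\bar{\th}.
\ees
By construction $\p_\th h=\p_\th H=K$, and $\int^{2\pi}_0 h(\th,r)d\th=0$ for every $r$, so the first, third, and fourth claims of the lemma are immediate.

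For the boundary conditions, I would use the hypothesis $K(\bar{\th},1)\equiv 0$ to conclude $H(\th,1)\equiv 0$, and hence $h(\th,1)\equiv 0$; the same argument at $r=+\i$ gives $h(\th,+\i)\equiv 0$. The key observation is that the $\th$-average subtracted off is a function of $r$ alone which itself vanishes at $r=1$ and $r=+\i$, so the mean-subtraction does not disturb the boundary conditions.

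For the Sobolev bounds, since the mean-subtraction commutes with $\p^k_r$, the function $\p^k_r h(\cdot,r)$ has vanishing $\th$-average for each fixed $r$. The one-dimensional Poincar\'e inequality on $\bT$ then yields
\bes
\|\p^k_r h(\cdot,r)\|_{L^2(\bT)}\leq C\|\p_\th\p^k_r h(\cdot,r)\|_{L^2(\bT)}=C\|\p^k_r K(\cdot,r)\|_{L^2(\bT)}.
\ees
For $j\geq 1$ no Poincar\'e is needed, since $\p^j_\th\p^k_r h=\p^{j-1}_\th\p^k_r K$ directly. Squaring and integrating in $r\in[1,+\i)$ gives the claimed estimate $\|\p^j_\th\p^k_r h\|_2\leq C\|\p^j_\th\p^k_r K\|_2$.

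There is no substantive obstacle here; the proof is a short explicit construction followed by Poincar\'e on the circle. The only small point worth double-checking is the compatibility of the mean-subtraction with the two boundary conditions in $r$, which is handled as above.
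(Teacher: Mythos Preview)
Your construction is correct and is precisely the standard argument the paper has in mind (the paper itself omits the proof, citing the analogous Lemma~6.1 in \cite{FeiGLT:2023CMP}). One tiny remark: for $j\geq 1$ you still need a Poincar\'e step to pass from $\|\p^{j-1}_\th\p^k_r K\|_2$ to the stated bound $\|\p^j_\th\p^k_r K\|_2$, which is immediate since $\p^{j-1}_\th\p^k_r K$ also has zero $\th$-mean.
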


Next, we construct the corrector $h(\th,r)$ by the above lemma.
Direct computation gives
\begin{align*}
&\p_\th (u^a_e+ u^a_{p})+\p_r(rv^a_e+rv^a_{p})=\p_\th  u^a_{p}+\p_r(rv^a_{p})\\
=&\chi(r)(\p_\th {u}_p+\p_r(r{v}_p))+r\chi^{\prime}(r){v}_p,\nn\\
=&r\chi^{\prime}(r){v}_p=r\e^{22}\chi^{\prime}(r)(r-1)^{-22}\zeta^{22}{v}_p\nn\\
:=&-\e^{22} K(\th,r).
\end{align*}

Notice that $\chi'(r)=0,\ \text{for } r\in [1,2]\cup [3,+\i)$ and the properties of ${v}_p$, we know that $K(\th,r)$ satisfies the assumption in Lemma \ref{corector equation}, then there exists  $h(\th,r)$ such that
\beas
&\p_\th (u^a_e+ u^a_{p})+\p_r(rv^a_e+rv^a_{p})=-\e^{22}\p_\th h(\th,r),
\eeas
which indicates that
\bes
\p_\th u^a+\p_r(rv^a)=0.
\ees

\section*{Data availability statement}

Data sharing is not applicable to this article as no datasets were generated or analysed during the current study.

\section*{Conflict of interest statement}

The authors declare that they have no conflict of interest.

\section*{Acknowledgments}

 X. Pan is supported by NSF of China  under Grant  No.12471222 and No.12031006. We thank Dr. Chen Gao for helpful discussions on this topic.

\end{document}